\definecolor{blue2}{rgb}{0,0,0.6}
\definecolor{turck}{rgb}{0,0.5,0.5}
\definecolor{sturck}{rgb}{0,0.9,0.8} 
\definecolor{orange}{rgb}{1,0.4,0}
\definecolor{bord}{rgb}{1,0.3,0.3}
\definecolor{violet}{rgb}{0.4,0.2,0.8}
\definecolor{sblue}{rgb}{0.2,0.4,1.}
\definecolor{yellowa}{rgb}{1.,0.8,0}
\definecolor{greena}{rgb}{0.2,0.8,0}
\definecolor{greenb}{rgb}{0,0.5,0.2}
\definecolor{greenc}{rgb}{0.1,0.7,0.3}
\definecolor{greend}{cmyk}{0.7,0,0.3,0}
\definecolor{greene}{cmyk}{0.7,0,0.3,0.1}
\definecolor{greya}{cmyk}{0,0,0,0.1}
\definecolor{greyb}{cmyk}{0,0,0,0.2}
\definecolor{greyc}{cmyk}{0,0,0,0.3}
\definecolor{greyd}{cmyk}{0,0,0,0.4}
\definecolor{greye}{cmyk}{0,0,0,0.5}
\definecolor{greyf}{cmyk}{0,0,0,0.6}
\definecolor{greyg}{cmyk}{0,0,0,0.7}
\definecolor{greyh}{cmyk}{0,0,0,0.8}
\definecolor{greyi}{cmyk}{0,0,0,0.9}
\long\def\notes#1{\ifinner
         {\tiny #1}
         \else
          \marginpar{\protect\tiny #1}%
          \fi}%
\newtheorem{lemma}     {Lemma}[section]
\newtheorem{thm}   [lemma]{Theorem}
\newtheorem{teorema1}   [lemma]{Theorem}
\newtheorem{prop}      [lemma]{Proposition}
\newtheorem{coro}    [lemma]{Corollary}
\newtheorem{cong1}      [lemma]{Conjecture}
\newtheorem{remark1}    [lemma]{Remark}
\newtheorem{defin}     [lemma]{Definition}
\numberwithin{equation}{section}
\newcommand{\und}{\underline}
     \newcommand{\nn}{\nonumber}
\newcommand{\dis}{\displaystyle}
\newcommand{\ssp}{\mathrm{sp}}
\newcommand{\mmmintone}[1]{{\dis{\int\kern -.43cm
-}}_{\kern-.21cm\substack{#1}}\;\;}
\newcommand{\mmmintwo}[2]{{\dis{\int\kern -.43cm
-}}_{\kern-.21cm\substack{#1}}^{\substack{#2}}\;\;}
\newcommand{\submint}{{\scriptstyle{\int\kern -.66em -}}}
\newcommand{\submintone}[1]{{\scriptstyle{\int\kern -.66em
-}}_{\scriptscriptstyle{\kern-.21em\substack{#1}}}}
\newcommand{\fracmint}{{\textstyle{\int\kern -.88em -}}}
\newcommand{\fracmintone}[1]{{\textstyle{\int\kern -.88em
-}}_{\scriptscriptstyle{\kern-.21em\substack{#1}}}\;}
\def\mintone{\protect\mmmintone}
\newcommand{\eps}{\epsilon}
\newcommand{\ga}{\gamma}
\newcommand{\Ga}{\Gamma}
\newcommand{\Om}{\Omega}
\newcommand{\om}{\omega}
\newcommand{\si}{\sigma}
\newcommand{\Si}{\Sigma}
\newcommand{\la}{\lambda}
\newcommand{\La}{\Lambda}
\newcommand{\nada}[1]{}
\title{Potts models in the continuum. Uniqueness and
exponential decay  in the restricted ensembles}
\date{\today}
\author{
A. De Masi 
\thanks{Anna De Masi: Dipartimento di Matematica, Universit\`a di L'Aquila, Via Vetoio, 1, 67010 COPPITO (AQ), Italy; \texttt{demasi@univaq.it}.} 
\and 
I. Merola 
\thanks{Immacolata Merola: Dipartimento di Matematica, Universit\`a di L'Aquila, Via Vetoio, 1, 67010 COPPITO (AQ), Italy; \texttt{merola@univaq.it}.}  
\and 
E. Presutti
\thanks{Errico Presutti, Dipartimento di Matematica, Universit\`a
di Roma Tor Ver\-ga\-ta, 00133 Roma, Italy; \texttt{presutti@mat.uniroma2.it}.} 
\and Y. Vignaud
\thanks{Yvon Vignaud: TU Berlin - Fakult\"{a}t II, Institut f\"{u}r Mathematik, Strasse des 17. Juni 136, D-10623 Berlin, Germany; \texttt{ vignaud@mail.math.tu-berlin.de}.}
}
\begin{document}
\maketitle

\pagenumbering{arabic}

\begin{abstract}

In this paper we study a continuum version of the Potts model, where
particles are points in $\mathbb R^d$, $d\ge 2$, with a spin which
may take $S\ge 3$ possible values. Particles with different spins
repel each other via a Kac pair potential of range $\ga^{-1}$,
$\ga>0$.  In mean field, for any inverse temperature $\beta$ there
is a value of the chemical potential $\la_\beta$ at which $S+1$
distinct phases coexist.  We  introduce a restricted ensemble for
each mean field pure phase which is defined so that the empirical
particles densities are close to the mean field values. Then, in the
spirit of the Dobrushin Shlosman theory, \cite{DS}, we prove  that
while the Dobrushin high-temperatures uniqueness condition does not
hold, yet a finite size condition is verified for $\ga$ small enough
which implies uniqueness and exponential decay of correlations. In a
second paper, \cite{DMPV2}, we will use such a result to implement
the Pirogov-Sinai scheme proving coexistence of $S+1$ extremal DLR
measures.

\end{abstract}

\tableofcontents


\section{{\bf Introduction}}
        \label{sec:e1}
In this paper we consider a continuum version of the classical Potts
model, namely a system of point particles in $\mathbb R^d$ where
each particle has a spin $s\in\{1,..,S\}$, $S>1$, and particles with
different spins repel each other, this being the only interaction
present. When $S=2$ this is a simple version of the famous
Widom-Rowlinson model which has been the first system where phase
transitions in the continuum have been rigorously proved,
\cite{ruelle}, and for $S\ge 2$ and at very
low temperature, a phase coexistence  between the $S$
symmetric phases for continuum Potts models
was established in \cite{GH}.

The mean field version of the continuum Potts model has
been recently studied in \cite{GMRZ}. The phase diagram has
an interesting structure. In the $(\beta,\la)$-plane,
$\beta$ the inverse temperature, $\la$ the chemical
potential, there is a critical curve, see Figure 1, above
which (i.e.\ $\la$ ``large''), there is segregation, namely
there are $S$ pure phases, each one characterized by having
``a most populated species'' (of particles with same spin).
Instead, below the critical curve there is only one phase,
the disordered one where the spin densities are all equal.
The behavior on the critical curve depends on $S$. If $S=2$
there is only the disordered phase while if $S>2$ there is
coexistence, namely there are $S+1$ phases, the ``ordered
phases'' where there is a spin density larger than all the
others and the disordered phase as well.

An analogous phenomenon occurs in the mean
field lattice Potts model
where at a critical temperature there is a first order
phase transition with
coexistence of $S+1$ phases if $S>2$, but
in the continuum there is an extra phenomenon occurring at the
transition, namely the total particles density has
a strictly positive jump when going
from the disordered to an ordered phase.
This can be seen as an example
of interplay between magnetic and elastic properties
and interpreted as a magneto-striction effect, as the
appearance of a net magnetization is accompanied by an
increase of density and thus a decrease of
inter-particles distances.

Our purpose is to prove that the above picture remains valid
if mean field is replaced by a finite
range interaction.   Let  $q=(...,r_i,s_i,...)$, $i=1,..,n$,
$r_i\in \mathbb R^d$, $s_i\in \{1,..,S\}$,
a finite configuration of particles.  We suppose that their
energy is
   \begin{equation}
      \label{e1.1}
H_\la (q) = \frac 12 \sum_{i\ne j} V_\ga(r_i,r_j) \text{\bf
1}_{s_i\ne s_j} - \la n
     \end{equation}
where
   \begin{equation}
      \label{e1.2}
V_\ga(r,r')=\int_{\mathbb R^d} J_{\ga}(r,z)J_{\ga}(z,r')
     \end{equation}
$J_{\ga}(r,r')=\ga^d J (0,\ga(r'-r))$, $\ga>0$
a Kac scaling parameter,
$J(0,r)$ a smooth probability kernel supported by $|r|\le
1/2$.  (Observe that $H_\la (q)$ is independent
of the particles labeling).

To motivate the above choice recall that the mean field energy density
(mean field energy over volume) is
   \begin{equation}
      \label{e1.3}
e_\la(\rho(\cdot)):= \frac 1{2} \sum_{s\ne s'}
\rho(s)\rho(s') - \la \rho_{\rm tot}, \quad \rho_{\rm tot}=\sum_{s} \rho(s)
     \end{equation}
where $\rho(s)$ is the density of particles with spin $s$.  Then
   \begin{equation}
      \label{e1.4}
H_\la (q) = \int e_\la(\rho_{q,r}(\cdot)),\quad \rho_{q,r}(s)= \sum_i
\text{\bf 1}_{s_i=s} J_\ga(r,r_i)
     \end{equation}
Thus $H_\la (q)$ is the integral of the mean field free energy density,
where the latter is computed using the empirical
averages $\rho_{q,r}(s)$.  If $\ga$ is small one may think
that \eqref{e1.1} ``simulates mean field''.
Indeed we will prove in \cite{DMPV2} that

\vskip1cm

    \begin{thm}
    \label{thme1.1}
For any $d\ge 2$, $S>2$ and $\beta >0$ there is $\ga^*>0$ such that
for any $\ga\le \ga^*$ there exist $\la_{\beta,\ga}$ and $S+1$
mutually distinct, extremal
DLR measures at $(\beta,\la_{\beta,\ga})$.
    \end{thm}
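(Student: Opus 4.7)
The plan is to follow the Pirogov-Sinai strategy adapted to the continuum Kac-Potts setting, using the uniqueness and exponential decay in restricted ensembles established earlier in this paper as the crucial input. The starting point is a coarse-grained description of configurations based on empirical densities at scale $\gamma^{-1}$: for each cell of a suitable lattice partition of $\mathbb R^d$ I would classify the local density profile as being close to one of the $S+1$ mean field pure phases $\rho^{(k)}$, $k=0,1,\dots,S$ (the disordered phase and the $S$ ordered ones), or as a \emph{bad} block where no such identification is possible. Contours are then defined as maximal connected components of bad blocks together with a suitable boundary layer, labelled by the phases identified in the adjacent regions. The restricted ensemble $\mu^{(k)}_\Lambda$ on a region $\Lambda$ consists of configurations whose density profile stays inside a prescribed tube around $\rho^{(k)}$ throughout $\Lambda$: this is precisely the ensemble analyzed in the present paper.

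The second step is to derive a polymer/contour representation. Partition functions with mixed boundary conditions are expanded as sums over contour configurations whose weight involves ratios of restricted partition functions in the interior versus the exterior of each contour. Using the exponential decay of correlations in the restricted ensembles, these weights factorize up to errors that decay exponentially in $\gamma^{-1}$ times the contour size, producing a genuine contour model. The Peierls estimate then follows from the mean field coercivity proved in \cite{GMRZ}: in a bad block the free energy density exceeds the minimum by a positive constant $c(\beta)$, and after rescaling the excess cost of a contour of volume $V$ is of order $\gamma^{-d} V c(\beta)$, overwhelming the combinatorial entropy of contours once $\gamma$ is small.

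Third, the chemical potential is tuned. At the mean field value $\lambda_\beta$ all $S+1$ phases are minimizers of $e_\lambda$, but for positive $\gamma$ each phase acquires a correction $f^{(k)}(\lambda,\beta,\gamma)$ to its true bulk free energy, computed from the restricted ensemble partition function. By abstract Pirogov-Sinai one shows that each $f^{(k)}$ is $C^1$ in $\lambda$ with derivative close to $-\rho^{(k)}_{\mathrm{tot}}$, and since the total densities of the ordered and disordered phases differ in the continuum (the magneto-striction effect stressed in the introduction) the differences $f^{(k)}-f^{(0)}$ are genuinely non-degenerate in $\lambda$. An implicit function / intermediate value argument in the $S$-dimensional space of these differences, together with the $\mathbb Z/S\mathbb Z$ symmetry among the ordered phases, then yields a unique $\lambda_{\beta,\gamma}$ at which all $f^{(k)}$ coincide. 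At this value the standard Pirogov-Sinai machinery produces $S+1$ mutually singular extremal DLR measures, each obtained as an infinite-volume limit with the corresponding restricted boundary condition.

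The main obstacle I expect is the contour expansion in the second step: because the interaction has range $\gamma^{-1}$ which diverges as $\gamma\to 0$, the interior and exterior of a contour are not spatially independent, and decoupling them requires the fine form of the exponential decay proved here, with rate uniform in the shape of the domain and with errors controlled by the contour surface rather than by bulk terms. Matching the coarse-graining scale $\ell$ against the Kac scale $\gamma^{-1}$, and propagating the finite size Dobrushin condition through the polymer expansion so that cluster weights satisfy a Kotecky-Preiss criterion uniformly in $\gamma$, is the technically most delicate part of \cite{DMPV2} and is exactly where the result of the present paper is indispensable.
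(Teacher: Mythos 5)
The paper you were given does not itself prove Theorem~\ref{thme1.1}: it explicitly states that the proof is given in the companion paper \cite{DMPV2}, and the present paper only supplies one of the inputs, namely uniqueness and exponential decay of correlations in the restricted ensembles (Theorem~\ref{thme3.6.1}). So there is no internal proof of this statement against which your sketch can be compared line by line. That said, your outline is consistent with the Pirogov--Sinai strategy described in the introduction, and you correctly identify the decisive role of the restricted-ensemble result and the difficulty that the Kac range $\gamma^{-1}$ diverges as $\gamma\to 0$, which is exactly why contour interiors and exteriors do not decouple trivially and why a finite-size Dobrushin-type condition rather than Dobrushin's high-temperature condition is needed.

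One place where your sketch overcomplicates matters: the symmetry among the $S$ ordered phases is the full permutation group $\mathfrak S_S$, not the cyclic group $\mathbb Z/S\mathbb Z$, and this symmetry already forces the $S$ ordered bulk free energies $f^{(1)}=\dots=f^{(S)}$ to coincide identically. Consequently the tuning of $\lambda$ is a one-dimensional problem, matching the common ordered free energy against the disordered one $f^{(S+1)}$, and an $S$-dimensional implicit-function argument is not needed. The monotonicity in $\lambda$ that you invoke, coming from the strict jump in total density between ordered and disordered phases (the magnetostriction effect), then gives a unique $\lambda_{\beta,\gamma}$ directly. You should also be careful with the heuristic $\gamma^{-d}Vc(\beta)$ for the Peierls cost: with the scales used here the cost of a contour is of order $\zeta^2\ell_{-,\gamma}^d N_\Gamma$ with $\zeta=\gamma^a$ (see \eqref{e3.3.1} and \eqref{e3.5.3.0}), which still diverges as $\gamma\to 0$ but with a different exponent than a naive mean-field count would suggest.
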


\vskip1cm

To keep the statement simple we have not reported all the
information we have on the structure of
the DLR measures referring to \cite{DMPV2} for the full result.
 In particular we know that the particles densities
are close to their mean field values
(for $\ga$ small).  The proof of Theorem \ref{thme1.1}
follows the Pirogov-Sinai strategy which is based on the introduction
of ``restricted ensembles'' where the original phase space
of the system is restricted by constraints which impose local closeness
to one of the putative pure phases, in our case local closeness
of empirical averages to the mean field values in a pure phase.  We need
a full control of such ``restricted ensembles'' and then a general machinery
applies   giving the desired phase transition.  As a difference
with the classical Pirogov-Sinai theory, here the small
parameter is the inverse interaction range $\ga$ instead of the temperature,
as we are ``perturbing'' mean field instead of the ground states, see
for instance the LMP model, \cite{LMP}, where
these ideas have been applied to prove
phase transitions for particles systems in the continuum with
Kac potentials.

In the typical applications of Pirogov-Sinai, restricted
ensembles are studied using cluster expansion which yields
a complete analyticity (in the Dobrushin-Shlosman sense,
\cite{DS}) characterization of the system.  Namely
constraining the system into a restricted ensemble raises
the effective temperature and the state enjoys the
characteristics of high temperature systems.  An analogous
effect has been found in the Ising model with Kac
potentials, \cite{CP}, \cite{BZ}, and in the LMP model, in
both the high-temperatures Dobrushin uniqueness condition
has been proved to hold. This is a ``finite size''
condition, and the Dobrushin uniqueness theorem states that
if such a condition is verified, then there is a unique DLR
state. The importance of the result is that the condition
involves only the analysis of the system in a finite box:
loosely speaking it is a contraction property which states
that compared with the variations of the boundary
conditions, the Gibbs measure has strictly smaller changes,
all this being quantified using the Wasserstein distance.
\emph{Dobrushin's high temperatures} means that the size of the
box [where the conditional measures are compared] can be
chosen small (a single spin in the Ising case) or a small
cube in LMP so that there is no self interaction in Ising
or a negligible interaction among particles of the box (in
LMP) and the main part of the energy is due to the
interaction with the boundary conditions.  The measure and
its variations are then quite explicit and it is possible
to check the validity of the above contraction property.

As explained by Dobrushin and Shlosman, one expects that
when lowering the temperature the above high temperature
property eventually fails, the point however being that it
could be regained if we look at systems still in a finite
box but with a larger size, eventually divergent as
approaching the critical temperature. The problem is that
if the finite size condition involves a large box then self
interactions are important and it is difficult to check
whether the condition is verified.

While it is generally believed that the above picture is
correct, there are however not many examples where it has
been rigorously established.  Unlike Ising with Kac
potentials and LMP, in an interval of values of the
temperature, where the high temperature Dobrushin condition
is valid in restricted ensembles, in the continuum Potts
model we are considering there is numerical evidence (at
least) that it is not verified. We will prove here that a
finite size condition (involving some large boxes where
self interaction is important) is verified in our
restricted ensembles and then prove using the disagreement
percolation techniques introduced in \cite{vb}, \cite{vm},
that our finite size condition implies uniqueness and
exponential decay of correlations and all the properties
needed to implement Pirogov-Sinai, a task accomplished in
\cite{DMPV2}.

\vskip3cm

\part{ Model and main results}


\vskip2cm

\section{{\bf Mean field}}
        \label{sec:e2}

\setcounter{equation}{0}

The ``multi-canonical'' mean field free energy  is
   \begin{equation}
      \label{e2.1}
F^{\rm mf}(\rho) = \frac 12 \sum_{s\ne s'}
\rho_s\rho_{s'} + \frac 1 \beta \sum_s \rho_s [\log
\rho_s-1 ],\quad \rho=\{\rho_1,..,\rho_S\}\in \mathbb
R_+^S
     \end{equation}
where $\rho_s$ represents the density of particles with spin
$s$ and $\beta$ the inverse
temperature, to underline dependence on $\beta$ we
may add it as a subscript.
The ``canonical'' mean field free energy  is instead
   \begin{equation}
      \label{e2.2}
f^{\rm mf}(x) = \inf\big\{F^{\rm mf}(\rho);
\sum_{s} \rho_s=x\big\},\qquad x>0
     \end{equation}
and the mean field free energy $CEf^{\rm mf}(x)$
is the convex envelope of $f^{\rm mf}(x)$.
$F^{\rm mf}_\la(\rho)$,
$f^{\rm mf}_\la(x)$ and $CEf_\la^{\rm mf}(x)$,
$\la\in \mathbb R$ the chemical
potential, are defined by adding the term $-\la x$, where
in the case of $F^{\rm mf}_\la(\rho)$,
$\dis{x=\sum_{s} \rho_s}$.

Observe that for any $a>0$,
   \begin{equation}
      \label{e2.3}
F^{\rm mf}_{\beta,\la}(\rho) =  a^{-2} F^{\rm mf}_{\beta/a,\la'}(a\rho),
\quad  \la= a^{-1}\la' - \frac{\log a}\beta
     \end{equation}
so that if the graph of
$CEf_{\beta,\la}^{\rm mf}(x)$ has a horizontal segment, then
for any $\beta'$, $CEf_{\beta',\la'}^{\rm mf}(x)$ has also a
horizontal segment when $\la'=a\la +\beta^{-1} a\log a$, $a=\beta/\beta'$, which
reduces the analysis of phase transitions to a single temperature, object
of the following considerations.

As shown in \cite{GM} (see the proof of Theorem A.1 therein), the variational problem \eqref{e2.2} is actually reduced to a two-dimensional problem because:

    \begin{lemma}
    \label{lemma2.1}
   \begin{equation}
      \label{e2.4}
f^{\rm mf}(x) = \inf\big\{F^{\rm mf}(\rho);
\sum_{s} \rho(s)=x; \rho_1\ge \rho_2=\cdots= \rho_S\big\}
     \end{equation}
     \end{lemma}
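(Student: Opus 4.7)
My plan is to exploit the quadratic structure of $F^{\rm mf}$ on the hyperplane $\{\sum_s \rho_s = x\}$ to reduce the $S$-dimensional optimization to the analysis of a single real-variable function. Using the identity $\sum_{s\ne s'}\rho_s\rho_{s'}= x^2 - \sum_s \rho_s^2$ on this hyperplane, the problem \eqref{e2.2} becomes, up to the additive constant $x^2/2$, that of minimizing $\sum_s g(\rho_s)$ subject to $\sum_s \rho_s = x$, $\rho_s\ge 0$, with
\begin{equation*}
g(t) := \frac{1}{\beta}\, t(\log t - 1) - \frac{t^2}{2}, \qquad g'(t) = \frac{\log t}{\beta} - t, \qquad g''(t) = \frac{1}{\beta t} - 1.
\end{equation*}
Thus $g$ is strictly convex on $(0, 1/\beta)$ and strictly concave on $(1/\beta, \infty)$, while $g'$ is unimodal with global maximum at $1/\beta$ and $g'(t)\to-\infty$ both as $t\to 0^+$ and as $t\to\infty$.

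Next I would establish existence and interiority of a minimizer. Compactness of the simplex and continuity of $F^{\rm mf}$ (extending $t\log t = 0$ at $t = 0$) give existence, while the blow-up $g'(t)\to-\infty$ as $t\to 0^+$ rules out $\rho_s = 0$ at the minimum: moving an infinitesimal mass $\epsilon$ into a vanishing component changes the objective by $\sim \beta^{-1}\epsilon\log\epsilon < 0$, which beats any first-order loss elsewhere. Hence at a minimizer the Lagrange equation $g'(\rho_s) = \mu$ holds for every $s$, and since $g'$ is strictly monotone on each side of $1/\beta$, this equation has at most two solutions $a \le b$ with $a\in (0,1/\beta]$, $b\in[1/\beta,\infty)$, so each $\rho_s \in \{a,b\}$.

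The decisive step is a second-order variation argument. If two components were both equal to $b > 1/\beta$, the perturbation $(d\rho_i,d\rho_j) = (\epsilon,-\epsilon)$ (which respects the constraint) would produce a second-order change $\epsilon^2 g''(b) < 0$, contradicting local minimality; so at most one component equals $b$, and the remaining $S-1$ components (or all $S$) are equal to $a$. After relabeling, $\rho_1 \ge \rho_2 = \cdots = \rho_S$, proving \eqref{e2.4}. The main subtlety I anticipate is the degenerate case $b = 1/\beta$, where $g''(b) = 0$ and the second-order test is silent; however in that case $\mu = g'(1/\beta)$ is the unique maximum value of $g'$, attained only at $1/\beta$, so the Lagrange condition forces every $\rho_s$ to equal $1/\beta$ and the configuration is already of the required symmetric form.
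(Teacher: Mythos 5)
Your argument is correct and self-contained, whereas the paper itself does not prove Lemma \ref{lemma2.1} — it defers to the proof of Theorem A.1 in \cite{GM}. I verified each step: on the hyperplane $\sum_s\rho_s = x$ the identity $\sum_{s\ne s'}\rho_s\rho_{s'} = x^2 - \sum_s\rho_s^2$ indeed reduces $F^{\rm mf}$ to $\frac{x^2}{2}+\sum_s g(\rho_s)$ with $g(t)=\beta^{-1}t(\log t -1)-t^2/2$; your formulas for $g'$ and $g''$ are right; the blow-up $g'(t)\to-\infty$ as $t\to 0^+$ does force an interior minimizer; the stationarity condition $g'(\rho_s)=\mu$ together with the unimodality of $g'$ limits each $\rho_s$ to a two-point set $\{a,b\}$ with $a\le 1/\beta\le b$; the constrained second-order variation $(\epsilon,-\epsilon)$ yielding $\epsilon^2 g''(b)<0$ correctly rules out two components strictly above $1/\beta$ (and the $x^2/2$ term is constant on the constraint set, so it does not affect the second variation); and the degenerate case $a=b=1/\beta$ is correctly closed off by the uniqueness of the maximizer of $g'$. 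After relabeling (legitimate by the permutation symmetry of $F^{\rm mf}$), the minimizer has the form $\rho_1\ge\rho_2=\cdots=\rho_S$, which gives the claimed equality of infima. Since the paper hands the reader off to \cite{GM} here, your proof is arguably more useful for a self-contained exposition, and the reduction-to-$g$ plus Lagrange-multiplier route is the natural one; I have no corrections, only the minor stylistic remark that the phrase ``beats any first-order loss elsewhere'' would read more precisely as ``the total change is $\epsilon\bigl[\beta^{-1}\log\epsilon - g'(\rho_j)\bigr](1+o(1))$, which is negative for $\epsilon$ small.''
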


\vskip1cm

 The analysis of \eqref{e2.4} yields:

\vskip1cm

    \begin{thm}
    \label{thme2.1}
Let $S>2$ and $\beta>0$. Then there are $0<x_-<x_+$ such that
$CEf^{\rm mf}_\beta(x)$ coincides with  $f^{\rm mf}_\beta(x)$ in the complement
of $(x_-,x_+)$ and it is  a straight line in $[x_-,x_+]$.  As a consequence
there is $\la_\beta$ such that
$CEf^{\rm mf}_{\beta,\la_\beta}(x)$
has the whole interval  $[x_-,x_+]$ as minimizers, it is strictly convex in
the complement and $D^2f^{\rm mf}_{\beta,\la_\beta}(x_{\pm})>0$.

    \end{thm}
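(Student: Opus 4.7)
By Lemma \ref{lemma2.1} the minimization reduces to the two-dimensional problem with $\rho_1 = u$ and $\rho_2 = \cdots = \rho_S = v$, constrained by $u + (S-1)v = x$ and $u \geq v \geq 0$. Writing the interaction part as $(S-1)uv + \frac{(S-1)(S-2)}{2}v^2$ and eliminating $u$, I would study $\psi_x(v) := F^{\rm mf}(x-(S-1)v,v)$ on $v\in[0,x/S]$ and compute that the stationarity condition reduces to
\begin{equation*}
u - v \;=\; \frac{1}{\beta}\,\log\frac{u}{v}.
\end{equation*}
This admits the symmetric solution $u=v=x/S$ (disordered phase) and, for $t:=u/v>1$, a non-symmetric branch parameterized by
$v(t)=\frac{\log t}{\beta(t-1)}$ and $x(t)=\frac{(t+S-1)\log t}{\beta(t-1)}$.

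Next I would compute $F^{\rm mf}$ on each branch. The symmetric branch gives $\phi(x)=\frac{(S-1)x^2}{2S}+\frac{x}{\beta}(\log(x/S)-1)$, which is smooth and strictly convex since $\phi''(x)=(S-1)/S+1/(\beta x)>0$. The non-symmetric branch gives a smooth function $\psi(x)$ defined for $x$ above a spinodal threshold, bifurcating from the symmetric one at the point where $t=1$. For $S>2$ I would verify (essentially as in \cite{GM}, \cite{GMRZ}) that $\phi(x)<\psi(x)$ near the bifurcation and $\psi(x)<\phi(x)$ for large $x$, so that the two branches cross at a unique value $x_c$. Consequently $f^{\rm mf}_\beta=\min\{\phi,\psi\}$ is smooth away from $x_c$, has a corner at $x_c$, and is not convex in a neighborhood of $x_c$.

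The convex envelope is then obtained by the classical common-tangent construction. Since $\phi$ and $\psi$ are each strictly convex in their own domain and their derivatives sweep the required range of slopes, there is a unique affine function $L(x)=\la_\beta x + c$ lying below $f^{\rm mf}_\beta$ and tangent to it at exactly two points $x_-<x_+$ (with $x_-<x_c<x_+$, so that $\phi'(x_-)=\psi'(x_+)=\la_\beta$). Outside $[x_-,x_+]$ the function $f^{\rm mf}_\beta$ is strictly convex and already coincides with $CEf^{\rm mf}_\beta$; on $[x_-,x_+]$ one has $CEf^{\rm mf}_\beta=L$. Taking $\la=\la_\beta$ turns $L$ into a horizontal line, so $CEf^{\rm mf}_{\beta,\la_\beta}$ has the whole interval $[x_-,x_+]$ as its set of minimizers. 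The strict inequality $D^2 f^{\rm mf}_{\beta,\la_\beta}(x_\pm)>0$ is then immediate: near $x_-$ one has $f^{\rm mf}_\beta=\phi$ whose second derivative is manifestly positive, and near $x_+$ the strict convexity of $\psi$ follows from the implicit function theorem applied to the stationarity equation at $t(x_+)>1$.

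The main obstacle is the global qualitative analysis of the non-symmetric branch: showing that $\psi(x)$ is a proper local minimum of $F^{\rm mf}$ (not a saddle) throughout its range of existence, and that the energy difference $\psi(x)-\phi(x)$ has the monotonicity required to produce a unique crossover $x_c$ and hence a unique common tangent $L$. Both points reduce to tracking $x(t)$ and the free-energy gap along the branch as functions of the parameter $t=u/v>1$, which is the technical heart of the argument.
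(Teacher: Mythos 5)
Your overall plan mirrors the paper's: reduce to the two-dimensional problem via Lemma \ref{lemma2.1}, parameterize the ordered branch (your $t=u/v$ is the paper's $\frac{1+(S-1)z}{1-z}$, and your $x(t)$ is exactly the equation $x=R(z)$), show the disordered and ordered branches cross once, and construct the convex envelope by a common tangent. However there is a genuine gap at the heart of the common-tangent step.

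You assert that ``$\phi$ and $\psi$ are each strictly convex in their own domain'' and use this to produce the common tangent and to conclude $D^2 f^{\rm mf}_{\beta,\la_\beta}(x_+)>0$. This is false for small $S$. The paper's Lemma \ref{lem=notation_agreement} gives an explicit sign formula $\left(f^{\rm ord}\right)''(x) = -\frac{1}{x}\frac{z'}{z}P_{z(x)}(x)$, and Lemma \ref{lem=pvariations} then shows that for $3\le S\le 59$ there is a $z_S^{\star}\in(z_S,1)$ with $R^{+}_{z}<R(z)$ on $[z_S,z_S^{\star})$, meaning $f^{\rm ord}$ is strictly \emph{concave} on an initial interval $[x_S,x_S^{\star}]$ and only convex afterwards (the dichotomy at $S\approx 59$ is the content of the $G(S)$ analysis). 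So the ordered branch is not globally convex, and the naive common-tangent argument you invoke does not apply directly. The paper fixes this by replacing $f^{\rm ord}$ on $[x_S,x_S^{\star}]$ with the tangent line at $x_S^{\star}$, producing a genuinely convex function $g$ with $C^1$ matching, and only then applying the common-tangent lemma (Lemma \ref{lem=common_tangent}); it also needs the sign of the derivative jump at $x_S$ (Lemma \ref{lem=slopejump}, $\left(1-\frac{2}{S}\right)\ln(S-1)>0$) to verify the slope inequalities required by that lemma. Your sketch neither detects the concave region nor supplies the convex surrogate, and ``the implicit function theorem applied to the stationarity equation'' gives smoothness of $\psi$, not positivity of $\psi''$ --- so the claim $D^2 f^{\rm mf}_{\beta,\la_\beta}(x_+)>0$ is not actually established by your argument. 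Once the common tangent is built correctly, $x_+ > x_S^{\star}$ follows automatically (the tangent point cannot lie in the concave region), and then the positivity of the second derivative holds; but this reasoning has to be made, and you have not made it.
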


\vskip1cm

By using the scaling property \eqref{e2.3} we then obtain
the phase diagram in Figure \ref{phasediagram}.

\begin{figure}
\centering
\includegraphics[width=.8\textwidth]{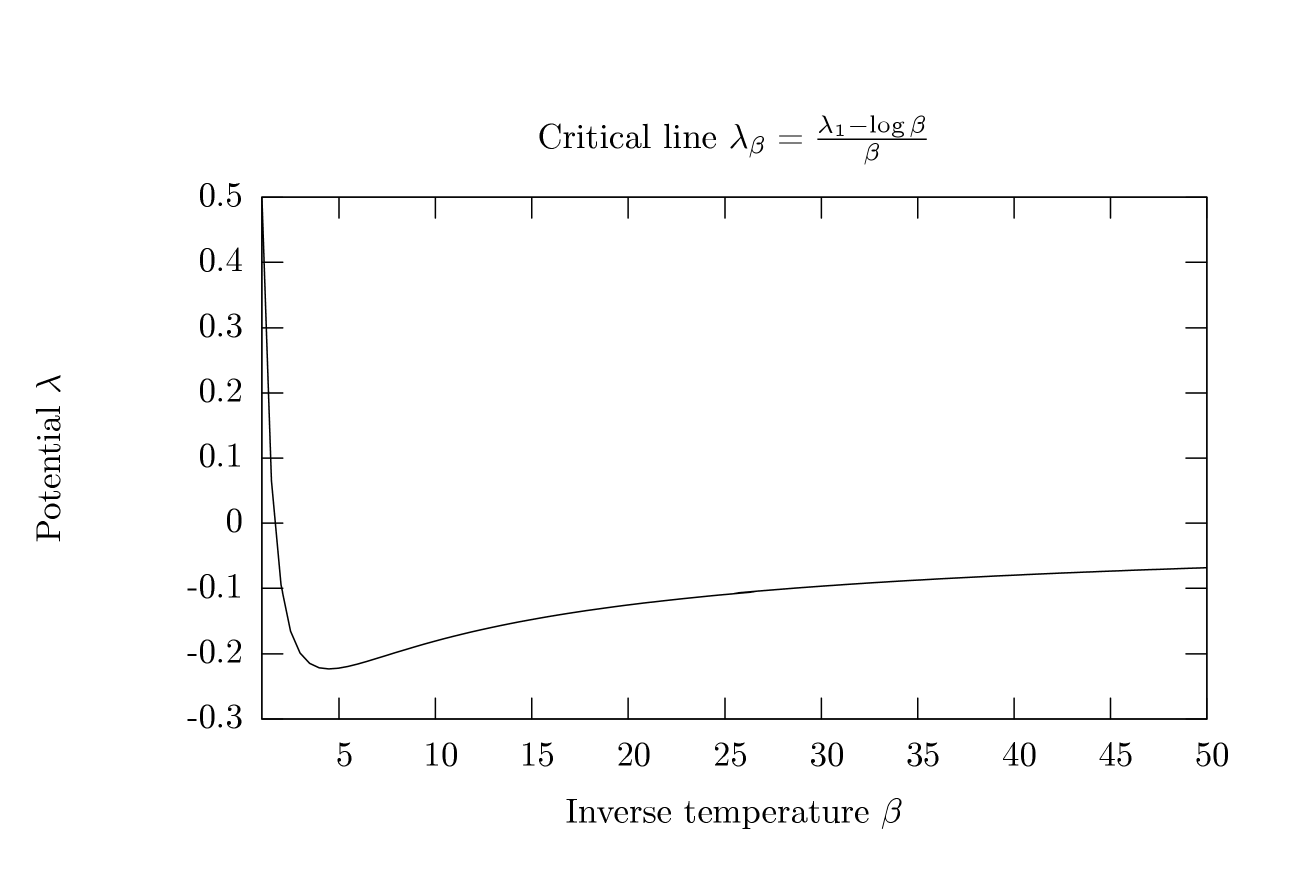}
\caption{Phase Diagram of the Mean field Potts gas}
\label{phasediagram}
\end{figure}

We will next discuss the structure of the minimizers
of $F^{\rm mf}_{\beta,\la_\beta}(\rho)$.

\vskip1cm

    \begin{thm}
    \label{thme2.2}
Let $S>2$, $\beta>0$ and $\la_\beta$ as in Theorem \ref{thme2.1}.
Then $F^{\rm mf}_{\beta,\la_\beta}(\rho)$ has $S+1$ minimizers
denoted by $\rho^{(k)}$, $k=1,..,S+1$. For $k\le S$,
$\rho^{(k)}_k>\rho^{(k)}_s, s\ne k$ and $\rho^{(k)}_s=
\rho^{(k)}_{s'}$ for all $s,s'$ not equal to $k$.  Instead
 $\rho^{(S+1)}_s=\rho^{(S+1)}_1$ for all $s$ and
    \begin{equation}
      \label{e2.5}
 \sum_{s}
\rho^{(1)}_s>  \sum_{s}
\rho^{(S+1)}_s
     \end{equation}
Finally for any $k$
the Hessian matrix $L^{(k)}:=D^2F^{\rm mf}_{\beta,\la_\beta}(\rho^{(k)})$
is strictly positive, namely there is $\kappa^*>0$ such that for any vector $v=v(s),
s\in \{1,..,S\}$,
    \begin{equation}
      \label{2.4}
\langle v,L^{(k)} v\rangle=   \sum_{s,s'} L^{(k)}(s,s') v(s) v(s')
\ge \kappa^*\langle v,v\rangle
     \end{equation}

    \end{thm}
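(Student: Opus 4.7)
The plan is to classify the minimizers of $F^{\rm mf}_{\beta,\la_\beta}$ by combining the symmetry reduction of Lemma~\ref{lemma2.1} with the structure of $f^{\rm mf}_{\beta,\la_\beta}$ given by Theorem~\ref{thme2.1}. Since $F^{\rm mf}_{\beta,\la_\beta}$ is invariant under permutations of spin labels, the $S!$-action partitions minimizers into orbits; by Lemma~\ref{lemma2.1} every orbit contains a representative of the form $\rho_1=y\ge z=\rho_2=\cdots=\rho_S$. It therefore suffices to classify the global minima of the restricted two-variable problem in $(y,z)$: an orbit with $y=z$ contributes one configuration (the disordered phase $\rho^{(S+1)}$), while an orbit with $y>z$ contributes $S$ configurations (the ordered phases $\rho^{(1)},\ldots,\rho^{(S)}$), accounting for the $S+1$ minimizers claimed.

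Next I would characterize the critical points in the reduced slice. The stationarity equations $\partial_{\rho_s}F^{\rm mf}_{\beta,\la_\beta}=0$ read $\rho_s\,e^{-\beta\rho_s}=e^{\beta(\la_\beta-T)}$ for each $s$, with $T=\sum_s\rho_s$. Since $g(x)=xe^{-\beta x}$ is unimodal with maximum at $x=\beta^{-1}$, each $\rho_s$ takes at most two values, giving either the symmetric branch $y=z$ or an asymmetric branch characterized by
\begin{equation*}
y-z=\beta^{-1}\log(y/z),\qquad z<\beta^{-1}<y.
\end{equation*}
By Theorem~\ref{thme2.1} the value $\min F^{\rm mf}_{\beta,\la_\beta}=\min f^{\rm mf}_{\beta,\la_\beta}$ is attained exactly at $T=x_\pm$, and the two-dimensional analysis then shows that $T=x_-$ selects the symmetric solution (yielding $\rho^{(S+1)}$ with $\rho^{(S+1)}_s=x_-/S$ for all $s$) whereas $T=x_+$ selects the asymmetric one (yielding $\rho^{(1)},\dots,\rho^{(S)}$ by permutation). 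The strict inequality $\sum_s\rho^{(1)}_s=x_+>x_-=\sum_s\rho^{(S+1)}_s$ is then precisely the content of Theorem~\ref{thme2.1}.

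For the Hessian, direct differentiation gives $L^{(k)}(s,s')=\beta^{-1}\delta_{ss'}/\rho^{(k)}_s+(1-\delta_{ss'})$, so that
\begin{equation*}
\langle v,L^{(k)}v\rangle=\Bigl(\sum_s v_s\Bigr)^2+\sum_s\Bigl(\frac{\beta^{-1}}{\rho^{(k)}_s}-1\Bigr)v_s^2.
\end{equation*}
For the disordered minimizer, with $\rho^{(S+1)}_s=a=x_-/S$ for all $s$, the quadratic form diagonalizes on $\mathbb R\mathbf{1}\oplus\mathbf{1}^\perp$ with eigenvalues $S-1+\beta^{-1}/a$ and $\beta^{-1}/a-1$, and strict positivity reduces to $a<\beta^{-1}$. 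This last inequality holds because if $a=\beta^{-1}$ the asymmetric branch would degenerate to $y=z=\beta^{-1}$ at $T=S\beta^{-1}=Sa=x_-$, forcing $x_-=x_+$ and contradicting Theorem~\ref{thme2.1}. For an ordered minimizer $\rho^{(k)}$ I would decompose $v$ into its component in the two-dimensional slice where $v_s=v_{s'}$ for all $s,s'\neq k$ and its $(S-2)$-dimensional complement. On the complement, only indices $s\neq k$ contribute and $\rho^{(k)}_s=z<\beta^{-1}$ yields $(\beta^{-1}/z-1)\sum_{s\neq k}v_s^2>0$. On the slice, the restricted Hessian coincides with the $2\times 2$ Hessian of $F^{\rm mf}_{\beta,\la_\beta}(y,z)$ at the ordered minimum; its positive definiteness combines positivity in the total-density direction (which is $D^2f^{\rm mf}_{\beta,\la_\beta}(x_+)>0$ from Theorem~\ref{thme2.1}) with positivity in the transverse direction, which after the substitution $\xi=\beta(y-z)$ reduces to an explicit one-variable inequality.

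The main obstacle is this last point: verifying transverse positivity at the ordered minimum. The limit $\xi=0$ is degenerate, corresponding to the merging of the ordered and symmetric branches, so one must use that the $\xi$ selected by $\la=\la_\beta$ is bounded away from $0$; this bound traces back to the bifurcation picture underlying Theorem~\ref{thme2.1}, since $y-z$ must be strictly positive where $x_+>x_-$. Once strict positivity is established at each of the finitely many minimizers, the uniform constant $\kappa^*>0$ follows by taking the minimum over $k$.
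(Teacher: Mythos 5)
Your classification of the minimizers — reduce by permutation symmetry to the two-parameter slice of Lemma~\ref{lemma2.1}, identify the two branches (symmetric at $T=x_-$, asymmetric at $T=x_+$), and count $1+S$ configurations — is essentially the paper's argument, as is the derivation of \eqref{e2.5} from $x_+>x_-$.

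For the Hessian you take a genuinely different, more computational route: the identity $\langle v,L^{(k)}v\rangle=(\sum_s v_s)^2+\sum_s(\beta^{-1}/\rho^{(k)}_s-1)v_s^2$, explicit diagonalization on $\mathbb R\mathbf 1\oplus\mathbf 1^\perp$ for the disordered minimizer, and a slice/complement decomposition for the ordered ones. The paper instead uses a short indirect argument: $L^{(k)}$ is automatically positive \emph{semi}-definite because $\rho^{(k)}$ is a minimizer, and a null direction $v$ is ruled out by the third-order Taylor coefficient $-\tfrac16\sum_s v_s^3/(\rho^{(k)}_s)^2$ (the third partials are diagonal, $\partial^3_{stu}F=-\rho_s^{-2}\mathbf 1_{s=t=u}$), whose sign flips with $t$ and would break minimality. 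Your explicit route is more informative but, as written, leaves two genuine gaps.

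First, for the disordered minimizer you reduce strict positivity to $a:=x_-/S<\beta^{-1}$ but justify it by claiming that $a=\beta^{-1}$ ``forces $x_-=x_+$.'' That implication is not established: the fact that the asymmetric branch $y-z=\beta^{-1}\log(y/z)$ merges with $y=z$ at $T=S\beta^{-1}$ does not constrain where $x_+$ sits on the ordered branch. (A clean way to close this: with $\beta=1$ one has $x_-<x_S=2\frac{S-1}{S-2}\ln(S-1)$, and one checks directly that $x_S<S$ for all $S\ge 3$, hence $a<1=\beta^{-1}$; the general $\beta$ follows by the scaling \eqref{e2.3}.) Note also that you never exclude $a>\beta^{-1}$, which is needed for the decomposition to even be nonnegative; this follows from minimality (the Hessian must be PSD), but it should be said.

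Second, and more seriously, for the ordered minimizers you reduce the problem to ``an explicit one-variable inequality'' after the substitution $\xi=\beta(y-z)$, but you never state or verify that inequality and explicitly acknowledge it as ``the main obstacle.'' Since $\rho^{(k)}_k=y>\beta^{-1}$, the diagonal term $\beta^{-1}/\rho^{(k)}_k-1$ is negative, so strict positivity on the two-dimensional slice is not automatic and this verification is precisely where the content lies; its absence means the proof of \eqref{2.4} is not complete as proposed. The paper's third-order argument bypasses this entirely, at the price of being less explicit about the eigenvalues.
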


The proof of Theorems \ref{thme2.1} and \ref{thme2.2} is given in
Appendix \ref{sec:C}.

\vskip1cm

The minimizers satisfy the mean field equation
        \begin{equation}
      \label{e2.7}
\rho^{(k)}_s=  \exp\Big\{-\beta\{\sum_{s'\ne s} \rho^{(k)}_{s'} -
\la_\beta\} \Big\}
     \end{equation}
The Hessian $L^{(k)}$ has the explicit form:
   \begin{equation}
      \label{e2.8}
 L^{(k)}(s,s') =\frac
 {\partial^2 F^{\rm mf}_{\beta,\la_\beta }}{\partial\rho_s
 \partial\rho_{s'}}\Big|_{\rho=\rho^{(k)}}= \frac{1}{\beta \rho^{(k)}_s}
 \text{\bf 1}_{s=s'} + \text{\bf 1}_{s\ne s'}
     \end{equation}
%
%
%
%
%

\vskip2cm
 \setcounter{equation}{0}

\section{{\bf Restricted ensembles}}
\label{sec:e3}

The purpose of this paper is to study the system in
restricted ensembles defined by restricting the phase space
to particles configurations which are ``close
to a mean field equilibrium phase''.  Unfortunately the requests
from the Pirogov-Sinai theory will complicate the picture, but let
us do it gradually and start by defining notions as
local equilibrium and
``coarse grained'' variables,
adapted to the present context.

\vskip1cm

\subsection{Geometrical notions}
    \label{sec:e3.1}
We discretize $\mathbb R^d$ by introducing cells of size $\ell>0$,
the mesh parameter $\ell$ will be specified in the next paragraph.

\vskip.5cm

\centerline  {\it The partition $\mathcal D^{(\ell)}$}
 \nopagebreak
$\bullet$\; $\mathcal D^{(\ell)}$, $\ell>0$, denotes the partition
$\{C^{(\ell)}_x, \, x\in \ell \mathbb Z^d\}$ of
$\mathbb R^d$ into the cubes
$C^{(\ell)}_x=\{r\in \mathbb R^d:
x_i \le r_i <x_i+\ell, i=1,..,d\}$ ($r_i$ and $x_i$ the cartesian
components of $r$ and $x$), calling $C^{(\ell)}_r$
the cube  which contains $r$.

$\bullet$\; A set $\La$ is
$\mathcal D^{(\ell)}$-measurable if it is union of cubes in
$\mathcal D^{(\ell)}$ and $\delta_{\rm out}^{\ell}[ \La]$ denotes
the union of
all $\mathcal D^{(\ell)}$ cubes in $\La^c$( the complement of
$\La$) which are connected to $\La$,
two sets being connected if their closures have
non empty intersection.
Analogously,
$\delta_{\rm in}^{\ell}[ \La]$ is the union of all $\mathcal
D^{(\ell)}$ cubes in $\La$ which are connected to $\La^c$.

$\bullet$\; A function $f:\mathbb R^d\to \mathbb R$ is
$\mathcal D^{(\ell)}$-measurable if its inverse images are
$\mathcal D^{(\ell)}$-measurable sets.

\bigskip

\centerline  {\it The basic scales}
 \nopagebreak
There are four main lengths in our analysis:
$\ell_0 \ll \ell_{-,\ga}
\ll \ga^{-1}\ll \ell_{+,\ga}$.  More precisely
let $\alpha_+$, $\alpha_-$ and $ a$ verify
    \begin{equation}
    \label{e3.1.1}
\frac 12 \gg \alpha_+>\alpha_-\gg a >0
    \end{equation}
(the precise meaning of the inequality will
become clear in the course of the proofs), then
    \begin{equation}
    \label{e3.1.2}
    \lim_{\ga \to 0} \frac{\ell_{0}}{\gamma^{-1/2}}=
\lim_{\ga \to 0} \frac{\ell_{-,\ga}}{\gamma^{-(1-\alpha_-)}}=
\lim_{\ga \to 0} \frac{\ell_{+,\ga}}{\gamma^{-(1+\alpha_+)}}=1
    \end{equation}
with the additional request that $\ell_{+,\ga}$ is an integer multiple
of $\ga^{-1}$ which is an  an integer multiple
of $\ell_{-,\ga}$ which is an  integer multiple
of $\ell_{0}$.  The partition $\mathcal D^{(\ell)}$ is coarser than
$\mathcal D^{(\ell')}$ if each cube of the former is union of cubes of
the latter, we will then also say that $\mathcal D^{(\ell')}$
is finer than $\mathcal D^{(\ell)}$.  This happens if and only if
$\ell$ is an integer multiple of $\ell'$, thus $\mathcal D^{(\ell_0)}$
is finer than  $\mathcal D^{(\ell_{-,\ga})}$
which is finer than  $\mathcal D^{(\ga^{-1})}$ which
is finer than  $\mathcal D^{(\ell_{+,\ga})}$.

We will need that
    \begin{equation}
    \label{ea3.1.1}
        \frac {(\alpha_++\alpha_-)d}{2(1-\alpha_-)}<\frac 1{1000},\quad
        8\alpha_++9\alpha_-<\frac 12
    \end{equation}
    \vskip.5cm

{Eventually we define, for any $\mathcal
D^{(\ell_{+,\ga})}$-measurable region $\La$, :}

\begin{equation}
    \label{ea3.1.1b}
       N_{\La}:=\frac{|\La|}{\ell_{+,\ga}^d}
    \end{equation}
    where $|\La|$ is the volume of the region $\La$, thus $N_{\La}$ is
    the number of blocks $C^{(\ell_{+,\ga})}$ inside $\La$.
     \vskip.5cm

\centerline  {\it The accuracy parameter $\zeta$}
 \nopagebreak
Finally, the   parameter $a$ in \eqref{e3.1.1} is not related to a length,
it defines an ``accuracy parameter''
    \begin{equation}
    \label{e3.1.3}
\zeta=\gamma^a
    \end{equation}
whose role will be specified next.

\vskip1cm

\subsection{Local equilibrium}
    \label{sec:e3.2}
A particles configuration  $q$ is a
sequence $(...r_i,s_i....)$ such that for any compact set
$\La$ and any $s\in \{1,..,S\}$,
    \begin{equation}
      \label{e3.2.1}
n(x,s):=|q(s)\cap \La| <\infty,\;\; q(s)=\{r_i,s_i\in q: s_i=s\}
     \end{equation}
We then associate to any such $q$ the empirical densities
   \begin{equation}
      \label{e3.2.2}
 \rho^ {(\ell)}(q;r,s):= \frac{ |q(s)\cap
 C_r^ {(\ell)}|} {\ell^d},\quad s\in \{1,\dots,S\}
     \end{equation}
as functions on $\mathbb R^d\times\{1,..,S\}$ and
 the ``local
phase indicators'' first for any $\rho\in L^1(\mathbb R^d\times
\{1,..,S\})$ ($\rho^{(k)}$ below as in Theorem \ref{thme2.2})
      \begin{equation}
      \label{e3.2.3}
\eta^{(\zeta,\ell)}(\rho;r) = \begin{cases} k &\text{if
$\dis{|\mintone{C^{(\ell)}_r} [\rho(r',s) -\rho^{(k)}_s]|\le
\zeta}$, for all $s\in \{1,..,S\}$}
\\0&\text{otherwise}\end{cases}
     \end{equation}
and then for any particles configuration $q$ as above,
    \begin{equation}
      \label{e3.2.4}
\eta^{(\zeta,\ell)}(q;r) =  \eta^{(\zeta,\ell)}\big(\rho^ {(\ell)}(q;\cdot);r\big)
     \end{equation}
With $\zeta$  and $\ell_{-,\ga}$
 as in \eqref{e3.1.3}
and  \eqref{e3.1.2}, we then define
    \begin{equation}
      \label{e3.2.5}
\mathcal X^{(k)}:=\Big\{ q:
\eta^{(\zeta,\ell_{-,\ga})}(q;r)=k, \,\,\text{for all $r\in \mathbb R^d$} \Big\}
     \end{equation}
$\mathcal X^{(k)}$ is the restricted phase space and
the configurations in
$\mathcal X^{(k)}$ are said to be in local equilibrium in the phase $k$.
Their restrictions to a $\mathcal D^{(\ell_{-,\ga})}$-measurable set $\La$
is denoted by $\mathcal X^{(k)}_\La$ and we will study (in the simplest case)
the Gibbs measure with Hamiltonian $H_\la$ as in \eqref{e1.4} on the phase space
restricted to $\mathcal X^{(k)}$.  As mentioned
in the beginning of this section to apply Pirogov-Sinai  we
will need to complicate the picture, by adding a ``polymer structure''
to the phase space and by modifying the Hamiltonian $H_\la$.

\vskip1cm

\subsection{Polymer configurations}
    \label{sec:e3.3}

A polymer is a pair $\Ga=({\rm sp}(\Ga),\eta_\Ga)$,
${\rm sp}(\Ga)$, the spatial support of $\Ga$,
is a bounded,  connected $\mathcal D^{(\ell_{+,\ga})}$-measurable region
and $\eta_\Ga$, its specification, a
$\mathcal D^{(\ell_{-,\ga})}$-measurable function
on ${\rm sp}(\Ga)$ with values in $\{0,1,..,S+1\}$.
In the applications of Pirogov-Sinai, $\Ga$ will be contours
and $\eta_\Ga$ not as general as above, to keep it simple
we skip all that sticking to the above definition.
We tacitly fix in the sequel $k\in \{1,..,S+1\}$ and
the corresponding phase space $\mathcal X^{(k)}$ and define:

\vskip.5cm

\centerline  {\it Polymer weights}
 \nopagebreak
The weight of $\Ga$ is a function $w(\Ga;q)$, $q\in
\mathcal X^{(k)}$, (dependence on $k$ is not made explicit
in $w$) which depends on the restriction of $q$ to
$\delta_{\rm out}^{\ga^{-1}}[{\rm sp}(\Ga)]$ and which
satisfies the bound
   \begin{equation}
      \label{e3.3.1}
      \sup_{q\in \mathcal X^{(k)}}|w(\Ga;q)|  \le   e^{-c_{\rm
pol} \zeta^2 \ell_{-,\ga}^d N_\Ga},\quad  N_\Ga=\frac{|{\rm sp}(\Ga)|}{\ell_{+,\ga}^d}
     \end{equation}

\vskip.5cm

\centerline  {\it Polymer configurations and weights}
 \nopagebreak
We denote by $\und \Ga$ sequences $...\Ga_i...$ of polymers
with the restriction that any two polymers $\Ga_i$ and
$\Ga_j$, $i\ne j$, are mutually disconnected (i.e.\ the
closures of their spatial supports do not intersect and
they are therefore at least at mutual distance
$\ell_{+,\ga}$).  The collection of all such sequences is
denoted by $\mathcal{B}$ and $\mathcal{B}_\La$,
$\La$ a $\mathcal D^{(\ell_{+,\ga})}$-measurable region,
the subset of $\mathcal{B}$ made by sequences whose
elements $\Ga$ have all sp$(\Ga)$ in $\La$;
$\mathcal{B}^0_\La$ subset of  $\mathcal{B}_\La$ with the
further request that  sp$(\Ga)$ is not connected to
$\La^c$. If $\und \Ga\in \mathcal{B}$ is a finite sequence
we define its weight as
   \begin{equation}
      \label{e3.3.2}
     w(\und \Ga;q)= \prod_{\Ga\in \und \Ga}w(\Ga;q)
     \end{equation}

\vskip1cm

\subsection{The interpolated Hamiltonian}
    \label{sec:e3.4}
Pirogov-Sinai applications also require to change the
Hamiltonian. Let $\La$ be a bounded,
$\mathcal D^{(\ell_{+,\ga})}$-measurable region, $q_\La\in \mathcal X_\La^{(k)}$,
then the ``reference Hamiltonian'' in $\La$ is
    \begin{equation}
    \label{e3.4.1}
h_\La(q_\La)=\sum_s\Big[ \big(\sum_{s'\ne s}
\rho^{(k)}_{s'}\big)-\la_\beta\Big]\;\ell_{0}^{d}\sum_{x\in
\ell_{0}\mathbb Z^d\cap
 \La}\rho^{(\ell_{0})}(q_\La;x,s)
    \end{equation}
where  $\la_\beta$ is the chemical potential introduced in
Theorem \ref{thme2.1}, $\ell_0$ is defined in Subsection
\ref{sec:e3.1}, $\rho^{(\ell_0)}$ in \eqref{e3.2.2}.

For any $t\in[0,1]$ we then define the ``interpolated Hamiltonian''
    \begin{equation}
     \label{e3.4.2}
H_{\La,t}(q_\La|\bar q_{\La^c})=t H_{\La}(q_\La|\bar
q_{\La^c})+(1-t)h_\La(q_\La)
    \end{equation}
where $q_\La \in   \mathcal X^{(k)}_\La$, $\bar
q_{\La^c}\in   \mathcal X^{(k)}_{\La^c}$ and
    \begin{equation}
     \label{e3.4.3}
H_{\La}(q_\La|\bar
q_{\La^c})= H (q_\La\cup\bar
q_{\La^c})- H (\bar q_{\La^c})
    \end{equation}
$H $ as in \eqref{e1.1} with  $\la$
such that $|\la-\la_\beta|\le c \ga^{1/2}$.  Since
$H_{\La,1}(q_\La|\bar q_{\La^c})= H_{\La}(q_\La|\bar
q_{\La^c})$ and $H_{\La,0}(q_\La|\bar q_{\La^c})= h_\La(q_\La)$,
$H_{\La,t}$ interpolates between the true and the reference Hamiltonians.

As we will see in \cite{DMPV2}, $H_{\La,t}(q_\La|\bar q_{\La^c})$
enters in the analysis of the finite volume corrections to
the pressure, a key step in the
implementation of the Pirogov-Sinai strategy.

\vskip1cm

\subsection{DLR  measures}
    \label{sec:e3.5}

The finite volume Gibbs measure in $\La$, $\La$  a bounded,
$\mathcal D^{(\ell_{+,\ga})}$-measurable region, with boundary
condition $\bar q_{\La^c}$, is the following probability on
$\mathcal X_\La^{(k)}\times \mathcal{B}^0_\La$
        \begin{equation}
    \label{e3.5.1}
dG_\La(q_\La, \und \Ga|\bar q_{\La^c}):=\frac {w(\und \Ga;q)
e^{-\beta H_{\La,t} (q_\La| \bar q_{\La^c}) }}{Z_\La(\bar
q_{\La^c})}  d \nu_\La(q_\La)
     \end{equation}
where the free measure $ d \nu_\La(q_\La)$ is
    \begin{equation}
    \label{e3.5.2}
\int_{\mathcal X_\La^{(k)}}f(q_\La)d\nu_\La(q_\La)=\sum_{n=0}^\infty \frac
1{n!}\sum_{s_1,\dots, s_n}\int_{\La^n} f(r_1,s_1,\dots,
r_n,s_n)dr_1\cdots dr_n
    \end{equation}
and where the partition function $Z_\La(\bar q_{\La^c})$
is the normalization factor
which makes the above a probability.  In \eqref{e3.5.1}
the boundary conditions only involve particles
configurations, to define the DLR measures we also need to condition on
the outside polymers.

\vskip.5cm

\centerline  {\it DLR measures}
 \nopagebreak

Given $\und \Ga \in \mathcal{B}$,  $\und
\Ga=(\Ga_1,\Ga_2,\dots)$, we call $\und \Ga_{\La^c}$ the
collection of all pairs $({\rm sp}(\Ga_i)\cap \La^c,
\eta_{{\rm sp}(\Ga_i)\cap \La^c})$ where
$\eta_{{\rm sp}(\Ga_i)\cap \La^c}$ denotes the
restriction of $\eta_\Ga$ to ${\rm sp}(\Ga)\cap \La^c$. 
We then define the probability $dG(q_\La, \und \Ga|\bar
q_{\La^c},\bar{\und \Ga}_{\La^c})$ on $\mathcal
X_\La^{(k)}\times \mathcal B$  by
        \begin{equation}
    \label{e3.5.3}
dG_\La(q_\La, \und \Ga|\bar q_{\La^c},\bar{\und \Ga}_{\La^c}):=
\frac {\text{\bf 1}_{\und \Ga_{\La^c}=\bar{\und
\Ga}_{\La^c}}}{Z_\La(\bar q_{\La^c},\bar{\und \Ga}_{\La^c})}
e^{-\beta H_{\La,t} (q_\La| \bar q_{\La^c}) } \left\{ \prod_{\Ga\in \und
\Ga: {\rm sp}(\Ga)\cap \La\ne\emptyset}w(\und \Ga;q)\right\}
 d \nu_\La(q_\La)
     \end{equation}

\vskip.5cm

A probability $\mu$ on   $\mathcal X^{(k)}\times \mathcal
B$  is DLR if the two properties below hold.

$\bullet$\; it verifies the Peierls bound: for any
$\Ga_1,..,\Ga_k$,
   \begin{equation}
      \label{e3.5.3.0}
  \mu\Big( \{\und \Ga\ni\Ga_1\}\cap \cdots\cap\{\und \Ga\ni\Ga_k\}\Big)   \le   e^{-c_{\rm
pol} \zeta^2 \ell_{-,\ga}^d (N_{\Ga_1}+..+N_{\Ga_k})}
     \end{equation}
 $\bullet$\;
for  any  bounded,
$\mathcal D^{(\ell_{+,\ga})}$-measurable region $\La$ the
conditional probability of $\mu$ given that the particles
configurations in $\La^c$ is   $\bar q_{\La^c}$
and that $\und \Ga_{\La^c}=\bar{\und \Ga}_{\La^c}$ is
$dG_\La(q_\La, \und \Ga|\bar q_{\La^c},\bar{\und \Ga}_{\La^c})$ as given by \eqref{e3.5.3}.

\vskip.5cm

 A few remarks on the above definitions: the
Gibbs measures $dG_\La(q_\La, \und \Ga|\bar q_{\La^c})$
satisfy the Peierls bound \eqref{e3.5.3.0}.  Indeed given
any $\Ga_1,..,\Ga_k$ in $\mathcal B^0_\La$ such that
sp$(\Ga_i)$ is not connected to sp$(\Ga_j)$ for any $i\ne
j$, then, for any $q_\La$,
   \begin{eqnarray*}
&&\hskip-1cm \sum_{\und \Ga\in \mathcal B_\La:
\Ga_1,..\Ga_k \in \und \Ga} w(\und \Ga,q_\La) =
\{\prod_{i=1}^k w(\Ga_i,q_\La)\}\sum_{\und \Ga\in \mathcal
B_\La: \Ga_1,..\Ga_k \in \und \Ga} \;\;\prod_{\Ga\in \und
\Ga, \Ga\ne \Ga_i, i=1,..,k} w(\Ga,q_\La)\\&&\hskip2cm \le
\{\prod_{i=1}^k w(\Ga_i,q_\La)\} \sum_{\und \Ga\in \mathcal
B_\La} w(\Ga,q_\La)
     \end{eqnarray*}
and \eqref{e3.5.3.0} follows from \eqref{e3.3.1}.  On the
other hand we have not specified all the properties of the
weights as they arise in the applications (to the continuum
Potts model) so that in the present context wild things may
happen. For instance  weights still compatible with
\eqref{e3.3.1} may be such that whenever sp$(\Ga)$ contains
$\delta_{\rm out}^{\ell_{+,\ga}}[\Delta]$, $\Delta$ a
bounded, simply connected $\mathcal D^{(\ell_{+,\ga})}$
measurable set, then $w(\Ga,q)=0$ unless sp$(\Ga)\supset
\Delta$.  If the weights had such a property then there are
sequences of finite volume Gibbs measures whose limits are
not supported by $\und \Ga\in \mathcal B$.  Thus a support
property  like \eqref{e3.5.3.0} is necessary in the present
context.

\vskip1cm

\subsection{Main result}
    \label{sec:e3.6}
We fix   $k\in \{1,..,S+1\}$, the statements below
being valid for any such $k$ and for all $\ga$ small enough.
We will employ the following notion: $(q,\und \Ga)$ agrees with
$(q',\und \Ga')$ in $\Delta$
($\Delta$ a $\mathcal D^{(\ell_{+,\ga})}$-measurable set) if
all $\Ga\in \und \Ga$  such that the closure
of sp$(\Ga)$ intersects $\Delta$ are also in $\und \Ga'$
and viceversa and moreover
    \begin{equation}
      \label{e3.6.0}
q\cap \Delta^* = q'\cap \Delta^*,\quad \Delta^*:=
\Delta \bigcup_{\Ga\in \und\Ga} \{{\rm sp}(\Ga)\cup
\delta_{\rm out}^{(\ell_{+,\ga})}
[{\rm sp}(\Ga)]\}
    \end{equation}

\vskip1cm

    \begin{thm}
    \label{thme3.6.1}
For all $\ga$ small enough there is a unique DLR measure $\mu$ and
there are constants $c_1$ and $c_2$ such that the following holds. For
any bounded, $\mathcal D^{(\ell_{+,\ga})}$-measurable regions $\La$
and $\La'\supset \La$ and any boundary conditions ${\bar
q}'_{\La^c}$ and ${\bar q}''_{{\La'}^c}$ there is a coupling $dQ$ of
$dG_\La(q_\La, \und \Ga|{\bar q}'_{\La^c})$ and $dG_{\La'}(q_{\La'},
\und \Ga|{\bar q}''_{{\La'}^c})$ such that if $\Delta$ is any
$\mathcal D^{(\ell_{+,\ga})}$-measurable subset of $\La$:
    \begin{equation}
      \label{e3.6.1}
      \dis{
  Q\Big(\{\text{\,$(q'_\La,\und \Ga')$ and $(q''_{\La'},\und \Ga'')$
  agree in $ \Delta$} \}\Big)
  \ge 1- c_1 e^{- c_2 \frac {{\rm
  dist}(\Delta,\La^c)}{\ell_{+,\ga}}}}
     \end{equation}

    \end{thm}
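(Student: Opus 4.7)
The overall strategy is the two-step program advertised in the introduction: (A) establish a finite-size Dobrushin--Shlosman-type condition for the constrained measure on a box of side comparable to $\ell_{+,\ga}$, and then (B) transform that finite-size condition into a coupling via disagreement percolation in the spirit of van den Berg--Maes \cite{vb,vm}. Uniqueness is a direct byproduct of (B) once one compares $dG_\La$ and $dG_{\La'}$ with different boundary data, since the right-hand side of \eqref{e3.6.1} tends to $1$ when $\La\uparrow\mathbb R^d$ along any sequence, which forces the DLR measure to be unique.

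For step (A) the plan is to work at the scale $\ell_{+,\ga}$. On a single block $C^{(\ell_{+,\ga})}$, conditionally on an admissible outside configuration $(\bar q,\bar{\und\Ga})$, the measure obtained from \eqref{e3.5.3} can be rewritten, after integrating out the charges, as a perturbation of a product of mean-field Gaussians in the local density variables $\rho^{(\ell_0)}(q;x,\cdot)$: the quadratic form is governed by $L^{(k)}$ from \eqref{e2.8}, which by \eqref{2.4} is uniformly strictly positive. The interaction with the boundary is of Kac type and contributes a gradient term of order $\ga$, while the polymer weights are exponentially small by \eqref{e3.3.1}. The restriction $\mathcal X^{(k)}$ keeps the density fluctuations of order $\zeta=\ga^a$, and a Laplace/saddle-point analysis (using the $D^2$-positivity and the interpolation parameter $t$ only through the bound $|\la-\la_\beta|\le c\ga^{1/2}$) will yield a bound of the form
\begin{equation}
\label{plan:dob}
\sum_{C'}R(C,C')\le 1-\delta,\qquad \delta=\delta(\beta)>0,
\end{equation}
where $R(C,C')$ is the Wasserstein coupling coefficient between the conditional measures on $C$ for two boundary data differing only in block $C'$. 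The constraint into $\mathcal X^{(k)}$ is what prevents the bad long-range effects that would appear without the restriction (where \eqref{plan:dob} fails numerically, as the authors note).

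For step (B) the plan is the standard sequential coupling. Enumerate the blocks $C_1,C_2,\dots$ of $\La$ in an order that exhausts $\La\setminus\Delta$ before reaching $\Delta$ and, at each step, couple the one-block conditional distributions of $dG_\La(\cdot|{\bar q}'_{\La^c})$ and $dG_{\La'}(\cdot|{\bar q}''_{{\La'}^c})$ using the optimal Wasserstein coupling that realizes \eqref{plan:dob}. A block is declared \emph{bad} if the coupling fails there; by \eqref{plan:dob} and the bound on the polymer weights, the resulting process of bad blocks is dominated by a subcritical site percolation on the block lattice. Since the blocks of $\La^c$ and those in $\La'\setminus\La$ where the two specifications a priori disagree form the only possible seed, a bad path must connect $\La^c$ to $\Delta$; subcriticality then gives the exponential tail \eqref{e3.6.1} with $c_2$ any number strictly below the logarithm of the inverse of the percolation parameter.

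The main technical obstacle will be step (A), specifically verifying \eqref{plan:dob} on the scale $\ell_{+,\ga}$ where self-interaction is non-negligible. The key input is the strict positivity of $L^{(k)}$ together with the bookkeeping of the three length scales in \eqref{e3.1.2}: one uses $\ell_0\ll\ga^{-1}$ to make the local empirical densities concentrate around their conditional means, $\ell_{-,\ga}\ll\ga^{-1}$ to treat the restriction $\mathcal X^{(k)}$ as local, and $\ell_{+,\ga}\gg\ga^{-1}$ to average out the Kac gradient on the boundary into a small effective field. Matching these scales through the inequalities \eqref{ea3.1.1} and extracting a quantitative $\delta$ uniform in $\ga$ is where the delicate analysis lies; once this is done, the passage to \eqref{e3.6.1} is a direct application of the disagreement percolation bound.
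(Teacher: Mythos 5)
Your two-stage architecture (finite-size condition, then disagreement percolation) correctly identifies the paper's overall strategy, but the way you set up step (A) is a route the paper rules out explicitly, and it would not succeed.

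You propose to verify a one-block Dobrushin contraction $\sum_{C'}R(C,C')\le 1-\delta$ between neighboring $\mathcal D^{(\ell_{+,\ga})}$-cubes, with the Wasserstein coefficient $R(C,C')$ controlling the change of the single-block conditional measure under a change of boundary data in one block. The introduction of the paper states in so many words that this single-block high-temperature Dobrushin condition is precisely what \emph{fails} for the continuum Potts model in the restricted ensemble, and that the whole point of Part II is to establish a weaker finite-size condition on \emph{large} boxes where self-interaction matters. The region $\Lambda$ in Theorem \ref{thme3.7.1} is not one cube but lives inside $\Lambda^*$, built as a $5\times\dots\times 5$ array of $\ell_{+,\ga}$-cubes, so that its diameter is several interaction ranges; one cannot neglect the self-interaction within $\Lambda$ as in the standard Dobrushin one-spin argument, and this is exactly where the difficulty lies. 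Moreover the statement that is actually verified is not a contraction of a Wasserstein coefficient but the probability bound \eqref{e3.7.1a} on the event $\bigcap_x\Theta_\Lambda(x)$: this event carries, in addition to exact configuration agreement on the $\ell_{-,\ga}$-cells, a quantitative \emph{improvement} of the density deviation (the boundary data being in $[\zeta_{m+1},\zeta_m)$ forces the interior to be in $\zeta_{m-1}$), encoded through $K_\Lambda$, $\Theta_\Lambda$ and the geometric sequence $\zeta_n$. This improvement is what makes the percolation event in Part III subcritical; without it the scheme does not close.

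Two further mismatches worth flagging. First, the relevant coarse-graining scale for the effective Hamiltonian is $\ell_{-,\ga}=\ga^{-(1-\alpha_-)}$, not $\ell_0\sim\ga^{-1/2}$; the scale $\ell_0$ enters only in the definition of the reference Hamiltonian $h_\Lambda$. The actual route to Theorem \ref{thme3.7.1} is: cluster expansion to identify the effective Hamiltonian as Lebowitz--Penrose plus small many-body corrections (Section \ref{sec:e4}), uniqueness and exponential decay of its constrained minimizers via strict convexity from $\kappa^*$ (Section \ref{sec:ee5}), and only then a Gaussian/Wasserstein approximation around those minimizers (Section \ref{sec:a6}). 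A direct Laplace/saddle point on the Gibbs measure, as you sketch, would have to reproduce all of this, in particular the exponential decay of the minimizer's dependence on the boundary, which is a nontrivial theorem (Theorem \ref{thmIe.2.5}) and not merely a consequence of $D^2F^{\rm mf}>0$. Second, in step (B) the naive sequential coupling along a fixed enumeration of blocks is not obviously a coupling of the two Gibbs measures; the paper introduces stopping sets, screening sets, and the strong Markov property of couplings (Section \ref{sec:7}) precisely to make the iterated conditional coupling legitimate, and it handles polymers separately (type-1 cubes via Peierls estimates, type-2 via the finite-size condition) rather than folding them into a single percolation parameter.
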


\vskip1cm

\subsection{A finite size condition}
    \label{subsec:e3.7}

The proof of Theorem \ref{thme3.6.1}  follows the Dobrushin
Shlosman approach: we first introduce and verify a finite
size condition and then prove that this implies uniqueness
and exponential decay. In this subsection we describe the
former step. Let $\La$ be a $\mathcal
D^{(\ell_{+,\ga})}$-measurable, connected region contained
in $\La^*$ where $\La^*$ is obtained by taking a cube $C\in
\mathcal D^{(\ell_{+,\ga})}$, then considering $A:=C\cup
\delta_{\rm out}^{\ell_{+,\ga}} [C]$ and finally $\La^* = A
\cup \delta_{\rm out}^{\ell_{+,\ga}} [A]$.  All the bounds
we will write must be uniform in such a class. Notice that
the diameter of $\La$ is $>\ell_{+,\ga}$ which for $\ga$
small is much larger than the interaction range, in this
sense $\La$ is ``large'' and we are away from the
Dobrushin's high temperatures uniqueness scenario.

Our finite size condition involves only   Gibbs measures without
polymers: namely the probability on $\mathcal X_\La^{(k)}$ defined
for any given $\bar q_\La\in \mathcal X_{\La^c}^{(k)}$ as follows
        \begin{equation}
    \label{e3.7.1}
dG^0_\La(q_\La|\bar q_{\La^c}):=\frac { e^{-\beta H_{\La,t} (q_\La|
\bar q_{\La^c}) }}{Z^0_\La(\bar q_{\La^c})}  d \nu_\La(q_\La)
     \end{equation}
We want to compare two such measures with different boundary
conditions ${\bar q}'_{\La^c}$ and ${\bar q}''_{\La^c}$, thus
introducing the product space $\mathcal X_\La^{(k)} \times
\mathcal X_\La^{(k)}$ whose elements are denoted by
$(q'_\La,q''_\La)$. The finite size condition requires that there
is a coupling $dQ$ of $dG^0_\La(q_\La|{\bar q}'_{\La^c})$ and
$dG^0_\La(q_\La|{\bar q}''_{\La^c})$ with the property that the
event we define below has  a ``large probability''.

\vskip.5cm

 \centerline{\it Notation}
 \nopagebreak
Let  $\bar m= 2^{d}+2$ and  $c_{\rm acc}=2c^*$ with $c^*$ as in Theorem
\ref{thmee5.0} below. Call $\zeta_n := c_{\rm acc}^{-n}\zeta$ and
define a partition of $\mathbb R_+$ into the intervals
$[0,\zeta_{\bar m})$, $[\zeta_{\bar m},\zeta_{\bar m
-1})$,...,$[\zeta_{3},\zeta_{2})$, $[\zeta_{2},\infty)$.

\vskip.5cm

 \begin{defin}{\bf The function $K_\La(\cdot)$ and the set $\Theta_\La(\cdot)$.}
 \label{kappa}
 \nopagebreak

\bigskip

We denote by
            \begin{equation}
            \label{3.22}
A_x:=B_x(10^{-10}\ell_{+,\ga}) \cap \La^c,\qquad {\text {$B_x(R)$
the ball of center $x$ and radius $R$}}
    \end{equation}
Given $\bar q'_{\La^c}$ and $\bar q''_{\La^c}$, we define the
function $K_\La({\bar q}'_{\La^c},{\bar q}''_{\La^c};x)$, $x\in
\ell_{-,\ga}\mathbb Z^d \cap \La$ as follows.

If $A_x=\emptyset$ then $K_\La({\bar q}'_{\La^c},{\bar
q}''_{\La^c};x)=\bar m+1$.

If $A_x\ne \emptyset$ and ${\bar q}'_{\La^c}\cap A_x\ne {\bar
q}''_{\La^c}\cap A_x$, then $K_\La(q'_{\La^c},q''_{\La^c};x)=0$.

If $A_x\ne \emptyset$ and $q'_{\La^c}\cap A_x= q''_{\La^c}\cap A_x$,
call $b:=\dis{\max_{r\in A_x, s\in \{1,..,S\}}|
\rho^{(\ell_{-,\ga})}({\bar q}'_{\La^c};r,s)-\rho^{(k)}_s|}$, then
if $b\in [\zeta_{m+1},\zeta_m)$ for some $m \geq 2$, we set $K_\La({\bar
q}'_{\La^c},{\bar q}''_{\La^c};x)=m$, otherwise we set $K_\La({\bar q}'_{\La^c},{\bar q}''_{\La^c};x)=0$.

\bigskip

The set $\Theta_\La(x)=\Theta_\La({\bar q}'_{\La^c}{\bar
q}''_{\La^c};x)$, $x\in \ell_{-,\ga}\mathbb Z^d\cap \La$,
is defined as the whole space {$\{q'_\La,q''_\La\}$}
if $K(\cdot;x)=K_\La({\bar q}'_{\La^c},{\bar q}''_{\La^c};x)=0$ and
otherwise by
    \begin{eqnarray}
      \label{e3.7.1.0}
&&\hskip-1cm \Theta_\La(x)=\Big\{q'_\La,q''_\La:\;\;
q'_\La\cap C_x^{(\ell_{-,\ga})}= q''_\La \cap
C_x^{(\ell_{-,\ga})}, \nn\\&&\hskip2cm
 \max_{s\in\{1,..,S\}}|\rho^{(\ell_{-,\ga})}({
q}'_{\La};x,s)-\rho^{(k)}_s|
 \le \zeta_{K(\cdot;x)-1}\Big\}
     \end{eqnarray}
 \end{defin}

 \vskip1cm
In section \ref{subsec:7.5}, we will use Theorem \ref{thme3.7.1}
below with $n = 5^d-1$ and $\La \subset \La^\ast$. Recalling
the definition of $N_\La$ in \eqref{ea3.1.1b}, we state:

\label{t3}
\label{y2}
    \begin{thm}
    \label{thme3.7.1}
 For any integer $n>0$ there exist $\ga_n>0$ and $\eps_n< 1$ such that for all
$\ga < \ga_n$ and for any $\La$ with $N_{\La}\le n$,
for any ${\bar q}'_{\La^c}$
and ${\bar q}''_{\La^c}$ as above, there is a coupling $dQ_\La$ of
$dG^0_\La(q_\La|{\bar q}'_{\La^c})$ and $dG^0_\La(q_\La|{\bar
q}''_{\La^c})$ such that with $K(\cdot;x)=K_\La({\bar
q}'_{\La^c},{\bar q}''_{\La^c};x)$ and
$\Theta_\La(x)=\Theta_\La({\bar q}'_{\La^c}{\bar q}''_{\La^c};x)$
defined above,

    \begin{eqnarray}
      \label{e3.7.1a}
&&\hskip-1cm
Q_\La\Big(\bigcap_{x\in \ell_{-\ga}\mathbb Z^d\cap
\La }
\Theta_\La(x) \Big) \ge 1 - \eps_n
     \end{eqnarray}
%

    \end{thm}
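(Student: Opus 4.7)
The strategy is to construct the coupling $dQ_\La$ in two stages: first coupling the coarse-grained density fields $\rho^{(\ell_{-,\ga})}(q';\cdot)$ and $\rho^{(\ell_{-,\ga})}(q'';\cdot)$ inside $\La$ under the two boundary conditions, and then coupling the fine-grained particle positions conditionally on matched densities. Since $N_\La\le n$, the coarse-grained field lives in a space of bounded dimension (uniformly in $\ga$), so the first stage reduces to a finite-dimensional coupling problem, while the second is trivially accomplished by taking equal point configurations in each cell where the two densities have been forced to agree.

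The first stage requires sharp control on the marginal distribution of the density field under $dG^0_\La$. I would start by rewriting $H_{\La,t}(q|\bar q_{\La^c})$ as a quadratic functional of the deviation $v(r,s):=\rho^{(\ell_0)}(q;r,s)-\rho^{(k)}_s$, which in $\mathcal X^{(k)}_\La$ satisfies $|v|\le\zeta=\ga^a$. The reference Hamiltonian $h_\La$ in \eqref{e3.4.1} is designed so as to cancel the linear part of the mean field interaction at $\rho^{(k)}$, a part which anyway vanishes by the mean field equation \eqref{e2.7}; the quadratic term of the interaction together with the entropy from the Poisson reference measure $d\nu_\La$ combine to yield an effective action whose leading quadratic form is governed by the Hessian $L^{(k)}$ of \eqref{e2.8}. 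By Theorem \ref{thme2.2} this form is bounded below by $\kappa^*>0$, while the cubic correction is of order $\zeta$.

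From this quadratic structure in the finite region $\La$ one deduces that, to leading order, the law of $\{\rho^{(\ell_{-,\ga})}(q;x,s)\}$ under $dG^0_\La$ is Gaussian, with mean $\rho^{(k)}$ shifted by a boundary contribution and with covariance of order $\ell_{-,\ga}^{-d}=\ga^{(1-\alpha_-)d}$. Since $a\ll\alpha_-$ by \eqref{ea3.1.1}, this intrinsic fluctuation scale is much smaller than every $\zeta_m$, and the boundary shift at a site $x$ with $K(\cdot;x)=m\ge 2$ is bounded by the accuracy $\zeta_m$ of the boundary configuration on $A_x$. A finite-dimensional Gaussian coupling then produces jointly matched coarse-grained densities everywhere in $\La$, with each one automatically within $\zeta_{m-1}$ of $\rho^{(k)}_s$ thanks to the contraction factor $\zeta_m/\zeta_{m-1}=c_{\rm acc}^{-1}$ built into the definitions; the degenerate cases $K=0$ (event trivially the whole space) and $K=\bar m+1$ (no boundary shift, since $A_x=\emptyset$) are handled separately. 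Conditionally on matched coarse-grained densities, the law of positions inside each $C_x^{(\ell_{-,\ga})}$ is close to an inhomogeneous Poisson process because $V_\ga$ is smoothed on the scale $\ga^{-1}\gg\ell_{-,\ga}$, so positions can be sampled identically. A union bound over the $O(N_\La(\ell_{+,\ga}/\ell_{-,\ga})^d)$ sites yields the required $\eps_n<1$ after choosing $\ga_n$ small enough.

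The main obstacle is to quantify the Gaussian approximation precisely enough that the accuracy ratio $\zeta_m/\zeta_{m-1}=c_{\rm acc}^{-1}$ yields a strictly positive coupling probability uniformly in $n$. This fixes the constant $c_{\rm acc}=2c^*$ appearing in Definition \ref{kappa} and relies simultaneously on the lower bound $\kappa^*$ from Theorem \ref{thme2.2}, on the fact that the cubic errors in the expansion around $\rho^{(k)}$ are $O(\zeta)\ll 1$, and on the scale separation \eqref{ea3.1.1} which guarantees that intrinsic Gaussian fluctuations are negligible compared to every $\zeta_m$. A secondary difficulty is to glue the block-by-block coupling estimates into a single coupling of the full finite-dimensional density field, but this is a standard construction once the marginal contraction estimates are in place.
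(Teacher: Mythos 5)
Your two-stage strategy (couple the coarse-grained density field first, then the particle positions conditionally on matched densities) is the same architecture the paper uses, splitting the problem into Theorem \ref{thm:6.2} (Wasserstein distance between the marginal laws of the occupation numbers) and Theorem \ref{thm:6.1} (a Dobrushin high-temperature coupling of the multi-canonical measures given matched occupation numbers). The Gaussian approximation of the coarse-grained marginal with a Hessian lower bound $\kappa^*$, the observation that intrinsic fluctuations are of order $\ell_{-,\ga}^{-d/2}\ll\zeta_m$, and the role of the contraction constant $c_{\rm acc}$ are all correctly identified. However there is a genuine gap in the step you pass over in one clause: the claim that the Gaussian is centered at ``$\rho^{(k)}$ shifted by a boundary contribution'' which at a site with $K(\cdot;x)=m$ is ``bounded by $\zeta_m$''. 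That claim is exactly Theorem \ref{thmee5.0}, and it is by far the hardest part of the proof. The center of your Gaussian is the unique minimizer $\hat\rho_\La$ of the effective Hamiltonian $H^{\rm eff}_\La(\cdot|\bar q_{\La^c})$ subject to the constraint $\rho_\La\in Y^{(k)}_\La$; nothing in your sketch establishes that this minimizer is unique, nor that its deviation from $\rho^{(k)}$ at a given site is controlled by the quality $\zeta_m$ of the boundary data in a ball around that site, nor — crucially — that the difference of the two minimizers for the two boundary conditions is \emph{exponentially small} in the interior of $\La$ (Theorem \ref{thmIe.2.5}). Without that exponential decay the two Gaussians one is coupling have means that could differ by $O(\zeta)$, which would not give a coupling probability $1-\eps_n$ uniformly in $\ga$.

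A second, subsidiary omission is that ``rewriting $H_{\La,t}$ as a quadratic functional of the deviation'' requires first integrating out the positions within each $\ell_{-,\ga}$-cell, which produces not a clean quadratic form but the effective Hamiltonian of Theorem \ref{thme4.2.1} with LP, one-body, many-body, and remainder terms, controlled via cluster expansion (Subsections \ref{subsec:ee4.3}--\ref{subsec:e4.4}). The ``cubic correction of order $\zeta$'' you invoke is then a statement about this effective Hamiltonian, not about $H_{\La,t}$ itself, and its control involves the condition $\delta<\frac{d}{6}[\frac12-3\alpha_--2\alpha_+]$ (see \eqref{delta2}), which is more delicate than a simple $O(\zeta)$ bound. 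In short: the skeleton of your argument matches the paper, but the load-bearing lemma — uniqueness and exponential decay of the effective ground state, Theorems \ref{thmIe.2.5} and \ref{thmee5.0} — is asserted rather than proved, and it is precisely what the bulk of Sections \ref{sec:e4} and \ref{sec:ee5} is devoted to establishing.
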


\vskip1cm

The proof of Theorem \ref{thme3.7.1} is given in Part II of
this paper.  It consists of three parts, in the first one
we use a step of the renormalization group to describe the
marginal of $dG^0_\La$ over the variables
$\{\rho^{(\ell_{-,\ga})}(x,s), x\in \ell_{-,\ga}\mathbb Z^d
\cap \La, s\in \{1,..,S\}\}$.  Their distribution is proved
to be Gibbsian with an  effective Hamiltonian at the
inverse effective temperature $\beta \ell_{-,\ga}$.  In a
second part we study the ground states of the effective
Hamiltonians, proving exponential decay from the boundary
conditions.  In a third and final part we bound  the
Wasserstein distance between the Gibbs measures by
approximating the latter to Gaussian distributions
describing fluctuations around the ground states
characterized in the previous step.

\vskip2cm

\subsection{Disagreement percolation}
    \label{sec:e3.8}
The finite size condition established in Theorem
\ref{thme3.7.1} is used to construct the coupling $Q$ of
Theorem \ref{thme3.6.1}. The proof uses the ideas
introduced by van der Berg and Maes in their disagreement
percolation paper, \cite{vm}. The proof given in Part III
of this paper consists of two steps. In the first one we
introduce set-valued stopping times, called stopping sets,
and prove that monotone sequences of stopping sets define
couplings of the Gibbs measures and that if the sequence
stops, then in the last set there is agreement.  In the
second and last step we prove that the probability that the
sequence stops late is related to a percolation event which
is then shown to have exponentially small probability.

\vskip3cm

\part { The finite size condition}


%

\vskip2cm

\section{{\bf Effective Hamiltonians}}
        \label{sec:e4}

 We will use the following notations.

\subsection{General notation for Part II}
 \label{subsec:e4.1}

$\bullet$\; By default in this section  $\La$ is  a
connected, $\mathcal D^{(\ell_{+,\ga})}$-measurable region
contained in $\La^*$, see  Subsection \ref{subsec:e3.7},
and regions in $\mathbb R^d$ are all $\mathcal
D^{(\ell_{-,\ga})}$-measurable. To discretize $\mathbb R^d$
we will use the lattice $\ell_{-,\ga} \mathbb Z^d$. Thus in
the sequel $\ell_{-,\ga}$ is the basic mesh. We  define
  \begin{equation}
      \label{e4.1.1}
J_\ga^{(\ell)}(x,y)=
\mintone{C^{(\ell)}_x}\mintone{C^{(\ell)}_y}
J_\ga(r,r'),\quad x,y \in \ell \mathbb Z^d,
\;\;\ell=\ell_{-,\ga}
     \end{equation}

$\bullet$\; The basic variables are the densities
$\rho_\Delta=\{\rho_\Delta(x,s)\ge 0, x\in \ell_{-,\ga} \mathbb
Z^d \cap \Delta,\,s\in \{1,..,S\} \}$, $\Delta \subset \mathbb
R^d$, (by default variables denoted by $\rho$ are non negative
densities). Call $X_\Delta^{(k)}$ the set of all $\rho_\Delta$
such that $n_\Delta:=\ell_{-,\ga}^d\rho_\Delta$ has integer
values, so that $X_\Delta^{(k)}$ is the range of values of the
densities $\rho^{(\ell_{-,\ga})}_\Delta(q_\Delta;x,s)$ when
$q_\Delta \in \mathcal X^{(k)}_\Delta$, $x\in \ell_{-,\ga}\mathbb
Z^d\cap \Delta$, $s\in \{1,..,S\}$; $\rho^{(\ell)}_\Delta$ being
defined in   \eqref{e3.2.2}.

$\bullet$\; To have lighter notation we will use the label
$i$  for a pair $(x,s)$, $x\in\ell_{-,\ga} \mathbb Z^d, s
\in\{1,..,S\}$, writing $x(i)=x$, $s(i)=s$ if $i=(x,s)$ and
sometimes shorthand $|i-j|$ for $|x(i)-x(j)|$ and $i\in
\La$ for $x(i)\in \ell_{-,\ga} \mathbb Z^d\cap\La$.

$\bullet$\; $\mathcal H$ denotes the Euclidean space of
vectors $u=\big(u(i), i \in \La\big)$ with the usual scalar
product $\dis{(u,v)= \sum_{i}  u(i) v(i)}$.  By an abuse of
notation we also denote by $\mathcal H$ the Hilbert space
with $\La$ above replaced by $\mathbb R^d$.

\vskip1cm


\subsection{The effective Hamiltonian}
        \label{subsec:e4.2}


The effective Hamiltonian $ H^{{\rm eff}}_\La(\rho_\La|\bar
q_{\La^c})$, $\rho_\La \in X_\La^{(k)}$, $\bar q_{\La^c}\in
\mathcal X_{\La^c}^{(k)}$, is defined by the equality
   \begin{equation}
     \label{ee4.2.1}
e^{-\beta \ell_{-,\ga}^d H^{{\rm eff}}_\La(\rho_\La|\bar
q_{\La^c})} := \int_{\{\rho( q_{\La};\cdot)=\rho_\La \}} e^{-\beta
H_{\La,t}(q_\La|\bar q_{\La^c})} \nu_\La(dq_\La)
     \end{equation}
$ H_{\La,t}$ as in  \eqref{e3.4.2}, so that $\beta
\ell_{-,\ga}^d$ is the effective inverse temperature. The
Gibbs measure with Hamiltonian $H^{{\rm
eff}}_\La(\rho_\La|\bar q)$, inverse temperature $\beta
\ell_{-,\ga}^d$ and free measure the counting measure on
$X_\La^{(k)}$ is then the marginal over the variables
$\{\rho_\La\in X^{(k)}_\La\}$ of  the Gibbs measure
$dG^0_\La(q_\La|\bar q_{\La^c})$ defined in \eqref{e3.7.1}.

Since $\ell_{-,\ga}=\ga^{-1+\alpha_-}$ and $\alpha_-$ is
small, the effective temperature vanishes as $\ga\to 0$,
and the analysis of the Gibbs measure becomes intimately
related to the study of the ground states of $ H^{{\rm
eff}}_\La$. This will be the argument of the next section,
in this one we determine  $ H^{{\rm eff}}_\La$. In this
subsection we describe its main terms and state the main
theorem; in the successive ones we give the proof.

%


%

\vskip.5cm

 \centerline{\it The LP term.}
  \nopagebreak
The main contribution to the effective Hamiltonian will be
the Lebowitz-Penrose free energy functional, the LP term in
the title of the paragraph.  This is
    \begin{equation}
     \label{ee4.2.2}
F_\La(\rho_\La|\bar \rho_{\La^c})= t\Big\{\frac 12
(\rho_\La,\bar V_\ga \rho_\La) + (\rho_\La,\bar V_\ga\bar
\rho_{\La^c})\Big\} -\frac{1}{\beta} (1_\La, \mathcal
I(\rho_\La)) +(1-t) (\rho^{(k)}1_\La,  \rho_\La)
     \end{equation}
where we employ the usual vector notation: if $A(i,j)$ is a
matrix, $u(i)$ a vector in $\mathcal H$,
         \begin{equation}
     \label{ee4.2.3}
 \big(u, v\big)= \sum_{i}
 u(i) v(i),\quad Au(i)= \sum_{j}A(i,j)u(j)
     \end{equation}
calling $1_\La$ the  vector $ 1_\La(i)= 1$ if $i\in \La$
and $=0$ otherwise.  In \eqref{ee4.2.2}
         \begin{equation}
     \label{ee4.2.3a}
     \bar V_\ga(i,j)= \ell_{-,\ga}^d
\sum_{y\in \ell_{-,\ga}\mathbb Z^d}
J_\ga^{(\ell_{-,\ga})}(x(i),y) \ell_{-,\ga}^d
J_\ga^{(\ell_{-,\ga})}(y,x(j))  \text{\bf 1}_{s(i)\ne s(j)}
     \end{equation}
The normalization is such that $\bar V_\ga$ is a
probability kernel.  The term $(1_\La, \mathcal
I(\rho_\La))$ in \eqref{ee4.2.2} is ``the entropy minus the
chemical potential energy'': 
         \begin{equation}
     \label{ee4.2.4}
\mathcal I(\rho_\La)(i)= \mathcal I^*(\rho_\La(i)),\quad
\mathcal I^* (b):=  -b (\log b -1) +\beta \la_\beta b
    \end{equation}
When $t=1$, $F_\La$ is just the usual LP free energy and for this
reason we    call   $F_\La$ the LP term.  Notice that if
$\rho_\La(i)=\rho^{(k)}_{s(i)} 1_\La(i)$, then the bulk terms of
$F_\La$ which are proportional to $t$ cancel, this will play an
important role in the study of the ground states.

\vskip.5cm

 \centerline{\it The one body effective potential.}
  \nopagebreak
This term is due to second order terms in the Stirling formula
when computing the entropy contribution. It has the
form:
              \begin{equation}
     \label{ee4.2.5}
H^{(1)}_\La(\rho_\La) = \frac{\ell_{-,\ga}^{-d}}\beta \big(  1_{\La}, \log
\sqrt{2\pi\ell_{-,\ga}^{d}\rho_\La} +t [\la_\beta-\la]\rho_\La\big)
     \end{equation}

\vskip.5cm

 \centerline{\it The many-body effective potential.}
  \nopagebreak
This term denoted by $H^{(2)}_\La(\rho_\La|\bar
q_{\La^c})$, takes into account variations of the potential
energy inside the elementary cells $C_x^{(\ell_{-,\ga})}\in
\mathcal D^{(\ell_{-,\ga})}$ which have been neglected in
the LP term.  The dependence of $H^{(2)}_\La$ on $\rho_\La$
is very simple, it is in fact a polynomial of order $<N$,
$N$ a suitable positive integer.  The coefficients of the
polynomial are described next, they have a simpler form
once we use Poisson polynomials. We denote by $\pi_k(n)=
n(n-1)\cdots (n-k+1)$, $k\in \mathbb N_+$, $n\in \mathbb
N_+$, 
the Poisson polynomial of order $k$
and, by an abuse of notation we write
   \begin{equation}
      \label{ee4.2.6}
\pi^*_k(\rho) = \ell_{-,\ga}^{-dk}\pi_k(n),\;\;
\rho=\frac{n}{\ell_{-,\ga}^d}
     \end{equation}

We shorthand $\und i=(i_1,..,i_n)$, $n< N$, and call
$n=n(\und i)$; $\und i \cap \La \ne  \emptyset$ meaning
that there is $i_h\in \und i$ such that $i_h\in \La$. Given
$\und i$ we denote by $k(\und i) =(k(i_1),..,k(i_n))$, with
$k(i_h)$ positive integers, calling $\dis{|k(\und i)|
=\sum_{h=1}^{n(\und i)}k(i_h)}$. We finally call
$\bar\rho_{\La^c}(i):= \rho^{(\ell_{-,\ga})}(\bar
q_{\La^c};i)$ and denote by $\rho(i)$ the function equal to
$\rho_\La(i)$ and to $\bar \rho_{\La^c}(i)$ when $i\in
\La$, respectively $i\in \La^c$; $a_0$ below is a positive
number $<1$. Then $H^{(2)}_\La$ has the form:
         \begin{equation}
     \label{ee4.2.7}
H^{(2)}_\La(\rho_\La|\bar q_{\La^c}) = \sum_{\und i \cap
\La\ne \emptyset} \;\; \sum_{k(\und i):2\le |k(\und i)|< N}
(\ga\ell_{-,\ga})^{a_0 |k(\und i)|}\Phi(\und i,k(\und i),
\bar q_{\La^c, \und i}) \prod_{h=1}^{n(\und i)} \pi^*_{k(
i_h)}(\rho(i_h))
     \end{equation}
$\Phi$ are coefficients which may depend on $\bar q_{\La^c}$ but
only if $\und i \cap \La^c\ne \emptyset$, in such a case they only
depend on $\dis{\bar q_{\La^c, \und i}:= \bigcup_{i\in \und i:
x(i)\in \La^c}\{\bar q_{\La^c}\cap C_{x(i)}^{(\ell_{-\ga})}\}}$.
The main features of the coefficients $\Phi$ (whose dependence on
$t$ is not made explicit) is that:
         \begin{equation}
     \label{ee4.2.8}
\Phi(\und i,k(\und i),\bar q_{\La^c, \und i})=0\;\;\text{
if diam$(x(i_1),...,x(i_n)) \ge 2N\ga^{-1}$}
     \end{equation}
and
   \begin{equation}
      \label{ee4.2.9}
\sum_{\und i\ni i_0} \;\;\sum_{k(\und i):2\le |k(\und i)|<
N} \Phi(\und i,k(\und i),\bar q_{\La^c, \und i}) \le
c,\quad \text{for any $i_0$}
     \end{equation}
where   $c>0$ is a constant independent of $\bar q_{\La^c}$
and $t$.

\vskip1cm

          \begin{thm}

    \label{thme4.2.1}

For any $a_0<1$ there are $c$, $N$ and coefficients $\Phi$
as above such that for all $\ga$ small enough
    \begin{equation}
     \label{ee4.2.10}
H^{{\rm eff}}_\La(\rho_\La|\bar q_{\La^c}) =
F_\La(\rho_\La|\bar \rho_{\La^c})
+H^{(1)}_\La(\rho_\La)+H^{(2)}_\La(\rho_\La|\bar
q_{\La^c})+R_\La(\rho_\La|\bar q_{\La^c})
     \end{equation}
with the remainder $ R_\La(\rho_\La|\bar q_{\La^c}) =
 R^{(1)}+R^{(2)}$
    \begin{equation}
      \label{ee4.2.11}
      |R^{(i)}| \le c\ga^{\tau},\; i=1,2
     \end{equation}

  \end{thm}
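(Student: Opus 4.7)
My plan is to evaluate the conditional partition function in \eqref{ee4.2.1} by integrating out the within-cell positions at fixed $\ell_{-,\ga}$-density profile, then identifying three contributions: a combinatorial/entropy factor from the free measure, a cell-averaged (LP) energy, and a small fluctuation factor to be Taylor-expanded. The whole expansion is driven by the smallness of $\ga\ell_{-,\ga}=\ga^{\alpha_-}$: since $V_\ga(r,r')=\ga^d W(\ga(r-r'))$ with $W=J*J$ smooth, $V_\ga$ varies by only $O((\ga\ell_{-,\ga})^p)$ relative to its magnitude across distances of one $\ell_{-,\ga}$-cell at order $p$. Fix $\rho_\La\in X^{(k)}_\La$ with integer cell occupations $n(i)=\ell_{-,\ga}^d\rho_\La(i)$. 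By \eqref{e3.5.2} the measure $d\nu_\La$ restricted to $\{\rho^{(\ell_{-,\ga})}(q_\La;\cdot)=\rho_\La\}$ factorizes into the multinomial multiplicity $\prod_i\ell_{-,\ga}^{dn(i)}/n(i)!$ times the uniform distribution of positions within each cell. Stirling in the form $\log n!=n\log n-n+\tfrac12\log(2\pi n)+O(1/n)$ produces the three desired ingredients: the $\rho(\log\rho-1)$ contribution to $-\beta^{-1}(1_\La,\mathcal I(\rho_\La))$, the $\log\sqrt{2\pi\ell_{-,\ga}^d\rho}$ part of $H^{(1)}_\La$, and a Stirling remainder summable to $O(\ga^\tau)$ absorbed in $R^{(1)}$.

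Next I split the pair interaction for $r_i\in C^{(\ell_{-,\ga})}_{x(i)}$, $r_j\in C^{(\ell_{-,\ga})}_{x(j)}$ as $V_\ga(r_i,r_j)\mathbf 1_{s_i\ne s_j}=\bar V_\ga(i,j)+\Delta V(i,j;r_i,r_j)$, with $\bar V_\ga$ as in \eqref{ee4.2.3a}. Summing the bulk piece over pairs yields a function of $q_\La$ depending only on $\rho^{(\ell_{-,\ga})}$, and so reproduces exactly the quadratic form $t\{\tfrac12(\rho_\La,\bar V_\ga\rho_\La)+(\rho_\La,\bar V_\ga\bar\rho_{\La^c})\}$ of $F_\La$. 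The $(1-t)$-part of $H_{\La,t}$ is already a cell-local density functional; identifying the spin-sum structure of $h_\La$ via the mean-field identity $\sum_{s'\ne s}\rho^{(k)}_{s'}-\la_\beta=-\beta^{-1}\log\rho^{(k)}_s$ (from \eqref{e2.7}) and combining it with the $-\la_\beta$ piece inside $\mathcal I$ produces the $(1-t)(\rho^{(k)}1_\La,\rho_\La)$ term of $F_\La$ together with the residual $t(\la_\beta-\la)$ contribution to $H^{(1)}_\La$. The spurious $i=j$ self-interaction artificially included in the above LP convolution is of order $\ga^d$ per particle and contributes $O(\ga^\tau)$ to $R^{(1)}$.

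I then expand the fluctuation factor $\exp\{-\beta t\sum_{i\ne j}\Delta V\,\mathbf 1_{s_i\ne s_j}\}$ in a Taylor series truncated at order $N-1$. At order $p$ in the within-cell displacement, $\Delta V$ is smaller than a generic $V_\ga$-value by a factor $(\ga\ell_{-,\ga})^p$, and integration over the uniform within-cell measures turns products of indicator functions into factorial-moment counts; regrouping by the multiplicity $k(i_h)$ with which each cell index $i_h$ appears reproduces exactly the Poisson polynomials $\pi^*_{k(i_h)}(\rho(i_h))$ of \eqref{ee4.2.6}. The remaining dependence on $\und i$, $k(\und i)$ and the frozen data (which enters only through values of $\rho^{(\ell_{-,\ga})}(\bar q_{\La^c};\cdot)$ on cells that $V_\ga$ links to some $i_h\in\und i\cap\La^c$) is the coefficient $\Phi$. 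The finite-range property \eqref{ee4.2.8} is inherited from the support of $V_\ga$: every nonzero term involves a connected cluster of at most $N-1$ pairs, each linking points at distance $\le\ga^{-1}$, so the cluster diameter is $<N\ga^{-1}$. The summability bound \eqref{ee4.2.9} follows, at fixed $i_0$, by counting such clusters (polynomially in $\ga^{-1}/\ell_{-,\ga}$ at each fixed order) after pulling the $(\ga\ell_{-,\ga})^{a_0|k|}$ smallness out of $\Phi$; the leftover $\sum_{\und i\ni i_0}\sum_k$ is a $\ga$-independent constant. Finally the Taylor truncation error $R^{(2)}$ is controlled in Lagrange form by $(\ga\ell_{-,\ga})^{Na_0}=\ga^{Na_0\alpha_-}$, so choosing $N$ with $Na_0\alpha_->\tau$ gives $|R^{(2)}|\le c\ga^\tau$.

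The main difficulty I anticipate is the uniform, $\bar q_{\La^c}$-independent verification of \eqref{ee4.2.9}: the coefficient $\Phi(\und i,k(\und i),\bar q_{\La^c,\und i})$ inherits $\bar q_{\La^c}$-dependence through the frozen densities $\rho^{(\ell_{-,\ga})}(\bar q_{\La^c};\cdot)$ on the cells that $V_\ga$ reaches from $\und i$, and the bound must exploit the a priori local-equilibrium estimate $|\rho^{(\ell_{-,\ga})}(\bar q_{\La^c};\cdot)-\rho^{(k)}|\le\zeta$ guaranteed by $\bar q_{\La^c}\in\mathcal X^{(k)}_{\La^c}$. The second delicate point is keeping the chemical-potential and entropy bookkeeping consistent among $F_\La$, $H^{(1)}_\La$, and the interpolation structure in $t$; everything else reduces to explicit Taylor expansion with $C^N$ bounds on $V_\ga$ and on the combinatorial cluster count.
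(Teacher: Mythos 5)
Your first two steps track the paper's proof closely. The Stirling treatment of the free measure (multinomial factor from \eqref{e3.5.2}, then $\log n! = n\log n - n + \tfrac12\log(2\pi n) + O(1/\sqrt n)$) reproduces Subsection~\ref{subsec:ee4.3} almost verbatim, yielding the $\mathcal I$ part, $H^{(1)}_\La$, and $R^{(1)}$. The replacement of $J_\ga$ by the cell-averaged kernel $J^{(\ell_{-,\ga})}_\ga$ to isolate the LP quadratic form is also exactly what the paper does via $\bar H_\La$ and \eqref{ee4.3.5}.

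The gap is in your third step. You propose to Taylor-expand $\exp\bigl\{-\beta t\sum_{i\ne j}\Delta V\,\mathbf 1_{s_i\ne s_j}\bigr\}$ to order $N-1$ with a Lagrange-form remainder bounded by $(\ga\ell_{-,\ga})^{N a_0}$. But the exponent is not small: although each pair contributes only $O(\ga^d\,\ga\ell_{-,\ga})$, a single particle has $O(\ga^{-d})$ neighbors in range, so the per-particle sum is already $O(\ga\ell_{-,\ga})$ and the total exponent scales like $|\La|\cdot\ga\ell_{-,\ga}\sim\ell_{+,\ga}^d\,\ga\ell_{-,\ga}$, which diverges as $\ga\to0$. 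A plain Taylor expansion of $e^X$ with $|X|$ unbounded does not admit the advertised $(\ga\ell_{-,\ga})^{N a_0}$ Lagrange bound, and the naive expansion also produces disconnected products of pairs that do not organize into the local form \eqref{ee4.2.7} with a summable bound \eqref{ee4.2.9}. What the paper does instead (Subsections~\ref{subsec:e4.3}--\ref{subsec:e4.4}) is a Mayer/cluster expansion: it writes $\prod_{(\xi,\xi')}\{(e^{-\beta t\Delta V}-1)+1\}$, expands in the small pair weights $w((\xi,\xi'))$ satisfying the Kotecky--Preiss-type condition \eqref{Ae.2.7}, and identifies $-\beta\ell_{-,\ga}^d\{H^{(2)}+R^{(2)}\}$ with $\log Z$, which is a sum over \emph{connected} multiplicity functions $m$ with the decay \eqref{Ae.2.10}. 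That logarithmic/connectedness structure is precisely what gives (i) the locality and finite range in \eqref{ee4.2.8}, (ii) the uniform summability \eqref{ee4.2.9} via a tree-graph count, and (iii) the truncation bound $|R^{(2)}|\le c\ga^\tau$ from \eqref{Ae.2.13}, which is the tail of a convergent geometric-like series rather than a Lagrange remainder. Your regrouping of repeated labels into Poisson polynomials $\pi^*_{k(i_h)}$ is the correct final bookkeeping step (it matches \eqref{Ae.2.15}--\eqref{Ae.2.18}), but it must be applied to the output of the cluster expansion, not to a direct Taylor expansion of the exponential.
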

with $\tau = (3-5\alpha_--2\alpha_+)\frac{d}{2}>0$ (see \eqref{R1} and \eqref{Ae.2.13}).

\vskip1cm

Recall that in this section $\La$ is a subset of $\La^*$
thus $|\La| \le c \ell_{+,\ga}^d$, $c$ a constant, if we
wanted larger volumes we would have to increase $N$, namely
to include more body-potentials and longer interaction
range, the expansion in Theorem \ref{thme4.2.1} being
highly non uniform in $\La$. The proof which follows
closely the one in \cite{LMP} of a similar result, is given
in the remaining subsections.

\vskip1cm


\subsection{Derivation of the LP term}
        \label{subsec:ee4.3}


 We fix  arbitrarily
$\rho_\La \in X_\La^{(k)}$, call $n_\La(i)=\ell_{-,\ga}^d
\rho_\La(i)$, introduce a set of labels $\mathcal L$ 
whose elements are denoted by $\xi=(i,\ell)$, where
$i=(x,s)\in \La$, $\ell\in\{1,..,n_\La(\xi)\}$;  the
coordinate functions on $\mathcal L$ are  $x(\xi)$,
$s(\xi)$ and $\ell(\xi)$ respectively equal to the first,
second and third entry in $\xi$. We then define for $\xi
\in \La$ (meaning $x(\xi)\in \La$) the probability measures
on $\La \times\{1,..,S\}$ as $\dis{dp_{\xi}(r,s)= \text{\bf
1}_{r\in C_{x(\xi)}^{(\ell_{-,\ga})}}\text{\bf
1}_{s}\frac{dr }{\ell_{-,\ga}^d}}$ and call $\dis{d p_\La =
\prod_{\xi\in \mathcal L} dp_{\xi}}$, remembering that this
measure as well as the index set $\mathcal L$ depend on the
initial choice of $\rho_\La$, as this is momentarily fixed
we are not making it explicit. We obviously have:
   \begin{equation}
     \label{ee4.3.1}
e^{-\beta \ell_{-,\ga}^d H^{{\rm eff}}_\La(\rho_\La|\bar
q)} = \Big( \prod_{i\in \La} \frac{\ell_{-,\ga}^{d
n_\La(i)}}{n_\La(i)!}\Big)\; \int e^{-\beta
H_{\La,t}(q_\La|\bar q_{\La^c})} dp_\La
     \end{equation}
where  $q_\La$ on the r.h.s.\ should be thought of as a
$\xi$-labeled configuration of particles (the label
specifying also the cube where the particle is) which is
identified to the integration variable relative to the
measure $dp_\La$: thus the dependence on $\rho_\La$ is
 hidden in the structure of the
probability $dp_\La$. The bracket on the r.h.s.\ is equal
to
   \begin{equation}
     \label{ee4.3.2}
 \prod_{i\in  \La} \frac{\ell_{-,\ga}^{d n_\La(i)}}{n_\La(i)!}
=e^{
 \ell_{-,\ga}^d
\big(1_{\La}, S(\rho_\La)\big)},\quad S(\rho_\La)(i)=
\ell_{-,\ga}^{-d} \big( n \log \ell_{-,\ga}^{d} - \log
n!\big),\;\; n=n_\La(i)=\ell_{-,\ga}^d\rho_\La(i)
      \end{equation}
Then, recalling the Stirling formula:
      \begin{equation}
    \label{ee4.3.3}
n! = n^{n+1/2} e^{-n} \sqrt{2\pi}\Big( 1 +
0\left(\frac{1}{\sqrt n}\right)\Big)
     \end{equation}
we can estimate $\big(1_{\La}, S(\rho_\La)\big)$ as follows
     \begin{equation}
       \label{ee4.3.4}
      \big( 1_{\La}, S(\rho_\La)\big) = \big(1_{\La}, S^{\rm
         app}(\rho_\La)\big)  - \beta H^{(1,0)}  - \beta R^{(1)}
     \end{equation}
where $S^{\rm app}(\rho) = -\rho (\log\rho-1)$ and $H^{(1,0)}$ is equal to the r.h.s. of \eqref{ee4.2.5} with  $t=0$.

\vskip.5cm

{\bf Proof of  \eqref{ee4.2.11} for $R^{(1)}$.}

We now show that $R^{(1)}$ defined in \eqref{ee4.3.4} above satisfies the bound \eqref{ee4.2.11}:
\begin{align*}
\ell_{-,\ga}^{d}\left[S^{\rm app}(\rho_\La)(i) - S(\rho_\La)(i)\right] & =  -n_{\La}(i)\left( \log  n_{\La}(i) -1 \right)+ \log n_{\La}(i)!\\
& = \frac{1}{2}\log n_{\La}(i) + \log \sqrt{ 2\pi } + 0\left(\frac{1}{\sqrt{n_{\La}(i)}}\right),\\
\\
\big(1_{\La}, S^{\rm app}(\rho_\La)\big) - \big( 1_{\La}, S(\rho_\La)\big)& =  \beta H^{(1,0)} + \sum_{i\in \La} 0\left(\ell_{-,\ga}^{-d/2}  \right)
\end{align*}
where  we used the fact that $ n_{\La}(i) \geq c \ell_{-,\ga}^{d}$, since $\rho_{\La} \in  X_\La^{(k)}$. From this, we get
\begin{align}
\nonumber | \beta R^{(1)} | & \leq  \# \left\{i \in \La \right\} \cdot \ell_{-,\ga}^{-3d/2} \\
\label{R1}               & \leq SN_{\La}  \left( \frac{ \ell_{+,\ga}}{ \ell_{-,\ga}}\right)^{d}   \ell_{-,\ga}^{-3d/2} \qed
\end{align}

Call $ \bar H_\La(q_\La|\bar q_{\La^c})$ the energy
$H_{\La,t}(q_\La|\bar q_{\La^c})$ defined with $J_\ga$
replaced by $ J^{(\ell_{-,\ga})}_\ga$, then $\bar
H_\La(q_\La|\bar q_{\La^c})$ depends only on the densities
$\rho^{(\ell_{-,\ga})}(q_\La;i)$ and
$\rho^{(\ell_{-,\ga})}(\bar q_{\La^c};i)$ which in
\eqref{ee4.3.1} are fixed equal to $\rho_\La(i)$ and $\bar
\rho_{\La^c}(i)$, hence
   \begin{equation}
     \label{ee4.3.5}
 \bar
H_\La(q_\La|\bar q_{\La^c})= \ell_{-,\ga}^d\Big\{t\Big(
\frac 12 \big(\rho_\La, \bar V_\ga \rho_\La\big) +
\big(\rho_\La, \bar V_\ga
\bar\rho_{\La^c}\big)-\la(1_\La,\rho_\La)\Big) + (1-t)
(1_\La[\rho^{(k)}-\la_\beta],\rho_\La)\Big\}
      \end{equation}
Collecting all the above terms we thus identify in
\eqref{ee4.3.1}
        \begin{equation}
     \label{ee4.3.6}
e^{-\beta \ell_{-,\ga}^d \{H^{(2)}_\La(\rho_\La|\bar
q_{\La^c}) +R^{(2)}\}} =   \int e^{-\beta
\{H_{\La,t}(q_\La|\bar q_{\La^c})- \bar H_\La(q_\La|\bar
q_{\La^c})\}} dp_\La
     \end{equation}

\vskip1cm


\subsection{Cluster expansion}
        \label{subsec:e4.3}


To  estimate the r.h.s.\  of \eqref{ee4.3.6} we  use
cluster expansion. Call $\mathcal E$ a set of unordered
pairs $(\xi,\xi')$, $\xi\ne \xi'$, then $\mathcal E$
defines a graph structure $(\mathcal L,\mathcal E)$ with
vertices $\xi\in \mathcal L$ and edges $(\xi,\xi')\in
\mathcal E$. We call diagrams the connected sets $\theta$
in $(\mathcal L,\mathcal E)$, $\und
\theta=(\theta_1,..,\theta_n)$ their collection. Call
$\Theta$ and $\Theta_{\rm dsc}$ the spaces of all possible
diagrams and of all possible $\und \theta$ which appear
when varying $\mathcal E$. Let
        \begin{equation}
     \label{Ae.2.5}
w(\theta)=    \int \Big(\prod_{(\xi,\xi')\in \theta,
s(\xi)\ne s(\xi')} \{ e^{-\beta t
\{V_\ga(x(\xi),x(\xi'))-\ell_{-,\ga}^{-d} \bar
V_\ga(x(\xi),x(\xi'))\}}-1\}\Big) dp_\La
     \end{equation}
then, since $dp_\La$ is a product measure,
        \begin{equation}
     \label{Ae.2.6}
\int e^{-\beta \{H_{\La,t}(q_\La|\bar q_{\La^c})- \bar
H_\La(q_\La|\bar q_{\La^c})\}} dp_\La = \sum_{\und \theta
\in \Theta_{\rm dsc}}\prod_{\theta \in \und \theta}
w(\theta)
     \end{equation}
\eqref{Ae.2.6} is derived from \eqref{ee4.3.6} by writing
        \begin{equation*}
e^{-\beta \{H_{\La,t}(q_\La|\bar q_{\La^c})- \bar
H_\La(q_\La|\bar q_{\La^c})\}}=\prod_{(\xi,\xi'): s(\xi)\ne
s(\xi')} \{ e^{-\beta t
\{V_\ga(r_\xi,r_{\xi'})-\ell_{-,\ga}^{-d}\bar
V_\ga(x(\xi),x(\xi'))\}}-1 +1\}
     \end{equation*}
where the labels $\xi$ include both the particles in $\La$
and those of $\bar q_{\La^c}$ outside $\La$. After
expanding the product we then get \eqref{Ae.2.6}, details
are omitted.

The basic condition for cluster expansion which we have in
the present context, involves the elementary diagrams
namely $\theta=(\xi,\xi')$ and states that  given any
$\dis{a>0}$
             \begin{equation}
     \label{Ae.2.7}
 \sum_{\xi'} |w\big((\xi,\xi')\big)| \ga^{-\alpha_-+a} <1,
 \quad \text{for   any $\ga$ small enough}
     \end{equation}
\eqref{Ae.2.7} is proved by observing that the densities
$\rho_\La(i)$ are bounded and that \eqref{Ae.2.5} yields
for $\theta=(\xi,\xi')$
             \begin{equation}
     \label{Ae.2.7.1}
 |w\big((\xi,\xi')\big)|\le c
 \ga^{d}(\ga\ell_{-,\ga})\text{\bf 1}_{{\rm dist}(C_{x(\xi)}^{(\ell_{-,\ga})},
 C_{x(\xi')}^{(\ell_{-,\ga})})\le  \ga^{-1}}
     \end{equation}
``Cluster expansion'' then applies for any $\ga$ small
enough and the following holds (for any $\rho_\La\in
X_\La^{(k)}$).

\vskip.5cm

{\it Notation.} We give $\Theta$ a  graph structure by
calling vertices  the diagrams $\theta\in \Theta$ and
edges   the pairs $\theta$ and $\theta'$ which have non
empty intersection, as sets in $\mathcal L$.

Denote by $m(\theta)$, $\theta\in\Theta$, positive, integer
valued  functions, calling $m(\theta)$ ``the multiplicity''
of $\theta$.  We restrict to $m\in \mathcal M$ where
    \begin{equation}
     \label{Ae.2.8}
 \text{$m\in \mathcal M$ if and only if}\;\;{\rm sp}(m):=
  \{\xi:\xi\in\theta, m(\theta)>0\}\;\; \text{is a connected set}
     \end{equation}
and shorthand  $\xi\in m$ when $\xi\in $ sp$(m)$.

\vskip.5cm

Cluster expansion tells us that given any $a_0<1$ for all
$\ga$ small enough there are coefficients $\om (m )$, $m\in
\mathcal M$, such that
     \begin{equation}
     \label{Ae.2.9}
\log Z (\{w(\cdot)\}):=\log\{ \sum_{\und \theta \in
\Theta_{\rm dsc}}\prod_{\theta \in \und \theta}
w(\theta)\}= \sum_{m\in \mathcal M} \om (m )
     \end{equation}
and, for any $\xi\in \mathcal L$,
     \begin{equation}
       \label{Ae.2.10}
\sum_{m\in \mathcal M:m\ni \xi}\;\;  |\om (m ) |
\{\prod_{\theta: m(\theta)>0}
(\ga\ell_{-,\ga})^{a_0|\theta|_{\rm edg}m(\theta)|}\} < 1
    \end{equation}
where $|\theta|_{\rm edg}$ is the number of edges in
$\theta$. The coefficients  $\om(m)$ have the following
explicit expression:
     \begin{equation}
       \label{Ae.2.11}
 \om(m) = C_m \prod_{\theta: m(\theta)>0}
  w(\theta)
     \end{equation}
where thinking of $Z(\{w(\cdot)\})$ in \eqref{Ae.2.9} as a
function of the weights $\{w(\theta), \theta\in \Theta\}$,
          \begin{equation}
       \label{Ae.2.12}
C_m=   \prod_{\theta: m(\theta)>0} \frac 1{m(\theta)!} \;
\{ \prod_{\theta: m(\theta)>0}\frac{\dis{\partial ^{
m(\theta)}}}{\dis{
\partial
 w(\theta)^{m(\theta)}}}\} \;\; \log Z_\La(w(\cdot))\Big |_{w(\theta)=0}
    \end{equation}
($C_m$ being bounded coefficients independent of $\La$). As
said, all the above follows from the general theory (of
cluster expansion) using the condition \eqref{Ae.2.7}, see
for instance \cite{cluster-expansion}.

\vskip1cm


\subsection{Identification of the many body potential}
        \label{subsec:e4.4}


We will next use \eqref{Ae.2.10} to truncate the sum in
\eqref{Ae.2.9} identifying the remainder with the term
$R^{(2)}$ and recognizing in the finite sum the Hamiltonian
$H^{(2)}_\La(\rho_\La|\bar q_{\La^c})$, for this we will
use the explicit representation of the terms  of the
expansion provided by \eqref{Ae.2.11}--\eqref{Ae.2.12}.

Calling $\dis{|m|= \sum_{\theta\in \Theta}|\theta|_{\rm
edg} m(\theta)}$, by \eqref{Ae.2.10},
 for any $N>0$,
     \begin{eqnarray*}
&&\hskip-1cm \sum_{m\in \mathcal M:|m|\ge N} |\om (m )| \le
\sum_{\xi\in \mathcal L} \;\;\sum_{m\in \mathcal M:m\ni
\xi, |m|\ge N}\;\;  |\om (m ) |\nn\\&& \hskip1cm \le
|\mathcal L| (\ga\ell_{-,\ga})^{a_0 N} \sum_{m\in \mathcal
M:m\ni \xi} |\om (m )| \{\prod_{\theta: m(\theta)>0}
(\ga\ell_{-,\ga}))^{-a_0|\theta|_{\rm edg}m(\theta)| }\}
\le |\mathcal L| (\ga\ell_{-,\ga})^{a_0 N}
     \end{eqnarray*}
Since  $\La\subset \La^*$, there is $c>0$ such that
$|\mathcal L| \le c  \ell_{+,\ga}^{d}$ and we can then
choose $N$ so large that
     \begin{eqnarray}
     \label{Ae.2.13}
&& -\beta \ell_{-}^d R^{(2)}:= \sum_{m\in \mathcal M:|m|\ge
N}  \om (m ), \qquad |\sum_{m\in \mathcal M:|m|\ge N}  \om
(m )|
\le \ell_{-,\ga}^{-d/2} 
     \end{eqnarray}
thus \eqref{ee4.2.11} is satisfied and
\begin{eqnarray}
     \label{Ae.2.14}
&& -\beta \ell_{-,\ga}^dH^{(2)}(\rho_\La|\bar q_{\La^c}):=
\sum_{m\in \mathcal M:|m|< N} \om (m )
     \end{eqnarray}

The dependence on $\rho_\La$ is hidden in the space
$\Theta$, on which the functions $m$ are defined. Theorem
\ref{thme4.2.1} will be proved once we show that the
r.h.s.\ of \eqref{Ae.2.14} can be written as the r.h.s.\ of
\eqref{ee4.2.7}.

We  rewrite the r.h.s.\ of \eqref{Ae.2.14} by first summing
over all $m$ in ``the same equivalence class'' and then
summing over all equivalence classes. Before defining the
equivalence $m\sim m'$ we observe that if $\psi$ is a one
to one map of $\mathcal L$ onto itself, then $\psi$ extends
naturally to a map of $\Theta$ onto itself by letting
$\psi(\theta)$ be the diagram with vertices $\psi(\xi)$,
$\xi\in \theta$, and edges $(\psi(\xi),\psi(\xi'))$,
$(\xi,\xi')$ the edges of
$\theta$.  
We then call  $m\sim m'$ if there is a one to one map
$\phi$ from $\mathcal L$ onto $\mathcal L$ such that
$\bullet$\; $x(\phi(\xi))=x(\xi)$, $s(\phi(\xi))=s(\xi)$
for
all $\xi$; 
$\bullet$\; $m'(\phi(\theta)) =m(\theta)$ for all
$\theta\in \Theta$.


Calling $[m]$ the equivalence class of $m$, i.e.\ the set
of all $m': m'\sim m$, we define the average weight
     \begin{eqnarray}
     \label{Ae.2.15.0}
&&  \om^*(m):= \frac{1}{\text{\rm card}([m])}\sum_{m' \in
[m]} \om (m')
     \end{eqnarray}
Notice that if sp$(m)$ consists only of $\xi$ such that
$x(\xi)\in \La$ then $\om(m)=\om(m')=\om^*(m)$ for all
$m'\in [m]$. If instead there are labels $\xi$ in sp$(m)$
such that $x(\xi)\in \La^c$ then $\om^*(m)$ is a non
trivial average. Actually the averages involve the labels
$\ell$ in each triple $(x,s,\ell)$, $x\in \La^c$, with
$m(x,s,\ell)>0$.  Calling $K(i;m)$ the number of $\xi\in m$
such that $i(\xi)= i$,
     \begin{eqnarray}
     \label{Ae.2.15}
&& \text{\rm card}([m])= \prod_{i} \pi_{K(i;m)}(n(i))
     \end{eqnarray}
where $\pi_k(n)$ is the Poisson polynomial and
$n(i)=\rho(i) \ell_{-,\ga}^d$.
We then have
     \begin{eqnarray}
     \label{Ae.2.16}
&&\hskip-1cm  -\beta \ell_{-}^dH^{(2)}(\rho_\La|\bar
q_{\La^c}):= \sum_{[m], |m|< N} \om^*(m ) \{\prod_{i}
\pi_{K(i;m)}(n (i))\}
     \end{eqnarray}
We next interchange the sums: for any sequence $K(i)\in
\mathbb N_+$, $\dis{\sum_{i} K(i) <N}$,  let
          \begin{eqnarray}
     \label{Ae.2.17}
&& \Psi(K(\cdot)):= \ell_{-,\ga}^{-d}\sum_{[m], m:
K(\cdot;m)=K(\cdot)} \om^*(m ) \prod_{i}\ell_{-,\ga}^{d
K(i)}
     \end{eqnarray}
then
     \begin{eqnarray}
     \label{Ae.2.18}
&&\hskip-1cm -\beta H^{(2)}(\rho_\La|\bar q_{\La^c}):=
\sum_{K(\cdot)} \Psi(K(\cdot)) \{\prod_{i} \ell_{-,\ga}^{-d
K(i)} \pi_{K(i)}(n(i))\}
     \end{eqnarray}
thus identifying $\Phi$ in   Theorem \ref{thme4.2.1} in
terms of $\Psi$:
     \begin{eqnarray}
     \label{Ae.2.18.1}
     \Psi(K(\cdot))=(\ga\ell_{-,\ga})^{a_0 |K(\und i)|}
\Phi(\und i,   K(\und i),\bar q_{\La^c, \und i})
  \end{eqnarray}
recalling the remark before \eqref{Ae.2.15}, indeed the
l.h.s.\ depends on  $\bar q_{\La^c}$ only via $\bar
q_{\La^c, \und i}$.

Of course we still need to prove that the function $\Phi$
defined via \eqref{Ae.2.18.1} satisfies the bounds stated
in   \eqref{ee4.2.8}--\eqref{ee4.2.9}. Since the
coefficients $C_m$ in \eqref{Ae.2.11}, are bounded, say
     \begin{eqnarray}
     \label{Ae.2.18.111}
 \max_{m:|m|<N} |C_m|\le c_{N}
  \end{eqnarray}
we just need to bound $|w(\theta)|$. The definition of
$w(\theta)$ involves product of terms
$w\big((\xi,\xi')\big)$ for each edge of the diagram which
we bound using \eqref{Ae.2.7.1}. The bound obtained in this
way is the same for all $m'\in [m]$ so that the bound for
$\om^*(m)$ is the same as for $\om(m)$. To fix up the
combinatorics, we proceed as follows. For any $m$ we define
a graph structure $G(m)$ on sp$( m)$ introducing a node for
each element $\xi$ of sp$( m)$ which is then given the
label $i=(x(\xi),s(\xi))$, thus different nodes may have
the same label. Edges in $G(m)$ are the union of all the
edges present in all the diagrams $\theta$ such that
$m(\theta)>0$. Each edge is then given a multiplicity equal
to the sum of all $m(\theta)$ over the diagrams $\theta$
which contain the given edge. With this definition any
$m'\in [m]$ gives rise to the same $G(m)$ as we are only
recording the coordinates  $x(\xi)$ and $s(\xi)$ of $\xi$.

To proceed with the bound we assign a ``weight''
$\ell_{-,\ga}^{d}$ to any node in $G(m)$.  Having \eqref{Ae.2.7.1}in mind, we assign to each edge a weight  $\dis{\Big(c
\ga^{d}(\ga \ell_{-,\ga}) \text{\bf 1}_{{\rm
dist}(C_{x(\xi)}^{(\ell_{-,\ga})},  C_{x(\xi')}^{(\ell_{-,\ga})})\le  \ga^{-1}} \Big)^p}$,
where $p$ the multiplicity of the edge. We
have thus assigned a weight $W(G(m))$ to $G(m)$
equal to the product of the weights of its nodes and of its
edges and, with reference to \eqref{Ae.2.17} and recalling
\eqref{Ae.2.18.111}
          \begin{eqnarray}
     \label{Ae.2.19}
&& |\Psi(K(\cdot))|\le   c_{N} \ell_{-,\ga}^{-d}\sum_{[m],
m: K(\cdot;m)=K(\cdot)} W(G(m) )
     \end{eqnarray}
Recalling that $K(i;m)$ is the number of $\xi\in m$ such
that $i(\xi)=i$, $K(i;m)$ is also the number of nodes in
$G(m)$ with label $i$.  Thus, calling $K(i, G)$ the number
of nodes in $G$ with label $i$, $\und i=\{i, i \in
G\}$, and $K(\und i,G)=\{K(i, G), i \in \und i\}$,
          \begin{eqnarray}
     \label{Ae.2.20}
&& |\Psi(K(\und  i))|\le   c_{N} \ell_{-,\ga}^{-d}\sum_{G:
K(\und i ;G )=K(\und i)} W(G)
     \end{eqnarray}
\eqref{Ae.2.18.1} then yields
     \begin{eqnarray}
     \label{Ae.2.18.1.0}
 |\Phi(\und i,   K(\und i),\bar q_{\La^c, \und i})|\le c_N
 \ell_{-,\ga}^{-d}
(\ga \ell_{-,\ga})^{-a_0|K(\und i)|} \sum_{G: K(i ;G
)=K(i)} W(G)
  \end{eqnarray}
By \eqref{Ae.2.17} the terms  to consider have $\und i$
such that $\dis{\sum_{i\in \und i} K(i) <N}$. Then
$\Phi(\und i,K(\und i),\bar q_{\La^c, \und i})=0$ if
diam$(\und x) \ge 2 \ga^{-1}N$, $\und x$ being the sites
appearing in $\und i$, because the weight of the edges in
$G$ are proportional to $\text{\bf 1}_{{\rm
dist}(C_{x(\xi)}^{(\ell_{-,\ga})},
C_{x(\xi')}^{(\ell_{-,\ga})})\le  \ga^{-1}}$.  

To prove \eqref{ee4.2.9} we fix $i_0$ and restrict the sum
in \eqref{Ae.2.18.1.0} to $G: K(i_0;G)>0$.  For each such
$G$ we can then define a tree structure $T_{i_0}(m)$ in
$G(m)$ with root $i_0$, a first generation made by all
nodes connected to the root, second generation made by the
nodes connected to those of the first generation and so
forth. To recover the original graph we may also have to
add edges connecting individuals of the same generation and
also attribute to each edge its multiplicity, as explained
earlier.  We then have
   \begin{equation}
      \label{Ae.2.21}
\text{l.h.s.\ of \eqref{ee4.2.9}} \;\; \le  \;\; \sum_{\und
i\ni i_0} \;\;\sum_{K(\und i):|K(\und i)|< N}
  \ell_{-.\ga}^{-d}(\ga\ell_{-,\ga})^{-a_0|K(\und i)|} \sum_{T_{i_0}:
K(\und  i ;T_{i_0} )=K(\und i)} W(T_{i_0})
     \end{equation}
Define a new weight $W^*(T)$ by changing the weights of the
edges into
    $$
    \Big( c  (\ga\ell_{-,\ga})^{1-a_0}
\ga^{d}\text{\bf 1}_{|x -x ' |\le 2\ga^{-1}} \Big)^p,\quad
\text{$p$ the multiplicity of the edge}
       $$
 while the weights of the node are unchanged.  Then
    \begin{equation}
      \label{Ae.2.22}
\text{l.h.s.\ of \eqref{ee4.2.9}} \;\;\le  \;\;
\ell_{-,\ga}^{-d} \sum_{\und  i\ni  i_0} \;\;\sum_{K(\und
 i):|K(\und i)|< N}
  \sum_{T_{ i_0}:
K(\und  i ;T_{ i_0} )=K(\und  i)} W^*(T_{ i_0})
     \end{equation}
The weight of the root of the tree cancels with the
prefactor $ \ell_{-,\ga}^{-d}$.  We upper bound the sum on
the r.h.s.\ if we regard a multiple edge with multiplicity
$k$ as $k$ distinct edges originating from a same node and
also regard edges between nodes in the same generation as
edges into the next generation (thus dropping the
constraint that the arrival node is the same as the arrival
node of another edge), each node added in this way getting
an extra weight $\ell_{-,\ga}^d$. In this way we have an
independent branching and since
      $$
 \lim_{\ga\to 0}  \sum_{x'}  (\ga\ell_{-,\ga})^{a_0}
\ga^{d}\text{\bf 1}_{|x'|\le 2\ga^{-1}} \ell_{-}^d =0
       $$
we then get \eqref{ee4.2.9}, details are omitted.  Theorem
\ref{thme4.2.1} is proved. \qed

\vskip2cm

\section{Ground states of the effective Hamiltonian }
        \label{sec:ee5}

In this section we study the ground states of the main term
in the effective Hamiltonian $H^{{\rm
eff}}_\La(\rho_\La|\bar q_{\La^c})$,  which, with reference
to \eqref{ee4.2.10}, is
    \begin{equation}
     \label{ee5.0.1}
f(\rho_\La;\bar q_{\La^c}):= H^{{\rm
eff}}_\La(\rho_\La|\bar q_{\La^c}) - R_\La(\rho_\La|\bar
q_{\La^c})
     \end{equation}
While originally $\rho_\La=\big(\rho_\La(i), i=(x,s), x\in
\ell_{-,\ga}\mathbb Z^d\cap \La, s\in \{1,..,S\} \big)\in
X^{(k)}_\La$ defined in Subsection \ref{subsec:e4.1}, it is
convenient here to extend  the range of values of $\rho_\La(i)$ to
an interval of the real line. We thus call
            $$
Y^{(k)}_\La=\Big\{\rho_\La:
\rho_\La(x,s)\in[\rho^{(k)}_s-\zeta,\rho^{(k)}_s+\zeta], \forall
x\in \ell_{-,\ga}\mathbb Z^d\cap \La, \forall s\in \{1,..,S\}\Big\}
                $$
The ground states in the title are then the minimizers of
$f(\rho_\La;\bar q_{\La^c})$ as a function on $Y^{(k)}_\La$ with
$\bar q_{\La^c}$ regarded as a parameter.

\vskip1cm

Let  $\hat K_\La(x)\equiv \hat K_\La(\bar q'_{\La^c},\bar
q''_{\La^c};x)$ be the function defined as $K_\La(x)$ in Definition
\ref{kappa} but with the set $A_x$ in \eqref{3.22} replaced with the
set
    \begin{equation}
    \label{kappa2}
\hat A_x=B_x(10^{-30}\ell_{+,\ga}) \cap \La^c
    \end{equation}

Our main result is the following theorem:

\vskip1cm

\begin{thm}
    \label{thmee5.0}
    There are $c^*$ and $\hat \om$ positive such that for any $a_0<1$ and
    for all $\ga$ small enough the following holds. For any $\bar
    q_{\La^c}\in \mathcal X^{(k)}_{\La^c}$ there is a unique minimizer
    $\hat \rho_\La$ of $\{f(\rho_\La;\bar q_{\La^c}), \rho_\La\in
    Y^{(k)}_\La\}$.  Let $\hat K(x)$
    $x\in \ell_{-,\ga}\mathbb Z^d\cap \La$,  be as above and  ${\hat
      \rho}'_\La$ and  ${\hat \rho}''_\La$ the minimizers with ${\bar
      q}'_{\La^c}$ and ${\bar q}''_{\La^c}$, then for any
    $s\in\{1,..,S\}$:

    \begin{itemize}

    \item (i) If $\hat K(x)>0$,  $|{\hat \rho}'_\La(x,s)-
      {\hat \rho}''_\La(x,s)| \le c e^{-10^{-30} (\ga\ell_{+,\ga})\hat \om}$.

    \item (ii) If $ \hat K(x)=m>0$,
      $|{\hat \rho}'_\La(x,s)- \rho^{(k)}_s|
      \le c^*(\zeta_m+ (\ga\ell_{-,\ga})^{a_0}+ e^{-10^{-30}
        (\ga\ell_{+,\ga})\hat\om})$,
      with same bound for $
      {\hat \rho}''_\La(x,s)$.

    \end{itemize}

  \end{thm}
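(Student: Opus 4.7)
The plan is to establish strict convexity of $f(\rho_\La;\bar q_{\La^c})$ on $Y^{(k)}_\La$ with a positive Hessian lower bound uniform in $\ga$ and in $\bar q_{\La^c}$, then derive existence and uniqueness immediately, and finally extract the exponential decay by viewing the Euler--Lagrange equation as a fixed point and iterating a contraction.

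First, I would compute the Hessian of the dominant piece $F_\La + H^{(1)}_\La + H^{(2)}_\La$. The entropy piece $-\frac{1}{\beta}(1_\La,\mathcal I(\rho_\La))$ contributes a diagonal $\frac{1}{\beta \rho_\La(i)}$; the quadratic piece $\frac{t}{2}(\rho_\La,\bar V_\ga \rho_\La)$ contributes the off-diagonal $t \bar V_\ga(i,j)\mathbf{1}_{s(i)\ne s(j)}$. On $Y^{(k)}_\La$ these diagonal entries are within $O(\zeta)$ of $\frac{1}{\beta \rho^{(k)}_s}$, so site-by-site the Hessian is a small perturbation of the mean field matrix $L^{(k)}$ of \eqref{e2.8}, which by Theorem \ref{thme2.2} satisfies $L^{(k)}\ge \kappa^\ast$. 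Since $\bar V_\ga$ is a probability kernel, the spatial coupling averages $L^{(k)}$ in a way that preserves the lower bound $\kappa^\ast$ up to $O(\zeta)$. The Hessians of $H^{(1)}_\La$ (one-body logarithm) and of $H^{(2)}_\La$ (bounded by \eqref{ee4.2.9} times $(\ga\ell_{-,\ga})^{a_0}$) are negligibly small. Hence $\mathrm{Hess}\, f \ge \tfrac{\kappa^\ast}{2}$ on $Y^{(k)}_\La$ for $\ga$ small. Strict convexity on the compact convex set $Y^{(k)}_\La$ yields existence and uniqueness of $\hat \rho_\La$; the bound $|\hat \rho_\La - \rho^{(k)}|\le \zeta$ is verified a posteriori from part (ii).

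Next I would rewrite $\nabla f(\hat \rho_\La;\bar q_{\La^c})=0$ as a fixed-point equation of the form $\hat \rho_\La(i)=\exp\bigl\{-\beta\bigl[t(\bar V_\ga(\hat \rho_\La+\bar \rho_{\La^c}))(i)+(1-t)\bigl(\sum_{s'\ne s(i)}\rho^{(k)}_{s'}\bigr)-\la_\beta+\mathrm{corr}(i)\bigr]\bigr\}$, where $\mathrm{corr}(i)$ collects the derivatives of $H^{(1)}_\La+H^{(2)}_\La$. Given two boundary conditions, set $\delta\rho:=\hat \rho'_\La-\hat \rho''_\La$ and $\delta\bar\rho:=\bar \rho'_{\La^c}-\bar \rho''_{\La^c}$; the mean value theorem applied to the exponential gives $\delta\rho=A\delta\rho+B\delta\bar\rho$ with operators $A,B$ whose kernels are controlled by $\bar V_\ga$ and whose norm $\|A\|\le 1-\eta$ for some $\eta>0$ thanks to the Hessian bound above (after using that exponentiating the EL stationarity produces the same Jacobian one inverts when diagonalizing $\mathrm{Hess}\, f$). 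Iterating $(I-A)^{-1}=\sum_{n\ge 0}A^n$, and noting that $A$ has kernel supported on $|x-y|\le\ga^{-1}$, the site $x$ can be reached from the support of $\delta\bar\rho$ (which, for part (i), lies at distance $\ge 10^{-30}\ell_{+,\ga}$ from $x$ because $\bar q'$ and $\bar q''$ agree on $\hat A_x$) only after $n\ge 10^{-30}\ga\ell_{+,\ga}$ steps. This gives $|\delta\rho(x,s)|\le c(1-\eta)^{10^{-30}\ga\ell_{+,\ga}}\le c\,e^{-10^{-30}(\ga\ell_{+,\ga})\hat\om}$, proving (i).

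For (ii) I would apply the same linearization but compare $\hat \rho_\La$ to $\rho^{(k)}\mathbf{1}_\La$. Substituting $\rho^{(k)}$ into the EL equation produces a residual $B'(i)$ that has three sources: inside the $\ga^{-1}$-neighborhood of the boundary, $\bar V_\ga(\bar\rho_{\La^c}-\rho^{(k)})$ contributes $O(\zeta_m)$ on $\hat A_x$ thanks to $\hat K(x)=m$; outside $\hat A_x$, the boundary can deviate by up to $\zeta$ but these contributions come via $n\ge 10^{-30}\ga\ell_{+,\ga}$ iterations giving the exponential term; finally $\mathrm{corr}(i)$ adds $O((\ga\ell_{-,\ga})^{a_0})$ by \eqref{ee4.2.9}. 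Then $(\hat \rho_\La-\rho^{(k)})=(I-A)^{-1}B'$ and $\|(I-A)^{-1}\|\le\eta^{-1}$, yielding the bound in (ii) with $c^\ast=2/\eta$ times universal constants.

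The main obstacle will be Step 1 and the derivation of $\|A\|\le 1-\eta$: the spectral gap $\kappa^\ast$ of Theorem \ref{thme2.2} is a site-wise property of $L^{(k)}$, whereas $A$ involves the full spatially coupled Jacobian including the non-diagonal $t\bar V_\ga$ piece and the small but non-trivial corrections from $H^{(1)}_\La+H^{(2)}_\La$. One must show that the Kac probability-kernel structure together with the pointwise dominance of $L^{(k)}$ produces a uniform contraction independent of $\La$ and $\bar q_{\La^c}$, a step that requires some care with how the exponential map transfers Hessian positivity to contraction of the EL iteration.
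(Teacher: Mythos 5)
Your Step 1 (strict convexity from the Hessian lower bound, hence existence and uniqueness of the minimizer on the compact convex set $Y^{(k)}_\La$) is essentially the content of Theorem \ref{thmIe5.3.1} and Corollary \ref{coroe5.3.1}; that part is fine. The problem is everything after.

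\textbf{Gap 1: the minimizer need not satisfy $\nabla f=0$.} Your fixed-point representation $\hat\rho_\La(i)=\exp\{-\beta[\ldots]\}$ is the Euler--Lagrange equation, which only holds at interior critical points. On the compact set $Y^{(k)}_\La$ the minimizer can sit on the boundary $\hat\rho_\La(i)=\rho^{(k)}_{s(i)}\pm\zeta$ at some sites, in which case $\partial f/\partial\rho_\La(i)$ is only signed, not zero (Lemma \ref{lemmaee.5.2.3}). This is precisely why the paper introduces the $\eps$-regularization $f_\eps$ on the larger set $W^{(k)}_\La$: the minimizer of $f_\eps$ is an interior critical point for $\eps$ small (Lemma \ref{lemmaee.5.2.1}), one can differentiate along interpolations, and only at the end does one pass $\eps\to 0$. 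Without something equivalent your linearization $\delta\rho=A\delta\rho+B\delta\bar\rho$ is not valid at the sites where the constraint is active — and exactly at the sites covered by part (ii) you cannot assume a priori that the constraint is inactive, since that is what (ii) is proving.

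\textbf{Gap 2: $\|A\|\le 1-\eta$ is false in general.} You flag this as the ``main obstacle that requires care,'' but it is worse than that: the Jacobian of the exponentiated EL map is \emph{not} a contraction for this model. Schematically $A(i,j)\approx -\beta t\,\rho^{(k)}_{s(i)}\bar V_\ga(i,j)\mathbf 1_{s(i)\ne s(j)}$, so $\|A\|_\infty\sim\beta t(S-1)\max_s\rho^{(k)}_s$, which is not $<1$ at the coexistence point for $S>2$. Positivity of the Hessian $L^{(k)}$, i.e.\ invertibility of $I-A$, does not give $\|A\|<1$: in the disordered phase $A$ has eigenvalues $\{-\beta\bar\rho(S-1),\ \beta\bar\rho\}$, and $L^{(S+1)}>0$ only forces $\beta\bar\rho<1$, so $\beta\bar\rho(S-1)$ can exceed $1$. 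Indeed the paper explicitly notes in the introduction that this model is \emph{not} in the Dobrushin high-temperature (one-step contraction) regime; the whole point of the ``finite-size condition'' is to cope with that. The paper's exponential decay is obtained at the level of the Hessian quadratic form, via the semigroup representation $B^{-1}=\int_0^\infty e^{-Bt}\,dt$ and a time-splitting argument (Theorem \ref{thmappB.1}), which requires only $B\ge\kappa'>0$ together with the exponential moment bound $\sup_i\sum_j|B(i,j)|e^{\ga|i-j|}\le a$; no contraction of a nonlinear iteration is needed. That replaces both your contraction step and your ``iterate $A$ through $n\ge 10^{-30}\ga\ell_{+,\ga}$ shells'' step. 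You would need to abandon the exponentiated fixed-point framework entirely and instead control $(D^2_\La f)^{-1}$ directly, which is the route the paper takes (and which also forces the further decomposition with projections $P,Q$ to handle the large diagonal terms introduced by the $\eps$-regularization).
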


\vskip1cm

Existence of a minimizer follows from $f$
being a smooth function on a compact set of the Euclidean
space.  Uniqueness and exponential decay are more difficult
and the proof will take the whole section. The basic
ingredient  is that $D^2f$ (the Hessian matrix
of the derivatives w.r.t.\ the variables $\rho_\La(i)$)
computed on the minimizer in the constraint space
$Y^{(k)}_\La$  is positive and ``quasi diagonal'', which
would then give the required uniqueness and exponential
decay if we had $Df=0$. This is however not necessarily the
case  because  the minimum could be reached on the
boundaries of the domain of definition, which, on the other
hand, is necessary to ensure convexity. We will solve the
problem by relaxing the constraint and then studying the
limit when the cutoff is reconstructed.

\vskip1cm

\vskip1cm


\subsection{Extra notation and definitions}
        \label{subsec:e5.1}

The basic notation are those established in Subsection
\ref{subsec:e4.1}, here we add a few new ones specific to
this section:

\vskip.3cm

$\bullet$\; We will write $f(\rho_\La;\bar q_{\La^c})=
F(\rho_\La;\bar \rho_{\La^c})+g(\rho_\La;\bar q_{\La^c})$
where, recalling \eqref{ee4.2.10},
    \begin{equation}
     \label{ee5.1.1}
g(\rho_\La;\bar q_{\La^c}) =
H^{(1)}_\La(\rho_\La)+H^{(2)}_\La(\rho_\La|\bar
q_{\La^c})
     \end{equation}

$\bullet$\;    To evidentiate some of the variables in
$\rho_\La$, say those in $\Delta\subset \La$, we write
$\rho_\La= (\rho_\Delta,\rho_{\La \setminus \Delta})$,
where $\rho_\Delta$ and $\rho_{\La \setminus \Delta}$ are
the restrictions of $\rho_\La$ to $\Delta$ and respectively
to $\La \setminus \Delta$.

$\bullet$\; It will be convenient to relax the constraint
$\rho_\La\in Y^{(k)}_\La$ by enlarging $Y^{(k)}_\La$ into
$W^{(k)}_{\La}$
       \begin{equation}
       \label{def:Wk}
W^{(k)}_{\La}=\Big\{\rho_\La:
\rho_\La(x,s)\in[\rho^{(k)}_s-b,\rho^{(k)}_s+b], \forall x\in
\ell_{-,\ga}\mathbb Z^d\cap \La, \forall s\in \{1,..,S\}\Big\}
\end{equation}
where $\dis{b:=\min_{k_1\neq k_2} \frac{\| \rho^{(k_1)} -
\rho^{(k_2)}\|_\infty}{2}}$ has been chosen such that
\begin{equation*}
W^{(k)}_{\La} \cap \{\rho^{(1)},..,\rho^{(S+1)}\} = \{ \rho^{(k)} \}
\end{equation*}

We then introduce a cutoff parameter $\eps\in (0,1)$ (which will
eventually vanish), call $(a)_+=a \,\text{\bf 1}_{a>0}$,
$(a)_-=a\,\text{\bf 1}_{a<0}$ and define for any $\eps>0$, the
function $f_\eps$ on $W^{(k)}_{\La}$ as
    \begin{eqnarray}
     \label{ee5.1.2}
&&f_\eps(\rho_\La;\bar q_{\La^c}):=f(\rho_\La;\bar
q_{\La^c})+ \frac {\eps^{-1}}4 \sum_{i\in
 \La}\Big(
\{(\rho_\La(i)-[\rho^{(k)}_{s(i)}+\zeta])_+\}^4\nn\\&&\hskip3cm
+\{(\rho_\La(i)- [\rho^{(k)}_{s(i)}-\zeta])_-\}^4 \Big)
     \end{eqnarray}

$\bullet$\; Since $f$ [$f_\eps$] is a continuous function of
$\rho_\La$ which varies on a compact set, it has a minimizer denoted
by $\hat \rho_\La$  [$\hat \rho_{\La,\eps}$], and we will later see that
this minimizer is unique. We call $\hat \rho$ its extension  to the
whole $\ell_{-,\ga}\mathbb Z^d \times \{1,..,S\}$, by setting $\hat
\rho= \bar \rho_{\La^c}$ on $\La^c$. Here $\bar \rho_{\La^c}$ is the
density associated to $\bar q_{\La^c}$ via \eqref{e3.2.2} with
$\ell=\ell_{-,\ga}$, thus $\hat \rho$ of course depends on $\bar
q_{\La^c}$.

$\bullet$\; For any $\mathcal D^{(\ell_{-,\ga})}$-measurable set $B$
we write for any differentiable and $\mathcal
D^{(\ell_{-,\ga})}$-measurable function $\psi(\rho)$
     \begin{equation}
      \label{ee5.1.3}
D_{B}\psi\;=\;\Big\{\;\frac{\partial\psi}{\partial \rho(i)},\;
   x(i)\in \ell_{-,\ga}\mathbb Z^d \cap B\Big\}
     \end{equation}

\vskip1cm


\subsection{A-priori estimates}

       \label{subsec:e5.2}


In this subsection we prove some a-priori bounds on $\hat
\rho_{\La,\eps}(i)$.  When $\eps>0$ we loose the bound
$|\hat\rho_\La(i) - \rho^{(k)}_{s(i)}|\le \zeta$ valid at
$\eps=0$  but, as we will see, we have the great
simplification to know that for $\eps$ small enough,
minimizers are critical points, thus satisfying $D_\La
f_\eps=0$, and $|\hat\rho_{\La,\eps}(i) -
\rho^{(k)}_{s(i)}|\le 2\zeta$.

\vskip1cm

    \begin{lemma}
     \label{lemmaee.5.2.1}
There is a constant $c>0$ such that for all $\eps>0$ and for  any
minimizer $\hat \rho_{\La,\eps}\in W^{(k)}_{\La}$ of $f_\eps$ the following holds: for all $x\in \ell_{-,\ga}\mathbb Z^d\cap \La$ and all $s\in \{1,..,S\}$,
    \begin{eqnarray}
     \label{ee5.2.1}
&& \big|\hat\rho_{\La,\eps}(x,s)-\rho^{(k)}_{s}\big| \le \zeta +c
(\frac{\ell_{+,\ga}}{\ell_{-,\ga}})^{d/4}\eps^{1/4}
     \end{eqnarray}
In particular, if $\zeta< b/2$ then for all $\eps>0$ small enough, any
minimizer $\hat \rho_{\La,\eps}\in W^{(k)}_{\La}$ of $f_\eps$ is also a critical point.
   \end{lemma}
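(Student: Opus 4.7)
The plan is to test the minimizer $\hat\rho_{\La,\eps}$ against the constant profile $\rho^*$ defined by $\rho^*(i):=\rho^{(k)}_{s(i)}$ for all $i\in\La$, and exploit the fact that the penalty term in \eqref{ee5.1.2} vanishes identically at $\rho^*$ (since $|\rho^{(k)}_s-\rho^{(k)}_s|=0<\zeta$). Writing
\begin{equation*}
P(\rho_\La):= \frac{\eps^{-1}}{4}\sum_{i\in\La}\Big(\{(\rho_\La(i)-[\rho^{(k)}_{s(i)}+\zeta])_+\}^4+\{(\rho_\La(i)-[\rho^{(k)}_{s(i)}-\zeta])_-\}^4\Big),
\end{equation*}
the minimizing property $f_\eps(\hat\rho_{\La,\eps};\bar q_{\La^c})\le f_\eps(\rho^*;\bar q_{\La^c})$ gives
\begin{equation*}
P(\hat\rho_{\La,\eps})\le f(\rho^*;\bar q_{\La^c})-f(\hat\rho_{\La,\eps};\bar q_{\La^c}).
\end{equation*}

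The next step is to bound the right-hand side by a constant times the number of lattice sites in $\La$. For any $\rho_\La\in W^{(k)}_\La$ all densities are bounded, $\bar V_\ga$ is (essentially) a probability kernel, and $\mathcal I^*$ is smooth on the relevant interval, so the LP term $F_\La$ and the one-body term $H^{(1)}_\La$ are each $O(|\La|/\ell_{-,\ga}^d)$ uniformly in $\rho_\La\in W^{(k)}_\La$; the many-body term $H^{(2)}_\La$ admits the same bound via the summability estimate \eqref{ee4.2.9} together with the truncation $|k(\und i)|<N$ and the fact that Poisson polynomials $\pi^*_k(\rho)$ are uniformly bounded on $W^{(k)}_\La$. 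Using $|\La|\le c\,\ell_{+,\ga}^d$ this yields a constant $C_0$ independent of $\eps$, $\ga$ and $\bar q_{\La^c}$ with
\begin{equation*}
f(\rho^*;\bar q_{\La^c})-f(\hat\rho_{\La,\eps};\bar q_{\La^c})\le C_0\,(\ell_{+,\ga}/\ell_{-,\ga})^d.
\end{equation*}

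Since $P(\hat\rho_{\La,\eps})$ is a sum of non-negative terms, for every single site $i_0=(x,s)\in\La$ the corresponding summand is itself $\le C_0(\ell_{+,\ga}/\ell_{-,\ga})^d$. Setting $E(i_0):=(|\hat\rho_{\La,\eps}(x,s)-\rho^{(k)}_s|-\zeta)_+$, the one-site inequality $\tfrac{\eps^{-1}}{4}E(i_0)^4\le C_0(\ell_{+,\ga}/\ell_{-,\ga})^d$ gives $E(i_0)\le (4C_0)^{1/4}\,\eps^{1/4}(\ell_{+,\ga}/\ell_{-,\ga})^{d/4}$, which, together with $|\hat\rho_{\La,\eps}(x,s)-\rho^{(k)}_s|\le\zeta+E(i_0)$, is exactly \eqref{ee5.2.1}. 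For the second claim, if $\zeta<b/2$, then by \eqref{ee5.2.1} one can choose $\eps_0>0$ so small that $\zeta+c(\ell_{+,\ga}/\ell_{-,\ga})^{d/4}\eps^{1/4}<b$ for all $\eps<\eps_0$; this places any minimizer in the interior of $W^{(k)}_\La$, and since $f_\eps$ is $C^1$ the condition $D_\La f_\eps(\hat\rho_{\La,\eps})=0$ follows by the standard first-order optimality.

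The only delicate point is the uniform bound on $f$, for which the non-trivial piece is $H^{(2)}_\La$; but this is controlled by summing $|\Phi|$ first over $k(\und i)$ and then over $\und i\ni i_0$, invoking \eqref{ee4.2.9}, and finally summing over $i_0\in\La$. All remaining estimates are trivial in comparison.
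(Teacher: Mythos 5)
Your proof is correct and follows essentially the same approach as the paper: compare $f_\eps$ at the minimizer with its value at a reference point in $Y^{(k)}_\La$ where the penalty vanishes, bound the resulting free-energy difference by a constant times the number of $\ell_{-,\ga}$-cells, and extract the single-site estimate from the non-negativity of the penalty summands. The only cosmetic difference is that you test against the explicit profile $\rho^* = \rho^{(k)}1_\La$, whereas the paper phrases the bound in terms of $\phi'=\min_{Y^{(k)}_\La} f$ and $\phi''=\min_{W^{(k)}_\La} f$; both lead to the identical estimate.
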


\vskip.5cm

{\bf Proof.}  We denote by
    $$
\psi(\rho_\La)=\sum_{i\in \La}
\{(\hat\rho_{\La,\eps}(i)-[\rho^{(k)}_{s(i)}+\zeta])_+\}^4
+\{(\hat\rho_{\La,\eps}(i)- [\rho^{(k)}_{s(i)}-\zeta])_-\}^4
    $$
Then for all $\rho_{\La}\in W^{(k)}_\La$,
   $$
\frac 1{4\eps}\psi(\hat \rho_{\La,\eps})\le f(\rho_\La;\bar
q_{\La^c})-f(\hat \rho_{\La,\eps};\bar q_{\La^c})+\frac
1{4\eps}\psi(\rho_\La)
    $$
and since $\Psi$ vanishes on $Y^{(k)}_\La$:
   $$
\frac 1{4\eps}\psi(\hat \rho_{\La,\eps})\le \inf_{\rho_{\La}\in
Y^{(k)}_\La}  f(\rho_\La;\bar q_{\La^c}) - f(\hat
\rho_{\La,\eps};\bar q_{\La^c})
    $$
and, calling $\dis{\phi'=\min_{\rho_{\La}\in
Y^{(k)}_\La}f(\rho_\La;\bar q_{\La^c})}$,
$\dis{\phi''=\min_{\rho_{\La}\in W^{(k)}_\La}f(\rho_\La;\bar
q_{\La^c})}$
    $$
\frac 1{4\eps}\psi(\hat \rho_{\La,\eps})\le \phi'-\phi''
    $$
and in conclusion
    \begin{equation}
     \label{ee5.2.1.0}
|\hat\rho_{\La,\eps}(x,s)-\rho^{(k)}_{s}|\le \Big(4\eps
(\phi'-\phi'')\Big)^{1/4}\,+\,\zeta
    \end{equation}
and \eqref{ee5.2.1} follows because $\phi'$ and $\phi''$ are bounded
proportionally to the cardinality of  $\{x: x\in \ell_{-,\ga}\mathbb
Z^d\cap\La\}$.

By choosing $\eps$ so small that $\dis{\zeta +c\ga^{-(\alpha_++\alpha_-)d/4}\eps^{1/4}< 2\zeta< b}$, we conclude that $\hat\rho_{\La,\eps}$ is in the interior of $W^{(k)}_\La$ and is thus a critical point.
\qed

\vskip1cm

    \begin{lemma}
     \label{lemmaee.5.2.2}

$\hat \rho_{\La,\eps}$ converges by subsequences and any
limit point  $\hat \rho_{\La}$ is a minimizer of $f$.

   \end{lemma}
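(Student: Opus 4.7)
The plan is a standard compactness plus penalty argument: we extract a subsequential limit using compactness of $W^{(k)}_\La$, show the limit belongs to the smaller constraint set $Y^{(k)}_\La$ using Lemma \ref{lemmaee.5.2.1}, and then use that the penalty vanishes identically on $Y^{(k)}_\La$ to promote the minimality of $\hat\rho_{\La,\eps}$ for $f_\eps$ into minimality of the limit for $f$.

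More concretely, $W^{(k)}_\La$ is a product of compact intervals, hence compact, so the net $\{\hat\rho_{\La,\eps}\}_{\eps>0}$ admits convergent subsequences $\hat\rho_{\La,\eps_n}\to \hat\rho_\La^*\in W^{(k)}_\La$. Lemma \ref{lemmaee.5.2.1} gives
\begin{equation*}
|\hat\rho_{\La,\eps_n}(x,s)-\rho^{(k)}_s|\le \zeta + c\Bigl(\tfrac{\ell_{+,\ga}}{\ell_{-,\ga}}\Bigr)^{d/4}\eps_n^{1/4},
\end{equation*}
so passing to the limit $\eps_n\to 0$ we obtain $|\hat\rho_\La^*(x,s)-\rho^{(k)}_s|\le \zeta$ for every $(x,s)$, i.e.\ $\hat\rho_\La^*\in Y^{(k)}_\La$.

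For the minimality, observe that on $Y^{(k)}_\La$ the quartic penalty in \eqref{ee5.1.2} vanishes identically, hence $f_\eps(\rho_\La;\bar q_{\La^c})=f(\rho_\La;\bar q_{\La^c})$ for every $\rho_\La\in Y^{(k)}_\La$. Since $Y^{(k)}_\La\subset W^{(k)}_\La$ and $\hat\rho_{\La,\eps_n}$ minimizes $f_{\eps_n}$ on $W^{(k)}_\La$, we have for every $\rho_\La\in Y^{(k)}_\La$
\begin{equation*}
f(\hat\rho_{\La,\eps_n};\bar q_{\La^c})\le f_{\eps_n}(\hat\rho_{\La,\eps_n};\bar q_{\La^c})\le f_{\eps_n}(\rho_\La;\bar q_{\La^c})=f(\rho_\La;\bar q_{\La^c}),
\end{equation*}
where the first inequality uses nonnegativity of the penalty. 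By continuity of $f$ (a polynomial-plus-entropy on a compact set where densities are bounded away from $0$), the left-hand side converges to $f(\hat\rho_\La^*;\bar q_{\La^c})$, yielding $f(\hat\rho_\La^*;\bar q_{\La^c})\le f(\rho_\La;\bar q_{\La^c})$ for all $\rho_\La\in Y^{(k)}_\La$. Hence $\hat\rho_\La^*$ is a minimizer of $f$ on $Y^{(k)}_\La$.

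There is no serious obstacle: the only points to be careful about are (i) that the a priori bound of Lemma \ref{lemmaee.5.2.1} is what forces the limit into $Y^{(k)}_\La$ rather than merely into $W^{(k)}_\La$, and (ii) that the continuity used in the passage to the limit is legitimate, which it is because $f$ (through $F_\La$, $H^{(1)}_\La$ and $H^{(2)}_\La$) is a smooth function of finitely many real variables restricted to a compact set on which $\rho_\La(i)\ge \rho^{(k)}_{s(i)}-b>0$, so all logarithms and entropic terms are uniformly regular.
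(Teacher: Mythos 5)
Your proof is correct and follows essentially the same route as the paper: compactness of $W^{(k)}_\La$ gives subsequential limits, the a priori bound \eqref{ee5.2.1} places the limit in $Y^{(k)}_\La$, and the chain $f(\rho_\La)=f_\eps(\rho_\La)\ge f_\eps(\hat\rho_{\La,\eps})\ge f(\hat\rho_{\La,\eps})$ for $\rho_\La\in Y^{(k)}_\La$, passed to the limit by continuity of $f$, gives minimality. No discrepancies.
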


\vskip.5cm

{\bf Proof.} Convergence by subsequences follows from
compactness and by \eqref{ee5.2.1} any limit point  $\hat
\rho_{\La}$ is in $Y^{(k)}_\La$. Now for any $\rho_{\La} \in Y^{(k)}$, we get
$f(\rho_\La) = f_\eps(\rho_\La) \ge f_\eps(\hat
\rho_{\La,\eps})\ge f(\hat \rho_{\La,\eps})$ and by taking
$\eps\to 0$ along a convergent subsequence $ f(\rho_\La)
\ge f(\hat \rho_{\La})$.
\qed

\vskip1cm

A minimizer $\hat \rho_\La$ of $f$ is not necessarily a
critical point, i.e.\ $D_\La f=0$, the equality may fail if
the minimizer is on the boundary of the constraint. In such
a case however, the gradient if different from zero ``must
be directed along the normal pointing toward the
interior''.

\vskip.5cm

\begin{lemma}
 \label{lemmaee.5.2.3}
Any minimizer $\hat\rho_\La$  of $\{f(\rho_\La,\bar
q_{\La^c})$, $ \rho_{\La}\in Y_{\La}^{(k)}\}$ is ``a
critical point'' in the following sense:

$\bullet$\; If for some $i\in \La$,
$|\hat\rho_\La(i)-\rho^{(k)}_{s(i)}| <\zeta$ (strictly!),
then
   \begin{equation}
      \label{ee5.2.2}
\frac{\partial}{\partial \rho_\La(i)} f (\hat \rho_\La,\bar
q_{\La^c}) =0
     \end{equation}

$\bullet$\; If instead
$\hat\rho_\La(i)=\rho^{(k)}_{s(i)}\pm \zeta$, then
   \begin{equation}
      \label{ee5.2.3}
\frac{\partial}{\partial \rho_\La(i)}  f(\hat \rho_\La,\bar
q_{\La^c}) \le 0,\;\;\text{respectively $\ge 0$}
     \end{equation}

  \end{lemma}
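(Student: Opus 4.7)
The plan is to reduce the statement to the standard first-order necessary condition for a smooth function on a box. The key structural observation is that
\begin{equation*}
Y^{(k)}_\La = \prod_{i\in\La}\bigl[\rho^{(k)}_{s(i)}-\zeta,\,\rho^{(k)}_{s(i)}+\zeta\bigr]
\end{equation*}
is a Cartesian product of intervals, so each coordinate $\rho_\La(i)$ can be perturbed independently of the others without leaving $Y^{(k)}_\La$. The lemma therefore reduces to a one-variable optimality test at each $i\in\La$ separately.

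First I would check that $f$ is smooth in each $\rho_\La(i)$ on a neighborhood of $Y^{(k)}_\La$. By \eqref{ee5.0.1} and \eqref{ee4.2.10},
\begin{equation*}
f(\rho_\La;\bar q_{\La^c}) = F_\La(\rho_\La|\bar\rho_{\La^c}) + H^{(1)}_\La(\rho_\La) + H^{(2)}_\La(\rho_\La|\bar q_{\La^c}).
\end{equation*}
Here $F_\La$ is quadratic plus the entropy-type term $\mathcal I(\rho_\La)(i) = -\rho_\La(i)(\log\rho_\La(i)-1)+\beta\la_\beta\rho_\La(i)$, $H^{(1)}_\La$ contains $\log\sqrt{2\pi\ell_{-,\ga}^d\rho_\La(i)}$, and $H^{(2)}_\La$ is polynomial in $\rho_\La$. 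Since $\zeta=\ga^a$ is much smaller than $\min_{s,k}\rho^{(k)}_s>0$ (positivity coming from Theorem \ref{thme2.2}) for $\ga$ small, $\rho_\La(i)$ stays bounded away from zero on $Y^{(k)}_\La$, so each summand is $C^\infty$ in $\rho_\La(i)$ there.

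Next, I would fix $i\in\La$ and consider the one-variable function $\phi_i(t) := f(\hat\rho_\La + t e_i;\bar q_{\La^c})$ on the admissible interval
\begin{equation*}
J_i = \bigl[\rho^{(k)}_{s(i)}-\zeta-\hat\rho_\La(i),\;\rho^{(k)}_{s(i)}+\zeta-\hat\rho_\La(i)\bigr].
\end{equation*}
Since $\hat\rho_\La$ minimizes $f$ on $Y^{(k)}_\La$, one has $\phi_i(t)\ge\phi_i(0)$ for every $t\in J_i$. In the interior case $|\hat\rho_\La(i)-\rho^{(k)}_{s(i)}|<\zeta$, the origin lies in the interior of $J_i$ and smoothness of $\phi_i$ forces $\phi_i'(0)=0$, which is \eqref{ee5.2.2}. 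If $\hat\rho_\La(i) = \rho^{(k)}_{s(i)}+\zeta$, then only $t\le 0$ is admissible; the difference quotient $(\phi_i(t)-\phi_i(0))/t\le 0$ for $t\in(-2\zeta,0)$ passes to the limit $t\to 0^-$ and yields $\phi_i'(0)\le 0$, the first half of \eqref{ee5.2.3}. The lower-boundary case $\hat\rho_\La(i)=\rho^{(k)}_{s(i)}-\zeta$ is symmetric and gives $\phi_i'(0)\ge 0$.

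There is no genuine obstacle here; the lemma is precisely the Karush--Kuhn--Tucker necessary condition for minimizers on a box, and the only ingredient specific to the present setting is the smoothness verification above. Everything else is a one-dimensional calculus argument applied separately in each coordinate direction.
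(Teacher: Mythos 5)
Your proof is correct. The paper does not include a proof of this lemma (it is stated without one, being a routine first-order necessary condition), and your argument — exploiting that $Y^{(k)}_\La$ is a Cartesian product of intervals so that each coordinate can be varied independently, noting $f$ is smooth there because $\rho_\La(i)$ stays bounded away from zero for $\ga$ small, and then applying the one-dimensional sign test on difference quotients at interior and boundary points — is precisely the standard reasoning the authors leave implicit.
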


\vskip2cm


\subsection{Convexity and uniqueness}

       \label{subsec:e5.3}


Convexity is a key ingredient in our analysis:

 \vskip1cm

\begin{thm}
 \label{thmIe5.3.1}

Given any  $\kappa \in (0,\kappa^*)$ ($\kappa^*$ as in \eqref{2.4}),
for all $\ga$ small enough the following holds. Let $\rho_\La \in
W^{(k)}_\La$ be such that $|\rho_{\La}(i)-\rho^{(k)}_{s(i)}\big| \le
4\zeta$, then the matrix $A:=D^2_\La f_\eps(\rho_\La,\bar
q_{\La^c})$  is strictly positive, as an operator on $\mathcal H$,
namely (recall the definitions in Subsection \ref{subsec:e4.1})
   \begin{equation}
      \label{e5.3.1}
\big(u,A u\big) \ge \kappa (u,u),\;\;\text{for
 all $u \in \mathcal H$}
     \end{equation}
Same inequality holds when $\eps=0$.


  \end{thm}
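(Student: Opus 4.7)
I would split the Hessian as $A = A_F + A_1 + A_2 + A_\eps$, where $A_F, A_1, A_2, A_\eps$ are the Hessians of $F_\La$, $H^{(1)}_\La$, $H^{(2)}_\La$ and of the quartic penalty in \eqref{ee5.1.2}. The penalty is a convex combination of one-sided fourth powers, so $A_\eps \ge 0$ is immediate; the one- and two-body pieces $A_1, A_2$ will be shown to be $o(1)$ as $\ga\to 0$. The heart of the proof is therefore the bound $A_F \ge (\kappa^* - o(1))\,I$ on the main Lebowitz--Penrose piece, which I would derive from a short convex-combination identity that reduces it to the mean-field bound \eqref{2.4}.

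\vskip.2cm
\textbf{Main step: positivity of $A_F$.} Differentiating \eqref{ee4.2.2} twice gives
\begin{equation*}
A_F(i,j) \;=\; \frac{1}{\beta \rho_\La(i)}\,\delta_{ij} \;+\; t\,\bar V_\ga(i,j),
\end{equation*}
and \eqref{ee4.2.3a} factors $\bar V_\ga(i,j) = \tilde V_\ga(x(i),x(j))\,M(s(i),s(j))$, where $M(s,s') = \text{\bf 1}_{s\ne s'}$ and $\tilde V_\ga(x,x') = \ell_{-,\ga}^{2d}\sum_y J_\ga^{(\ell_{-,\ga})}(x,y)\,J_\ga^{(\ell_{-,\ga})}(y,x')$. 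The cell-average definition \eqref{e4.1.1} yields the exact normalization $\ell_{-,\ga}^d\sum_x J_\ga^{(\ell_{-,\ga})}(x,y)\equiv 1$, and a Cauchy--Schwarz argument then shows that, as an operator on $\ell^2(\ell_{-,\ga}\mathbb Z^d\cap\La)$ with the flat inner product $(v,w)=\sum_x v(x)w(x)$, one has $0\le \tilde V_\ga \le I$, whence $0\le t\tilde V_\ga \le I$ for $t\in[0,1]$. Writing $D_0$ for the spin-diagonal matrix with entries $D_0(s,s)=1/(\beta\rho^{(k)}_s)$, so that $L^{(k)}=D_0+M$ by \eqref{e2.8}, the key identity is
\begin{equation*}
I\otimes D_0 \;+\; t\,\tilde V_\ga\otimes M \;=\; (I-t\tilde V_\ga)\otimes D_0 \;+\; t\tilde V_\ga \otimes L^{(k)},
\end{equation*}
a sum of two tensor products of non-negative operators. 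Testing \eqref{2.4} against the spin basis vectors $e_s$ gives $\kappa^*\le D_0(s,s)$ for each $s$, hence $D_0\ge \kappa^* I$; combined with $L^{(k)}\ge \kappa^* I$ and the non-negativity of $I-t\tilde V_\ga$ and $t\tilde V_\ga$, the two summands dominate respectively $\kappa^*(I-t\tilde V_\ga)\otimes I$ and $\kappa^* t\tilde V_\ga\otimes I$, whose sum is exactly $\kappa^* I\otimes I$. Finally, the hypothesis $|\rho_\La(i)-\rho^{(k)}_{s(i)}|\le 4\zeta$ shows that the actual diagonal $1/(\beta\rho_\La(i))$ differs from $D_0(s(i),s(i))$ by $O(\zeta)$ in operator norm, giving $A_F \ge (\kappa^* - c\,\zeta)\,I$.

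\vskip.2cm
\textbf{Correction terms and conclusion.} Direct differentiation of \eqref{ee4.2.5} shows $A_1$ is diagonal with entries of order $\ell_{-,\ga}^{-d}=\ga^{(1-\alpha_-)d}$, so $\|A_1\|\to 0$. For $A_2$, two differentiations of the polynomial \eqref{ee4.2.7} bring out a factor $(\ga\ell_{-,\ga})^{a_0 |k(\und i)|}$ with $|k(\und i)|\ge 2$, and fixing one endpoint $i_0$ and summing $|A_2(i_0,\cdot)|$ over the other variable is bounded by $(\ga\ell_{-,\ga})^{2a_0}$ times the row-sum \eqref{ee4.2.9}; Schur's test then yields $\|A_2\| = O((\ga\ell_{-,\ga})^{2a_0}) \to 0$. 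Adding everything and using $\zeta=\ga^a\to 0$, one gets $A\ge A_F+A_1+A_2 \ge (\kappa^* - c\,\zeta - o_\ga(1))\,I$, which exceeds any prescribed $\kappa<\kappa^*$ for all $\ga$ small enough. The same bound at $\eps=0$ is immediate since the penalty contributes only a non-negative term. The point requiring the most care is not the convex-combination identity itself but the clean spectral bound $0\le \tilde V_\ga\le I$: once the exact normalization of the cell-averaged kernel is in hand, the rest of the argument unfolds essentially by linear algebra.
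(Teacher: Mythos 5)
Your proof is correct and uses the same essential ingredients as the paper's: drop the nonnegative penalty, replace $\rho_\La$ by $\rho^{(k)}$ on the diagonal with an $O(\zeta)$ error, exploit the spectral bounds $0\le t\tilde V_\ga\le I$, $D_0\ge\kappa^*I$ (from testing \eqref{2.4} on basis vectors) and $L^{(k)}\ge\kappa^*I$, and show $\|D^2_\La g\|=o(1)$. Your tensor-product identity $(I-t\tilde V_\ga)\otimes D_0+t\tilde V_\ga\otimes L^{(k)}$ is a clean operator-theoretic repackaging of the paper's quadratic-form manipulation (which adds and subtracts $\sum_{x,s}[1/(\beta\rho^{(k)}_s)-\kappa^*]U(x,s)^2$ with $U=Ju$), but the two arguments are algebraically equivalent.
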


\vskip.5cm

{\bf Proof.} Recalling \eqref{ee5.1.1} and denoting by
$\rho_\La^{-1}$ below the diagonal matrix with entries
$\rho_\La(i)^{-1}$
    \begin{equation*}
(u,A u) =    t (u, \bar V_\ga u)   +\frac{1}{\beta}
(u,\rho_\La^{-1}   u) + (u, [ D^2_\La g]u) +  (u, [ D^2_\La
(f_\eps-f)]u)
     \end{equation*}
and get a lower bound by dropping the last term thus
reducing the proof to the case $\eps=0$. Extend $u$ and $A$
as equal to 0 outside $\La$ and set
   $$
   U(x,s)=  \ell_{-,\ga}^d \sum_{y\in \ell_{-,\ga} \mathbb Z^d}
   J^{(\ell_{-,\ga})}_\ga(x,y)u(y,s),
   \;\;x\in \ell_{-,\ga}\mathbb Z^d
   $$
where $J^{(\ell)}_\ga$ is defined in  \eqref{e4.1.1}. Then,
   \begin{eqnarray*}
&& (u,A u) \ge    t  \sum_{s\ne s'} \sum_{x\in \ell_{-,\ga}\mathbb
Z^d}U(x,s)U(x,s') +\frac{1}{\beta} (u,\rho_\La^{-1}  u) + (u, [
D^2_\La g]u)\nn\\&& \hskip1.5cm =  \Big\{ t \sum_{s\ne s'}
\sum_{x\in \ell_{-,\ga}\mathbb Z^d}U(x,s)U(x,s') + \sum_{x\in
\ell_{-,\ga}\mathbb Z^d,s} [\frac{1}{\beta\rho^{(k)}(s)} -\kappa^*]
U(x,s)^2\Big\} \nn\\&& \hskip2cm -  \sum_{x\in \ell_{-,\ga}\mathbb
Z^d,s}[\frac{1}{\beta\rho^{(k)}(s)} -\kappa^*] U(x,s)^2
+\frac{1}{\beta} (u,\rho_\La^{-1}  u) +(u, [ D^2_\La g]u)
     \end{eqnarray*}
recalling\eqref{e2.8}, by \eqref{2.4} the curly bracket is non
negative as well as $\dis{\frac 1{\beta\rho^{(k)}_s}-\kappa^*}$.

Since for each $s$
      $$
\sum_{x\in \ell_{-,\ga}\mathbb Z^d}  U(x,s)^2 \le
\sum_{x\in \ell_{-,\ga}\mathbb Z^d} u(x,s)^2
       $$
then
      $$
\sum_{x\in\ell_{-,\ga}\mathbb
Z^d,s}[\frac{1}{\beta\rho^{(k)}_s} -\kappa^*] U(x,s)^2 \le
( u,[\frac{1}{\beta\rho^{(k)}}-\kappa^*]u)
       $$
Thus
   \begin{eqnarray*}
&& \big(u,A  u\big) \ge   \Big(u, [\kappa^*+\frac 1 {\beta \rho_\La}
-\frac 1 {\beta \rho^{(k)}}] u\Big)+ \big(u, [D^2_\La g] u\big)
     \end{eqnarray*}
Recalling \eqref{7e.0}, \eqref{7e.0.0} and using
\eqref{ee4.2.7}--\eqref{ee4.2.9} we get
    \begin{eqnarray*}
&& \| D^2_\La g\|\le \sup_i \sum _j |\frac {\partial ^2g}{\partial
\rho_\La(i)\partial\rho_\La(j)}|\le(\ga\ell_{-,\ga})^{a_0}
     \end{eqnarray*}
Thus
    $$
\big(u, [D^2_\La g] u\big)\le [\ga\ell_{-,\ga}]^{2a_0}(u,u)
    $$
 \eqref{e5.3.1} is then proved recalling the assumption
$|\rho_{\La}(i)-\rho^{(k)}_{s(i)}\big| \le 4\zeta$.

 \qed

\vskip1cm

\begin{thm}
 \label{thmIe5.3.2}
Given any  $\kappa \in (0,\kappa^*)$ ($\kappa^*$ as in
\eqref{2.4}), for all $\ga$ small enough the following
holds.  Let $\hat\rho_{\La,\eps}$ be a minimizer of
$f_\eps$ and for $\eps=0$ of $f$, then for both $\eps>0$
small enough and  $\eps=0$
   \begin{equation}
      \label{e5.3.2}
f_\eps(\rho_\La, \bar q_{\La^c})\ge
 f_\eps(\hat\rho_{\La,\eps},
\bar q_{\La^c})
 + \frac{\kappa}{2}\; \big(\rho_\La-\hat\rho_{\La,\eps},\rho_\La-\hat\rho_{\La,\eps}\big)
     \end{equation}
for all $\rho_\La$ such that
$|\rho_\La(i)-\rho^{(k)}_{s(i)} |\le 2\zeta$ for all $i\in
\La$.  \eqref{e5.3.2} remains valid  if
$\hat\rho_{\La,\eps}$ is a critical point, $D_\La
f_\eps=0$, and $|\hat\rho_{\La,\eps} -\rho^{(k)} \big| \le
2\zeta$ as well as when  $\eps=0$ and $\hat\rho_{\La,0}$ a
``critical point'' of $f$ in the sense of Lemma
\ref{lemmaee.5.2.3}.

\end{thm}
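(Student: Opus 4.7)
The plan is to obtain \eqref{e5.3.2} from a second-order Taylor expansion of $f_\eps$ along the segment from $\hat\rho_{\La,\eps}$ to $\rho_\La$, combined with the Hessian bound of Theorem \ref{thmIe5.3.1} and the appropriate first-order condition on $\hat\rho_{\La,\eps}$. Setting $v:=\rho_\La-\hat\rho_{\La,\eps}$ and $\rho_\La^{(\tau)}:=\hat\rho_{\La,\eps}+\tau v$, smoothness of $f_\eps$ in its first argument gives
\[
f_\eps(\rho_\La;\bar q_{\La^c}) = f_\eps(\hat\rho_{\La,\eps};\bar q_{\La^c}) + \bigl(D_\La f_\eps(\hat\rho_{\La,\eps}),v\bigr) + \int_0^1(1-\tau)\bigl(v,\,D^2_\La f_\eps(\rho_\La^{(\tau)})\, v\bigr)\,d\tau.
\]
The task reduces to signing the first-order term non-negatively and bounding the Hessian from below uniformly along the segment.

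For the quadratic part, both endpoints lie within $2\zeta$ (in each coordinate) of $\rho^{(k)}$: the bound $|\rho_\La(i)-\rho^{(k)}_{s(i)}|\le 2\zeta$ is a hypothesis, while $|\hat\rho_{\La,\eps}(i)-\rho^{(k)}_{s(i)}|\le 2\zeta$ follows from Lemma \ref{lemmaee.5.2.1} when $\eps$ is small in the minimizer case, is a hypothesis in the critical-point case, and is part of the definition of $Y^{(k)}_\La\ni\hat\rho_{\La,0}$ in the Lemma-critical case. The segment $\rho_\La^{(\tau)}$ thus stays within $2\zeta\le 4\zeta$ of $\rho^{(k)}$, so Theorem \ref{thmIe5.3.1} applies uniformly in $\tau$ and yields $(v,D^2_\La f_\eps(\rho_\La^{(\tau)})v)\ge\kappa(v,v)$. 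Integration against $\int_0^1(1-\tau)\,d\tau=1/2$ produces the $\tfrac{\kappa}{2}(v,v)$ term on the right-hand side of \eqref{e5.3.2}.

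For the linear term I distinguish cases. When $\hat\rho_{\La,\eps}$ is a minimizer of $f_\eps$ with $\eps>0$ small, Lemma \ref{lemmaee.5.2.1} places it strictly inside $W^{(k)}_\La$, so it is a bona fide critical point and $D_\La f_\eps(\hat\rho_{\La,\eps})=0$; the same vanishing holds by hypothesis in the critical-point case. For $\eps=0$ with $\hat\rho_{\La,0}$ Lemma-critical, Lemma \ref{lemmaee.5.2.3} directly gives $(D_\La f(\hat\rho_{\La,0}),v)\ge 0$ coordinatewise whenever $\rho_\La\in Y^{(k)}_\La$, since the gradient components on the $\pm\zeta$ boundary have precisely the sign opposite to the admissible direction of $v$. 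To extend to $\rho_\La$ in the enlarged $2\zeta$-ball that leave $Y^{(k)}_\La$, I would approach $\hat\rho_{\La,0}$ by the interior critical points $\hat\rho_{\La,\eps}$ of $f_\eps$ provided by Lemma \ref{lemmaee.5.2.2}, apply the already proved inequality for each $\eps>0$ (where the first-order term vanishes identically), and pass to the limit using continuity of $f$ on the compact ball together with the non-negativity of the quartic penalty in $f_\eps$, which can be discarded in the limit.

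The main obstacle I anticipate is exactly this last limiting step in the Lemma-critical case: I need to preserve the quadratic lower bound $\tfrac{\kappa}{2}(v,v)$ under the replacement $\hat\rho_{\La,\eps}\leadsto\hat\rho_{\La,0}$. Uniformity of the constant $\kappa$ in $\eps$ is built into Theorem \ref{thmIe5.3.1}; the requirement that the segment from $\rho_\La$ to $\hat\rho_{\La,\eps}$ remain in the $4\zeta$-ball around $\rho^{(k)}$ for every small $\eps$ is ensured by Lemma \ref{lemmaee.5.2.1}, which gives a vanishing deviation of $\hat\rho_{\La,\eps}$ from $Y^{(k)}_\La$ as $\eps\to 0$; and the convergence $\hat\rho_{\La,\eps}\to\hat\rho_{\La,0}$ along the subsequence of Lemma \ref{lemmaee.5.2.2} is enough to pass both sides of \eqref{e5.3.2} to the limit by continuity.
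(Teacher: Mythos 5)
Your plan—Taylor expansion along the segment $\hat\rho_{\La,\eps}\to\rho_\La$, the uniform Hessian bound of Theorem \ref{thmIe5.3.1}, and the sign of the linear term—is exactly the paper's, and the paper treats the $\eps=0$ case by citing Lemma \ref{lemmaee.5.2.3} directly, with no limiting detour. The gap is in your extension step to $\rho_\La$ in the $2\zeta$-ball but outside $Y^{(k)}_\La$ at $\eps=0$. Two things defeat the proposed limiting argument. First, writing the $\eps>0$ inequality with $f_\eps=f+(4\eps)^{-1}\psi$, the penalty $(4\eps)^{-1}\psi(\rho_\La)$ sits on the \emph{left}-hand side and diverges as $\eps\to 0$ whenever $\rho_\La\notin Y^{(k)}_\La$; non-negativity lets you discard the analogous term on the right, not on the left, so the limit is vacuous. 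Second, Lemma \ref{lemmaee.5.2.2} supplies approximating $f_\eps$-critical points only for the minimizer of $f$; a general critical point in the sense of Lemma \ref{lemmaee.5.2.3} need not be a limit of the $\hat\rho_{\La,\eps}$, so no approximating sequence is available in that case.

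It is worth noting that for $\eps=0$ the quadratic lower bound can genuinely fail off $Y^{(k)}_\La$: already in one dimension, take $f(x)=\tfrac12 x^2+bx$ on $[-\zeta,\zeta]$ with $b$ large; the constrained minimizer is $\hat\rho=-\zeta$, the boundary gradient condition of Lemma \ref{lemmaee.5.2.3} holds ($f'(-\zeta)=b-\zeta>0$), the Hessian is $1$, yet $f(-2\zeta)-f(-\zeta)=\tfrac32\zeta^2-b\zeta<\tfrac\kappa2\zeta^2$ once $b>\tfrac{3-\kappa}{2}\zeta$. So the coordinatewise sign argument from Lemma \ref{lemmaee.5.2.3} is valid only when $\rho_\La\in Y^{(k)}_\La$, and no supplementary device can recover the full $2\zeta$-ball at $\eps=0$. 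This is harmless in context: Corollary \ref{coroe5.3.1} invokes \eqref{e5.3.2} at $\eps=0$ only to compare two elements of $Y^{(k)}_\La$, where your direct Lemma \ref{lemmaee.5.2.3} argument already applies; the $2\zeta$ range is genuinely needed only at $\eps>0$, where Lemma \ref{lemmaee.5.2.1} gives interior criticality so the linear term vanishes identically.
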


\vskip.5cm

{\bf Proof.}  We interpolate by setting $\rho_\La(\theta)=
\theta\rho_\La+ (1-\theta) \hat \rho_{\La,\eps}$, $\theta\in [0,1]$,
then  calling  $\psi_\eps(\theta):=f_\eps (\rho_\La(\theta),\bar
q_{\La^c})$ we have
   \begin{eqnarray*}
 \psi_\eps(1) -  \psi_\eps(0)
&=& \int_0^1 \big(D_\La \psi_\eps(\theta),\rho_\La- \hat\rho_{\La,\eps}\big)\\
&=& \int_0^1 \int_0^\theta \big(D^2_\La
\psi_\eps(\theta')\{\rho_\La- \hat\rho_{\La,\eps}\},\rho_\La-
\hat\rho_{\La,\eps}\big) + \big(D_\La \psi_\eps(0),\rho_\La-
\hat\rho_{\La,\eps}\big)
     \end{eqnarray*}

By \eqref{ee5.2.1} for $\eps>0$ small enough and for
$\eps=0$ as well, $|\rho_{\La}(\theta)-\rho^{(k)}\big| \le
4\zeta$ so that by \eqref{e5.3.1}
   \begin{eqnarray*}
 \int_0^1 \int_0^\theta \big(D^2_\La
\psi_\eps(\theta')\{\rho_\La- \hat\rho_{\La,\eps}\},\rho_\La-
\hat\rho_{\La,\eps}\big)  \ge \frac{\kappa}{2}\;
\big(\rho_\La-\hat\rho_{\La,\eps},\rho_\La-\hat\rho_{\La,\eps}\big)
     \end{eqnarray*}
Moreover  $\big(D_\La \psi_\eps(0),\rho_\La-
\hat\rho_{\La,\eps}\big)\ge 0$. In fact, if $\eps>0$ and
$\hat\rho_{\La,\eps}$ is a minimizer of $f_\eps$, by Lemma
\ref{lemmaee.5.2.1} (for $\eps>0$ small enough)
$\hat\rho_{\La,\eps}$ is also a critical point and $D_\La
\psi_\eps(0)=0$. If $\eps=0$ and $\hat\rho_{\La}$ a minimizer of $f$
then by Lemma \ref{lemmaee.5.2.3}, $\big(D_\La \psi_0(0),\rho_\La-
\hat\rho_{\La}\big)\ge 0$ which, for the same reason,  holds if
$\hat\rho_{\La}$ is a critical point of $f$ in the sense of Lemma
\ref{lemmaee.5.2.3}.

\qed

\vskip.5cm

 \begin{coro}
 \label{coroe5.3.1}
For any $\ga$ and $\eps>0$ small enough the minimizer of
$f_\eps$ is unique, same holds at $\eps=0$ for $f$. For
$\eps>0$ (and small enough) there is a unique critical
point in the space $\{|\rho_{\La} -\rho^{(k)} \big| \le
2\zeta\}$; such a critical point minimizes $f_\eps$. Analogously, when $\eps=0$ there is a unique critical point
in the sense of Lemma \ref{lemmaee.5.2.3}. Such a critical
point minimizes  $f$. The minimizer of $f_\eps$, $\eps>0$,
converges as $\eps\to 0$ to the minimizer of $f$.

 \end{coro}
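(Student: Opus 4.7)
\textbf{Proof plan for Corollary \ref{coroe5.3.1}.} The entire statement is a direct consequence of the strict convexity inequality \eqref{e5.3.2} in Theorem \ref{thmIe5.3.2}, combined with the a-priori bounds of Subsection \ref{subsec:e5.2}. The plan is to apply \eqref{e5.3.2} twice, swapping the roles of two candidate minimizers (or critical points), and then add the resulting inequalities.

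First I would prove uniqueness of the minimizer of $f_\eps$ for $\eps>0$ small. Let $\hat\rho^{(1)}_{\La,\eps}$ and $\hat\rho^{(2)}_{\La,\eps}$ be two minimizers. By Lemma \ref{lemmaee.5.2.1}, for $\eps$ small enough both satisfy $|\hat\rho^{(j)}_{\La,\eps}(i)-\rho^{(k)}_{s(i)}|\le 2\zeta$, and by the same lemma both are critical points of $f_\eps$. Applying Theorem \ref{thmIe5.3.2} with $\hat\rho_{\La,\eps}=\hat\rho^{(1)}_{\La,\eps}$ and $\rho_\La=\hat\rho^{(2)}_{\La,\eps}$ gives
\begin{equation*}
f_\eps(\hat\rho^{(2)}_{\La,\eps})\ge f_\eps(\hat\rho^{(1)}_{\La,\eps})+\frac{\kappa}{2}\bigl(\hat\rho^{(2)}_{\La,\eps}-\hat\rho^{(1)}_{\La,\eps},\hat\rho^{(2)}_{\La,\eps}-\hat\rho^{(1)}_{\La,\eps}\bigr).
\end{equation*}
Since $f_\eps(\hat\rho^{(1)}_{\La,\eps})=f_\eps(\hat\rho^{(2)}_{\La,\eps})$, the quadratic term must vanish, forcing $\hat\rho^{(1)}_{\La,\eps}=\hat\rho^{(2)}_{\La,\eps}$. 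The same argument handles uniqueness of the minimizer of $f$ at $\eps=0$, now using Lemma \ref{lemmaee.5.2.3} to ensure that a minimizer in $Y^{(k)}_\La$ is a ``critical point'' in the sense required by the second statement of Theorem \ref{thmIe5.3.2}, and using the bound $|\hat\rho_\La(i)-\rho^{(k)}_{s(i)}|\le\zeta$ which is automatic on $Y^{(k)}_\La$.

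For the statement about critical points, let $\hat\rho^{(1)}_{\La,\eps}$ be any critical point of $f_\eps$ with $|\hat\rho^{(1)}_{\La,\eps}-\rho^{(k)}|\le 2\zeta$ and let $\hat\rho^{(2)}_{\La,\eps}$ be the (unique) minimizer, which also satisfies this bound. Applying Theorem \ref{thmIe5.3.2} twice, first with the critical point as reference and then with the minimizer as reference, yields
\begin{equation*}
f_\eps(\hat\rho^{(2)})\ge f_\eps(\hat\rho^{(1)})+\tfrac{\kappa}{2}\|\hat\rho^{(2)}-\hat\rho^{(1)}\|^2,\qquad f_\eps(\hat\rho^{(1)})\ge f_\eps(\hat\rho^{(2)})+\tfrac{\kappa}{2}\|\hat\rho^{(1)}-\hat\rho^{(2)}\|^2.
\end{equation*}
Adding gives $0\ge \kappa\,\|\hat\rho^{(1)}-\hat\rho^{(2)}\|^2$, hence $\hat\rho^{(1)}=\hat\rho^{(2)}$, and in particular the critical point is the minimizer. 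The same symmetric argument works at $\eps=0$ using the generalized criticality of Lemma \ref{lemmaee.5.2.3}, which is exactly what is needed for the second case in Theorem \ref{thmIe5.3.2}.

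Finally, convergence of $\hat\rho_{\La,\eps}$ to $\hat\rho_\La$ as $\eps\to 0$ follows from Lemma \ref{lemmaee.5.2.2}: every subsequential limit lies in $Y^{(k)}_\La$ and minimizes $f$, but the minimizer of $f$ is now known to be unique, so the full net converges. I do not foresee any genuine obstacle: the work has already been done in establishing strict convexity on the enlarged domain $W^{(k)}_\La$ (Theorem \ref{thmIe5.3.1}) and in promoting it to the quadratic lower bound \eqref{e5.3.2} around critical points; the only mild subtlety is to remember to invoke Lemma \ref{lemmaee.5.2.1} (for $\eps>0$) and Lemma \ref{lemmaee.5.2.3} (for $\eps=0$) to guarantee that the candidate objects lie in the range where \eqref{e5.3.2} is available.
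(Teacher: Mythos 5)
Your proof is correct and takes essentially the same route as the paper: deduce from Lemma \ref{lemmaee.5.2.1} (resp.\ Lemma \ref{lemmaee.5.2.3}) that minimizers are critical points lying in the region where the quadratic lower bound \eqref{e5.3.2} applies, then use that bound to force any two candidate minimizers or critical points to coincide, and finally invoke Lemma \ref{lemmaee.5.2.2} for the $\eps\to 0$ convergence. Your treatment is somewhat more explicit than the paper's (which disposes of the two-minimizer case in one sentence and is a bit cryptic about the $\eps=0$ argument), and the symmetric ``apply twice and add'' step for the critical-point claim is a small overkill—one application of \eqref{e5.3.2} with the critical point as reference already shows it minimizes, and then minimizer uniqueness finishes—but it is harmless and correct.
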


\vskip.5cm

{\bf Proof.} From Lemma \ref{lemmaee.5.2.1} it follows that any
minimizer $\hat \rho_{\La,\eps}$ of $f_\eps$ is also a critical
point and verifies \eqref{ee5.2.1}, so for all $x\in
\ell_{-,\ga}\mathbb Z^d\cap \La$ and all $s\in \{1,..,S\}$,
$\big|\hat\rho_{\La,\eps}(x,s)-\rho^{(k)}_{s}\big| \le 2\zeta $ and
we can apply
 Theorem   \ref{thmIe5.3.2} to the matrix $D^2_\La f_\eps(\hat \rho_{\La,\eps};\bar
q_{\La^c})$. If we assume that there are two minimizers, then
\eqref{e5.3.2} gives a contradiction. The proofs in the case
$\eps=0$ follows by using
 Lemma \ref{lemmaee.5.2.2}.\qed

\vskip2cm


\subsection{Perfect boundary conditions}

       \label{subsec:e5.4}


In this subsection we restrict to ``perfect boundary conditions'',
by this meaning that we study
    \begin{equation}
     \label{e5.4.1}
f^{\rm pf}(\rho_\La;\bar q_{\La^c}) =
F_\La(\rho_\La|\rho^{(k)}1_{\La^c}) + g( \rho_\La;\bar q_{\La^c})
     \end{equation}
namely we replace in the LP term of the effective Hamiltonian, see
\eqref{ee4.2.10}, $\bar\rho_{\La^c}$ by the mean field equilibrium
value.  $f^{\rm pf}_\eps$ is then defined by adding to $f^{\rm pf}$
the term $f_\eps-f$ given by \eqref{ee5.1.2}.  All the previous
considerations obviously apply to $f^{\rm pf}$ and $f^{\rm
pf}_\eps$.

\vskip1cm

\begin{thm}
 \label{thme5.4.1}
For any $\ga$ small enough and for all $\eps>0$ small
enough, the minimizer ${\hat\rho}^{\rm pf}_{\La,\eps}$ of
$f^{\rm pf}_\eps$ minimizes $f^{\rm pf}$ as well and it is
such that
   \begin{equation}
      \label{e5.4.2}
| {\hat \rho}^{\rm pf}_{\La,\eps}(i)-\rho^{(k)}_{s(i)}| \le
c (\ga\ell_{-,\ga})^{a_0},\quad \text{ for all $i\in \La$}
     \end{equation}
$c>0$ a constant.

\end{thm}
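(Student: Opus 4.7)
The plan is to prove the pointwise bound $|\hat\rho^{\rm pf}_{\La,\eps}(i)-\rho^{(k)}_{s(i)}|\le c(\ga\ell_{-,\ga})^{a_0}$ directly. The remaining claim --- that the $\eps$-minimizer also minimizes $f^{\rm pf}$ --- is then automatic: since $(\ga\ell_{-,\ga})^{a_0}=\ga^{a_0\alpha_-}\ll\ga^{a}=\zeta$ by \eqref{e3.1.1} (taking $a_0<1$ close to $1$ so that $a<a_0\alpha_-$), the bound forces $\hat\rho^{\rm pf}_{\La,\eps}\in Y^{(k)}_\La$, where $f^{\rm pf}_\eps\equiv f^{\rm pf}$; uniqueness (Corollary \ref{coroe5.3.1}) then identifies this minimizer with the $f^{\rm pf}$-minimizer.

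The central identity is that for perfect boundary conditions the LP gradient vanishes exactly at $\rho^{(k)}1_\La$. Because $\bar V_\ga$ is normalized so that $(\bar V_\ga\rho^{(k)})(i)=\sum_{s'\ne s(i)}\rho^{(k)}_{s'}$, the $t$-weighted Kac interaction and the $(1-t)$-weighted reference term of $F^{\rm pf}_\La$ assemble $t$-independently into this mean-field coupling, so $D_i F^{\rm pf}_\La(\rho^{(k)}1_\La)=\sum_{s'\ne s(i)}\rho^{(k)}_{s'}+\beta^{-1}\log\rho^{(k)}_{s(i)}-\la_\beta$, which vanishes by the mean-field equation \eqref{e2.7}. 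The perturbative gradients are much smaller: $\|D_\La H^{(1)}(\rho^{(k)}1_\La)\|_\infty=O(\ell_{-,\ga}^{-d})$ from \eqref{ee4.2.5}, and $\|D_\La H^{(2)}(\rho^{(k)}1_\La)\|_\infty=O((\ga\ell_{-,\ga})^{2a_0})$ since every term in \eqref{ee4.2.7} carries $|k|\ge 2$ while $\sum|\Phi|\le c$ by \eqref{ee4.2.9}. Hence $v:=D_\La f^{\rm pf}(\rho^{(k)}1_\La)$ obeys $\|v\|_\infty\le c(\ga\ell_{-,\ga})^{a_0}$.

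Put $u:=\hat\rho^{\rm pf}_{\La,\eps}-\rho^{(k)}1_\La$. Lemma \ref{lemmaee.5.2.1} and Corollary \ref{coroe5.3.1} ensure that for $\eps$ small $\hat\rho^{\rm pf}_{\La,\eps}$ is an interior critical point of $f^{\rm pf}_\eps$ with $\|u\|_\infty\le 2\zeta$. Taylor-expanding $D_\La f^{\rm pf}_\eps$ along the segment $\rho^{(k)}1_\La\to\hat\rho^{\rm pf}_{\La,\eps}$ rewrites the Euler--Lagrange equation as
\[
A_{\rm avg}\,u\;=\;-v\;-\;D_\La(f_\eps-f)(\hat\rho^{\rm pf}_{\La,\eps}),\qquad A_{\rm avg}:=\int_0^1 D^2_\La f^{\rm pf}\!\left(\rho^{(k)}+\theta u\right)d\theta,
\]
and Theorem \ref{thmIe5.3.1}, applied at every $\theta\in[0,1]$, yields the spectral coercivity $A_{\rm avg}\ge\kappa I$ on $\mathcal H$.

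The main obstacle is to upgrade this coercivity to a uniform $\ell^\infty\to\ell^\infty$ operator bound $\|A_{\rm avg}^{-1}\|_{\infty\to\infty}\le C$ independent of $\ga$. My plan is to exploit the decomposition $A_{\rm avg}=D+t\bar V_\ga+O((\ga\ell_{-,\ga})^{a_0})$ with strictly positive diagonal $D=(\beta\rho^{(k)}_{s(i)})^{-1}\delta_{ij}$ and symmetric non-negative convolution kernel $t\bar V_\ga$ of range $\sim\ga^{-1}$ with row-sum $t(S-1)$. Because $\bar V_\ga$ is of convolution type with Fourier symbol concentrated on $|k|\lesssim\ga$, the non-trivial part of the spatial kernel of $A_{\rm avg}^{-1}$ scales like $\ga^d h(\ga|x|)$ with $h$ Schwartz; a Young's/$L^1$-kernel estimate then bounds $\sup_i\sum_j|A_{\rm avg}^{-1}(i,j)|$ uniformly by the $L^\infty\to L^\infty$ norm of the continuous translation-invariant inverse, independent of $\ga$. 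The mean-field Hessian positivity \eqref{2.4} gap $\kappa^*$ feeds into this scaling argument, and the small correction $O((\ga\ell_{-,\ga})^{a_0})$ is handled as a Neumann perturbation. Once $\|A_{\rm avg}^{-1}\|_{\infty\to\infty}\le C$ is in hand, $\|u\|_\infty\le C\|v\|_\infty+C\|D(f_\eps-f)(\hat\rho)\|_\infty$; a bootstrap closes the cutoff term, since the target bound $C(\ga\ell_{-,\ga})^{a_0}$ is already $\ll\zeta$ and so $(f_\eps-f)$ vanishes a posteriori. Passing $\eps\to 0$ via Lemma \ref{lemmaee.5.2.2} identifies the $\eps=0$ minimizer and concludes the proof.
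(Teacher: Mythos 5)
Your outline shares the paper's skeleton (vanishing LP gradient at $\rho^{(k)}1_\La$, coercivity from Theorem~\ref{thmIe5.3.1}, and inversion of the Hessian in the $L^\infty$ operator norm), but realizes it by Taylor-expanding in $u=\hat\rho^{\rm pf}_{\La,\eps}-\rho^{(k)}1_\La$ rather than by the paper's interpolation in the strength $\theta$ of $g$. That is a legitimate, arguably cleaner, alternative: it sidesteps the differentiability-in-$\theta$ subtlety that the paper must resolve through the Picard/ODE detour around \eqref{e5.4.4}--\eqref{I.3.2.24}. However, two of your steps would not go through as written.

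First, the cutoff term. You write $A_{\rm avg}\,u=-v-D_\La(f_\eps-f)(\hat\rho^{\rm pf}_{\La,\eps})$ with $A_{\rm avg}=\int_0^1 D^2_\La f^{\rm pf}(\rho^{(k)}+\theta u)\,d\theta$, and then propose to close the cutoff contribution ``by bootstrap''. This is not a valid argument: Lemma~\ref{lemmaee.5.2.1} only gives $\|u\|_\infty\le\zeta+c(\ell_{+,\ga}/\ell_{-,\ga})^{d/4}\eps^{1/4}$, so the overshoot $(\hat\rho(i)-\rho^{(k)}_{s(i)}-\zeta)_+$ may be of order $\eps^{1/4}$, and then $D_i(f_\eps-f)=\eps^{-1}(\cdot)^3$ is of order $\eps^{-1/4}$, which diverges. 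Assuming the target bound $\ll\zeta$ to conclude that the cutoff vanishes is circular. The fix is immediate: Taylor-expand $D_\La f^{\rm pf}_\eps$ (not $D_\La f^{\rm pf}$), i.e.\ set $A^\eps_{\rm avg}:=\int_0^1 D^2_\La f^{\rm pf}_\eps(\rho^{(k)}+\theta u)\,d\theta$, so that the Euler--Lagrange equation becomes simply $A^\eps_{\rm avg}u=-v$ with no cutoff source term (since $D_\La(f_\eps-f)(\rho^{(k)}1_\La)=0$). The cutoff's Hessian is diagonal and non-negative, so it enlarges the diagonal of $A^\eps_{\rm avg}$ without affecting the off-diagonal row-sums; this does not impair the $L^\infty$ invertibility bound --- precisely the same observation the paper uses when applying Theorem~\ref{thmappB.1.1} to $D^2_\La f_{\eps,\theta}$.

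Second, the $L^\infty$ inverse bound. Your Fourier/Young-inequality plan presupposes translation invariance, but $\La\subset\La^*$ is a bounded region and the diagonal of $A^\eps_{\rm avg}$ is not constant in $i$ (it involves $\rho_\La(i)^{-1}$ plus an $\eps$-dependent diagonal that varies strongly near the cutoff boundary). Making the Fourier scaling argument rigorous in finite volume with a variable and possibly large diagonal would require substantial work. The paper instead invokes Theorem~\ref{thmappB.1.1}: writing the Hessian as diagonal $D$ plus an off-diagonal $R$ with $\|R\|_\infty$ bounded uniformly in $\ga$ (coming from the row-sum of $\bar V_\ga$ and the summability \eqref{ee4.2.9} of the many-body coefficients), the resolvent identity $B^{-1}=D^{-1}-D^{-1}RD^{-1}+D^{-1}RB^{-1}RD^{-1}$ together with the $\ell^2$-coercivity $\|B^{-1}\|\le\kappa^{-1}$ gives $\|B^{-1}\|_\infty\le\kappa^{-1}+\kappa^{-2}\|R\|_\infty(1+\|R\|_\infty/\kappa)$, uniformly in $\ga$ and $\eps$. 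This is both simpler and robust to the finite-volume boundary, and is what actually makes the estimate close.

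With these two fixes your Taylor-expansion route yields the same conclusion as the paper's $\theta$-interpolation, and it is arguably tidier because it bypasses the differentiability discussion entirely.
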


\vskip.5cm

{\bf Proof.} Since ${\hat\rho}^{\rm pf}_{\La,\eps}$ is a minimizer
of $f^{\rm pf}_\eps$, $D_\La  f^{\rm pf}_\eps({\hat\rho}^{\rm
pf}_{\La,\eps})=0$.  Then if \eqref{e5.4.2} holds, $D_\La f^{\rm
pf}({\hat\rho}^{\rm pf}_{\La,\eps})=D_\La f^{\rm
pf}_\eps({\hat\rho}^{\rm pf}_{\La,\eps})=0$ and by Corollary
\ref{coroe5.3.1} ${\hat\rho}^{\rm pf}_{\La,\eps}$ is a minimizer
of $f^{\rm pf}$.     We thus have only to prove \eqref{e5.4.2} for
all $\eps>0$ small enough.  Consider first the simplified problem
with $g=0$.

  \centerline{\it Case $g=0$}
 \nopagebreak
Recalling \eqref{ee4.2.2},  if $D_\La
F_\La(\rho_\La|\rho^{(k)}1_{\La^c})=0$, then by an explicit
computation, for all $i \in \La$,
   \begin{equation}
      \label{e5.4.3}
  \rho_\La(i) = \exp\Big\{-\beta[ \sum_{j\in \ell_{-,\ga}
  \mathbb Z^d} t\bar V_ \ga (i,j) \rho(j)
  +(1-t)\rho^{(k)}_{s(i)}
  -\la_\beta]\Big\}
     \end{equation}
where $\rho(j) =\rho_\La(j)$ if  $j\in \La$ and
$=\rho^{(k)}_{s(j)}$ if  $j\in \La^c$.
$\rho_\La(i)=\rho^{(k)}_{s(i)}$ is a solution of
\eqref{e5.4.3} and therefore also a solution of $D_\La
f^{\rm pf}_\eps=0$ (with $g=0$). By Corollary
\ref{coroe5.3.1} it is then the unique minimizer of $f^{\rm
pf}_\eps$ and \eqref{e5.4.2} is proved (for $g=0$).

\vskip.5cm

  \centerline{\it Proof of \eqref{e5.4.2}.}
 \nopagebreak

Call
        $$
f_{\eps,\theta}(\rho_\La)=F_\La (\rho_\La|\rho^{(k)}1_{\La^c})
+\theta g(\rho_\La;\bar q_{\La^c},t)+(f_\eps-f)
        $$
$\theta\in [0,1]$; for all $\eps>0$
small enough denote by $\hat \rho_{\La,\eps,\theta}$ the minimizer
of $f_{\eps,\theta}$, so that $D_\La f_{\eps,\theta}(\hat
\rho_{\La,\eps,\theta})=0$. Suppose that
  \begin{equation}
      \label{e5.4.4}
 \frac{d\hat
\rho_{\La,\eps,\theta}}{d\theta}\;\; \text{exists for all
$\theta\in [0,1]$ and depends continuously on $\theta$}
     \end{equation}
Obviously $\hat \rho_{\La,\eps,1}=\hat \rho_{\La,\eps}$ while $\hat
\rho_{\La,\eps,0}= \rho^{(k)}1_\La$ because of the above analysis
with $g=0$.  Then
   \begin{equation}
      \label{I.3.2.19.0}
\hat \rho_{\La,\eps,\theta}= \rho^{(k)}1_\La+\int_0^1 \frac
{d\hat \rho_{\La,\eps,\theta}}{d\theta}
     \end{equation}
On the other hand by differentiating $D_\La f _{\eps,\theta}(\hat
\rho_{\La,\eps,\theta})=0$ we get
   \begin{equation}
      \label{I.3.2.21.0}
 D^2_\La f
_{\eps,\theta}(\hat \rho_{\La,\eps,\theta})  \frac
{d\hat\rho_{\La,\eps,\theta}}{d\theta}
 = -
    D_{\La}g(\hat\rho_{\La,\eps,\theta})
      \end{equation}
By Lemma \ref{lemmaee.5.2.1} and Theorem \ref{thmIe5.3.1} for all
$\eps>0$ small enough, $ D^2_\La f_{\eps,\theta}(\hat
\rho_{\La,\eps,\theta}) $ is symmetric and positive definite, then
by Theorem \ref{thmappB.1.1} the inverse $(D^2_\La
f_{\eps,\theta}(\hat \rho_{\La,\theta}))^{-1}$ is well defined and
bounded as an operator on $L^\infty$, and we thus get  from
\eqref{I.3.2.21.0}
   \begin{equation}
      \label{I.3.2.21.1}
 | \frac
 {d\hat\rho_{\La,\eps,\theta}}{d\theta}| \le c \|
    D_{\La}g(\hat\rho_{\La,\eps,\theta})\|_\infty \le c'
    (\ga\ell_{-,\ga})^{a_0}
      \end{equation}
which by \eqref{I.3.2.19.0} yields \eqref{e5.4.2}.
\eqref{I.3.2.21.1} also implies that
$|\hat\rho_{\La,\eps,\theta} -\rho^{(k)}1_\La|\le c'
(\ga\ell_{-,\ga})^{a_0}$.  Notice that \eqref{I.3.2.21.1}
implies  \eqref{e5.4.4}, but unfortunately the argument is
circular as it started by supposing the validity of
\eqref{e5.4.4}. To avoid the impasse we start from the
equation in the unknown $u_\La$
   \begin{equation}
      \label{I.3.2.21.11}
D^2_\La f _{\eps,\theta}( \rho_{\La}) u_{\La}= -
    D_{\La}g(\rho_{\La})
      \end{equation}
where $\rho_\La$ is considered as a ``known term'' such
that $|\rho_\La(i)- \rho^{(k)}_{s(i)}|\le 2\zeta$ for all
$i\in \La$.  From what said before, \eqref{I.3.2.21.11} has
a unique solution called $\dot \rho_\La(i|\rho_\La)$ and
   \begin{equation}
      \label{I.3.2.22}
|\dot \rho_\La(i|\rho_\La)| \le c
(\ga\ell_{-,\ga})^{a_0},\quad \text{ for all $i\in  \La$}
     \end{equation}
Since $\dot \rho_\La(\cdot|\rho_\La)$ is Lipschitz in
$\rho_\La$ (we omit the details)  the ordinary differential
equation
   \begin{equation}
      \label{I.3.2.23}
\frac{ d \rho_\La(\theta)}{d\theta}= \dot
\rho_\La(\cdot|\rho_\La(\theta)),\quad
\rho_\La(0)=\rho^{(k)} 1_\La
     \end{equation}
has  a unique solution $\tilde \rho_\La(\theta)$.  Then, by
\eqref{I.3.2.21.11},
   \begin{equation}
      \label{I.3.2.24}
\frac {d}{d\theta} D_\La f _{\eps,\theta}(\tilde \rho_\La(\theta))
=0,\;\;\text{and hence}\;\; D_\La f_{\eps,\theta} (\tilde
\rho_\La(\cdot;\theta)) = D_\La f_{\eps,0}(\rho^{(k)} 1_\La)=0
      \end{equation}
Since $|\tilde\rho_{\La}(\theta) -\rho^{(k)}1_\La|\le c'
    (\ga\ell_{-,\ga})^{a_0}$, $D_\La f_{0,\theta} (\tilde
\rho_\La(\cdot;\theta))=0$ as well, hence by Corollary
\ref{coroe5.3.1}, $\tilde \rho_\La(\cdot;\theta)=\hat
\rho_{\La,\eps,\theta}(\cdot)$ and by \eqref{I.3.2.23} it is
differentiable with continuous derivative. \eqref{e5.4.4} thus holds
and the theorem proved.  \qed

\vskip2cm


\subsection{Exponential decay}

       \label{subsec:e5.5}


This subsection concludes our analysis with the following main
theorem, Theorem \ref{thmee5.0} will be proved in the Subsection
\ref{subsec:e5.6} as a corollary, taking $\La_{1}^{c}$ as a neighborhood of $x$ in $\La^c$ and $\La_{2}^c = \La^{c}\setminus \La_{1}^c$.

\vskip1cm

\begin{thm}
 \label{thmIe.2.5}
There are $\hat \om$ and $c$ positive such that the following holds.
Let ${\hat \rho}'_\La$ and  ${\hat \rho}''_\La$ be the minimizers of
$f(\rho_\La,{\bar q}'_{\La^c})$, respectively $f(\rho_\La,{\bar
q}''_{\La^c})$, with ${\bar q}'_{\La^c},{\bar q}''_{\La^c}\in
\mathcal X_{\La^c}$. Then for any partition of $\La^c$ into two
$\mathcal D^{(\ell_{-,\ga})}$-measurable sets $\La^c_1$ and $
\La^c_2$,
   \begin{eqnarray}
      \label{Ie.3.2.18}
&&\hskip-.5cm|{\hat \rho}''_\La(i)-{\hat \rho}'_\La(i)| \le
c \Big (\min\big\{
\text{\bf 1}_{{\bar q}''_{\La^c_1}\ne {\bar q}'_{\La^c_1}};
 \max_{j \in \La_1^c}\big((\ga\ell_{-,\ga})^{a_0}
+ \;|\rho^{(\ell_{-,\ga})}({\bar
q}''_{\La^c};j)-\rho^{(\ell_{-,\ga})}({\bar
q}'_{\La^c};j)|\big)\big\}\nn\\&&\hskip2cm + \sum_{j \in \La_2^c}
e^{-\om \ga |x(i)-x(j)|} \;\text{\bf 1}_{{\bar
q}''_{C^{(\ell_{-,\ga})}_j } \ne {\bar q}'_{C^{(\ell_{-,\ga})}_j}}
\Big),\qquad \,\,\,\,\,\,\forall i\in \La
     \end{eqnarray}

\end{thm}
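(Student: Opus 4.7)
The plan is to adapt the interpolation strategy used in the proof of Theorem \ref{thme5.4.1}, but now interpolating in the boundary condition rather than in the strength of $g$. By Lemma \ref{lemmaee.5.2.1} and Corollary \ref{coroe5.3.1}, for any $\eps>0$ small enough the minimizers $\hat\rho'_{\La,\eps}$ and $\hat\rho''_{\La,\eps}$ of $f_\eps(\cdot,\bar q'_{\La^c})$ and $f_\eps(\cdot,\bar q''_{\La^c})$ are interior critical points satisfying $|\hat\rho_{\La,\eps}-\rho^{(k)}|\le 2\zeta$; establishing \eqref{Ie.3.2.18} for $\eps>0$ and passing to $\eps\to 0$ via Lemma \ref{lemmaee.5.2.2} will yield the theorem at $\eps=0$.

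To compare the two minimizers, I would introduce a parameter $\theta\in[0,1]$ and a family $f^{(\theta)}_\eps$ in which both the LP boundary density $\bar\rho_{\La^c}(\theta):=\theta\bar\rho''_{\La^c}+(1-\theta)\bar\rho'_{\La^c}$ and the coefficients $\Phi$ in the expansion \eqref{ee4.2.7} for $H^{(2)}_\La$ are interpolated linearly between those associated with $\bar q'_{\La^c}$ and $\bar q''_{\La^c}$. The uniform bounds \eqref{ee4.2.8}--\eqref{ee4.2.9} survive interpolation, so Theorems \ref{thmIe5.3.1}--\ref{thmIe5.3.2} and Corollary \ref{coroe5.3.1} still apply, giving a unique minimizer $\hat\rho_\La(\theta)$ with $\hat\rho_\La(0)=\hat\rho'_{\La,\eps}$ and $\hat\rho_\La(1)=\hat\rho''_{\La,\eps}$. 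Differentiating the critical point equation $D_\La f^{(\theta)}_\eps(\hat\rho_\La(\theta))=0$ gives
\begin{equation*}
A(\theta)\,\dot{\hat\rho}_\La(\theta)=-\partial_\theta D_\La f^{(\theta)}_\eps(\hat\rho_\La(\theta)),\qquad A(\theta):=D^2_\La f^{(\theta)}_\eps(\hat\rho_\La(\theta)),
\end{equation*}
where by Theorem \ref{thmIe5.3.1}, $A(\theta)$ is symmetric and positive definite with spectral gap $\kappa$ uniform in $\theta$. Lipschitz dependence of the solution on $\theta$ is ensured by the same ODE argument used at the end of the proof of Theorem \ref{thme5.4.1}.

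The crucial input is exponential decay of $A(\theta)^{-1}$: the off-diagonal part of $A(\theta)$ consists of $t\bar V_\ga$ (supported within distance $\ga^{-1}$) plus $D^2_\La g$ (supported within $2N\ga^{-1}$ by \eqref{ee4.2.8}), so $A$ is a bounded perturbation of a positive diagonal matrix by a banded kernel of range $O(\ga^{-1})$. Combined with the spectral gap $\kappa$, a Combes--Thomas-type argument packaged in Theorem \ref{thmappB.1.1} yields
\begin{equation*}
|A(\theta)^{-1}(i,j)|\le c\,e^{-\om\ga|x(i)-x(j)|}
\end{equation*}
for some $\om>0$, uniformly in $\theta$ and $\La$. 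Meanwhile $\partial_\theta D_\La f^{(\theta)}_\eps$ at a site $i\in\La$ has two pieces: the LP contribution $t\bar V_\ga(\bar\rho''_{\La^c}-\bar\rho'_{\La^c})(i)$, localized within $\ga^{-1}$ of $\La^c$, and a $g$-contribution bounded using \eqref{ee4.2.9} by $c(\ga\ell_{-,\ga})^{a_0}$ times an indicator that $\bar q'_{\La^c,\und i}\ne\bar q''_{\La^c,\und i}$ on some $\und i$ of diameter $\le 2N\ga^{-1}$ meeting $\La$.

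Integrating $\hat\rho''_{\La,\eps}(i)-\hat\rho'_{\La,\eps}(i)=-\int_0^1 A(\theta)^{-1}\partial_\theta D_\La f^{(\theta)}_\eps\,d\theta$ and passing to $\eps\to 0$, contributions from $j\in\La^c_2$ produce the second term in \eqref{Ie.3.2.18} directly. For $j\in\La^c_1$ we have two options: apply the same exponential bound (useless if $\La^c_1$ is close to $i$ and the disagreements are many), or bound $\|A(\theta)^{-1}\|_{\infty\to\infty}\le c$ and estimate the pointwise $L^\infty$ norm of $\partial_\theta D_\La f^{(\theta)}$ restricted to $\La^c_1$ by $\max_{j\in\La^c_1}((\ga\ell_{-,\ga})^{a_0}+|\rho^{(\ell_{-,\ga})}(\bar q'';j)-\rho^{(\ell_{-,\ga})}(\bar q';j)|)$; the indicator $\text{\bf 1}_{\bar q''_{\La^c_1}\ne\bar q'_{\La^c_1}}$ reflects the fact that this term vanishes when the two boundary conditions agree on $\La^c_1$, and the $\min$ just records that both bounds are valid.

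The main obstacle is the off-diagonal exponential decay of $A(\theta)^{-1}$ with rate $\om\ga$ uniform in $\La$ and $\theta$: this requires verifying that the $D^2_\La g$ perturbation is genuinely small compared to $\kappa$ (which is controlled by the bound $\|D^2_\La g\|\le(\ga\ell_{-,\ga})^{a_0}$ from the proof of Theorem \ref{thmIe5.3.1}) and that the banded structure with reach $O(\ga^{-1})$ can be fed into the appendix result. Once that uniform resolvent estimate is in hand, the rest of the argument is a routine integration over $\theta$ combined with the two partition-dependent bounds above.
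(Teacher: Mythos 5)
Your overall strategy matches the paper's: interpolate in the boundary condition, differentiate the critical point equation $D_\La f_{\theta,\eps}(\hat\rho_{\La,\eps,\theta})=0$ to get $A(\theta)\dot{\hat\rho}=v$, invert $A(\theta)$ with exponential off-diagonal decay to handle $\La^c_2$, and use a uniform $\|A^{-1}\|_\infty$ bound to handle $\La^c_1$. However there is a genuine gap in the step you flag as ``the main obstacle,'' and it is not merely a matter of verifying banded structure. You claim a uniform bound $|A(\theta)^{-1}(i,j)|\le c\,e^{-\om\ga|x(i)-x(j)|}$ with $\om$ uniform in $\theta$ and $\La$, but you must also have it \emph{uniform in $\eps$} in order to pass to the limit $\eps\to 0$ via \eqref{e5.5.5} — and the argument you sketch cannot deliver that. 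The Hessian is $A = A_0 + \alpha$ where $A_0 = D^2_\La f_{\theta,0}$ has uniformly bounded, $O(\ga^{-1})$-banded entries, but $\alpha$ is the diagonal matrix from the cutoff \eqref{ee5.1.2}, with entries given by \eqref{Ie.3.2.30.0}. By Lemma \ref{lemmaee.5.2.1} the overshoot $|\hat\rho_{\La,\eps}(i)-\rho^{(k)}_{s(i)}|-\zeta$ is only $O(\eps^{1/4})$, so $\alpha(i)\sim\eps^{-1}(\eps^{1/4})^2=\eps^{-1/2}$ can diverge as $\eps\to 0$ at sites where the limiting minimizer hits the boundary of $Y^{(k)}_\La$. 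The Combes--Thomas estimate in Theorem \ref{thmappB.1} needs $a=\sup_i\sum_j|A(i,j)|e^{\ga|i-j|}<\infty$, and the decay rate it produces is $\om\sim\kappa'/(a+\kappa')$ — so an unbounded diagonal kills the rate. (Theorem \ref{thmappB.1.1}, which you also cite, only gives an $\|\cdot\|_\infty$ bound, not exponential decay.)

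This is precisely what the paper's $P/Q$ block decomposition is for. The set $G=\{i:\alpha(i)\ge b\}$ in \eqref{Ie.3.2.30}, the projections $Q$ onto $\mathcal H_G$ and $P=1-Q$, and the Schur complement $B=PAP-PA_0(QAQ)^{-1}QA_0$ in \eqref{Ie.3.2.32.1} separate the ``large-$\alpha$'' sites (where $(QAQ)^{-1}$ is controlled to be \emph{small} by Theorem \ref{thmappB.2}, precisely because the diagonal is large there) from the ``small-$\alpha$'' sites (where Theorem \ref{thmappB.1} applies with bounded $a$ and yields exponential decay of $B^{-1}$ with $\eps$-uniform rate). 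Since $\alpha$, $P$, $Q$ are diagonal and commute, $QAP=QA_0P$ and $PAQ=PA_0Q$, which is what makes the elimination in \eqref{Ie.3.2.31}--\eqref{Ie.3.2.33} close. Without this device, your decay rate $\om$ degenerates with $\eps$ and the final estimate does not survive the limit. You should either incorporate the Schur complement step, or find an alternative way to control $(A_0+\alpha)^{-1}$ uniformly in the size of the nonnegative diagonal $\alpha$; the remainder of your outline (the split $v=v^{(1)}+v^{(2)}$ according to $\La^c_1$ versus $\La^c_2$, and the use of \eqref{ee4.2.8}--\eqref{ee4.2.9} to localize the $g$-contribution to $v$) matches the paper and is sound.
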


{\bf Proof.}   We follow the interpolation strategy used in the
proof of Theorem \ref{thme5.4.1}.  To this end we separate the
``interaction part''  in $f_\eps$ writing $f_\eps=
f^0_\eps+f_\eps^1$ where $f^0_\eps=f^0_\eps(\rho_\La)$ is
independent of the boundary conditions while
    \begin{equation}
     \label{Ie.3.2.21.0}
f^1_\eps (\rho_\La,\bar q_{\La^c})=  
t (\rho_\La, \bar V_\ga\bar \rho_{\La^c}) + g_1 (\rho_\La,\bar
q_{\La^c})
     \end{equation}
where $g_1$ is given by the r.h.s of \eqref{ee4.2.7} with the sum
over $\und i$ restricted to the set $\und i \cap \La^c\ne
\emptyset$.

 We then interpolate between the
two boundary conditions
   \begin{equation}
      \label{Ie.3.2.21}
f_{\theta,\eps}(\rho_\La):= f^0_\eps(\rho_\La) + \theta f_\eps^1
(\rho_\La,{\bar q''}_{\La^c}) +(1-\theta) f_\eps^1 (\rho_\La,{\bar
q'}_{\La^c}),\quad \theta\in [0,1]
     \end{equation}
The analysis done in the previous subsections, applies to
$f_{\theta,\eps}(\rho_\La)$ as well. Thus the minimizer $\hat
\rho_{\La.\eps,\theta}$ of $f_{\theta,\eps}$ is unique, is a
critical point, namely  $D_\La f_{\theta,\eps}(\hat
\rho_{\La.\eps,\theta})=0$ and satisfies for all $x\in
\ell_{-,\ga}\mathbb Z^d\cap \La$ and all $s\in \{1,..,S\}$,
$\big|\hat\rho_{\La,\eps,\theta}(x,s)-\rho^{(k)}_{s}\big| \le
2\zeta $.

We can apply the same proof as the one given in Theorem
\ref{thme5.4.1}. In fact by Theorem \ref{thmIe5.3.1}, for all
$\eps>0$ small enough, and for all $\rho_\La$ such that
$\big|\rho_{\La}(x,s)-\rho^{(k)}_{s}\big| \le 2\zeta $, we have that
$ D^2_\La f_{\theta,\eps}( \rho_{\La})$ is symmetric and positive
definite, then by Theorem \ref{thmappB.1.1} the inverse $(D^2_\La
f_{\theta,\eps}(\rho_{\La}))^{-1}$ is well defined and bounded as an
operator on $L^\infty$. Thus the equation
     \begin{equation}
      \label{a5.25}
(D_\La^2f_{\theta,\eps} (\rho_{\La}))u_\La=- \frac {\partial D_\La
f_{\theta,\eps}(\rho_{\La})}{\partial\theta}
     \end{equation}
has a unique solution that we call $u_\La(\cdot,\rho_{\La})$ that
is Lipschitz in $\rho_{\La}$. This implies that the equation
    \begin{equation*}
\frac {d
\rho_{\La,\eps,\theta}}{d\theta}=u_\La(\cdot,\rho_{\La,\eps,\theta}),\qquad
 \rho_{\La,\eps,0} =\hat\rho_{\La,\eps,0}
     \end{equation*}
has a unique solution that coincides with the minimizer
$\hat\rho_{\La,\eps,\theta}$. Thus $\hat \rho_{\La.\eps,\theta}$ is
differentiable in $\theta$ and $d\hat
\rho_{\La.\eps,\theta}/d\theta$ satisfies
   \begin{equation}
      \label{Ie.3.2.23}
(D_\La^2f_{\theta,\eps}(\hat \rho_{\La,\eps,\theta}) )\frac {d\hat
\rho_{\La,\eps,\theta}}{d\theta}=- \frac {\partial D_\La
f_{\theta,\eps}(\hat \rho_{\La,\eps,\theta})}{\partial\theta}
     \end{equation}
By  Corollary \ref{coroe5.3.1},  $\hat
\rho_{\La.\eps,\theta}$ converges by subsequences as
$\eps\to 0$ to a limit $\tilde \rho_{\La,\theta}$ which
minimizes $f_\theta$, so that
   \begin{eqnarray}
      \label{e5.5.5}
&& |{\hat \rho}''_\La(i)-{\hat \rho}'_\La(i)| \le
\lim_{\eps\to 0} \int_0^1  |\frac {d\hat
\rho_{\La.\eps,\theta}(i)}{d\theta}|
     \end{eqnarray}
We now estimate  $\dis{|\frac {d\hat
\rho_{\La.\eps,\theta}(i)}{d\theta}|}$ uniformly in $\eps$ and
$\theta$ to prove \eqref{Ie.3.2.18} as a consequence of
\eqref{e5.5.5}.

\vskip1cm

  \centerline{\it Equations for ${d\hat
\rho_{\La.\eps,\theta}/d\theta}$.}
 \nopagebreak
we let
   \begin{equation}
      \label{Ie.3.2.28.2}
u:=\frac{d}{d\theta}\hat \rho_{\La.\eps,\theta},\qquad
 v=-  \frac {d}{d\theta'}
D_\La f_{\theta',\eps}(\hat \rho_{\La.\eps,\theta})
\Big|_{\theta'=\theta},\qquad A:=D^2_\La f_{\theta,\eps}(\hat
\rho_{\La.\eps,\theta})
     \end{equation}
so that  \eqref{Ie.3.2.23} becomes $$Au=v$$ We also define:
        \begin{equation}
      \label{a5.29}
A_0:=D^2_\La f_{\theta,0}(\hat
\rho_{\La.\eps,\theta}),\;\;\;\alpha:= A-A_0
     \end{equation}
$\alpha$ is a diagonal matrix whose diagonal elements are
   \begin{equation}
      \label{Ie.3.2.30.0}
\alpha(i): = 3 \eps^{-1} \Big( \{(\hat
\rho_{\La.\eps,\theta}(i)-[\rho^{(k)}_{s(i)}+\zeta])_+\}^2
+\{(\hat
\rho_{\La.\eps,\theta}(i)-[\rho^{(k)}_{s(i)}-\zeta])_-\}^2\Big)
     \end{equation}
To distinguish among large and non large (called small)
values of $\alpha(i)$, we introduce a large positive number
$b$ which will be specified later and, calling $\mathcal H$
the Hilbert space of vectors $u=\big(u(i), i\in \La\big)$,
   \begin{equation}
      \label{Ie.3.2.30}
G=\big\{(i): \alpha(i) \ge b\big\},\quad \mathcal H_G=
\big\{u\in \mathcal H: u(i)=0,\; \text{for all}\;i\in
G^c\big\}
     \end{equation}
Let $Q$ be the orthogonal projection on $ \mathcal H_G$ and
$P=1-Q$, thus $Q$ selects the sites where $\alpha$ is large
and $P$ those where it is small.

Our strategy will be the following: rewrite $Pu,Qu$ as linear expressions of $Pv,Qv$ to get bounds on $Pu,Qu$ (and therefore on $u$) using knowledge on $v$.

\vskip.5cm

  \centerline{\it Rewriting $Pu,Qu$ in terms of $Pv,Qv$.}
 \nopagebreak
Since the matrices $\alpha,P,Q$ are diagonal they commute, giving for instance $Q\alpha P = \alpha PQ = 0$, i.e.:
\begin{equation}
\label{QAA0P}
QAP = QA_0P,
\end{equation}
and symetrically:
\begin{equation}
\label{PAA0Q}
PAQ = PA_0Q.
\end{equation}

Using $Q^2=Q$ together with \eqref{QAA0P} we get:
   \begin{align}
      \nonumber QAQ Qu & = QAQu = QA(u - Pu)\\
      \nonumber QAQ Qu & = Qv - QAPu \\
        \label{Ie.3.2.31} Qu & = (QAQ)^{-1}\{Qv- QA_0Pu\}
     \end{align}
where $QAQ$ is invertible on the range of $Q$ since $A$ is a positive matrix.

Using $P^{2}=P$ together with \eqref{Ie.3.2.31} and \eqref{PAA0Q}we get:
   \begin{align*}
PAPu + PAQu & = PAu = Pv \\
PAPu + PA_0(QAQ)^{-1}\{Qv- QA_0Pu\} & = Pv \\
 \Big( PAP - PA_0(QAQ)^{-1}QA_0\Big)Pu & = Pv- PA_0(QAQ)^{-1}Qv
     \end{align*}
Let
   \begin{equation}
      \label{Ie.3.2.32.1}
B= PAP - PA_0(QAQ)^{-1}QA_0
     \end{equation}
so that if $B$ is invertible on the range of $P$ (as we
will prove), then
   \begin{equation}
      \label{Ie.3.2.33}
Pu=B^{-1}\{ Pv-PA_0(QAQ)^{-1}Qv\}
     \end{equation}

\vskip.5cm

  \centerline{\it A decomposition of $v$.}
 \nopagebreak
Recalling \eqref{Ie.3.2.28.2} and \eqref{Ie.3.2.21}, after
expanding the Poisson polynomials in  \eqref{ee4.2.7} we
get,
       \begin{eqnarray}
     \label{Ie.3.2.33.0}
&& v(i) =  -t \sum_{j\in \La^c}\bar V_\ga(i,j)
\Big({\bar\rho}''_{\La^c}(j)-{\bar\rho}'_{\La^c}(j)\Big)\nn\\&&\hskip1cm
-\sum_{n} (\ga\ell_{-,\ga})^{a_0n}\sum_{i_1,k_{i_1},..i_n,k_{i_n}:
i_1=i} k_{i_1}\Big( d_n\big(i_1,k_{i_1},..,i_n,k_{i_n};\bar
q''_{\La^c};t\big)\rho''(i_1)^{k_{i_1}-1}\cdots
\rho''(i_n)^{k_{i_n}}\nn\\&&\hskip2cm
-d_n\big(i_1,k_{i_1},..,i_n,k_{i_n};\bar q'_{\La^c};t\big)
\rho'(i_1)^{k_{i_1}-1}\cdots \rho'(i_n)^{k_{i_n}}\Big)
     \end{eqnarray}
where $\rho''(i)=\rho'(i)=\hat \rho_{\La,\eps,\theta}(i)$ if
$x(i)\in \La$ and $\rho''(i)={\bar\rho}''_{\La^c}(i)$,
$\rho'(i)={\bar\rho}'_{\La^c}(i)$ when $x(i)\in \La^c$. The
coefficients $d_n$ satisfy the same bounds as the coefficients
$\Phi$ of \eqref{ee4.2.7} (with maybe a different constant).

Shorthand by $\{x_j\}$ the sites in $\{x(i_1),..,x(i_n)\}$ which
are in $\La^c$, noticing that by definition of $g_1$ there are not
terms with $\{x_j\}= \emptyset$.

We then call $v^{(1)}$ the sum of $\dis{- t\sum_{j\in \La_1^c}
\bar V_\ga(i,j)
\Big({\bar\rho}''_{\La^c}(j)-{\bar\rho}'_{\La^c}(j)\Big)}$ minus
the second sum on the r.h.s.\ of \eqref{Ie.3.2.33.0} restricted to
sets $(i_1,...,i_n)$ such that: $\{x_j\} \ne \emptyset$ and any
$x_j\in \{x_j\}$ is either in $\La_1^c$ or ${\bar q
}''_{C^-_{x_j}}={\bar q}'_{C^-_{x_j}}$,
$C^-_x=C^{\ell_{-,\ga})}_x$, (or both). $v^{(2)}:= v- v^{(1)}$.

By linearity $u=u^{(1)}+u^{(2)}$ where $u^{(1)}$ and $
u^{(2)}$ are defined with $v$ replaced by $v^{(1)}$ and
$v^{(2)}$ and we will bound differently  $u^{(1)}$ and
$u^{(2)}$ using $\|\cdot\|_\infty$ norms for the former and
 $\|\cdot\|$ norms for the latter.

\vskip.5cm

  \centerline{\it Bounds on $u^{(1)}$.}
 \nopagebreak
By Theorem \ref{thmappB.2} if $b$ is large enough and
$c\ge\|A_0\|$,
   \begin{equation}
      \label{Ie.3.2.32.11}
\| PA_0(QAQ)^{-1}QA_0\|\le  \frac {2c^2}{b}=:\delta,\quad
\| PA_0(QAQ)^{-1}QA_0\|_\infty\le  \frac {2c^2}{b} e^{2c'}
     \end{equation}
Moreover by \eqref{appB.6}
   \begin{equation}
      \label{Ie.3.2.32.12}
 \sup_{i} \sum_{j}|B(i,j)| e^{\ga |i-j|}
\le \sup_{i\in G^c} \sum_{j}|A(i,j)| e^{\ga |i-j|} + \frac
{2c^2 e^{2c'}}b\;\le c''' b =:a
     \end{equation}
Then applying Theorem \ref{thmappB.1},\ref{thmappB.1.1} with $B$ as in
\eqref{Ie.3.2.32.1} and $R_1=PA_0(QAQ)^{-1}QA_0$, $B$ is invertible and there is
a constant $c>0$ such that $\|B^{-1}\|_\infty \le  c$. Therefore there is a new constant $c$
such that
   \begin{equation}
      \label{Ie.3.2.32.12.1}
 |Pu^{(1)}(i)| \le  c \max_j |v^{(1)}(j)|
     \end{equation}
If ${\bar q}''_{\La_1^c}={\bar q}'_{\La_1^c}$, $v^{(1)}=0$
and $u^{(1)}=0$ as well, let us then suppose ${\bar
q}''_{\La_1^c}\ne {\bar q}'_{\La_1^c}$.  Then
\eqref{Ie.3.2.33.0} yields
       \begin{eqnarray}
     \label{Ie.3.2.32.12.2}
&&|Pu^{(1)}(i)| \le c  \Big(\max_{j\in \La_1^c}
|{\bar\rho}''_{\La^c}(j)-{\bar\rho}'_{\La^c}(j)| +
(\ga\ell_{-,\ga})^{a_0}\Big)
     \end{eqnarray}
To bound $|Qu^{(1)}(i)|$ we go back to \eqref{Ie.3.2.31},
the same arguments used before prove that
$\|(QAQ)^{-1}\|_\infty \le c$ as well, so that
$|Qu^{(1)}(i)|$  is bounded as on the r.h.s.\ of
\eqref{Ie.3.2.32.12.2} (with a new constant $c$) and
$|u^{(1)}(i)|$ is therefore bounded as the first term on
the r.h.s.\  \eqref{Ie.3.2.18}, we will prove next that
$|u^{(2)}(i)|$ is bounded as the second term on the r.h.s.\
\eqref{Ie.3.2.18} which will then be proved.

\vskip.5cm

  \centerline{\it Bounds on $u^{(2)}$.}
 \nopagebreak
Recalling the definition of $v^{(2)}$
   \begin{equation}
      \label{Ie.3.2.32.14}
|v^{(2)}(i)|\le \sum_{j\in \La_2^c} K_\ga(i,j) \text{\bf
1}_{{\bar q}''_{C_j^-}\ne {\bar q}'_{C_j^-}}
     \end{equation}
where $\dis{ \sum_{i} K_\ga(i,j)\le c_K}$ and
$K_\ga(i,j)=0$ if $|x(i)-x(j)|\ge c'\ga^{-1}$, $c$ and $c'$
suitable constants.

By Theorem \ref{thmappB.1}
   \begin{equation}
      \label{Ie.3.2.32.13}
|B^{-1}(i,j)| \le  (\frac 1a+\frac 1
{\kappa'})\exp\Big\{-\frac{\kappa' \ga|i-j|}
{a+\kappa'}\Big\},\quad \kappa'=\kappa-\delta,
\;\;\text{$\delta$ as in \eqref{Ie.3.2.32.11}}
     \end{equation}
 By \eqref{Ie.3.2.32.13} and
\eqref{Ie.3.2.32.14}, calling $c''=1/a+  1/ {\kappa'}$ and
$\om= \kappa'/(a+\kappa')$,
   \begin{equation}
      \label{Ie.3.2.32.15}
|B^{-1} Pv^{(2)}(i)|\le  \sum_{j\in \La_2^c} \text{\bf
1}_{{\bar q}''_{C_j^-}\ne {\bar
q}'_{C_j^-}}\{c_Kc''e^{c'\om}\}e^{-\om \ga|x(i)-x(j)|}
     \end{equation}
By \eqref{appB.6}
   \begin{equation}
      \label{Ie.3.2.32.16}
 \sum_i |(QAQ)^{-1}(i,j)|
e^{\ga |i-j|} \le \frac {c_Q}b
    \end{equation}
and since $A_0(i,j)=0$ if $|i-j|\ge c'\ga^{-1}$ and
$\dis{\sum_i|A_0(i,j)|\le c_{A_0}}$,
   \begin{eqnarray*}
&&|B^{-1} PA_0(QAQ)^{-1}Qv^{(2)} (i)|\le  \sum_{j'}
\sum_{j''} \sum_{j'''\in \La_2^c}\text{\bf 1}_{{\bar
q}''_{C_{j'''}^-}\ne {\bar q}'_{C_{j'''}^-}} \{c''e^{-\om
\ga|x(i)-x(j')|} c_{A_0}e^{c'\om}\}\\&& \hskip1cm \times
e^{-\ga|x(j'')-x(j')|}|(QAQ)^{-1}(j',j'')|e^{\ga|x(j'')-x(j')|}
  K_\ga(j'',j''') \\&&
 \hskip1cm  \le  \{c'' c_{A_0}e^{c'\om}\}
\sum_{j'''\in \La_2^c}\text{\bf 1}_{{\bar
q}''_{C_{j'''}^-}\ne {\bar q}'_{C_{j'''}^-}}  e^{-\om
\ga|x(i)-x(j''')|} e^{\om c'} \; (\frac {c_Q}b)\; c_K
     \end{eqnarray*}
Thus supposing $\om \le 1$, we get from \eqref{Ie.3.2.33}
   \begin{equation}
      \label{Ie.3.2.32.17}
|Pu^{(2)}(i)|\le \sum_{j\in \La_2^c}\text{\bf 1}_{{\bar
q}''_{C_{j}^-}\ne {\bar q}'_{C_{j}^-}} c e^{-\om
\ga|x(i)-x(j)|}
     \end{equation}
To bound $Qu^{(2)}$ (recall \eqref{Ie.3.2.31}) we use
\eqref{Ie.3.2.32.16} to get
    \begin{equation}
      \label{Ie.3.2.32.18}
|(QAQ)^{-1} Qv^{(2)}(i)| \le  \sum_{j\in \La_2^c}\text{\bf
1}_{{\bar q}''_{C_{j}^-}\ne {\bar q}'_{C_{j}^-}} \frac
{c_Ke^{c'}c_Q}be^{-\ga|x(i)-x(j)|}
     \end{equation}
while, using \eqref{Ie.3.2.32.17} and \eqref{Ie.3.2.32.16},
   \begin{eqnarray*}
&&|(QAQ)^{-1} QA_0Pu^{(2)}(i)| \le \sum_{j'' }  \sum_{j'
}\sum_{j'''\in \La_2^c}\text{\bf 1}_{{\bar
q}''_{C_{j'''}^-}\ne {\bar q}'_{C_{j'''}^-}}   c e^{-\om
\ga|(x(j'')-x(j''')y|}\\&&\hskip1cm \times
 |A_0(j',j'')|
e^{-\ga|x(j'')-x(i)|} e^{c'} |(QAQ)^{-1}(i,j')|
e^{\ga|(x(j')-x(i)|}\\&&\hskip1cm\le ce^{c'}\sum_{j'''\in
\La_2^c}\text{\bf 1}_{{\bar q}''_{C_{j'''}^-}\ne {\bar
q}'_{C_{j'''}^-}}  e^{-\om \ga|x(i)-x(j''')|} c_{A_0}\;
(\frac {c_Q}b)
     \end{eqnarray*}
hence
    \begin{equation}
      \label{Ie.3.2.32.19}
| Qu^{(2)}(i)| \le \frac cb \sum_{j\in \La_2^c}\text{\bf
1}_{{\bar q}''_{C_{j}^-}\ne {\bar q}'_{C_{j}^-}}
\;e^{-\ga|x(i)-x(j)|}
     \end{equation}

     \qed

\vskip2cm


\subsection{Proof of Theorem \ref{thmee5.0}}

       \label{subsec:e5.6}

The proof is a corollary of Theorem \ref{thmIe.2.5}. Indeed given
any $x\in \ell_{-,\ga}\mathbb Z^d\cap \La$, call $\La_1^c$ the
union of all $C^{( \ell_{-,\ga})}_y$, $y\in \ell_{-,\ga}\mathbb
Z^d\cap
 \big(\La^c \cap B_x(10^{-30}\ell_{+,\ga})\big)$.  Then if $\hat K(x)>0$,
same notation as in  Theorem \ref{thmIe.2.5}, ${\bar q}''_{\La^c_1}
= {\bar q}'_{\La^c_1}$ and by \eqref{Ie.3.2.18} we are reduced to a sum over
$j\in \La_2^c$.  We split the exponent $-\ga\om |x(i)-x(j)|$ into two
equal terms and get
   \begin{eqnarray}
      \label{Ie.3.2.18.00}
&& |{\hat \rho}''_\La(x,s)-{\hat \rho}'_\La(x,s)| \le c  \{
e^{-(\om/2) \ga [10^{-30}\ell_{+,\ga}-\ell_{-,\ga}] }\}\{
\sum_{j\notin\La_1^c } e^{-(\om/2) \ga |x-x(j)|}\} \nn\\&&\hskip2cm
\le c' e^{-(\om/2) \ga[10^{-30}\ell_{+,\ga}-\ell_{-,\ga}] }
     \end{eqnarray}
The exponent $\hat \om$ in  Theorem \ref{thmee5.0} is thus going to
be half the $\om$ of Theorem \ref{thmIe.2.5}. Using  Theorem
\ref{thmIe.2.5} with ${\bar\rho}''_{\La^c}$ replaced by $\rho^{(k)}
1_{\La^c}$, and calling ${\hat \rho}^{\rm pf}_{\La}$  the
corresponding minimizer,
   \begin{eqnarray*}
&& |{\hat \rho}^{\rm pf}_{\La}(x,s)-{\hat \rho}'_\La(x,s)|  \le
c'e^{-(\om/2) \ga [10^{-30}\ell_{+,\ga}-\ell_{-,\ga}] } 
+ \big(c_1(\ga\ell_{-,\ga})^{a_0} +\zeta_m \big)
     \end{eqnarray*}
and using \eqref{e5.4.2}
   \begin{eqnarray*}
&& |{\hat \rho}'_\La(x,s)-\rho^{(k)}_{s}|  \le
c'e^{-(\om/2) \ga [10^{-30}\ell_{+,\ga}-\ell_{-,\ga}] } 
+ \big([c_1+c](\ga\ell_{-,\ga})^{a_0} +\zeta_m \big)
     \end{eqnarray*}

\newpage

\vskip1cm

\newpage

\section{\bf  Local Couplings}
\label{sec:a6}

 \vskip1cm

In this section we prove Theorem \ref{thme3.7.1}, thus we fix a
region $\La$, union of a finite number  $N_\La$, of cubes of
$\mathcal D^{(\ell_+)}$ and two boundary conditions $\bar
q_{i,\La^c}\in\mathcal X^{(k)}_{\La^c}$, $i=1,2$. We also fix a
$t\in(0,1]$ and we consider the two  Gibbs measures
$dG_\La^0(q_\La|\bar q_{i,\La^c})$ $i=1,2$ defined in \eqref{e3.7.1}
and with state space $\mathcal X^{(k)}_\La$. The aim is to construct
a coupling $Q_\La$ of these two probabilities such that
\eqref{e3.7.1a} holds. $Q_\La$, being a joint distribution, is
defined on the product space $\mathcal X^{(k)}_\La\times\mathcal
X^{(k)}_\La$ whose elements are denoted by $(q'_\La,q''_\La)$.

\bigskip

\subsection{Definitions and main results}
        \label{sec:a6.1}

Recalling that $K_\La(\cdot;x):=K_\La(\bar q_{1,\La^c},\bar
q_{2,\La^c};x)$ is defined in Definition \ref{kappa} we denote by
    \begin{equation}
    \label{a6.1}
\Delta_0\equiv\Delta_0(\bar q_{1,\La^c},\bar q_{2,\La^c}):=
\big\{x\in \ell_{-\ga}\mathbb Z^d\cap \La :K_\La(\bar
q_{1,\La^c},\bar q_{2,\La^c};x)>0\big\}
    \end{equation}
    In order to prove Theorem \ref{thme3.7.1} we have to find a coupling
$Q_\La$ so that there is $\eps_g$ such that
    \begin{equation}
    \label{aa6.1}
\sum_{x\in\Delta_0}Q_\La(\Theta_\La(x)^c)\le\eps_g
    \end{equation}
\medskip

We  define (recall that $B_x(R)$ is the ball of center $x$ and
radius $R$),
    \begin{equation}
    \label{aaa6.1}
\Delta_1=\bigcup_{x\in\Delta_0} B_x(10^{-20}\ell_{+,\ga})\cap\La
    \end{equation}
and we observe that $\Delta_1\supset \Delta_0$,
dist$\dis{({\Delta_0,\Delta_1^c})> 10^{-20}\ell_+,\ga}$.

\medskip

We denote by
    \begin{equation}
    \label{enn}
\und n\equiv n_{\La}=\Big\{n(x,s)\in \mathbb N, x\in
\ell_{-,\ga}\mathbb Z^d\cap\La, s\in \{1,..,S\}\Big\}
    \end{equation}
and in the sequel we will consider only those $\und n$ such that for
all $x\in \ell_{-,\ga}\mathbb Z^d\cap\La$ and $s\in \{1,..,S\}$,
$$\Big|\frac{n(x,s)}{\ell_-^d}-\rho^{(k)}(s)\Big|\le \zeta$$
Given $\und n$ and any subset $\Delta\subset \La$ we will call
$n_\Delta$ the restriction to $\Delta$ of $\und n$.

 \vskip1cm

%
%

\medskip

Given a subset $\Delta\subset \La$, we call $d_\Delta$ the following
metric on
 $\mathcal X^{(k)}_\La\times\mathcal X^{(k)}_\La$:
    \begin{eqnarray}
    \label{a6.2}
&&d_\Delta(q'_\La,q''_\La)=\sum_{x\in\ell_{-,\ga}\mathbb{Z}^d \cap
\Delta}d_x(q'_\La,q''_\La)
\\&&d_x(q'_\La,q''_\La)=\begin{cases}
    0 &{\text {if }} q'_\La\cap C^{(\ell_{-,\ga})}_x=
    q''_\La\cap C^{(\ell_{-,\ga})}_x
    \\ 1  &{\text {otherwise }}
\end{cases}
    \label{aa6.2a}
    \end{eqnarray}
We call $R_\Delta(\mu,\mu')$  the corresponding Wasserstein distance
between two measures $\mu$ and $\mu'$ in $\mathcal
X^{(k)}_\La\times\mathcal X^{(k)}_\La$:
    \begin{eqnarray}
    \nn
\hskip-1cm R_\Delta(\mu,\mu')&&=\inf_Q\int d_\Delta(q'_\La,q''_\La)
dQ(q'_\La,q''_\La)
\\&&=\inf_Q \sum_{x\in\ell_{-,\ga}\mathbb{Z}^d \cap
\Delta}Q\big(q'_\La\cap C^{(\ell_{-,\ga})}_x\ne
    q''_\La\cap C^{(\ell_{-,\ga})}_x\big)
    \label{a6.3}
    \end{eqnarray}
where the inf runs over all possible joint distributions (couplings)
of $\mu$ and $\mu'$.

\vskip1cm

 In Subsection
\ref{sec:II.2} we prove the following Theorem.
\bigskip

    \begin{thm}
    \label{thm:6.1}
Given $\La$ union of $N_\La$ cubes of $\mathcal D^{(\ell_+)}$ there
is $\eps_0=\eps_0(N_\La)$ such that for all $\bar
q_{i,\La^c}\in\mathcal X^{(k)}_{\La^c}$, $i=1,2$, the following
holds.

Given any $\und n'$, $\und n''$ such that $ n'_{\Delta_1}
=n''_{\Delta_1}=:n_{\Delta_1}$ ($\Delta_1$ defined in
\eqref{aaa6.1}), the following holds.

Calling $\mathring{\Delta}_{1}= \Delta_1\setminus\delta_{\rm
in}^{\ga^{-1}}[\Delta_1]$, for any two configurations $\bar
q_{i,\La\setminus\mathring{\Delta}_{1}}$, $i=1,2$ on $\mathcal
X^{(k)}_{\La\setminus\mathring{\Delta}_{1}}$, we denote by $\bar q_{i,\mathring{\Delta}_{1}^c}=\bar q_{i,\La\setminus\mathring{\Delta}_{1}}\cup \bar
q_{i,\La^c}$, $i=1,2$.

Let $dG^0_\La(q_{\mathring{\Delta}_{1}}|q_{i,\mathring{\Delta}_{1}^c},
n_{\Delta_1})$, $i=1,2$ be the probabilities  $dG^0_\La(\cdot | \bar
q_{i,\La^c})$, $i=1,2$ conditioned to have the configuration in
$\mathring{\Delta}_{1}^c$ equal to $\bar q_{i,\mathring{\Delta}_{1}^c}$ and
occupation numbers in $\Delta_1$ given by $ n_{\Delta_1}$.

Then for $\Delta_0$ defined in \eqref{a6.1}
    \begin{equation}
    \label{a6.4delta0}
{R_{\Delta_0}\big(dG^0_\La(\cdot|q_{1,\mathring{\Delta}_{1}^c},
n_{\Delta_1}),dG^0_\La(\cdot|q_{2,\mathring{\Delta}_{1}^c},
n_{\Delta_1})\big)\le \eps_0}
    \end{equation}
    \end{thm}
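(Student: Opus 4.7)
The plan hinges on the observation that the two conditional measures differ only through the within-cell positions of particles in the boundary layer $\Delta_1\setminus\mathring\Delta_1$. Indeed, $V_\ga$ has range $\ga^{-1}$ and $\mathring\Delta_1$ lies at distance $\ga^{-1}$ from $\Delta_1^c$, so conditionally on $n_{\Delta_1}$ the law of $q_{\mathring\Delta_1}$ depends on the boundary data only through the restrictions $\bar q_{i,\Delta_1\setminus\mathring\Delta_1}$, $i=1,2$; by hypothesis these have identical cell occupations, given by the restriction of $n_{\Delta_1}$. I introduce the reference measure $\mu_0$ that, given $n_{\mathring\Delta_1}$, places the $n(x,s)$ particles of type $s$ uniformly and independently inside $C_x^{(\ell_{-,\ga})}$, and decompose $H=\bar H+\delta H$ with $\bar H$ built from the cell-averaged kernel $\bar V_\ga$ of \eqref{ee4.2.3a}. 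Since $\bar H$ depends only on cell occupations, it is constant on the support of the conditional measures and the same for both boundary conditions, so it cancels from the Radon--Nikodym derivatives, giving
\[
\frac{d\mu_i}{d\mu_0}(q_{\mathring\Delta_1})\;=\;Z_i^{-1}\,\exp\bigl\{-\beta\,\delta H(q_{\mathring\Delta_1}\mid \bar q_{i,\Delta_1\setminus\mathring\Delta_1})\bigr\},\qquad i=1,2.
\]

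\textbf{Cluster expansion and TV bound.} Smoothness of $V_\ga$ yields the per-pair estimate $|V_\ga(r,r')-\bar V_\ga(x(r),x(r'))|=O\bigl(\ga^{d+1}\ell_{-,\ga}\bigr)$ uniformly for $r,r'$ in their cells, which quantifies the smallness of $\delta H$. Feeding this estimate into a cluster expansion of the type carried out in Subsections~\ref{subsec:e4.3}--\ref{subsec:e4.4}, I expect to extract
\[
TV(\mu_i,\mu_0)\;\le\;\eps_1(\ga,N_\La),\qquad \lim_{\ga\to 0}\eps_1(\ga,N_\La)=0,
\]
and hence $TV(\mu_1,\mu_2)\le 2\eps_1$ by the triangle inequality. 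I then use the maximal coupling $Q$ of $\mu_1$ and $\mu_2$ (for which $Q(q'=q'')\ge 1-TV(\mu_1,\mu_2)$), together with the inclusion $\{q'\cap C_x\ne q''\cap C_x\}\subset\{q'\ne q''\}$, to sum over $x\in\Delta_0$ and conclude
\[
R_{\Delta_0}(\mu_1,\mu_2)\;\le\;|\Delta_0|\cdot 2\eps_1\;\le\;c\,N_\La\bigl(\ell_{+,\ga}/\ell_{-,\ga}\bigr)^d\eps_1\;=:\;\eps_0(N_\La),
\]
which tends to zero with $\ga$ and is thus below any prescribed threshold for $\ga$ small enough.

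\textbf{Main obstacle.} The principal technical difficulty lies in the quantitative control of $\eps_1$: the powers of $\ga$ produced by the per-pair Lipschitz gain $\ga^{d+1}\ell_{-,\ga}$ must, after running the cluster expansion, dominate the combinatorial growth $|\Delta_0|\le c\,N_\La\,\ga^{-(\alpha_++\alpha_-)d}$. The scale relations \eqref{e3.1.1} and \eqref{ea3.1.1} among $\alpha_\pm$ and $a$ are designed precisely so that this balance holds uniformly over all admissible boundary data $\bar q_{i,\La^c}\in\mathcal X^{(k)}_{\La^c}$ and over all compatible profiles $\und n$ with $|n/\ell_{-,\ga}^d-\rho^{(k)}|\le\zeta$. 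Producing explicit constants small enough to be used as input in the disagreement percolation step of Subsection~\ref{sec:e3.8} is the main bookkeeping of the argument.
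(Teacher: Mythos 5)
Your reduction to the interior and the observation that the cell‐occupation data agree in the boundary layer is correct and matches the paper's setup, but the way you propose to finish — decompose $H=\bar H+\delta H$, bound $TV(\mu_i,\mu_0)$ via cluster expansion, and then conclude with the maximal coupling plus a sum over $|\Delta_0|$ cells — does not work, and the paper's proof takes a structurally different route precisely because of this.

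The gap is that $TV(\mu_i,\mu_0)$ is \emph{not} small, let alone $o(|\Delta_0|^{-1})$. The Radon--Nikodym derivative $\exp\{-\beta\,\delta H\}$ involves $\delta H$ summed over \emph{all} particle pairs within range in $\mathring\Delta_1$, which is of the order of (number of particles) $\times$ (interaction range)$^d$ $\times$ (per-pair error). With roughly $\ell_{+,\ga}^d$ particles, each interacting with $\sim\ga^{-d}$ others, a per-pair error of order $\ga^{d+1}\ell_{-,\ga}$ accumulates to something that diverges as $\ga\to 0$. The cluster expansion in Subsections~\ref{subsec:e4.3}--\ref{subsec:e4.4} controls $\log$ of the partition function and the local many-body effective potentials; it does not, and cannot, give a small global total-variation bound at this volume. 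Passing from a TV bound to a cell-count bound by multiplying by $|\Delta_0|\sim N_\La(\ell_{+,\ga}/\ell_{-,\ga})^d$ then inverts the sign of the problem: you would need the TV distance to \emph{shrink} with volume, whereas it \emph{grows}.

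What the paper does instead (Subsection~\ref{sec:II.2}) is exploit locality, not a global estimate. After labeling the particles, Proposition~\ref{propII.2.1} establishes the Dobrushin high-temperature single-particle contraction $\delta(p_0,p_1)\le c\,\ga^{d+\alpha_-}\mathbf{1}_{{\rm dist}\le\ga^{-1}}$ (that is, the conditional distribution of one labeled particle depends weakly on the location of any other labeled particle), with $\sum_{p'}\delta(p,p')\le c\,\ga^{\alpha_-}<1$. Summing geometric chains of $\delta$'s from $\Delta_0$ out to $\mathring\Delta_1^c$, as in Corollary~\ref{coroII.2.1}, then yields $R_{\Delta_0}(P',P'')\le e^{-\varsigma\,{\rm dist}(\Delta_0,\mathring\Delta_1^c)}$: the boundary influence decays \emph{exponentially} in the number of interaction ranges separating $\Delta_0$ from $\Delta_1^c$, which is $\sim\ga^{-\alpha_+}$. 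This exponential gain is what makes $\eps_0$ small; a polynomial-in-$\ga$ estimate from a TV argument, even if it were available, would not suffice after the $|\Delta_0|$ prefactor. If you want to repair your approach, the missing ingredient is precisely this propagation/contraction mechanism: you must compare the two measures via a chain of single-particle conditional comparisons, not via a single comparison of the full measures to a common reference.
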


\bigskip
The next result, proved at the end of Subsection \ref{sec:II.6.5},
deals with the Wasserstein distance $R_{\Delta_1}$ of the
distributions of the occupation numbers $\und n$ that in Theorem
\ref{thm:6.1} have been set equal to each other inside $\Delta_1$.
For these variables the metric $d_x$ defined in \eqref{aa6.2a} is
replaced by
    $$
d_x( \und n',\und n'')=\begin{cases}
    0 &{\text {if }} n'(x,s)=
    n''(x,s), \forall s
    \\ 1  &{\text {otherwise }}
\end{cases}
    $$

    \begin{thm}
    \label{thm:6.2}
Given $\La$ union of $N_\La$ cubes of $\mathcal D^{(\ell_+)}$ there
is $\eps_1=\eps_1(N_\La)$ such that the following holds.
Let $G^0_\La(n_{\La}|q_{i,\La^c})$, $i=1,2$ be the marginals of
$dG^0_\La(q_\La | \bar q_{i,\La^c})$, $i=1,2$ on the variables
$n_{\La}$ defined in \eqref{enn}.

Then
    \begin{equation}
    \label{a6.4}
R_{\Delta_1}\big(dG^0_\La(n_{\La}|\bar q_{1,\La^c}),
dG^0_\La(n_{\La}|\bar q_{2,\La^c})\big)\le \eps_1
    \end{equation}
    \end{thm}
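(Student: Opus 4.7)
The plan is to identify the marginal of $dG^0_\La(\cdot|\bar q_{i,\La^c})$ on $n_\La$ with the Gibbs measure on $X^{(k)}_\La$ having Hamiltonian $H^{\rm eff}_\La(\cdot|\bar q_{i,\La^c})$ at inverse effective temperature $\beta \ell_{-,\ga}^d$, exactly as in \eqref{ee4.2.1}. Because $\ell_{-,\ga}^d = \ga^{-(1-\alpha_-)d}$ diverges as $\ga\to 0$, the effective temperature goes to zero, so by Theorem \ref{thme4.2.1} the marginal is, to leading order, governed by the smooth deterministic Hamiltonian $F_\La + H^{(1)}_\La + H^{(2)}_\La$. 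Theorem \ref{thmee5.0} identifies the unique minimizer $\hat \rho_{\La,i}$ of the associated functional $f(\cdot;\bar q_{i,\La^c})$, and Theorem \ref{thmIe.2.5} pointwise controls the difference $\hat \rho_{\La,1}-\hat \rho_{\La,2}$ by the boundary discrepancy between $\bar q_{1,\La^c}$ and $\bar q_{2,\La^c}$.

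The strategy is then to approximate each marginal by a discrete Gaussian centered at the corresponding minimizer and to build the coupling from an optimal coupling of the two Gaussians. Introducing $\xi = (\beta \ell_{-,\ga}^d)^{1/2}(\rho_\La - \hat \rho_{\La,i})$ and Taylor expanding $H^{\rm eff}_\La$ around $\hat \rho_{\La,i}$ gives
\begin{equation*}
\beta \ell_{-,\ga}^d H^{\rm eff}_\La(\rho_\La|\bar q_{i,\La^c}) = \text{const}_i + \tfrac{1}{2}(\xi, A_i \xi) + \text{cubic and higher in }\xi,
\end{equation*}
with $A_i := D^2_\La H^{\rm eff}_\La(\hat \rho_{\La,i})$. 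By Theorem \ref{thmIe5.3.1} this Hessian is strictly positive uniformly in $\ga$, and by the bounds on $D^2_\La g$ used in the proof of Theorem \ref{thme5.4.1} it is a small perturbation of the LP Hessian. The cubic and higher terms in $\xi$ carry prefactors which are either small powers of $(\beta\ell_{-,\ga}^d)^{-1/2}$ or small powers of $(\ga\ell_{-,\ga})^{a_0}$ inherited from the bounds on $\Phi$ in Theorem \ref{thme4.2.1}, and can therefore be resummed via a cluster expansion to compare, in total variation, the marginal on $n_\La$ with the discrete Gaussian having mean $\ell_{-,\ga}^d \hat \rho_{\La,i}$ and covariance $\ell_{-,\ga}^d A_i^{-1}/\beta$, with an error which is $o(1)$ as $\ga\to 0$.

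Having approximated the two marginals by Gaussians, I would proceed in two steps: first couple each marginal to its Gaussian approximation (at a total-variation cost bounded by the error above), then couple the two Gaussians directly to minimize the expected number of $x \in \Delta_1$ at which $n'(x,\cdot)$ and $n''(x,\cdot)$ differ. Under the optimal Gaussian coupling, the disagreement probability at a single $x$ is controlled by the ratio of the mean difference $\ell_{-,\ga}^d \max_s |\hat \rho_{\La,1}(x,s) - \hat \rho_{\La,2}(x,s)|$ to a standard deviation of order $\ell_{-,\ga}^{d/2}$; Theorem \ref{thmIe.2.5}, applied with a partition of $\La^c$ tailored to $x$, makes this quantitative. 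Summing over $x \in \ell_{-,\ga}\mathbb Z^d \cap \Delta_1$ produces a bound of the form $\eps_1 = \eps_1(N_\La)$, whose dependence on $N_\La$ enters only through the volume of $\Delta_1$.

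The main obstacle will be making the Gaussian approximation rigorous and quantitative. Two delicate points arise: first, $n_\La$ is integer valued, so the comparison with a continuous Gaussian must be carried out on the lattice $X^{(k)}_\La$, exploiting that the typical fluctuations ($\sim\ell_{-,\ga}^{d/2}$) are much larger than the spacing; second, $A_i$ has long-range entries through $\bar V_\ga$ (range $\ga^{-1}\gg\ell_{-,\ga}$), so correlations between distant sites must be tracked when building the optimal coupling site by site. Both issues are handled by combining the uniform strict convexity of Theorem \ref{thmIe5.3.1} with the off-diagonal exponential decay of $A_i^{-1}$ already established in Subsection \ref{subsec:e5.5} via the resolvent bounds of Theorems \ref{thmappB.1}, \ref{thmappB.1.1} and \ref{thmappB.2}.
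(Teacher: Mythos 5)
Your overall strategy matches the paper's: reduce the marginal to the effective Gibbs measure at inverse temperature $\beta\ell_{-,\ga}^d$, concentrate near the unique minimizer, pass to a quadratic (Gaussian) approximation and exploit the exponential decay of $\hat\rho_{\La,1}-\hat\rho_{\La,2}$ from Theorem \ref{thmIe.2.5}/\ref{thmee5.0}. Two points, however, need repair.

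First, the proposed ``resum via a cluster expansion'' to control the cubic and higher terms is the wrong tool here and is not what the paper does. After the concentration estimate (Theorem \ref{thmII.5.1}), the measure is conditioned to $A_{\le,i}=\{|\rho_\La-\hat\rho_{\La,i}|\le\ell_-^{-d/2+\delta}\}$, where the cubic remainder $\beta\ell_-^d\mathcal R_i$ is bounded pointwise by $c\,\ga^{d/4}$ thanks to \eqref{delta2}. The comparison with the quadratic approximant is then done by the elementary variational Wasserstein bound of Theorem \ref{thmII.2.1}, $R(\mu_1,\mu_0)\le 2(\sup|\om|)(\sup|v|)$, not by any resummation. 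Setting up a cluster expansion for a polynomial perturbation of a discrete Gaussian in a volume $\sim\ell_+^d$ would be both unnecessary and awkward.

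Second, and more substantively, your heuristic ``the disagreement probability at a single $x$ is controlled by the ratio of the mean difference to the standard deviation'' implicitly assumes the two Gaussians have the \emph{same} covariance. They do not: $A_i=D^2_\La f(\hat\rho_{\La,i};\bar q_{i,\La^c})$ differ. The total variation distance between $\mathcal N(b_1,\Sigma_1)$ and $\mathcal N(b_2,\Sigma_2)$ has a contribution from $\Sigma_1\ne\Sigma_2$ that your formula misses. The paper resolves this by replacing both Hessians (and both centers) with a common matrix $B$ and common center $\rho^*$ on $\bar\Delta_2$ (\eqref{6.32}--\eqref{bmatrix}); the cost of this replacement is estimated in \eqref{6.34} of Proposition \ref{prop3} using Theorem \ref{thmee5.0}(i), which gives $|\hat\rho_{\La,1}(x,s)-\hat\rho_{\La,2}(x,s)|\le c e^{-10^{-30}(\ga\ell_{+,\ga})\hat\om}$ for $x\in\bar\Delta_2$. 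Only after this covariance matching can one reduce the Wasserstein distance between $P_{1,\Delta_1}$ and $P_{2,\Delta_1}$ to the $L^2$ norm of the mean difference, as in \eqref{8.8}--\eqref{8.10}. Your proposal needs to either incorporate this matching step or supply a separate estimate on the operator-norm distance between $A_1$ and $A_2$ restricted to $\Delta_1$.

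Finally, note that the per-site disagreement over $\Delta_1$ is handled in the paper by the identity $2R_{\Delta_1}(P_{1,\Delta_1},P_{2,\Delta_1})=\|P_{1,\Delta_1}-P_{2,\Delta_1}\|_{\rm TV}$, rather than by a site-by-site construction; the resolvent bounds of Theorems \ref{thmappB.1}, \ref{thmappB.1.1}, \ref{thmappB.2} enter only through $\|b_{1,\Delta_1}-b_{2,\Delta_1}\|_{L^2}$.
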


\vskip1cm

In Subsection \ref{sec:II.8} we show that Theorem \ref{thme3.7.1} is
a consequence of Theorems \ref{thm:6.1} and \ref{thm:6.2}.

 \vskip1cm
\subsection{Two properties of the Wasserstein distance in an abstract setting }
        \label{sec:a6.2}

\bigskip

Let $\Om$ be a complete, separable metric space with distance
$d(\om,\om')$ and let $R(\mu_1,\mu_0)$ be the corresponding
Wasserstein distance between two measures $\mu_1$ and $\mu_0$. Thus
    \begin{equation}
    \label{vase}
R(\mu_1,\mu_0)=\inf_Q\int d(\om,\om') Q(d\om,d\om')
    \end{equation}
where the inf runs over all possible joint distributions  of $\mu_1$
and $\mu_0$.

\vskip1cm

  \begin{thm}

  \label{thmII.2.1}
Let $\nu$ be a given positive measure on $\Om$. Let $h$ and $v$ be
such that for all $t\in [0,1]$,
    \begin{equation}
       \label{II.2.8}
  Z_t =\int
 e^{-[h(\om)+tv(\om)]}\nu(d\om) <\infty,
     \end{equation}
Set
         \begin{equation}
       \label{II.2.9}
 m_t(\om) = Z_t^{-1}
 e^{-[h(\om)+tv(\om)]},\quad  \mu_t(d\om)=m_t(\om)\nu(d\om)
     \end{equation}
Then
    \begin{equation}
       \label{II.2.10}
R(\mu_1,\mu_0) \le  \sup_{0\le t \le 1}\Big( \mu_t(|\om|\, |v| )+
\mu_t( |\om| )\mu_t(|v|)\Big)
     \end{equation}
where, after fixing arbitrarily an element $\om_0\in \Om$, we have
called $|\om|= d(\om,\om_0)$.

 In particular,
    \begin{equation}
       \label{II.2.11}
R(\mu_1,\mu_0) \le  2\big(\sup |\om|\big)\;\big(\sup |v(\om)|\big)
     \end{equation}

  \end{thm}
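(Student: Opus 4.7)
The plan is to use the Kantorovich--Rubinstein duality,
\[
R(\mu_1,\mu_0)=\sup_{f:\,\mathrm{Lip}(f)\le 1}\Big(\mu_1(f)-\mu_0(f)\Big),
\]
and interpolate along the family $\{\mu_t\}_{t\in[0,1]}$. The key computation is the derivative of $m_t$ in $t$: from \eqref{II.2.9}, $\partial_t \log m_t(\omega)=-v(\omega)+\mu_t(v)$, so
\[
\frac{d}{dt}\,m_t(\omega)=m_t(\omega)\bigl(-v(\omega)+\mu_t(v)\bigr),
\]
which gives, for any bounded measurable $f$,
\[
\frac{d}{dt}\,\mu_t(f)=-\mu_t(fv)+\mu_t(f)\mu_t(v).
\]

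Next I would exploit that the signed measure $\mu_1-\mu_0$ has zero total mass, so a Lipschitz test function $f$ may be shifted by an additive constant without affecting $\mu_1(f)-\mu_0(f)$. Fixing the base point $\omega_0$, we may therefore assume $f(\omega_0)=0$, and then $\mathrm{Lip}(f)\le 1$ yields the pointwise bound $|f(\omega)|\le d(\omega,\omega_0)=|\omega|$. The same shift can be applied to $v$ whenever convenient (it does not alter the centred combination $-\mu_t(fv)+\mu_t(f)\mu_t(v)$), but for the stated bound it is enough to use $|f|\le|\omega|$ directly. This gives
\[
\Bigl|\frac{d}{dt}\,\mu_t(f)\Bigr|\le \mu_t(|\omega|\,|v|)+\mu_t(|\omega|)\,\mu_t(|v|).
\]

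Integrating the derivative on $[0,1]$ and taking the sup over $f$ with $\mathrm{Lip}(f)\le 1$,
\[
|\mu_1(f)-\mu_0(f)|\le \int_0^1\Bigl(\mu_t(|\omega|\,|v|)+\mu_t(|\omega|)\,\mu_t(|v|)\Bigr)\,dt\le \sup_{0\le t\le 1}\Bigl(\mu_t(|\omega|\,|v|)+\mu_t(|\omega|)\,\mu_t(|v|)\Bigr),
\]
which, combined with Kantorovich--Rubinstein, yields \eqref{II.2.10}. Finally \eqref{II.2.11} is immediate by bounding each factor uniformly: $\mu_t(|\omega|\,|v|)\le (\sup|\omega|)(\sup|v|)$ and $\mu_t(|\omega|)\mu_t(|v|)\le(\sup|\omega|)(\sup|v|)$.

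The only delicate point is justifying that the Kantorovich--Rubinstein duality applies in the full generality stated (a complete separable metric space, possibly with $|\omega|$ unbounded). This requires integrability of $|\omega|$ against $\mu_t$, which is tacitly needed for the right-hand side of \eqref{II.2.10} to be finite; when it is not, the inequality is vacuous. Once that integrability holds, the dual formula is standard (see, e.g., Villani), and the rest is the one-line interpolation identity above; no further technical obstacle arises.
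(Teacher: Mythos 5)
Your proof is correct but follows a genuinely different route from the paper's. The paper does not invoke Kantorovich--Rubinstein duality at all: it constructs an \emph{explicit} coupling
\[
P(d\om\,d\om') = \Big\{ m(\om)\,\delta_{\om-\om'} + \tfrac{1}{C}\,[m_1(\om)-m(\om)]\,[m_0(\om')-m(\om')]\Big\}\nu(d\om)\nu(d\om'),
\]
with $m=\min\{m_0,m_1\}$ and $C=1-\int m\,d\nu$ (the maximal coupling of the total-variation theory), then bounds $d(\om,\om')\le |\om|+|\om'|$ to get $R(\mu_1,\mu_0)\le \int |\om|\,|m_1(\om)-m_0(\om)|\,\nu(d\om)$, and only then interpolates in $t$ on the \emph{density} level, using $\tfrac{d}{dt}m_t = m_t\bigl(\mu_t(v)-v\bigr)$. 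You instead differentiate at the level of \emph{expectations}, $\tfrac{d}{dt}\mu_t(f)=-\mu_t(fv)+\mu_t(f)\mu_t(v)$, and pass to the Wasserstein distance through the dual representation and the normalization $f(\om_0)=0$. The interpolation identity is the same in both; the difference is that the paper's argument is self-contained (any coupling gives an upper bound on the infimum by definition, no duality theorem needed), whereas yours rests on Kantorovich--Rubinstein, which on an abstract complete separable metric space needs some care about first-moment integrability --- a point you rightly flag and which the paper sidesteps entirely. Your route is a bit slicker, the paper's a bit more robust; both deliver the stated estimate.
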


\vskip.5cm

{\bf Proof.}  Let
    \begin{equation*}
m(\om)= \min\{m_1(\om),m_0(\om)\},\qquad C= 1-\int m(\om)\nu(d\om)
     \end{equation*}
    \begin{equation*}
P(d\om d\om')= \{m(\om) \delta_{\om-\om'} +
 \frac 1C  [m_1(\om)- m(\om)][m_0(\om')-m(\om')]\}\nu(d\om)\nu(d\om')
     \end{equation*}
$P$ is a coupling of $\mu_1$ and $\mu_0$ and therefore
    \begin{eqnarray*}
R(\mu_1,\mu_0) &\le & \int_{ \Om\times \Om }d(\om,\om')P(d\om d\om')
\le \int_{ \Om} |\om| \big([m_1(\om)-m(\om)] +
 [m_0(\om)-m(\om)]\big)\nu(d\om)
  \\&=& \int_{ \Om}  |\om| \, |m_1(\om)-m_0(\om)|\nu(d\om)
     \end{eqnarray*}
having bounded $d(\om,\om')\le  |\om| + |\om'| $  and integrated
over the missing variable.

\eqref{II.2.10} is then obtained by writing $\dis{m_1(\om)-m_0(\om)
= \int_0^1 \frac{d}{dt} m_t(\om) }$.

\qed

\vskip.5cm

The following estimate is taken from \cite{leipzig}:

\vskip.5cm

  \begin{thm}
  \label{thmII.6.1}

Let $A\subset \Om$ be a measurable set,  $\mu$ a probability on
$\Om$ and $\mu_A$ the probability $\mu$ conditioned to $A$.  Then
    \begin{equation}
       \label{8z.4.6.5}
R(\mu,\mu_A) \le 2\sup_{\om\in\Om}|\om|\,\,\mu(A^c)
     \end{equation}

  \end{thm}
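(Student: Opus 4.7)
The plan is to construct an explicit coupling of $\mu$ and $\mu_A$ following the same recipe used in the proof of Theorem \ref{thmII.2.1}, and then bound its cost crudely using the triangle inequality through the reference point $\omega_0$.

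First I would observe the elementary relation between $\mu$ and its conditioning: on the set $A$ one has $\mu_A = \mu/\mu(A) \ge \mu$ (as densities with respect to $\mu$ itself), while $\mu_A$ vanishes on $A^c$. Writing $c:=\mu(A)$, this means that the common lower bound $m := \min(\mu,\mu_A)$ equals $\mu|_A$, and hence has total mass $c$. The ``excess'' on the first marginal is $\mu - m = \mu|_{A^c}$ (total mass $1-c$), and the excess on the second marginal is $\mu_A - m = (c^{-1}-1)\mu|_A$, which is supported in $A$ and also has total mass $1-c$.

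Next I would put the common part $m$ on the diagonal and pair the two excesses independently (normalized by $1-c$), exactly as in the coupling $P$ constructed at the beginning of the proof of Theorem \ref{thmII.2.1}. This gives a legitimate coupling $P$ of $\mu$ and $\mu_A$, and estimating the cost by the triangle inequality $d(\omega,\omega')\le |\omega|+|\omega'|$ yields
\begin{equation*}
R(\mu,\mu_A)\le \int d(\omega,\omega')\,P(d\omega\,d\omega')
\le \int |\omega|\,d(\mu-m)(\omega)+\int |\omega'|\,d(\mu_A-m)(\omega'),
\end{equation*}
since the diagonal piece contributes zero. Each of the two remaining terms is a positive measure of total mass $1-c=\mu(A^c)$, so bounding $|\omega|\le \sup_{\omega\in\Omega}|\omega|$ pointwise gives
\begin{equation*}
R(\mu,\mu_A)\le 2\,\sup_{\omega\in\Omega}|\omega|\cdot \mu(A^c),
\end{equation*}
which is \eqref{8z.4.6.5}.

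There is no real obstacle here: the argument is essentially a direct specialization of the coupling construction already used for Theorem \ref{thmII.2.1}, with the simplification that one marginal dominates the other on $A$. The only thing to be mildly careful about is that $|\omega|=d(\omega,\omega_0)$ need not be integrable against $\mu$ a priori, but the statement implicitly uses $\sup|\omega|<\infty$, in which case the bound is immediate; if $\sup|\omega|=\infty$ the inequality is vacuous.
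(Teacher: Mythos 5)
Your proposal is correct and is in essence the paper's own proof: the maximal-coupling construction you invoke from Theorem \ref{thmII.2.1} specializes, when one marginal is $\mu$ and the other is $\mu$ conditioned to $A$, to exactly the coupling $Q(d\om,d\om')= \text{\bf 1}_{\om \in A}\mu(d\om)\delta_{\om}(d\om') +\text{\bf 1}_{\om \in A^c}\mu(d\om)\mu_A(d\om')$ that the paper writes down directly, since the normalized excess $(\mu_A-m)/(1-c)$ equals $\mu_A$ in this case. The remaining cost estimate via $d(\om,\om')\le|\om|+|\om'|$ is the same in both.
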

{\bf Proof.} Let
    \begin{eqnarray*}
Q(d\om,d\om')= \text{\bf 1}_{\om \in A}
\mu(d\om)\delta_{\om}(d\om') +\text{\bf 1}_{\om \in
A^c}\mu(d\om)\mu_A(d\om')
     \end{eqnarray*}
where $\delta_{\om}(d\om')$ is the probability supported by $\om$.
Let  $f$ be any bounded, measurable function on $\Om$, then
    \begin{eqnarray*}
\int f(\om)Q(d\om,d\om')= \int_{ A} f(\om) \mu(d\om) +\int_{
A^c}f(\om)\mu(d\om)\int \mu_A(d\om') = \mu(f)
     \end{eqnarray*}
    \begin{eqnarray*}
&&\int f(\om')Q(d\om,d\om')= \int_{ A} f(\om) \mu(d\om) +\mu(
A^c)\int f(\om')\mu_A(d\om') \\&&\hskip3cm = \mu_A(f)
\mu(A)+\mu_A(f) \mu(A^c)=\mu_A(f)
     \end{eqnarray*}
Hence $Q$ is a coupling and
    \begin{equation*}
R(\mu,\mu_A) \le \int d(\om,\om')Q(d\om,d\om') \le \int \text{\bf
1}_{\om \in A^c}(|\om|+|\om'|)\mu(d\om)\mu_A(d\om')
     \end{equation*}
which proves \eqref{8z.4.6.5}.  \qed

\vskip1cm

Eventually, we mention the following elementary property:

\begin{prop}Assume that the distance $d$ satisfies $m(d):=\dis{\inf_{\omega\neq \omega' \in \Omega}d(\omega,\omega')} >0 $. Then for all probability measures $\mu,\nu$ and for all $A \subset \Omega$
\begin{equation}
m(d) \cdot | \mu(A)-\nu(A) | \quad \leq \quad R(\mu,\nu)
\end{equation}
\end{prop}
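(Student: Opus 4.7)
The plan is to unfold the definition of $R(\mu,\nu)$ as an infimum over couplings, and to show that each coupling $Q$ of $\mu$ and $\nu$ already satisfies $\int d(\omega,\omega')\,Q(d\omega,d\omega') \geq m(d) \cdot |\mu(A)-\nu(A)|$; taking the infimum will then yield the claim. This reduces the proposition to a pointwise lower bound on the integrand together with a probabilistic comparison between $Q$-measures of certain product events and the quantity $|\mu(A)-\nu(A)|$.

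First, I would fix an arbitrary coupling $Q$ of $\mu$ and $\nu$, so that $\mu(A) = Q(A\times\Omega)$ and $\nu(A) = Q(\Omega\times A)$. Decomposing $A\times\Omega$ and $\Omega\times A$ along the partition $\Omega\times\Omega = (A\times A)\cup(A\times A^c)\cup(A^c\times A)\cup(A^c\times A^c)$, one gets $\mu(A)-\nu(A) = Q(A\times A^c) - Q(A^c\times A)$, hence
\begin{equation*}
|\mu(A)-\nu(A)| \;\leq\; Q(A\times A^c) + Q(A^c\times A) \;=\; Q\bigl(\{(\omega,\omega'):\mathbf{1}_A(\omega)\neq\mathbf{1}_A(\omega')\}\bigr).
\end{equation*}

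Next, on the event $E:=\{(\omega,\omega'):\mathbf{1}_A(\omega)\neq\mathbf{1}_A(\omega')\}$ we necessarily have $\omega\neq\omega'$, so by the assumption $m(d)>0$ we get $d(\omega,\omega')\geq m(d)$ pointwise on $E$. Therefore
\begin{equation*}
\int d(\omega,\omega')\,Q(d\omega,d\omega') \;\geq\; \int_E d(\omega,\omega')\,Q(d\omega,d\omega') \;\geq\; m(d)\cdot Q(E) \;\geq\; m(d)\cdot|\mu(A)-\nu(A)|.
\end{equation*}
Taking the infimum over all couplings $Q$ gives the claimed inequality. There is no real obstacle here; the only subtlety is just to notice that the diagonal $\{\omega=\omega'\}$ does not intersect $E$, which is exactly what allows the uniform lower bound $m(d)$ to enter. \qed
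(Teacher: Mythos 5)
Your proof is correct and takes essentially the same route as the paper's: the paper uses the pointwise inequality $\mathbf{1}_{\omega\neq\omega'}\geq \mathbf{1}_A(\omega)-\mathbf{1}_A(\omega')$ (after a WLOG reducing to $\mu(A)\geq\nu(A)$), which is just a compact way of phrasing your observation that the symmetric-difference event forces $\omega\neq\omega'$ and hence $d(\omega,\omega')\geq m(d)$. Your more explicit set decomposition $\mu(A)-\nu(A)=Q(A\times A^c)-Q(A^c\times A)$ is fine and avoids the WLOG at the cost of a few extra lines.
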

\begin{proof}
Without loss of generality we assume $\mu(A) \geq \nu(A)$. Remarking that $\textbf{1}_{\omega \neq \omega'} \geq \textbf{1}_{\omega \in A}-\textbf{1}_{\omega' \in A}$, we get for any coupling $Q$ of $\mu,\nu$
\begin{equation}
m(d)(\mu(A)-\nu(A)) \leq \int d(\om,\om')\textbf{1}_{\omega \neq \omega'} G(d\om,d\om')
\end{equation}
and the proposition is proved by taking the infimum over all possible couplings $Q$.
\end{proof}

\begin{remark1}
The proposition above states that Wasserstein distances associated to very particular distances $d$ are finer than the \emph{total variation} distance $\dis{d_\textrm{TV}(\mu,\nu) := \sup_{A\subset \Omega} | \mu(A)-\nu(A) |}$. In the following, we will use this property for $R_{\Delta}$, remarking that $m(d_{\Delta}) = 1$.
\end{remark1}

\vskip1cm

\subsection{Couplings of multi-canonical measures}
        \label{sec:II.2}

        \vskip1cm

Here we prove Theorem \ref{thm:6.1}. Recalling that $\mathring{\Delta}_{1}=
\Delta_1\setminus\delta_{\rm in}^{\ga^{-1}}[\Delta_1]$, we fix two
boundary conditions $\bar q_{i,\mathring{\Delta}_{1}^c}=\bar
q_{i,\La\setminus\Delta_1}\cup \bar q_{i,\La^c}$, $i=1,2$. We have
to compare the marginal distributions of
 $dG^0_\La(q_{\bar\Delta_1}|q_{i,\mathring{\Delta}_{1}^c}, n_{\Delta_1})$,
$i=1,2$
%
%
over the configurations in $\Delta_0$ (i.e.\ well inside $\mathring{\Delta}_{1}$). Since the probabilities
$dG^0_\La(q_{\bar\Delta_1}|q_{i,\mathring{\Delta}_{1}^c}, n_{\Delta_1})$,
$i=1,2$ depend only on the restrictions of $q_{i,\mathring{\Delta}_{1}^c}$
to $\delta_{\rm out}^{\ga^{-1}}[\Delta_1]$ where $n'(x,s)=n''(x,s)$
the corresponding occupation numbers in the two measures are all equal to
each other. We will thus study couplings of multi-canonical
measures, hence the title of the Subsection.

\vskip1cm

 It is now convenient to label the particles.  To this
purpose we use  a multi-index $p=(C_x,s,j)$, where $C_x$ is the cube
of $\mathcal D^{(\ell_-)}$ where the particle  is; $s$ is its spin
and $j\in \{1,..,n(x,s)\}$ distinguishes among the particles in the
same cube with same spin.  We call $\mathcal L_{\mathring{\Delta}_{1}}$ the
set of labels
    $$\mathcal L_{\mathring{\Delta}_{1}}=\{p=(C_x,s,j), x\in \mathring{\Delta}_{1},
    s=1,\dots,S,j\in
\{1,..,n(x,s)\}\}
    $$
Observe that $\mathcal L_{\mathring{\Delta}_{1}}$ is determined by $\und
n_{\Delta_1}$ and we thus have the same labels for the two measures.
Given $p=(C_x,s,j)\in \mathcal L_{\mathring{\Delta}_{1}}$ we denote by $r_p$
a vector configuration $r_p=(r_j,s)$ with $r_j\in C_x$. We then
denote by $r_{\mathcal L_{\mathring{\Delta}_{1}}}=\{r_p, p\in \mathcal
L_{\mathring{\Delta}_{1}}\}$ a vector configuration in $\mathring{\Delta}_{1}$.
Analogously we define $r_{\mathcal L_{\mathring{\Delta}_{1}^c}}$. We then
call $ H_{\mathcal L_{\mathring{\Delta}_{1}}}(r_{\mathcal L_{\mathring{\Delta}_{1}}}|r_{\mathcal L_{\mathring{\Delta}_{1}^c}})$ the energy $H_{\mathring{\Delta}_{1},t}$ defined  in \eqref{e3.4.2} and with $ n_{\Delta_1}$
fixed as above.

Calling
   \begin{equation}
      \label{II.2.2}
d\nu_p(r)= \text{\bf 1}_{r\in C_x} dr
     \end{equation}
we define
   \begin{equation}
      \label{II.2.3}
P_{\mathcal L_{\mathring{\Delta}_{1}}}(dr_{\mathcal L_{\mathring{\Delta}_{1}}}|r_{\mathcal L_{\mathring{\Delta}_{1}^c}}) = Z(r_{\mathcal L_{\mathring{\Delta}_{1}^c}})^{-1} e^{-\beta H_{\mathcal L_{\mathring{\Delta}_{1}}}(r_{\mathcal L_{\mathring{\Delta}_{1}}}|r_{\mathcal L_{\mathring{\Delta}_{1}^c}})}\prod_{p\in \mathcal L_{\mathring{\Delta}_{1}}} \nu_{p}(dr)
     \end{equation}
\begin{remark1}
\label{rem_unlabel}
If  $A$ is  a $\mathcal D^{(\ell_-)}$ measurable subset of $\mathring{\Delta}_{1}$, then $\mathcal L_A$ denotes all labels $(C,s,j)$ with
$C\subset A$ and the marginal of $P_{\mathcal L_A}(dr_{\mathcal
L_A}|r_{\mathcal L_{A^c}})$ over the unlabeled configurations is the
original multi-canonical measure in $A$.

We will thus prove Theorem \ref{thm:6.1} if we can compare
\begin{equation}
\label{PprimePsecond}
P'=P_{\mathcal L_{\mathring{\Delta}_{1}}}(\cdot|r'_{\mathcal L_{\mathring{\Delta}_{1}^c}}) \text{ and } P''=P_{\mathcal L_{\mathring{\Delta}_{1}}}(\cdot|r''_{\mathcal L_{\bar \Delta_1^c}})
\end{equation}
by evaluating the Wasserstein distance $R_{\Delta_0}(P',P'')$.
\end{remark1}

We will use the Dobrushin high-temperature techniques which allow
to reduce to a comparison of the conditional probabilities of a
single variable  $r_p$.
  \begin{prop}[Dobrushin high-temperature theorem]

  \label{propII.2.1}
There is $c$  such that the following holds. For all
$p_0=(C_{x_0},s_0,j_0)$, $C_{x_0}\subset \mathring{\Delta}_{1}$, all
$p_1=(C_{x_1},s_1,j_1)$ and all $r'_{p_1}$ and $r''_{p_1}$
   \begin{equation}
      \label{II.2.7}
\sup_{\und r}R_{\Delta_0}\Big(P_{\mathcal L_{p_0}}\big(\cdot|\und
r,r'_{p_1}\big),P_{\mathcal L_{p_0}}\big(\cdot|\und
r,r''_{p_1}\big)\Big) \le c \ga^{d+\alpha_-} \text{\bf 1}_{{\rm
dist}(C_{x_0},C_{x_1}) \le \ga^{-1}}
     \end{equation}
where $\und r=(r_p)_{ p \ne p_0,p_1}$

  \end{prop}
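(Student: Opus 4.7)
The plan is to reduce the estimate to the abstract Wasserstein bound of Theorem \ref{thmII.2.1} applied to the single-particle conditional distributions on $C_{x_0}$. First I will observe that $P_{\mathcal L_{p_0}}(dr_{p_0}|\und r, r_{p_1})$ is a probability on $C_{x_0}$ with density proportional to $e^{-\beta H(r_{p_0};\und r,r_{p_1})}$, and that the dependence on $r_{p_1}$ enters only through the pairwise term $V_\ga(r_{p_0}, r_{p_1})\mathbf{1}_{s_0\ne s_1}$. Hence the two measures $P' := P_{\mathcal L_{p_0}}(\cdot | \und r, r'_{p_1})$ and $P'' := P_{\mathcal L_{p_0}}(\cdot | \und r, r''_{p_1})$ fit the framework of Theorem \ref{thmII.2.1} with $\nu(dr) = \mathbf{1}_{r \in C_{x_0}}dr$, common $h(r) = \beta H(r; \und r, r''_{p_1})$, and perturbation $v(r) = \beta\bigl(V_\ga(r, r'_{p_1}) - V_\ga(r, r''_{p_1})\bigr)\mathbf{1}_{s_0 \ne s_1}$.

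Next, the indicator factor on the right-hand side will come directly from the support property of $V_\ga$: since $J(0,\cdot)$ is supported in $|r| \le 1/2$, the convolution in \eqref{e1.2} gives $V_\ga(r,r') = 0$ whenever $|r - r'| > \ga^{-1}$, so $v \equiv 0$ and $P' = P''$ when $\mathrm{dist}(C_{x_0}, C_{x_1}) > \ga^{-1}$. In the complementary case, I will use that shifting $v$ by an additive constant does not change either $P'$ or $P''$ (the constant is absorbed in the normalization): one may therefore assume $|v| \le \mathrm{osc}_{C_{x_0}}(v)$. Direct differentiation of \eqref{e1.2} yields $\|\nabla_r V_\ga\|_\infty \le c\,\ga^{d+1}$, which combined with $\mathrm{diam}(C_{x_0}) \le \sqrt d\, \ell_{-,\ga}$ gives $\mathrm{osc}_{C_{x_0}}(v) \le c'\ga^{d+1}\ell_{-,\ga} = c''\ga^{d+\alpha_-}$ by \eqref{e3.1.2}.

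Finally, since only the particle $r_{p_0} \in C_{x_0}$ is allowed to vary under $P'$ and $P''$ while all other particles are frozen at $\und r$, the metric $d_{\Delta_0}$ in \eqref{a6.2} reduces to $d_{x_0} \in \{0, 1\}$; the diameter of the underlying space in the sense of Theorem \ref{thmII.2.1} is therefore at most $1$ (pick any reference $\om_0 \in C_{x_0}$). Plugging these estimates into \eqref{II.2.11} yields
\begin{equation*}
R_{\Delta_0}(P', P'') \,\le\, 2 \cdot 1 \cdot c''\,\ga^{d+\alpha_-}\,\mathbf{1}_{\mathrm{dist}(C_{x_0}, C_{x_1}) \le \ga^{-1}},
\end{equation*}
which is the desired bound.

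The main obstacle is the constant-shift step: the naive estimate $|v| \le 2\beta \sup |V_\ga| = O(\ga^d)$ is off by exactly the factor $\ga^{\alpha_-}$ claimed, and recovering it requires exploiting that the one-body conditional is insensitive to additive constants in $v$, so that only the oscillation of $v$ across a single $\ell_{-,\ga}$-cube matters — a scale much finer than the interaction range $\ga^{-1}$. Everything else (the Lipschitz bound on $V_\ga$, the range truncation, the reduction of $d_{\Delta_0}$ to a single-site indicator) is routine and independent of the details of the conditioning on $\und r$.
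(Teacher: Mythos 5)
Your proof is correct and reaches the stated estimate via Theorem~\ref{thmII.2.1}, but the route is genuinely different from the paper's and worth contrasting. The paper simply observes that $r'_{p_1}$ and $r''_{p_1}$ both lie in the \emph{same} cube $C_{x_1}$ of side $\ell_{-,\ga}$, so $|r'_{p_1}-r''_{p_1}|\le \sqrt d\,\ell_{-,\ga}$; it then bounds the perturbation directly in terms of the Lipschitz constant of $V_\ga$ in its \emph{second} argument,
\[
|v(r)| \;=\; \beta\,\big|V_\ga(r,r'_{p_1})-V_\ga(r,r''_{p_1})\big| \;\le\; \beta\,\|\nabla_{r'}V_\ga\|_\infty \,\sqrt d\,\ell_{-,\ga}\;\le\; c\,\ga^{d+1}\ell_{-,\ga}\;=\;c\,\ga^{d+\alpha_-},
\]
with no constant-shift step needed. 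You instead exploit the fact that adding a constant to $v$ leaves the normalized measures unchanged, reduce $\sup|v|$ to the \emph{oscillation} of $v$ over $C_{x_0}$, and get the same power by Lipschitz continuity in the \emph{first} argument and $\mathrm{diam}(C_{x_0})\le\sqrt d\,\ell_{-,\ga}$. Both arguments land on the identical factor $\ga^{d+1}\ell_{-,\ga}$, because the two cubes $C_{x_0}$ and $C_{x_1}$ have the same side length $\ell_{-,\ga}$. The paper's version is the more economical one here, since the closeness of the two boundary points is handed to you for free by the fact that $p_1$ labels a particle in a single $\ell_{-,\ga}$-cell; your version is slightly more robust in that it would still give a gain even if the two boundary conditions were not localized in the same small cube. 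The only inaccuracy in your write-up is the concluding remark singling out the constant-shift device as ``the main obstacle'': the gain $\ga^{\alpha_-}$ can just as easily be recovered without it, by using the smallness of $|r'_{p_1}-r''_{p_1}|$ rather than the smallness of $\mathrm{diam}(C_{x_0})$, which is exactly what the paper does.
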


\vskip.5cm

{\bf Proof.}  The probabilities to compare have  the form
   \begin{equation*}
 P_{\mathcal L_{p_0}}\big(dr|\und r,r'_{p_1}\big)=
 \frac 1{ Z(\und r,r'_{p_1})} e^{W_\ga(r)}\text{\bf
1}_{r\in C_{x_0}}\; dr
     \end{equation*}
while
   \begin{equation*}
 P_{\mathcal L_{p_0}}\big(dr|\und r,r''_{p_1}\big)=
 \frac 1{ Z(\und r,r''_{p_1})} e^{W_\ga(r)+
W_\ga'(r)}\text{\bf 1}_{r\in C_{x_0}}\; dr
     \end{equation*}
where $W_\ga(r)=-\beta V_\ga(r,r_{j_1}')$ and $W_\ga'(r)=-\beta\{
V_\ga(r,r''_{j_1})-V_\ga(r,r'_{j_1})\}$ hence
    \begin{equation*}
| W_\ga'(r)|\le \beta \sup_{r'\in C_{x_1}}|\nabla V_\ga(r,r')|
\ell_- \le c'\ga^{d+\alpha_-} \text{\bf 1}_{{\rm
dist}(C_{x_0},C_{x_1}) \le \ga^{-1}}
     \end{equation*}
 Proposition \ref{propII.2.1} then follows from
 Theorem \ref{thmII.2.1}.  \qed

\begin{remark1}
 From the proof above, we see that the r.h.s of \eqref{II.2.7} is actually proportionnal to $\beta \gamma^{d+\alpha_{-}}$. In other terms, the \emph{effective temperature} of the system is of order $\gamma^{-d-\alpha_{-}}$ and thus \emph{very high} indeed.
\end{remark1}

\vskip1cm

  \begin{coro}

  \label{coroII.2.1}
With $P',P''$ defined by \eqref{PprimePsecond}, there is $\eps_0$ such that for all $\ga$ small enough the following holds:
    \begin{equation}
    \label{a6.16}
  R_{\Delta_0}(P',P'') \le \eps_0
  \end{equation}
  \end{coro}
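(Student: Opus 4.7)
The corollary is the usual consequence of the single-site Dobrushin bound \eqref{II.2.7}: I would apply Dobrushin's uniqueness theorem to the labeled system on $\mathcal L_{\mathring{\Delta}_1}$ and then unlabel via Remark \ref{rem_unlabel}. First, set the Dobrushin matrix
\[
C(p_0,p_1) := c\,\gamma^{d+\alpha_-}\,\mathbf{1}_{\mathrm{dist}(C_{x(p_0)},C_{x(p_1)})\le \gamma^{-1}}, \qquad p_0,p_1\in\mathcal L_{\mathring{\Delta}_1},
\]
so that \eqref{II.2.7} is exactly the hypothesis of the Dobrushin comparison theorem (with the metric $d_{\Delta_0}$, which on single-particle measures reduces to $\mathbf{1}_{r'_{p_0}\ne r''_{p_0}}$).

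Next, verify the high-temperature condition $\alpha := \sup_{p_0}\sum_{p_1} C(p_0,p_1)<1$. The labels $p_1=(C_{x_1},s_1,j_1)$ within the interaction range of $p_0$ number at most $S$ (spins) times the number of cubes of $\mathcal D^{(\ell_{-,\gamma})}$ in a ball of radius $\gamma^{-1}$ (which is $\lesssim(\gamma\ell_{-,\gamma})^{-d}$) times the number of particles per cube (bounded by $c\,\ell_{-,\gamma}^d$); the product is $\lesssim \gamma^{-d}$. Multiplying by $c\gamma^{d+\alpha_-}$ gives $\alpha \lesssim \gamma^{\alpha_-}\to 0$, so $\alpha<1$ for $\gamma$ small. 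Hence $D:=\sum_{n\ge 0}C^n=(I-C)^{-1}$ is well defined and $\sum_{p_1}D(p_0,p_1)\le (1-\alpha)^{-1}$.

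Now apply Dobrushin's theorem: writing $R^{\mathrm{lab}}$ for the labeled Wasserstein distance with metric $\sum_{p_0\in\mathcal L_{\Delta_0}}\mathbf{1}_{r'_{p_0}\ne r''_{p_0}}$,
\[
R^{\mathrm{lab}}(P',P'')\;\le\;\sum_{p_0\in\mathcal L_{\Delta_0}}\;\sum_{p_1\in\mathcal L_{\mathring{\Delta}_1^c}} D(p_0,p_1).
\]
Because each factor of $C$ connects points at distance $\le\gamma^{-1}$ while $\mathrm{dist}(\Delta_0,\mathring{\Delta}_1^c)\gtrsim 10^{-20}\ell_{+,\gamma}$, the term $C^n(p_0,p_1)$ vanishes unless $n\ge n_\star$ with $n_\star \sim 10^{-20}\gamma\ell_{+,\gamma}\sim \gamma^{-\alpha_+}$. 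Using also $\sum_{p_1}C^n(p_0,p_1)\le \alpha^n$, we get
\[
\sum_{p_1\in\mathcal L_{\mathring{\Delta}_1^c}} D(p_0,p_1)\;\le\;\sum_{n\ge n_\star}\alpha^n\;\le\;\frac{\alpha^{n_\star}}{1-\alpha},
\]
which is super-polynomially small in $\gamma^{-1}$. Finally, unlabel as in Remark \ref{rem_unlabel}: any coupling of $P',P''$ induces a coupling of the unlabeled measures, and since changing the set in $C_x$ requires at least one labeled coordinate there to change, $d_{\Delta_0}(q',q'')\le \sum_{p_0\in\mathcal L_{\Delta_0}}\mathbf{1}_{r'_{p_0}\ne r''_{p_0}}$. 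Therefore
\[
R_{\Delta_0}(P',P'')\;\le\;R^{\mathrm{lab}}(P',P'')\;\le\;|\mathcal L_{\Delta_0}|\cdot\frac{\alpha^{n_\star}}{1-\alpha},
\]
and since $|\mathcal L_{\Delta_0}|\le S(\rho^{(k)}+\zeta)|\Delta_0|\le cN_\La\,\ell_{+,\gamma}^d$ is polynomially bounded in $\gamma^{-1}$ while the right factor decays super-polynomially, one obtains \eqref{a6.16} with any desired (small) $\eps_0$ for $\gamma$ sufficiently small.

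\textbf{Main obstacle.} The only nontrivial point is the Step~2 bookkeeping: the Dobrushin matrix is defined on \emph{labels} (one per particle), not on cubes, and each cube carries $\sim\ell_{-,\gamma}^d\rho^{(k)}$ labels. One must verify that even after summing over all these particle labels within the interaction range, the Dobrushin constant $\alpha$ still vanishes as $\gamma\to 0$; this is exactly the role of the extra factor $\gamma^{\alpha_-}$ coming from $\ell_{-,\gamma}=\gamma^{-(1-\alpha_-)}$ in \eqref{II.2.7}, which produces the ``effective high temperature'' of order $\gamma^{-\alpha_-}$ needed to overcome the particle count.
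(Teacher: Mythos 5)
Your proof is correct and follows essentially the same route as the paper's: both apply the single-site Dobrushin estimate of Proposition~\ref{propII.2.1} through the standard path expansion (the paper writes the path sum directly, you package it as $D=(I-C)^{-1}$), both hinge on the contraction estimate $\sum_{p'}\delta(p,p')\lesssim\gamma^{\alpha_-}<1$, and both obtain the bound from the observation that any chain of steps of range $\gamma^{-1}$ needs $\gtrsim\gamma\ell_{+,\gamma}$ steps to reach $\mathring{\Delta}_1^c$ from $\Delta_0$, yielding super-polynomial decay; the reduction from labeled to unlabeled configurations via Remark~\ref{rem_unlabel} is also the paper's. Your extra bookkeeping for the Dobrushin constant simply spells out what the paper leaves implicit.
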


\vskip.5cm

{\bf Proof.} For $p_0$ and $p_1$ as in Proposition \ref{propII.2.1} we call $\delta(p_0,p_1)= c \ga^{d+\alpha_-}\text{\bf 1}_{{\rm dist}(C_{x_0},C_{x_1}) \le \ga^{-1}}$ (which is
the r.h.s.\ of \eqref{II.2.7}). Then there is $\varsigma>0$ such that for all $\ga$ small
enough the following holds:
    \begin{eqnarray*}
  R_{\Delta_0}(P',P'') &\le& \sum_{p_0\in \mathcal
L_{\La_0}} \sum_{n} \sum_{p_1,..,p_n \in \mathcal L_\Delta} \sum_{p
\notin \mathcal L_\Delta} \delta (p_0,p_1)\cdots \delta(p_n,p)\nn
\\&\le& e^{- \varsigma{\rm dist}(\Delta_0, \bar\Delta_1^c)}
     \end{eqnarray*}
The first inequality follows from the Dobrushin high-temperature theorem (Proposition \ref{propII.2.1}) while the second one is obvious once  $\dis{\sum_{p'\ne p} \delta(p,p') \le c
\ga^{\alpha_-}<1}$ (which is satisfied for all $\ga$ small enough).
\qed

\vskip.5cm

In view of Remark \ref{rem_unlabel}, the Theorem \ref{thm:6.1} is a straightforward consequence of \ref{coroII.2.1}. \qed

 \vskip1cm

\subsection{Taylor expansion}
        \label{sec:II.4}

\bigskip

In this subsection we consider the marginal of $dG_\La^0(q_\La|\bar
q_{\La^c})$ on the variables $\rho_\La=\ell_-^{-d}\,n_\La$,
$n_\La=\{n(x,s),x\in \ell_{-,\ga}\mathbb Z^d\cap\La$, $s\in
\{1,..,S\}$. By an abuse of notation we denote also the marginal
with $G_\La^0(\rho_\La|\bar q_{\La^c})$. 
%

 Recalling \eqref{ee4.2.1}
we get
    \begin{equation}
     \label{a6.17}
G_\La^0(\rho_\La|\bar q_{\La^c}) =\frac 1{Z^{\rm eff}(\bar
q_{\La^c})}e^{-\beta \ell_{-,\ga}^d H^{{\rm eff}}_\La(\rho_\La|\bar
q_{\La^c})}
     \end{equation}
Recalling \eqref{ee5.0.1} we also define
    \begin{equation}
     \label{a6.18}
G_\La^\star(\rho_\La|\bar q_{\La^c}) =\frac 1{Z^*(\bar
q_{\La^c})}e^{-\beta \ell_{-,\ga}^d f(\rho_\La;\bar q_{\La^c})}
     \end{equation}

\bigskip

The following holds:

        \begin{prop}
    \label{prop1}
For all $\bar q_{1,\La^c},\bar q_{2,\La^c}\in \mathcal
X^{(k)}_{\La^c}$,
    \begin{equation}
      \label{a6.20}
R_{\Delta_1}\big(G^0(\cdot|\bar q_{1,\La^c}),G^0(\cdot|\bar
q_{2,\La^c})\big) \le R_{\Delta_1}\big(G^\star(\cdot|\bar
q_{1,\La^c}),G^\star(\cdot|\bar q_{2,\La^c})\big) +
2c\ga^{\tau}
     \end{equation}
with $\tau$ given in \eqref{ee4.2.11}.
        \end{prop}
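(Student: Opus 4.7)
The plan is to combine the triangle inequality for $R_{\Delta_1}$ with Theorem~\ref{thmII.2.1}. Writing $G^0_i := G^0_\La(\cdot|\bar q_{i,\La^c})$ and $G^\star_i := G^\star_\La(\cdot|\bar q_{i,\La^c})$ for $i=1,2$, the Wasserstein triangle inequality gives
\begin{equation*}
R_{\Delta_1}(G^0_1, G^0_2) \;\le\; R_{\Delta_1}(G^\star_1, G^\star_2) \;+\; R_{\Delta_1}(G^0_1, G^\star_1) \;+\; R_{\Delta_1}(G^\star_2, G^0_2),
\end{equation*}
so that it suffices to establish, for each fixed boundary condition $\bar q_{\La^c}$ separately, the estimate $R_{\Delta_1}\bigl(G^0_\La(\cdot|\bar q_{\La^c}),\, G^\star_\La(\cdot|\bar q_{\La^c})\bigr) \le c\gamma^\tau$ with constants that may depend on $\beta$ and $N_\La$.

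For each such single-boundary comparison, I would observe from \eqref{a6.17}--\eqref{a6.18} together with the definition \eqref{ee5.0.1} of $R_\La$ that both measures are Gibbsian of the form $Z_t^{-1}e^{-(h+tv)}$, where
\begin{equation*}
h(\rho_\La) \;=\; \beta\ell_{-,\gamma}^d\, f(\rho_\La;\bar q_{\La^c}),\qquad v(\rho_\La) \;=\; \beta\ell_{-,\gamma}^d\, R_\La(\rho_\La|\bar q_{\La^c}),
\end{equation*}
so that $\mu_0=G^\star$ and $\mu_1=G^0$. I would then apply Theorem~\ref{thmII.2.1}, using \eqref{II.2.11} with $\omega=\rho_\La$ and metric $d_{\Delta_1}$, to obtain
\begin{equation*}
R_{\Delta_1}(G^0, G^\star) \;\le\; 2\bigl(\sup|\rho_\La|_{\Delta_1}\bigr)\bigl(\sup|v|\bigr).
\end{equation*}
The diameter of $d_{\Delta_1}$ is bounded by the number $|\Delta_1\cap \ell_{-,\gamma}\mathbb{Z}^d|$ of small cells in $\Delta_1$, while $\sup|v|\le \beta\ell_{-,\gamma}^d\cdot c\gamma^\tau$ is a direct consequence of the remainder bound \eqref{ee4.2.11} of Theorem~\ref{thme4.2.1}. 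Combining these two estimates yields $R_{\Delta_1}(G^0, G^\star) \le 2\beta\,|\Delta_1|\,c\gamma^\tau$, and the $\gamma$-dependent geometric prefactor from $|\Delta_1|$ is absorbed into the final constant using that in the regime of Theorem~\ref{thme3.7.1} the integer $N_\La$ is fixed and \eqref{ea3.1.1} controls the scales.

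The delicate point is precisely the $\gamma$-dependence of the combined prefactor: the diameter of $d_{\Delta_1}$ grows like $\gamma^{-(\alpha_++\alpha_-)d}$ and $\beta\ell_{-,\gamma}^d\gamma^\tau$ shrinks only like $\gamma^{d(1-3\alpha_--2\alpha_+)/2}$, and their product is a positive but not necessarily $\gamma^\tau$-order power of $\gamma$. To literally realize the stated bound $c\gamma^\tau$ with $c$ depending only on $\beta$ and $N_\La$, one would refine the above sup-based estimate by using the finer moment form \eqref{II.2.10} of Theorem~\ref{thmII.2.1}, together with concentration of $\rho_\La$ near the mean-field value $\rho^{(k)}$ under $\mu_t$ (so that $\mu_t(|\rho_\La|_{\Delta_1})$ is much smaller than the worst-case diameter). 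This is the main technical obstacle to making the proof quantitative; the strategy itself, however, is the direct triangle-inequality-plus-Theorem~\ref{thmII.2.1} scheme outlined above.
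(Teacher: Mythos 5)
Your strategy matches the paper's proof exactly: triangle inequality, identification of $h=\beta\ell_{-,\ga}^d f$ and $v=\beta\ell_{-,\ga}^d R_\La$, then Theorem~\ref{thmII.2.1} applied to each single-boundary comparison. The paper's proof is equally terse and invokes exactly the same ingredients.

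However, there is an inconsistency in your write-up that should be resolved. In the middle paragraph you claim the $\ga$-dependent prefactor from $|\Delta_1|$ is ``absorbed into the final constant using that $N_\La$ is fixed''; but $N_\La$ being fixed only bounds the number of $\ell_{+,\ga}$-cubes, not the number of $\ell_{-,\ga}$-cells, which is $N_\La(\ell_{+,\ga}/\ell_{-,\ga})^d \sim \ga^{-(\alpha_++\alpha_-)d}\to\infty$ as $\ga\to 0$. Your final paragraph correctly identifies this: after the $\ell_{-,\ga}^d$ cancellation the bound from \eqref{II.2.11} is $c\,\ga^{(1-4\alpha_+-5\alpha_-)d/2}$, not $c\,\ga^\tau$ with $\tau=(3-5\alpha_--2\alpha_+)d/2$. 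By \eqref{ea3.1.1} this exponent is still positive, and that is all that is used downstream (see e.g.\ \eqref{a6.69}), so the substance of the proposition survives; but the exponent stated in \eqref{a6.20} is not literally achieved by the argument given (the paper is sloppy on the same point). Finally, your proposed refinement via the moment form \eqref{II.2.10} and concentration of $\rho_\La$ near $\rho^{(k)}$ would not tighten this estimate: the distance $d_{\Delta_1}$ is a Hamming-type count of cells in which two configurations differ, so $|\om|=d_{\Delta_1}(\om,\om_0)$ is generically of order the number of cells in $\Delta_1$ under any $\mu_t$, regardless of how close $\rho_\La$ is to $\rho^{(k)}$; concentration in the values does not translate into concentration in the Hamming distance from a fixed reference. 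The correct resolution is simply to accept the smaller, but still strictly positive, exponent.
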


\vskip.6cm

{\bf Proof.} By \eqref{ee4.2.11} there is $c=c(N_\La)$ such that
     \begin{equation}
\label{a6.20bis}
 |H^{{\rm eff}}_\La(\rho_\La|\bar
q_{\La^c}) -f(\rho_\La;\bar q_{\La^c})|\le c \ga^{\tau}
    \end{equation}

By \eqref{a6.20bis} and Theorem \ref{thmII.2.1}, there is a (different)
constant $c>0$ such that
   \begin{equation}
    \label{a6.19}
R_{\Delta_1}\big(G^0(\cdot|\bar q_{\La^c}),G^\star(\cdot|\bar
q_{\La^c})\big) \le c \ga^{\tau}
     \end{equation}
Hence the triangular inequality implies \eqref{a6.20}.\qed

\bigskip

We will bound $R_{\Delta_1}\big(G^\star(\cdot|\bar
q_{1,\La^c}),G^\star(\cdot|\bar q_{2,\La^c})$ by using the
triangular inequality to replace the two measures by their Taylor
approximants.

\medskip

We first prove the following result true for any $\mathcal
D^{(\ell_+)}$-measurable region $\La$.

\medskip
        \begin{thm}
 \label{thmII.5.1}
For any $\bar q_{\La^c}\in\mathcal X^{(k)}_{\La^c}$, calling
$\mu=G^\star_\La(\cdot|\bar q_{\La^c})$, the following holds.

There are $c>0$ and $\delta<1/2$ that verifies \eqref{delta} below,
so that, calling $\hat\rho_\La$ the minimizer of $f(\rho_\La;\bar
q_{\La^c})$
   \begin{equation}
      \label{II.5.1}
\mu \Big( \{\exists x\in \La,\exists
s:|\rho_\La(x,s)-\hat\rho_\La(x,s)| \ge \ell_-^{-d/2+\delta}\}\Big)
\le e^{-c \ell_-^{2\delta}}
     \end{equation}

 \end{thm}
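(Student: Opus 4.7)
The plan is to exploit the strict convexity of $f$ at its minimizer $\hat\rho_\La$, established in Theorem \ref{thmIe5.3.2} at $\eps=0$, to show that $\mu$ has Gaussian-type concentration around $\hat\rho_\La$. Since the effective inverse temperature is $\beta\ell_{-,\ga}^d$ while the spacing of admissible values of each coordinate $\rho_\La(x,s)$ is $\ell_{-,\ga}^{-d}$, the typical single-coordinate fluctuations should be of order $\ell_{-,\ga}^{-d/2}$; the bound \eqref{II.5.1} is then a Chebyshev-type tail estimate at the slightly larger scale $\ell_{-,\ga}^{-d/2+\delta}$, with $\delta<1/2$ to be fixed at the end.

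Fix $(x_0,s_0)$ and set $A:=\{\rho_\La\in X^{(k)}_\La:|\rho_\La(x_0,s_0)-\hat\rho_\La(x_0,s_0)| \geq \ell_{-,\ga}^{-d/2+\delta}\}$. Since $\hat\rho_\La$ is a critical point of $f$ in the sense of Lemma \ref{lemmaee.5.2.3}, Theorem \ref{thmIe5.3.2} yields
\[
f(\rho_\La;\bar q_{\La^c}) - f(\hat\rho_\La;\bar q_{\La^c}) \;\geq\; \frac{\kappa}{2}\,\|\rho_\La-\hat\rho_\La\|^2
\]
for every $\rho_\La\in Y^{(k)}_\La$. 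On $A$ this is at least $\frac{\kappa}{2}\ell_{-,\ga}^{-d+2\delta}$, so each Boltzmann weight on $A$ is at most $e^{-\beta\ell_{-,\ga}^d f(\hat\rho_\La)}e^{-(\beta\kappa/2)\ell_{-,\ga}^{2\delta}}$. Bounding $|A|\leq (2\zeta\ell_{-,\ga}^d+1)^{S|\La|/\ell_{-,\ga}^d}$ controls the numerator of $\mu(A)$. For a matching lower bound on $Z^*(\bar q_{\La^c})$ I would pick any $\tilde\rho_\La\in X^{(k)}_\La$ within sup-distance $\ell_{-,\ga}^{-d}$ of $\hat\rho_\La$; boundedness of $\|D^2_\La f\|$ (from the explicit form of $F_\La$, $H^{(1)}_\La$ and \eqref{ee4.2.9}), together with the fact that $\hat\rho_\La$ is a critical point, yields $\beta\ell_{-,\ga}^d f(\tilde\rho_\La)\leq \beta\ell_{-,\ga}^d f(\hat\rho_\La)+O\!\big(|\La|\ell_{-,\ga}^{-2d}\big)$, where the error tends to $0$ as $\ga\to 0$ by the scale relations of Subsection \ref{sec:e3.1}. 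Hence $Z^*(\bar q_{\La^c})\geq e^{-o(1)}e^{-\beta\ell_{-,\ga}^d f(\hat\rho_\La)}$ for $\ga$ small.

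Combining these bounds,
\[
\mu(A) \;\leq\; e^{O(1)}\exp\!\Big(\tfrac{S|\La|}{\ell_{-,\ga}^d}\log(2\zeta\ell_{-,\ga}^d+1) \;-\; \tfrac{\beta\kappa}{2}\,\ell_{-,\ga}^{2\delta}\Big).
\]
The combinatorial exponent is bounded by $c\,\ga^{-d(\alpha_++\alpha_-)}\log(\ga^{-1})$, while $\ell_{-,\ga}^{2\delta}\sim\ga^{-2\delta(1-\alpha_-)}$. The scale-separation condition \eqref{ea3.1.1} gives $d(\alpha_++\alpha_-)/[2(1-\alpha_-)]<1/1000$, so any $\delta<1/2$ chosen strictly above this threshold makes the combinatorial entropy negligible compared with the Gaussian decay. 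A final union bound over the $S|\La|/\ell_{-,\ga}^d$ choices of $(x_0,s_0)$ loses only a polynomial factor in $\ga^{-1}$ and delivers \eqref{II.5.1}.

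The delicate step is the lower bound on $Z^*$: the continuous minimizer $\hat\rho_\La$ is not in the discrete set $X^{(k)}_\La$, so it must be approximated by an integer-valued configuration without losing more than $o(\ell_{-,\ga}^{2\delta})$ in the exponent. This works because the lattice spacing $\ell_{-,\ga}^{-d}$ is much finer than the Gaussian fluctuation scale $\ell_{-,\ga}^{-d/2}$, and the scale separation encoded in \eqref{ea3.1.1} keeps the combinatorial entropy $S|\La|\ell_{-,\ga}^{-d}$ well below $\ell_{-,\ga}^{2\delta}$ modulo logarithmic factors.
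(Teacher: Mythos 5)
Your proof is correct and follows the same essential strategy as the paper: the quadratic lower bound on $f$ from Theorem \ref{thmIe5.3.2} yields Gaussian decay of the numerator, a lower bound on $Z^{*}$ is obtained by evaluating near $\hat\rho_\La$, and the scale condition \eqref{delta} makes the combinatorial entropy negligible against $\ell_{-,\ga}^{2\delta}$. The only differences from the paper are cosmetic: you work per-coordinate and take a union bound (the paper treats the global event directly), you replace the paper's factorized Gaussian sum over configurations by a crude cardinality bound on $X^{(k)}_\La$, and you lower bound $Z^{*}$ by a single nearby discrete configuration rather than a small box around $\hat\rho_\La$; each of these loses only polynomial factors in $\ga^{-1}$, which the double-exponential gain from the condition $\delta>\frac{(\alpha_+ + \alpha_-)d}{2(1-\alpha_-)}$ easily absorbs. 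One point to be slightly more careful about, shared with the paper's own argument: the inequality $\beta\ell_{-,\ga}^{d}\bigl(f(\tilde\rho_\La)-f(\hat\rho_\La)\bigr)=o(1)$ implicitly assumes the first-order term $(D_\La f(\hat\rho_\La),\tilde\rho_\La-\hat\rho_\La)$ is negligible; since $\hat\rho_\La$ is a constrained minimizer on $Y^{(k)}_\La$, Lemma \ref{lemmaee.5.2.3} only gives the sign of the gradient at boundary coordinates (and that sign makes the term nonnegative, i.e.\ the wrong direction for an upper bound on $f(\tilde\rho_\La)$), so one should note that the contribution of any boundary coordinates is still $o(\ell_{-,\ga}^{2\delta})$ — which it is, since $|D_\La f|$ is $O(\zeta)$ there and the spacing is $\ell_{-,\ga}^{-d}$ — rather than rely on "$\hat\rho_\La$ is a critical point" as if the gradient vanished everywhere.
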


 \vskip.5cm

{\bf Proof.}  Denoting simply $A:= \{\exists x\in \La,\exists
s:|\rho_\La(x,s)-\hat\rho_\La(x,s)| \ge \ell_-^{-d/2+\delta}\} $ we have
    \begin{equation*}
\mu \Big( \{\exists x\in \La,\exists
s:|\rho_\La(x,s)-\hat\rho_\La(x,s)| \ge \ell_-^{-d/2+\delta}\} \Big) = \frac 1{Z^*(\bar
q_{\La^c})}\sum_{\rho_\La\in \mathcal X^{(k)}_\La}e^{-\beta \ell_-^d
f(\rho_\La;\bar q_{\La^c})} \textbf{1}_{A}(\rho_\La)
    \end{equation*}
By Theorem \ref{thmIe5.3.2} we have that
   \begin{equation*}
f(\rho_\La, \bar q_{\La^c})\ge
 f(\hat\rho_\La,
\bar q_{\La^c})
 + \frac{\kappa}{2}\; \big(\rho_\La-\hat\rho_\La,\rho_\La-
 \hat\rho_\La\big)
     \end{equation*}
Thus calling $\dis{C=\big(\sum_{n=0}^\infty e^{-\beta \frac
{\kappa}{2} n^2}\big)^{SN_{\La}}}$ we get
\begin{align*}
  \sum_{\rho_\La\in \mathcal X^{(k)}_\La}e^{-\beta
    \ell_-^d f(\rho_\La;\bar q_{\La^c})}& \textbf{1}_{A}(\rho_\La) \\
& \le
  e^{-\beta  \ell_-^d f(\hat \rho_\La;\bar q_{\La^c})} \sum_{\rho_\La\in \mathcal X^{(k)}_\La}\exp
  \big\{-\beta \ell_-^d \frac {\kappa}{2}\;
  \sum_{y,s}[\rho_\La(y,s)-\hat\rho_\La(y,s)]^2
  -\beta \frac {\kappa}{2}\ell_-^{2\delta}
 \big\}
 \\
&\le e^{-\beta  \ell_-^d f(\hat \rho_\La;\bar q_{\La^c})} e^{-\beta \frac
 {\kappa}{2}\ell_-^{2\delta}}\,\,\Big[\big(\sum_{n=0}^\infty
 e^{-\beta \frac
 {\kappa}{2} n^2}\big)^S\Big]^{|\La|/\ell_-^d}
\\
& \le e^{-\beta  \ell_-^d f(\hat \rho_\La;\bar q_{\La^c})} e^{-\beta \frac{\kappa}{2}\ell_-^{2\delta}}\,C^{(\ell_+/\ell_-)^d}
\end{align*}

We bound the partition function as follows, with $0<\eps$ a small constant to be chosen later:
     \begin{align*}
Z^*(\bar
q_{\La^c}) & \ge \sum_{\rho_\La\in \mathcal X^{(k)}_\La}
e^{- \beta\ell_-^d f(\rho_\La;\bar q_{\La^c})}\text{\bf
1}_{\{|\rho_\La(x,s)-\hat\rho_\La(x,s)| \le \eps\ell_-^{-d/2+\delta}
\forall x,\forall s\}} \\
 & \ge e^{-\beta  \ell_-^d f(\hat \rho_\La;\bar q_{\La^c})}  e^{-\beta\frac{C'\eps^{2}}{2}\ell_-^{2\delta}} (\eps\ell_-^{-d/2+\delta})^{S(\ell_+/\ell_-)^d},
    \end{align*}
so that
\begin{align*}
\mu \Big(\{|\rho_\La(x,s)-\hat\rho_\La(x,s)| \ge
\ell_-^{-d/2+\delta}\}\Big) &
 \le \exp\{-\left[\beta \frac{\kappa-C'\eps^{2}}{2}- \,\, \ell_-^{-2\delta}\left(\frac{\ell_+}{\ell_-}\right)^{d}\log (C \eps^{-1}\ell_{-}^{d/2-\delta})\right]  \ell_-^{2\delta}\}.
\end{align*}
Remark now that
\begin{align*}
\ell_-^{-2\delta} \left(\frac{\ell_+}{\ell_-}\right)^{d}\log (C \eps^{-1}\ell_{-}^{d/2-\delta}) & =  a \ga^{b}\left(\log \gamma\right)^{c}
\end{align*}
with $ a = (d/2-\delta)(1-\alpha_{-})>0$, $b = (1-\alpha_-)2\delta
 -(\alpha_++\alpha_-)d$ and $c = C\eps^{-1}>0$. Choosing $\delta$ such that $b>0$, i.e.
        \begin{equation}
        \label{delta}
\dis{\delta> \frac {(\alpha_++\alpha_-)d}{2(1-\alpha_-)}}
    \end{equation}
 which is always possible (see \eqref{ea3.1.1}), we get $\gamma^{b}(\log\gamma)^{c} \to 0$ as $\gamma \to 0$. The Theorem is now proved with $0<c < \beta\frac{\kappa-C'\eps^{2}}{2}$, which is always possible for $\eps$ small enough.\qed

\vskip1cm We  call $\hat\rho_{\La,i}$ the minimizer of $f(\cdot;\bar
q_{i,\La^c})$, $i=1,2$. We then let
    \begin{equation}
      \label{a4.55}
    A_{\le,i}=\big\{\rho_\La\in \mathcal X^{(k)}:
    |\rho_\La(x,s)-\hat\rho_{\La,i}(x,s)| \le
\ell_-^{-d/2+\delta}, \forall x,\forall s\big\},\qquad i=1,2
        \end{equation}

\bigskip

    \begin{prop}
    \label{prop2}
For all
 $\bar q_{i,\La^c}\in \mathcal
X^{(k)}_{\La^c}$, $i=1,2$,
    \begin{equation}
      \label{II.5.2}
R_{\Delta_1}\big(G^\star_\La(\rho_\La| \bar
q_{1,\La^c}),G^\star_\La(\rho_\La| \bar q_{2,\La^c}) \big)\le
R_{\Delta_1}\big(G^\star_\La(\rho_\La| \bar q_{1,\La^c},
A_{\le,1}),G^\star_\La(\rho_\La| \bar q_{2,\La^c}, A_{\le,2}) \big)
+
 2c e^{-c \ell_-^{2\delta}}
     \end{equation}

where $G^\star_\La(\rho_\La| \bar q_{i,\La^c}, A_{\le,i})$ $i=1,2$
are
 the probabilities $G^\star_\La(\cdot| \bar q_{i,\La^c})$
 conditioned to $ A_{\le,i}$, $i=1,2$.
 \end{prop}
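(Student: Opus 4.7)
The plan is to apply the triangle inequality for $R_{\Delta_1}$, writing
\begin{align*}
R_{\Delta_1}\big(G^\star_\La(\cdot|\bar q_{1,\La^c}),G^\star_\La(\cdot|\bar q_{2,\La^c})\big) &\le R_{\Delta_1}\big(G^\star_\La(\cdot|\bar q_{1,\La^c}),G^\star_\La(\cdot|\bar q_{1,\La^c},A_{\le,1})\big) \\
&\quad + R_{\Delta_1}\big(G^\star_\La(\cdot|\bar q_{1,\La^c},A_{\le,1}),G^\star_\La(\cdot|\bar q_{2,\La^c},A_{\le,2})\big) \\
&\quad + R_{\Delta_1}\big(G^\star_\La(\cdot|\bar q_{2,\La^c},A_{\le,2}),G^\star_\La(\cdot|\bar q_{2,\La^c})\big)
\end{align*}
and then bound each of the two ``boundary'' terms using Theorem \ref{thmII.6.1} (the abstract conditioning estimate).

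Concretely, for $i=1,2$, Theorem \ref{thmII.6.1} applied with $\mu=G^\star_\La(\cdot|\bar q_{i,\La^c})$, $\mu_A=G^\star_\La(\cdot|\bar q_{i,\La^c},A_{\le,i})$ yields
\begin{equation*}
R_{\Delta_1}\big(G^\star_\La(\cdot|\bar q_{i,\La^c}),G^\star_\La(\cdot|\bar q_{i,\La^c},A_{\le,i})\big)\le 2\,(\sup|\rho_\La|_{\Delta_1})\cdot G^\star_\La(A_{\le,i}^c\,|\,\bar q_{i,\La^c}).
\end{equation*}
Here the ``diameter'' $\sup|\rho_\La|_{\Delta_1}$ with respect to the site-by-site metric on the occupation numbers is bounded by the number of sites $|\ell_{-,\ga}\mathbb Z^d\cap\Delta_1|\le c(\ell_{+,\ga}/\ell_{-,\ga})^d$, which by \eqref{e3.1.2} is polynomial in $\ga^{-1}$.

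For the probability factor, Theorem \ref{thmII.5.1} applied with $\bar q_{\La^c}=\bar q_{i,\La^c}$ gives $G^\star_\La(A_{\le,i}^c|\bar q_{i,\La^c})\le e^{-c\ell_{-,\ga}^{2\delta}}$ provided $\delta$ is chosen to satisfy \eqref{delta} (which is compatible with \eqref{ea3.1.1}). Multiplying, the polynomial prefactor $(\ell_{+,\ga}/\ell_{-,\ga})^d = \ga^{-(\alpha_++\alpha_-)d}$ is absorbed into the stretched-exponential $e^{-c\ell_{-,\ga}^{2\delta}}=e^{-c\ga^{-2\delta(1-\alpha_-)}}$ (at the cost of slightly decreasing $c$) precisely because \eqref{delta} is equivalent to $2\delta(1-\alpha_-)>(\alpha_++\alpha_-)d$. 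Summing the two boundary contributions yields the additive error $2c\,e^{-c\ell_{-,\ga}^{2\delta}}$ in \eqref{II.5.2}.

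No real obstacle is expected: the whole argument is a routine combination of the triangle inequality, the abstract lemma \ref{thmII.6.1}, and the concentration bound \ref{thmII.5.1}. The only point deserving care is the bookkeeping needed to absorb the polynomial factor coming from $\sup|\rho_\La|_{\Delta_1}$ into the exponential, which is why the exponent conditions \eqref{ea3.1.1} and \eqref{delta} have been arranged in advance.
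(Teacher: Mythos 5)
Your proposal is correct and takes essentially the same route as the paper: the paper's own proof simply states that \eqref{II.5.2} ``follows from Theorem \ref{thmII.5.1} and Theorem \ref{thmII.6.1},'' which is precisely your combination of the triangle inequality, the abstract conditioning estimate \eqref{8z.4.6.5}, and the concentration bound \eqref{II.5.1}. Your added bookkeeping about absorbing the polynomial factor $(\ell_{+,\ga}/\ell_{-,\ga})^d$ into the stretched exponential is a useful clarification (though note that $\delta>0$ already suffices for that absorption; the stronger condition \eqref{delta} is what Theorem \ref{thmII.5.1} itself requires).
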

\medskip

{\bf Proof.} \eqref{II.5.2} follows from Theorem \ref{thmII.5.1} and
Theorem \ref{thmII.6.1}. \qed

\vskip1cm

Analogously to \eqref{aaa6.1} we define the following subset of
$\La$.
    \begin{equation}
    \label{aa6.28}
\Delta_2=\bigcup_{x\in\Delta_1} B_x(10^{-30}\ell_{+,\ga})\cap\La
    \end{equation}
and we observe that $\Delta_2\supset \Delta_1$,
dist$\dis{({\Delta_1,\Delta_2^c})> 10^{-30}\ell_+}$. We also have

    \begin{lemma}
    \label{lemma6.10}
Let $\hat K$ be as in Theorem \ref{thmee5.0}. Then $\hat K(x)>0$ for
all $x\in \Delta_2$.
    \end{lemma}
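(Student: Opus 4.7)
The statement is a purely geometric lemma about how the shrinking neighborhoods $\Delta_0 \subset \Delta_1 \subset \Delta_2$ interact with the two radii $10^{-10}\ell_{+,\ga}$ and $10^{-30}\ell_{+,\ga}$ defining $A_x$ and $\hat A_x$ in Definition \ref{kappa}. The plan is to unfold the definitions, establish a set inclusion $\hat A_x \subseteq A_z$ for a suitable $z \in \Delta_0$ close to $x$, and then read off $\hat K(x) > 0$ from whichever clause of Definition \ref{kappa} made $K_\La(z) > 0$.

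First I would observe that, for any $x \in \Delta_2$, by \eqref{aa6.28} and \eqref{aaa6.1} there exist $y \in \Delta_1$ with $|x-y| \le 10^{-30}\ell_{+,\ga}$ and $z \in \Delta_0$ with $|y-z| \le 10^{-20}\ell_{+,\ga}$, so $|x-z| \le (10^{-20} + 10^{-30})\ell_{+,\ga} \ll 10^{-10}\ell_{+,\ga}$. The triangle inequality then yields the key inclusion
\[
\hat A_x \;=\; B_x(10^{-30}\ell_{+,\ga}) \cap \La^c \;\subseteq\; B_z(10^{-10}\ell_{+,\ga}) \cap \La^c \;=\; A_z.
\]

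Next I would split on the clause of Definition \ref{kappa} that guarantees $K_\La(z) > 0$. If $A_z = \emptyset$ then the inclusion forces $\hat A_x = \emptyset$, so $\hat K(x) = \bar m + 1 > 0$. Otherwise $A_z \ne \emptyset$, the boundary conditions agree on $A_z$ (and hence on $\hat A_x$), and $b_z := \max_{r \in A_z,\, s}|\rho^{(\ell_{-,\ga})}(\bar q'_{\La^c};r,s) - \rho^{(k)}_s|$ lies in $[\zeta_{m+1},\zeta_m)$ for some $m \ge 2$. Restricting the maximum to $\hat A_x \subseteq A_z$ can only decrease it, so $\hat b_x \le b_z < \zeta_m \le \zeta_2$. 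If $\hat A_x = \emptyset$ we again get $\hat K(x) = \bar m + 1$; otherwise $0 \le \hat b_x < \zeta_2$ places $\hat b_x$ in some interval $[\zeta_{m'+1}, \zeta_{m'})$ of the partition with $m' \ge m \ge 2$ (adopting the natural convention $\zeta_{\bar m+1} := 0$ so that the leftmost interval $[0,\zeta_{\bar m})$ corresponds to $m' = \bar m$), whence $\hat K(x) = m' > 0$.

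The only nontrivial step is the geometric containment $\hat A_x \subseteq A_z$, which is immediate from the hierarchy $10^{-30} + 10^{-20} \ll 10^{-10}$ built into \eqref{aaa6.1} and \eqref{aa6.28}; the monotonicity bound $\hat b_x \le b_z$ follows because the same integrand is maximized on a smaller set. I do not anticipate any real obstacle: the lemma is essentially the assertion that the three radii $10^{-10},10^{-20},10^{-30}$ are internally consistent with Definition \ref{kappa}.
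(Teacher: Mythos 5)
Your proposal is correct and follows essentially the same route as the paper's own proof: both dispose of the $\hat A_x = \emptyset$ case immediately, locate a point $x_0 \in \Delta_0$ (your $z$) within distance $\le (10^{-20}+10^{-30})\ell_{+,\ga}$ of $x$ via \eqref{aaa6.1} and \eqref{aa6.28}, derive the inclusion $\hat A_x \subseteq A_{x_0}$, and then transfer the agreement of boundary conditions and the bound $\hat b_x \le b_{x_0} < \zeta_m$ ($m \ge 2$) from $A_{x_0}$ down to $\hat A_x$ to conclude $\hat K(x) > 0$. Your explicit handling of the leftmost interval $[0,\zeta_{\bar m})$ via the convention $\zeta_{\bar m+1}:=0$ is a reasonable clarification of a point Definition \ref{kappa} leaves slightly implicit, but the substance of the argument is identical to the paper's.
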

\medskip

{\bf Proof.} Let $x\in \Delta_2$, by definition of $\hat K(x)$, if
$\hat A_x=B_x(10^{-30}\ell_{+,\ga})\cap\La^c=\emptyset$ then $\hat
K(x)=\bar m + 1>0$. Assume then that $\hat A_x\ne\emptyset$. By
\eqref{aa6.28} and \eqref{aaa6.1} there is $x_0\in\Delta_0$ such
that $|x-x_0|\le (1+10^{-10})10^{-20}\ell_{+,\ga}$, thus  $\hat
A_x\subset A_{x_0}=B_x(10^{-10}\ell_{+,\ga})\cap\La^c$ and
therefore $A_{x_0}\ne\emptyset$. By definition of $\Delta_0$ we
then have that $q'_{\La^c}\cap \hat A_x= q''_{\La^c}\cap \hat A_x$
and also that $K(x_0)=m+1>0$ with  $m\geq 2$ where $m$ is given by
$\dis{\max_{r\in A_{x_0}, s\in \{1,..,S\}}|
\rho^{(\ell_{-,\ga})}({\bar q}'_{\La^c};r,s)-\rho^{(k)}_s|}\in
[\zeta_{m+1},\zeta_m)$. Then $\dis{\max_{r\in \hat A_x, s\in
\{1,..,S\}}| \rho^{(\ell_{-,\ga})}({\bar
q}'_{\La^c};r,s)-\rho^{(k)}_s|}<\zeta_m$, that implies that $\hat
K(x)>0$. \qed

\bigskip

Recalling that $\hat\rho_{i,\La}$ is the minimizer of $f(\cdot; \bar
q_{i,\La^c})$, $i=1,2$, we observe that in general the gradient of
$D_{\La}f$ (see \eqref{ee5.1.3} for notation), evaluated at
$\hat\rho_{i,\La}$ does not vanishes in all $\La$. However, by
Theorem \ref{thmee5.0} and Lemmas \ref{lemmaee.5.2.3}, \ref{lemma6.10} it follows that
$D_{\Delta_2}f(\hat \rho_{i,\La}; \bar q_{i,\La^c})=0$.

$N$ being defined by Theorem \ref{thme4.2.1}, we set $\bar \Delta_2 = \Delta_2\cup \delta_{\rm out}^{\ga^{-1}N}[\Delta_2]$ and define
\begin{equation}
  \label{6.32}
  \rho_i^*(x,s)=
  \begin{cases} \hat\rho_{1,\La}(x,s) &\text{if
      $x\in\ell_{-,\ga}\mathbb{Z}^d \cap \bar \Delta_2$ }
    \\\hat\rho_{i,\La}(x,s)&\text{if $x\in\ell_{-,\ga}\mathbb{Z}^d
      \cap(\La\setminus \bar\Delta_2)$}
  \end{cases}
\end{equation}
Thus $\rho_2^*=\hat\rho_{1,\La}$ in $\bar\Delta_2$ while $ \rho_1^*(x,s)=\hat\rho_{1,\La}(x,s)$ for all $x\in\ell_-\mathbb
Z^d\cap \La$ and $\forall s$. We denote by $\rho^*$ the common
value, thus
    \begin{equation}
    \label{rostar}
\rho^*(x,s)=\rho^*_1(x,s)=\rho^*_2(x,s),   \qquad \forall
x\in\ell_{-,\ga}\mathbb{Z}^d\cap \bar\Delta_2, \forall s
    \end{equation}

 We also define  the matrix $B_{i,\La}$ with entries:
    \begin{equation}
      \label{6.33}
      B_{i,\La}(x,s,x',s')=
      \begin{cases}
        D^2_\La f(\hat \rho_{1,\La};\bar q_{1,\La^c})(x,s,x',s')  &\text{if
$x,x'\in\ell_{-,\ga}\mathbb{Z}^d\cap \bar\Delta_2$ } \\
        D^2_\La f(\hat \rho_{i,\La};\bar q_{i,\La^c})(x,s,x',s')  & \text{otherwise}
      \end{cases}
    \end{equation}
Observe that  $B_{1,\La}=D^2_\La f(\hat \rho_{1,\La}; \bar
q_{1,\La^c})$. We denote by $B$ the two matrices restricted to
$\Delta_2\cup \delta_{\rm out}^{\ga^{-1}N}[\Delta_2]$ which are then
equal; their common entries are then
    \begin{equation}
    \label{bmatrix}
B(x,s,x',s')=B_{1,\La}(x,s,x',s') = B_{2,\La}(x,s,x',s')  \qquad
\forall x,x'\in\ell_{-,\ga}\mathbb{Z}^d\cap(\bar\Delta_2), \forall s
    \end{equation}

\medskip

We define for $i=1,2$
    \begin{equation}
    \label{6.27}
\varphi_i(\rho_\La; \bar q_{i,\La^c})= \Big(D_{\La} f(\hat
\rho_{i,\La}; \bar q_{i,\La^c}),[\rho_\La-\rho^*_i]\Big)+\frac 12
\Big([\rho_\La-\rho^*_i], B_{i,\La}[\rho_\La-\rho^*_i]\Big)
    \end{equation}
and the probabilities
    \begin{equation}
    \label{6.28}
\mu_i(\rho_\La):=\frac
1{Z_{i,\La}}e^{-\beta\ell_-^d\,\varphi_i(\rho_\La; \bar
q_{i,\La^c})}\chi_{A_{\le,i}}(\rho_\La),\qquad
Z_{i,\La}=\sum_{\rho_\La}e^{-\beta\ell_-^d\,\varphi_i(\rho_\La; \bar
q_{i,\La^c})}\chi_{A_{\le,i}}(\rho_\La)
    \end{equation}
where $\chi_A$ is the characteristic function of the set $A$:
\medskip

The following holds:
 \begin{prop}
    \label{prop3}
For all
 $\bar q_{i,\La^c}\in \mathcal
X^{(k)}_{\La^c}$, $i=1,2$, and for all $\eps_2>0$ if $\ga$ is small
enough the following holds:
    \begin{equation}
      \label{6.29tay}
R_{\Delta_1}(G^\star_\La(\rho_\La| \bar q_{1,\La^c},
A_{\le,1}),G^\star_\La(\rho_\La| \bar q_{2,\La^c}, A_{\le,2})  \le
R_{\Delta_1}(\mu_1,\mu_2)+  2c \ga^{d/4}+\eps_2
     \end{equation}
 \end{prop}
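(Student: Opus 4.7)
The plan is to combine the triangle inequality for $R_{\Delta_1}$ with Theorem \ref{thmII.2.1} in order to replace the full Gibbs densities by their second-order Taylor approximations $\mu_i$. Concretely, I first write
\begin{equation*}
R_{\Delta_1}\big(G^\star_\La(\cdot| \bar q_{1,\La^c}, A_{\le,1}),G^\star_\La(\cdot| \bar q_{2,\La^c}, A_{\le,2})\big)
\;\le\; \sum_{i=1,2} R_{\Delta_1}\big(G^\star_\La(\cdot| \bar q_{i,\La^c}, A_{\le,i}),\mu_i\big) + R_{\Delta_1}(\mu_1,\mu_2),
\end{equation*}
so it suffices to bound each symmetric term by $c\ga^{d/4}+\eps_2/2$.

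For a fixed $i$, both $G^\star_\La(\cdot|\bar q_{i,\La^c},A_{\le,i})$ and $\mu_i$ are probabilities on $A_{\le,i}$ with densities proportional to $\exp\{-\beta\ell_{-,\ga}^d f(\rho_\La;\bar q_{i,\La^c})\}$ and $\exp\{-\beta\ell_{-,\ga}^d \varphi_i(\rho_\La;\bar q_{i,\La^c})\}$, respectively. I would apply Theorem \ref{thmII.2.1} along the linear interpolation between $\varphi_i$ and $f-f(\hat\rho_{i,\La};\bar q_{i,\La^c})$ (the constant shift drops out in the normalization), i.e.\ with
\begin{equation*}
v(\rho_\La) \;=\; \beta\ell_{-,\ga}^d\,\Big(f(\rho_\La;\bar q_{i,\La^c}) - \varphi_i(\rho_\La;\bar q_{i,\La^c}) - f(\hat\rho_{i,\La};\bar q_{i,\La^c})\Big).
\end{equation*}
For $i=1$ this $v$ is exactly the third-order Taylor remainder of $f(\,\cdot\,;\bar q_{1,\La^c})$ around $\rho^*_1=\hat\rho_{1,\La}$, while for $i=2$ it additionally contains two discrepancies supported on $\bar\Delta_2$: the replacement of $D^2 f(\hat\rho_{2,\La};\bar q_{2,\La^c})$ by $D^2 f(\hat\rho_{1,\La};\bar q_{1,\La^c})$, and the expansion being taken at $\rho^*_2=\hat\rho_{1,\La}$ rather than at $\hat\rho_{2,\La}$. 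By Theorem \ref{thmee5.0} both of these discrepancies are controlled by $|\hat\rho_{1,\La}-\hat\rho_{2,\La}|$ on $\bar\Delta_2$, which is exponentially small in $\ga\ell_{+,\ga}$; these corrections contribute less than $\eps_2$ for $\ga$ small.

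The Taylor remainder itself is bounded using the local structure of $D^3 f$: from the LP term, the third derivative of the entropy is diagonal; the cluster-expansion term $H^{(2)}$ has finite range $2N\ga^{-1}$ and coefficients summable as in \eqref{ee4.2.9}. Hence $\sum_{k,\ell}|D^3 f(j,k,\ell)|\le C$ uniformly in $j$, and on $A_{\le,i}$ the cutoff $|\rho_\La-\rho^*_i|\le\ell_{-,\ga}^{-d/2+\delta}$ yields
\begin{equation*}
\sup_{A_{\le,i}}\big|v\big| \;\le\; C\,\beta\ell_{-,\ga}^d\,N_\La\,\Big(\tfrac{\ell_{+,\ga}}{\ell_{-,\ga}}\Big)^{\!d}\,\ell_{-,\ga}^{-3d/2+3\delta} \;=\; C'\beta\,\ga^{-(\alpha_++\alpha_-)d}\,\ga^{(1-\alpha_-)(d/2-3\delta)}.
\end{equation*}
Since the Wasserstein-diameter for the metric $d_{\Delta_1}$ is $O\bigl((\ell_{+,\ga}/\ell_{-,\ga})^d\bigr)=O(\ga^{-(\alpha_++\alpha_-)d})$, the bound \eqref{II.2.11} of Theorem \ref{thmII.2.1} yields
\begin{equation*}
R_{\Delta_1}\big(G^\star_\La(\cdot|\bar q_{i,\La^c},A_{\le,i}),\mu_i\big) \;\le\; C''\,\ga^{(1-\alpha_-)(d/2-3\delta)\,-\,2(\alpha_++\alpha_-)d} \,+\,\eps_2/2.
\end{equation*}
Choosing $\delta$ just above the critical value in \eqref{delta}, the inequalities \eqref{ea3.1.1} (in the form $8\alpha_++9\alpha_-<1/2$) imply that the exponent is $\ge d/4$, which gives the claim.

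The main obstacle is the tension between the high dimensionality of the state space (growing like $\ga^{-(\alpha_++\alpha_-)d}$ as $\ga\to 0$) and the smallness of the per-site remainder: a naive volume-factor computation could easily give a divergent bound. The competition between the $\beta\ell_{-,\ga}^d$ prefactor from the inverse effective temperature, the cubic cutoff $\ell_{-,\ga}^{-3d/2+3\delta}$, and the square of the Wasserstein-diameter factor is precisely why the quantitative restrictions \eqref{ea3.1.1} on $\alpha_+,\alpha_-$ and \eqref{delta} on $\delta$ are needed; once they hold, the powers of $\ga$ combine to give a positive exponent bigger than $d/4$, and all sub-leading corrections (including the boundary mismatch between $\hat\rho_{1,\La}$ and $\hat\rho_{2,\La}$ on $\bar\Delta_2$) are exponentially small and can be absorbed into $\eps_2$.
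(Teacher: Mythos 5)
Your proof is correct and takes essentially the same approach as the paper's: the triangle inequality reduces the comparison to two Wasserstein distances $R_{\Delta_1}(G^\star_\La(\cdot|\bar q_{i,\La^c},A_{\le,i}),\mu_i)$, each controlled via Theorem \ref{thmII.2.1} with $v$ the exact difference between $\beta\ell_{-,\ga}^d f$ and the quadratic surrogate $\beta\ell_{-,\ga}^d\varphi_i$, with the third-order Taylor remainder bounded on $A_{\le,i}$ and the extra $\bar\Delta_2$-supported discrepancies (present only for $i=2$) controlled by Theorem \ref{thmee5.0}(i) and absorbed into $\eps_2$. If anything you are slightly more careful than the paper: you explicitly carry along the Wasserstein-diameter factor $\sup|\omega| = O\bigl((\ell_{+,\ga}/\ell_{-,\ga})^d\bigr)$ in \eqref{II.2.11}, which makes manifest why the numeric constraint $8\alpha_++9\alpha_-<\tfrac12$ from \eqref{ea3.1.1} is needed, whereas the paper's bound through \eqref{delta2} absorbs this volume factor tacitly.
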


\medskip

{\bf Proof.} We Taylor expand $f( \rho_{\La}; \bar q_{i,\La^c})$ and
we call $\mathcal{R}_i$ the third order.
    \begin{eqnarray}
    \nn
&&\hskip-3cm\mathcal{R}_i:=f( \rho_{\La}; \bar q_{i,\La^c})-
f(\hat\rho_{i,\La}; \bar q_{i,\La^c})
 -\Big(D_{\La} f(\hat \rho_{i,\La};
\bar q_{i,\La^c}),[\rho_\La-\hat\rho_{i,\La}]\Big)\\&& - \frac 12
\Big([\rho_\La-\hat \rho_{i,\La}], D^2_\La f(\hat \rho_{i,\La}; \bar
q_{i,\La^c})[\rho_\La-\hat\rho_{i,\La}]\Big)
     \label{a13.8}
     \end{eqnarray}
Observe that in $A_{\le,i}$ and for a suitable constant $c_1$
    \begin{equation*}
\beta\ell_-^d|\mathcal{R}_i|\le c_1\beta\ell_-^d \sum_{x,s}
|\rho_{\La}(x,s)-\hat\rho_{i,\La}(x,s)|^3\le c_1 \ell_-^d
\big(\frac{\ell_+}{\ell_-}\big)^d \ell_-^{3\delta-3d/2}
     \end{equation*}
and conclude that the right hand side of the above inequality is
estimated by $c \ga^{d/4}$ as soon as $\delta$ satisfies
\begin{equation}
\label{delta2}
\delta < \frac{d}{6}\left[\frac{1}{2}-3\alpha_--2\alpha_+\right]
\end{equation}
which is compatible with \eqref{delta}, see \eqref{ea3.1.1}.

Since $B_{1,\La}=D^2_\La f(\hat \rho_{1,\La}; \bar q_{1,\La^c})$ and
$\rho^*_1=\hat\rho_{1,\La}$, by applying Theorem \ref{thmII.2.1}
with $v=\beta\ell_-^d\mathcal{R}_1$ and $h= \beta\ell_-^d(f(
\rho_{\La}; \bar q_{i,\La^c})
 -\mathcal{R}_1)$  we get  that
  \begin{equation}
    \label{6.25}
R_{\Delta_1}\Big(G^\star_\La(\rho_\La| \bar q_{1,\La^c},
A_{\le,1}),\mu_1 \Big)\le c \ga^{d/4}
    \end{equation}
From Lemma \ref{lemma6.10} and (i) of Theorem \ref{thmee5.0}
we get that given any $\eps_2$ for $\ga$ small enough.
    \begin{eqnarray}
    \nn
&& \Big|\frac {\beta\ell_-^d}2 \Big([\rho_\La-\hat
\rho_{2,\La}], D^2_\La f(\hat \rho_{2,\La}; \bar
q_{2,\La^c})[\rho_\La-\hat\rho_{2,\La}]\Big)- \frac {\beta\ell_-^d}2
\Big([\rho_\La-\rho^*_2],
B_{2,\La}[\rho_\La-\rho^*_2]\Big)\Big|
\\ &&\hskip1cm \nn
\le \Big|\frac {\beta\ell_-^d}2 \Big([\rho_\La-\hat
\rho_{2,\La}], \left(D^2_\La f(\hat \rho_{2,\La}; \bar
q_{2,\La^c})-B_{2,\La}\right) [\rho_\La-\hat\rho_{2,\La}]\Big)_{\bar \Delta_{2}}\Big|
\\ && \hskip2cm \nn
+ \Big| \frac {\beta\ell_-^d}2\Big([\hat \rho_{1,\La} - \hat \rho_{2,\La}],B_{2,\La}[\hat \rho_{1,\La} - \hat \rho_{2,\La}]\Big)_{\bar\Delta_{2}}\Big|
\\ &&\hskip1cm \nn
\le \Big|\frac {\beta\ell_-^d}2 \Big([\rho_\La-\hat
\rho_{2,\La}], \left(D^2_\La f(\hat \rho_{2,\La}; \bar
q_{2,\La^c})-D^2_\La f(\hat \rho_{1,\La}; \bar
q_{2,\La^c})\right) [\rho_\La-\hat\rho_{2,\La}]\Big)_{\bar \Delta_{2}}\Big|
\\ && \hskip2cm \nn
+ \Big| \frac {\beta\ell_-^d}2\Big([\hat \rho_{1,\La} - \hat \rho_{2,\La}],B_{2,\La}[\hat \rho_{1,\La} - \hat \rho_{2,\La}]\Big)_{\bar\Delta_{2}}\Big|
\\&&\hskip1cm \nn
\le \frac {\beta\ell_-^d}2 (\ell_-^{2\delta}+1  )\sum_{x\in\ell_{-,\ga}\mathbb{Z}^d\cap \bar\Delta_2}
c e^{-10^{-30}(\ga\ell_+)\hat\om}\le \eps_2
     \label{6.34}
    \end{eqnarray}

By applying Theorem \ref{thmII.2.1} with
$v=\beta\ell_-^d[\mathcal{R}_2- \frac 12 ([\rho_\La-\rho^*_2],
B_{2,\La}[\rho_\La-\rho^*_2])]$ and $h= \beta\ell_-^d(f( \rho_{\La};
\bar q_{2,\La^c})
 -v)$  we get  that
    \begin{equation}
    \label{6.35}
R_{\Delta_1}\Big(G^\star_\La(\rho_\La| \bar q_{2,\La^c},
A_{\le,2}),\mu_2 \Big)\le c \ga^{d/4}+\eps_2
    \end{equation}
By using the triangular inequality we then get
\eqref{6.29tay}.\qed

 \vskip1cm
\subsection{Quadratic approximation in continuous variables }
        \label{sec:II.5}

\bigskip

In this subsection we consider the conditional probabilities
$\mu_i(\cdot|\bar \rho_{i,\La\setminus\Delta_2})$, $\bar
\rho_{i,\La\setminus\Delta_2}\in A_{\le,i}$, $i=1,2$. Since
$D_{\Delta_2} f(\hat \rho_{i,\La}; \bar q_{i,\La^c})=0$, and recalling
\eqref{rostar} and \eqref{bmatrix}, we have that
    \begin{equation}
\mu_i(\rho_{\Delta_2}|\bar \rho_{i,\La\setminus\Delta_2}):=\frac{e^{-\beta\ell_-^d\Big[\frac 12 \big([\rho_{\Delta_2}-\rho^*],
B_{\Delta_2}[\rho_{\Delta_2}-\rho^*]\big)+\big([\rho_{\Delta_2}-\rho^*],
B[ \bar \rho_{i,\La\setminus\Delta_2}-\rho^*]\big)
\Big]}\chi_{A_{\le,i}}(\rho_{\Delta_2})}{Z_{i,\Delta_2}(\bar
\rho_{i,\La\setminus\Delta_2})}
    \label{6.36}
    \end{equation}
where $B_{\Delta_2}$ is the matrix $B$ restricted to $\Delta_2$ and
where, as usual, $Z_{i,\Delta_2}(\bar
\rho_{i,\La\setminus\Delta_2})$ is the sum over $\rho_{\Delta_2}$ of
the numerator on the right hand side of \eqref{6.36}.

\medskip

We compare the probabilities  $ \mu_i(\cdot|\bar
\rho_{i,\La\setminus\Delta_2})$ with measures $p_i$ with the same
energy but with continuous state space. To define these measures we
start by setting some notations.

%

\smallskip

By convenience we consider the variables $n_{\Delta_2}
=\ell_-^{d}\rho_{\Delta_2}$, thus $n_{\Delta_2}=(n(x,s), x\in
\ell_-\mathbb{Z}^d\cap \Delta_2, s\in \{1,..,S\})$.  Since $\mu_i$,
$i=1,2$ defined in \eqref{6.36} have support on $A_{\le,i}$, the
variables $n_{\Delta_2}$ are such that
   \begin{equation}
      \label{6.37}
[n(x,s)-a^*(x,s)]\in \Big\{-M,
 -M+1,\dots, M\Big\},\qquad
 a^*(x,s)=\ell_-^{d}\rho^*(x,s)
     \end{equation}
where  $M$ is the integer part of $\ell_-^{d/2 +\delta}$ ($\delta$
as in Theorem \ref{thmII.5.1}).

We call $ \xi=(\xi(x,s), x\in \ell_-\mathbb{Z}^d\cap \Delta_2, s\in
\{1,..,S\})$ with
   \begin{equation}
      \label{6.38}
 \xi(x,s) =  \ell_-^{-d/2} [n(x,s)-a^*(x,s)]
     \end{equation}
and we denote by $X_M=\Big\{ \xi: \xi(x,s)\in\{-M,
 -M+1,\dots, M\}\Big\}$. In this new variables the boundary
 conditions become
    \begin{equation}
      \label{a6.39}
\xi^*_i= \ell_-^{-d/2} B[ \bar n_{i,\La\setminus\Delta_2}-a^*],
\qquad \bar n_{i,\La\setminus\Delta_2}=\ell_-^d \bar
\rho_{i,\La\setminus\Delta_2}
     \end{equation}

By an abuse of notation we call $\mu_i( \xi| \xi^*_i)$ the
distribution of the variables $ \xi$ under the probabilities
$\mu_i(\cdot|\bar \rho_{i,\La\setminus\Delta_2})$ defined in
\eqref{6.36}, thus
            \begin{equation}
\mu_i( \xi|\xi^*_i)=\frac 1{Z( \xi^*_i)}e^{-\beta\big[\frac 12 (
\xi, B_{\Delta_2} \xi)+(\xi,\xi^*_i)\big]}
    \label{6.39}
   \end{equation}
where $Z( \xi^*_i)$ is the sum over $ \xi\in X_M$ of the numerator.

We next introduce  variables $\und r=(r(x,s),x\in \Delta_2, s\in
\{1,..,S\})$ which take values in the interval of the real line:
   \begin{equation}
      \label{II.6.5}
r(x,s) \in  \ell_-^{-d/2} [-M,M+1]
     \end{equation}
and we call
\begin{equation}
      \label{6.39ym}
Y_M=\Big\{\und r: r(x,s)\in\ell_-^{-d/2}[-M,M+1], \forall x\in
\Delta_2, s\in \{1,..,S\})\Big\}
     \end{equation}
We next define the probabilities measures on $Y_M$ as
   \begin{equation}
      \label{6.40}
dp_i(\und r| \xi^*_i)=  \frac 1{Z_M( \xi^*_i)}e^{-\beta\big[\frac 12
(\und r, B_{\Delta_2}\und r)+(\und r,  \xi^*_i)\big]}\chi_{Y_M}(\und
r) d\und r,\qquad i=1,2
     \end{equation}
where $\dis{d\und r=\prod_{x,s}dr(x,s)}$ and $Z_M( \xi^*_i)$ is the
integral of the numerator.

\smallskip

 \begin{prop}
    \label{prop10}
For all
 $\bar
\rho_{i,\La\setminus\Delta_2}\in A_{\le,i}$, recalling \eqref{a6.39}
 the following holds:
    \begin{equation}
      \label{6.29}
R_{\Delta_1}\Big(\mu_1(\cdot| \xi^*_1)),\mu_2(\cdot| \xi^*_2))\Big)
\le R_{\Delta_1}\Big(p_1(\cdot| \xi^*_1),p_2(\cdot| \xi^*_2)\Big)+
2c \ga^{d/4}
     \end{equation}
 \end{prop}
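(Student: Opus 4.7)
The strategy is the triangle inequality
\begin{equation*}
R_{\Delta_1}(\mu_1,\mu_2)\le R_{\Delta_1}(\mu_1,p_1) + R_{\Delta_1}(p_1,p_2) + R_{\Delta_1}(p_2,\mu_2),
\end{equation*}
which reduces the proposition to showing $R_{\Delta_1}(\mu_i,p_i)\le c\gamma^{d/4}$ uniformly in $i$ and in the boundary values $\bar\rho_{i,\La\setminus\Delta_2}\in A_{\le,i}$.

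The first step is to lift $\mu_i$, supported on the discrete lattice $X_M$, to a continuous probability $\hat\mu_i$ on $Y_M$. Set $\mathsf{K}:=S\cdot|\Delta_2\cap\ell_{-,\ga}\mathbb{Z}^d|$ and replace each atom at $\xi\in X_M$ by the uniform distribution on the cell $C_\xi := \xi + \ell_{-,\ga}^{-d/2}[0,1)^{\mathsf{K}}$. The cells $\{C_\xi\}$ partition $Y_M$, and since the metric $d_{\Delta_1}$ depends only on the cell of each coordinate $(x,s)$, the natural coupling $\mu_i(d\xi)\otimes\mathrm{Unif}_{C_\xi}(d\underline r)$ gives $d_{\Delta_1}(\xi,\underline r)\equiv 0$. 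Hence $R_{\Delta_1}(\mu_i,\hat\mu_i)=0$, and the task reduces to bounding $R_{\Delta_1}(\hat\mu_i,p_i)$, a comparison of two continuous measures on the common space $Y_M$.

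Next I would apply Theorem \ref{thmII.2.1}. The measures admit densities $\hat\mu_i\propto e^{-h(\underline r)}$ and $p_i\propto e^{-h(\underline r)-v(\underline r)}$ with respect to Lebesgue, where for $\underline r = \xi+\epsilon$, $\xi\in X_M$ the corner of the cell containing $\underline r$, $\epsilon\in[0,\ell_{-,\ga}^{-d/2})^{\mathsf{K}}$:
\begin{equation*}
h(\underline r) = \beta\bigl[\tfrac12(\xi,B_{\Delta_2}\xi) + (\xi,\xi_i^*)\bigr],\qquad v(\underline r) = \beta\bigl[(\epsilon, B_{\Delta_2}\xi+\xi_i^*) + \tfrac12(\epsilon,B_{\Delta_2}\epsilon)\bigr].
\end{equation*}
Choosing the reference $\omega_0$ in Theorem \ref{thmII.2.1} so that $|\omega|\le N:=|\Delta_1\cap\ell_{-,\ga}\mathbb{Z}^d|\sim(\ell_{+,\ga}/\ell_{-,\ga})^d$ yields $R_{\Delta_1}(\hat\mu_i,p_i)\le 2N\sup_{t\in[0,1]}\mu_t(|v|)$. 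The interpolating $\mu_t$ are quasi-Gaussian with Hessian $\ge\beta\kappa\,\mathrm{Id}$ by Theorem \ref{thmIe5.3.1}, so a concentration argument analogous to (but simpler than) Theorem \ref{thmII.5.1} gives $\mathbb{E}_{\mu_t}|\xi(x,s)| = O(1)$ uniformly in $(x,s)$, while $|\xi_i^*|_\infty = O(1)$ follows from $\bar\rho_{i,\La\setminus\Delta_2}\in A_{\le,i}$ and the boundedness of $B$. Combined with $|\epsilon|_\infty\le\ell_{-,\ga}^{-d/2}$ and the short-range structure of $B_{\Delta_2}$ (nonzero entries within distance $\gamma^{-1}$, giving bounded $\ell^1$-row-sum), one gets $\mu_t(|v|)\le c\beta\ell_{-,\ga}^{-d/2}\mathsf{K}$. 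Multiplying by $N$ and substituting $\ell_{-,\ga}\sim\gamma^{-(1-\alpha_-)}$, $\ell_{+,\ga}\sim\gamma^{-(1+\alpha_+)}$ produces $R_{\Delta_1}(\hat\mu_i,p_i) \le c\gamma^{d[1/2-5\alpha_-/2-2\alpha_+]}$, which is $\le c\gamma^{d/4}$ by \eqref{ea3.1.1}.

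The main obstacle is precisely the concentration: the naive a priori bound $|\xi|_\infty\le M\sim\ell_{-,\ga}^{d/2+\delta}$ inherited from the cutoff $A_{\le,i}$ would make $\mu_t(|v|)$ grow with $\gamma^{-1}$ instead of vanishing. One must genuinely exploit the Gaussian concentration of the quadratic measures $\hat\mu_i,p_i,\mu_t$ to get that typical $|\xi|$ is $O(1)$ in $\xi$-units; after that, the remainder of the argument is a routine scaling check against the constraints \eqref{ea3.1.1}.
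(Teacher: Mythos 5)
Your proposal takes a genuinely different (dual) route from the paper's. The paper keeps the comparison on the discrete side: it introduces the cell-marginal $m_i$ of $p_i$ on $X_M$, bounds $R_{\Delta_1}(\mu_i,m_i)$ via Theorem \ref{thmII.2.1} using the gradient estimate \eqref{II.6.11}--\eqref{II.6.12}, and then shows $R_{\Delta_1}(m_1,m_2)\le R_{\Delta_1}(p_1,p_2)$ by pushing any coupling of $p_1,p_2$ forward through the cell map, exploiting $d_{\Delta_1}(\xi',\xi'')\le d_{\Delta_1}(\und r',\und r'')$. You instead lift $\mu_i$ to a continuous $\hat\mu_i$ on $Y_M$, use that the cell-wise metric gives $R_{\Delta_1}(\mu_i,\hat\mu_i)=0$ for the natural coupling, and then apply Theorem \ref{thmII.2.1} directly to the pair $(\hat\mu_i,p_i)$. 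This is valid, arguably cleaner, and even yields a slightly stronger statement (the middle term is the cell-wise Wasserstein distance, which lower-bounds the hard one used in \eqref{6.60}); both approaches produce the same final exponent.

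There is, however, a unit error that propagates into several of your intermediate claims and, more importantly, into your assessment of where the difficulty lies. The cutoff $A_{\le,i}$ gives $|n(x,s)-a^*(x,s)|\le M\sim\ell_{-}^{d/2+\delta}$; after the rescaling \eqref{6.38} this becomes $|\xi|_\infty\le\ell_{-}^{-d/2}M\sim\ell_{-}^{\delta}$, not $|\xi|_\infty\le M$ as you write. Similarly $\xi_i^*=\ell_{-}^{-d/2}B[\bar n_{i}-a^*]$ with $|\bar n_i - a^*|_\infty\lesssim M$ and $B$ having bounded row sums gives $|\xi_i^*|_\infty=O(\ell_{-}^{\delta})$, not $O(1)$, and likewise $\mathbb{E}_{\mu_t}|\xi(x,s)|=O(\ell_{-}^{\delta})$. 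Because of these, the worst-case sup-bound of Theorem \ref{thmII.2.1}, eq.~\eqref{II.2.11}, already yields $|v|\lesssim\ell_{-}^{-d/2+\delta}\mathsf{K}$ without any Gaussian concentration, and $2N\sup|v|\lesssim(\ell_{+}/\ell_{-})^{2d}\ell_{-}^{-d/2+\delta}\le\ga^{d/4}$ under \eqref{ea3.1.1}, \eqref{delta2} — exactly the paper's bound. So your closing paragraph misidentifies the ``main obstacle'': no concentration argument is needed (and the paper uses none); the a priori bound from $A_{\le,i}$ already suffices. Once the $\ell_{-}^{-d/2}$ factor is put in the right place and the spurious appeal to concentration is dropped, your proof is correct and faithfully parallel to the paper's, just from the opposite side of the discrete/continuous divide.
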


\medskip

{\bf Proof.} Given $\xi\in X_M$ we call  $C( \xi)=\{\und r:0\le
r(x,s)-\xi(x,s)< \ell_-^{-d/2}, \forall x\in \Delta_2, \forall s\}$
we define $H'(\und \xi|\xi^*_i)$ as
\begin{equation}
      \label{a14.5}
e^{-H'( \xi|\xi^*_i)}:= \int_{C( \xi)}e^{-\beta\ell_-^{-d}\big[\frac
12 (\und r, B_{\Delta_2}\und r)+(\und r, \xi^*_i)\big]} d\und r
     \end{equation}
and the following probabilities $m_i$ on  $X_M$
\begin{equation}
      \label{II.6.9}
\dis{m_i(\und \xi)=\frac{e^{-H'(\und \xi|\xi^*_i)}}{\sum_{\und
\xi\in X_M}e^{-H'(\und \xi|\xi^*_i)}}},\qquad i=1,2
     \end{equation}
By continuity there is a point $\und r_{ \xi}\in C( \xi)$ such that
   \begin{equation}
      \label{6.30}
H'(\xi|\xi^*_i)=\beta\big[\frac 12 (\und r_{ \xi}, B_{\Delta_2}\und r_{
\xi})+(\und r_{\xi}, \xi^*_i)\big]
     \end{equation}
Therefore
   \begin{equation}
      \label{II.6.11}
\Big|H'( \xi)-\beta\big[\frac 12 ( \xi, B_{\Delta_2} \xi)+( \xi,
\xi^*_i)\big]\Big|\le \sup_{r\in C(\xi)}\|\nabla\{ (\und r,B_{\Delta_2}
\und r)/2+ \xi^*_i)\}\| \ell_-^{-d/2}
     \end{equation}
where $\nabla \psi(\und r)$ is the vector defined as the gradient of
$\psi$ with respect to the variables $r(x,s)$ and $\|\cdot\|$ is the
norm of the vector $\cdot$.

Since $\|B_{\Delta_2}\|\le c^*\frac{|\Delta_2|}{\ell_-^d} $  then
   \begin{equation}
      \label{II.6.12}
\Big|H'( \xi)-\beta\big[\frac 12 ( \xi, B_{\Delta_2} \xi)+( \xi,
\xi^*_i)\big]\Big|\le c^* \frac{|\Delta_2|}{\ell_-^d}S \ell_-^\delta
\ell_-^{-d/2}\le c^*S N_\La\Big(\frac
{\ell_+}{\ell_-}\Big)^d\ell_-^{-d/2+\delta}
     \end{equation}
For $\ga$ small  $\Big(\frac
{\ell_+}{\ell_-}\Big)^d\ell_-^{-d/2+\delta}\le \ga ^{d/4}$, thus by
Theorem \ref{thmII.2.1} and the triangular inequality we get
    \begin{equation}
      \label{6.29bis}
R_{\Delta_1}\Big(\mu_1(\cdot|\bar
\rho_{1,\La\setminus\Delta_2}),\mu_2(\cdot|\bar
\rho_{2,\La\setminus\Delta_2})\Big)  \le R_{\Delta_1}(m_1,m_2)+ 2c
\ga^{d/4}
     \end{equation}
We now observe that at any coupling $Q$ of $p_1$ and $p_2$ we can
associate a coupling $Q^*$ of $m_1$ and $m_2$ by setting
        $$
Q^*(\xi',\xi'')=Q\big(C(\xi')\times C(\xi'')\big)
    $$
To prove that $Q^*$ is indeed a coupling of $m_1$ and $m_2$ we
compute for any function $\psi$ on $X_M$
    \begin{eqnarray*}
&&\hskip-3cm\sum_{\xi''}\sum_{\xi'}\psi(\xi')Q^*(\xi',\xi'')=
\sum_{\xi'}\psi(\xi')p_1(C(\xi'))\\&&\hskip1cm=\frac 1{Z_M(
\xi^*_i)}\sum_{\xi'}\psi(\xi') \int_{C(\xi')}
e^{-\beta\ell_-^{-d}\big[\frac 12 (\und r, B_{\Delta_2}\und r)+(\und r,
\xi^*_i)\big]} d\und r
\\&&\hskip1cm=\sum_{\xi'}\psi(\xi') m_1(\xi')
    \end{eqnarray*}
Thus
    \begin{equation}
      \label{6.52}
\forall Q, \quad  R_{\Delta_1}(m_1,m_2)\le \sum_{\xi'',\xi'}d_{\Delta_1}(\xi',\xi')Q^*(\xi',\xi'')
     \end{equation}
 We next observe that
    \begin{eqnarray*}
&&\hskip-3cm\sum_{\xi'',\xi'}d_{\Delta_1}(\xi',\xi')Q^*(\xi',\xi'')=
\sum_{\xi',\xi'}\int_{C(\xi')\times C(\xi'')}d_{\Delta_1}(\xi',\xi')
dQ(\und r',\und r'')\\&&\hskip1cm\le
\sum_{\xi',\xi'}\int_{C(\xi')\times C(\xi'')}d_{\Delta_1}(\und
r',\und r'') dQ(\und r',\und r'')
    \end{eqnarray*}
Taking the $\inf$ over the coupling $Q$ in the above inequality
and using \eqref{6.52}, we get that  $ R_{\Delta_1}(m_1,m_2)\le
R_{\Delta_1}\Big(p_1(\cdot| \xi^*_1),p_2(\cdot| \xi^*_2)\Big)$,
thus \eqref{6.29bis} implies \eqref{6.29}. \qed

 \vskip1cm

\subsection{Gaussian approximation}
        \label{sec:II.6.5}
We now  extend the measures $p_i(\cdot| \xi^*_1)$ on $Y_M$ to a
measures $P_i$, $i=1,2$, on the full Euclidean space, thus $P_i$,
$i=1,2$ are  the Gaussian measure defined by the r.h.s.\ of
\eqref{6.40} without the last characteristic function.

Thus letting $\und r=( r(x,s)\in \mathbb{R}^d: x\in \Delta_2,
s\in\{1,\dots S\})$,
    \begin{equation}
    dP_i(\und r|\xi^*_i)= \frac 1{Z(\xi^*_i)}
    e^{-\beta\big[\frac 12 (\und r, B_{\Delta_2}\und r)+(\und r,
\xi^*_i)\big]}d\und r
     \label{II.7.1}
     \end{equation}
 with $Z(\xi^*_i)$ the integral of the
numerator.

\bigskip

The following holds:
 \begin{prop}
    \label{prop11}
There is $\delta^*>0$ such that  the following holds:
    \begin{equation}
      \label{6.29pp}
R_{\Delta_1}\Big(p_1(\cdot| \xi^*_1),p_2(\cdot| \xi^*_2)\Big) \le
R_{\Delta_1}\Big(P_1(\cdot|\xi^*_1),P_2(\cdot| \xi^*_2)\Big)+
2\ga^{\delta^*}
     \end{equation}
 \end{prop}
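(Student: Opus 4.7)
I would first observe that $p_i(\cdot|\xi^*_i)$ defined in \eqref{6.40} is precisely the Gaussian measure $P_i(\cdot|\xi^*_i)$ of \eqref{II.7.1} conditioned to the box $Y_M$. The strategy is therefore to bound $R_{\Delta_1}(P_i,p_i)$ for each $i$ by a suitable variant of Theorem \ref{thmII.6.1}, and then to conclude by the triangular inequality
\begin{equation*}
R_{\Delta_1}(p_1,p_2) \;\le\; R_{\Delta_1}(p_1,P_1)+R_{\Delta_1}(P_1,P_2)+R_{\Delta_1}(P_2,p_2).
\end{equation*}
Since the metric $d_{\Delta_1}$ takes values in $\{0,1\}$ on each $\ell_{-,\ga}$-cell and there are at most $S N_\La(\ell_{+,\ga}/\ell_{-,\ga})^d$ such cells in $\Delta_1$, its diameter is polynomial in $\ga^{-1}$, so Theorem \ref{thmII.6.1} gives
\begin{equation*}
R_{\Delta_1}(P_i,p_i) \;\le\; 2\,|\ell_{-,\ga}\mathbb Z^d\cap\Delta_1|\cdot P_i(Y_M^c|\xi^*_i).
\end{equation*}

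The main step is a Gaussian tail estimate showing that $P_i(Y_M^c|\xi^*_i)$ is super-polynomially small. Completing the square, $P_i(\cdot|\xi^*_i)$ is the Gaussian on $\mathbb R^{S|\Delta_2\cap\ell_{-,\ga}\mathbb Z^d|}$ with covariance $(\beta B_{\Delta_2})^{-1}$ and mean $m_i := -B_{\Delta_2}^{-1}\xi^*_i$. By Theorem \ref{thmIe5.3.1}, $B_{\Delta_2}\ge \kappa$ is strictly positive uniformly in $\ga$, so the variance of each coordinate is $O(1/\beta)$. Moreover, since $\bar\rho_{i,\La\setminus\Delta_2}\in A_{\le,i}$, the vector $\xi^*_i$ defined in \eqref{a6.39} satisfies $|\xi^*_i(x,s)|\le c\,\ell_{-,\ga}^{\delta}$ and is supported on the set of sites $x\in\Delta_2$ within distance $\ga^{-1}$ of $\La\setminus\Delta_2$. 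Exploiting the exponential decay of $B_{\Delta_2}^{-1}$ provided by Theorem \ref{thmappB.1}, one obtains $\|m_i\|_\infty \le c'\,\ell_{-,\ga}^{\delta}$ with $c'<1$ for $\ga$ small enough. A standard univariate Gaussian tail estimate combined with a union bound over the $S|\Delta_2\cap\ell_{-,\ga}\mathbb Z^d|$ coordinates then yields
\begin{equation*}
P_i(Y_M^c|\xi^*_i) \;\le\; C\,(\ell_{+,\ga}/\ell_{-,\ga})^d\, e^{-c''\ell_{-,\ga}^{2\delta}},
\end{equation*}
for some $c''>0$ depending on $\beta,\kappa$ and $c'$.

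Combining the two displays, $R_{\Delta_1}(P_i,p_i) \le C'(\ell_{+,\ga}/\ell_{-,\ga})^{2d} e^{-c''\ell_{-,\ga}^{2\delta}}$, which is bounded by $\ga^{\delta^*}$ for some $\delta^*>0$ and all $\ga$ small enough, since $\ell_{-,\ga}^{2\delta}$ grows as a positive power of $\ga^{-1}$ by \eqref{e3.1.2} and \eqref{delta}. Plugging this into the triangular inequality above yields \eqref{6.29pp}. The main obstacle is the quantitative control on $\|m_i\|_\infty$: we need $c'<1$ with a definite margin, so that the Gaussian mass sits strictly inside $Y_M$ and the tail bound $e^{-c''\ell_{-,\ga}^{2\delta}}$ is indeed summable over coordinates. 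This relies on the exponential decay of $B_{\Delta_2}^{-1}$ on the scale $\ga^{-1}$ (much shorter than the linear size of $\Delta_2$) together with the fact that $\xi^*_i$ is supported only in a neighborhood of the boundary of $\Delta_2$; both properties are established in the preceding subsections of Part II.
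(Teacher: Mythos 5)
Your proposal takes essentially the same route as the paper: recognize that $p_i$ is the Gaussian $P_i$ conditioned to $Y_M$, bound $R_{\Delta_1}(P_i,p_i)$ via Theorem \ref{thmII.6.1} in terms of $P_i(Y_M^c)$, and close by the triangular inequality. The only real divergence is in the tail estimate for $P_i(Y_M^c)$: the paper applies Chebyshev's inequality coordinate-by-coordinate and sums over the $O\big((\ell_{+,\ga}/\ell_{-,\ga})^d\big)$ sites, obtaining a polynomial-in-$\ga$ bound (made small by the choice \eqref{delta}), whereas you invoke the full Gaussian tail to get a super-polynomial bound $e^{-c''\ell_{-,\ga}^{2\delta}}$. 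Both are sufficient; the exponential bound is stronger but not needed. One small remark in your favor: you explicitly flag the issue that the Gaussian mean $m_i=-B_{\Delta_2}^{-1}\xi^*_i$ must sit strictly inside $Y_M$ (your $c'<1$ requirement), whereas the paper's application of Chebyshev silently absorbs the mean into the constant; in fact that control is needed in the paper's version too, since a mean of order $\ell_{-,\ga}^{\delta}$ or larger would push an $O(1)$ Gaussian mass outside the box. Your heuristic for $\|m_i\|_\infty \le c'\ell_{-,\ga}^{\delta}$ — exponential decay of $B_{\Delta_2}^{-1}$ combined with the fact that $\xi^*_i$ is supported only within $O(\ga^{-1})$ of $\partial\Delta_2$, plus Theorem \ref{thmee5.0}(i) which makes $\hat\rho_{1,\La}$ and $\hat\rho_{2,\La}$ exponentially close throughout $\Delta_2$ — is the right justification and goes slightly beyond what the paper spells out.
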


\medskip

{\bf Proof.}
By the Chebischev's inequality, and recalling that $\mathrm{Var} P_{i}(\cdot | \xi_i^\ast) = \| B_{\Delta_2}\|^{-1}$, there is $c$ such that
    \begin{equation*}
P_i\big(\{|r(x,s)|\ge \ell_-^\delta\}\big)\le c
\ell_-^{-2\delta}(\frac{\ell_+}{\ell_-})^{-d},\qquad i=1,2
     \end{equation*}
By \eqref{delta} there is $\delta^*>0$ such that
    \begin{equation}
    \label{6.50}
P_i\big(Y_M^c\big)\le \sum_s\sum_{x\in\ell_-\cap\mathbb{Z}^d\cap
\Delta_2}P_i\big(\{|r(x,s)|\ge \ell_-^\delta\}\big)\le \ga^{\delta^*}
    \end{equation}
Since $p_i$ is equal to the probability $P_i$ conditioned to the
set $Y_M$, by using Theorem \ref{thmII.6.1} and the triangular
inequality, we get \eqref{6.29pp}. \qed

\bigskip

We are thus left with the estimate of $R_{\Delta_1}(P_1,P_2)$ that
we do next.

\vskip1cm
            \begin{prop}
            \label{prop12}
There is $\eps_3>0$ such that  the following holds:
    \begin{equation}
      \label{6.53}
R_{\Delta_1}\Big(P_1(\cdot|\xi^*_1),P_2(\cdot| \xi^*_2)\Big)\le
\eps_3
     \end{equation}
 \end{prop}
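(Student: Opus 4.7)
The measures $P_1,P_2$ are Gaussian on the same Euclidean space with common precision matrix $\beta B_{\Delta_2}$ and means $-B_{\Delta_2}^{-1}\xi_i^*$, so I would couple them by the shift $\und r''=\und r'+\Delta$, $\Delta:=B_{\Delta_2}^{-1}(\xi_1^*-\xi_2^*)$, so that $\und r''\sim P_2$ when $\und r'\sim P_1$. Extending the cell-based metric $d_x$ to all real-valued configurations by declaring $d_x(\und r',\und r'')=1$ iff $\lfloor \ell_{-,\ga}^{d/2}r'(x,s)\rfloor\ne\lfloor \ell_{-,\ga}^{d/2}r''(x,s)\rfloor$ for some $s$, the ``bad event'' at $(x,s)$ is that $r'(x,s)$ lies in $\bigcup_k[k\ell_{-,\ga}^{-d/2}-|\Delta(x,s)|,\,k\ell_{-,\ga}^{-d/2}]$, a set of local Lebesgue density $\ell_{-,\ga}^{d/2}|\Delta(x,s)|$. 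Since the one-dimensional marginals of $P_1$ have uniformly bounded densities (the diagonal variance $(\beta B_{\Delta_2})^{-1}(x,s;x,s)$ is bounded thanks to Theorem \ref{thmIe5.3.1}), a standard Riemann-sum estimate gives
\begin{equation*}
R_{\Delta_1}(P_1,P_2)\;\le\; C\,\ell_{-,\ga}^{d/2}\!\!\sum_{x\in\ell_{-,\ga}\mathbb Z^d\cap\Delta_1,\,s}\!\!|\Delta(x,s)|.
\end{equation*}

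The task is thus to bound $|\Delta(x,s)|$ for $x\in\Delta_1$. Because $B$ has interaction range $\ga^{-1}N$, the vector $\xi_1^*-\xi_2^*$ is supported on the collar of thickness $\ga^{-1}N$ inside $\partial\Delta_2$, and from \eqref{a6.39} one has
\begin{equation*}
|\xi_1^*-\xi_2^*|(y)\le \ell_{-,\ga}^{-d/2}\|B\|_\infty\!\!\!\!\sum_{y'\in\delta^{\ga^{-1}N}_{\rm out}[\Delta_2]\cap\La}\!\!\!\bigl(|\bar n_1-\ell_{-,\ga}^d\hat\rho_{1,\La}|+|\bar n_2-\ell_{-,\ga}^d\hat\rho_{2,\La}|+\ell_{-,\ga}^d|\hat\rho_{1,\La}-\hat\rho_{2,\La}|\bigr)(y').
\end{equation*}
The constraint $\bar\rho_{i,\La\setminus\Delta_2}\in A_{\le,i}$ bounds the first two summands by $\ell_{-,\ga}^{d/2+\delta}$. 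For the third I would extend Lemma \ref{lemma6.10}: any $y'\in\bar\Delta_2\cap\La$ lies within $10^{-20}\ell_{+,\ga}+\ga^{-1}N$ of some $x_0\in\Delta_0$, and since $10^{-30}\ell_{+,\ga}+10^{-20}\ell_{+,\ga}+\ga^{-1}N\le 10^{-10}\ell_{+,\ga}$ for $\ga$ small, $\hat A_{y'}\subset A_{x_0}$; the argument of Lemma \ref{lemma6.10} then yields $\hat K(y')>0$, and Theorem \ref{thmee5.0}(i) bounds the third summand by $c\,e^{-10^{-30}(\ga\ell_{+,\ga})\hat\om}$. Hence $|\xi_1^*-\xi_2^*|(y)$ is at most polynomial in $\ga^{-1}$.

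The decay of $\Delta$ from the collar into the interior is supplied by Theorem \ref{thmappB.1}: since $B_{\Delta_2}$ is symmetric and strictly positive (Theorem \ref{thmIe5.3.1}) with off-diagonal range $\ga^{-1}$, one has $|B_{\Delta_2}^{-1}(x,y)|\le c\,e^{-\om'\ga|x-y|}$. For $x\in\Delta_1$ and $y$ in the collar, $|x-y|\ge 10^{-30}\ell_{+,\ga}-\ga^{-1}N\sim 10^{-30}\ell_{+,\ga}$, so $|\Delta(x,s)|\le e^{-c''\ga^{-\alpha_+}}$ up to polynomial prefactors, beating any power of $\ga$. Combining with the display above, $R_{\Delta_1}(P_1,P_2)\to 0$ as $\ga\to 0$, and Proposition \ref{prop12} follows with a suitable $\eps_3=\eps_3(\ga,N_\La)\to 0$. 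The only step demanding care is the extension of Lemma \ref{lemma6.10} to $\bar\Delta_2\cap\La$ and the attendant verification that $\hat A_{y'}\subset A_{x_0}$ for an appropriate $x_0\in\Delta_0$; all other ingredients are elementary Gaussian calculations and the already-established exponential-decay bound for $B_{\Delta_2}^{-1}$.
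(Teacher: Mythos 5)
Your proposal is correct in substance and takes a genuinely different route from the paper. The paper works with the \emph{exact-equality} metric on the continuous variables (so that $R_{\Delta_1}(P_1,P_2)$ can be related to total variation via \eqref{8.7}), then controls the total variation of the two Gaussians by interpolating the means and applying Cauchy--Schwarz to arrive at \eqref{8.8}, with the final step being the exponential-decay bound on $b_{1,\Delta_1}-b_{2,\Delta_1}$ from Theorem~\ref{thmappB.2}. You instead use the \emph{shift} coupling $\und r''=\und r'+\Delta$ and the \emph{floor-based} metric, turning the estimate into a ``probability of crossing a cell boundary'' count. This is a cleaner Gaussian argument that avoids the TV formula and interpolation, and it feeds directly into the discretization step of Proposition~\ref{prop10} because the floor metric is exactly the pushforward of $d_{\Delta_1}$ under the map $\und r\mapsto\xi$ defined by the cells $C(\xi)$. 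Note, however, that you are thereby proving a statement with a weaker metric than the one implicitly used in the paper's \eqref{6.60} (for the exact-equality metric, your shift coupling is useless since it never produces $\und r'=\und r''$). Because the chain of Propositions~\ref{prop10}--\ref{prop11} only requires the continuous-space metric to dominate the discrete one, the floor metric suffices; but to present this as a proof of Proposition~\ref{prop12} as stated, you should be explicit that you are replacing the metric and justify why this is adequate for Theorem~\ref{thm:6.2}.

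On the estimates themselves, both routes hinge on $|B_{\Delta_2}^{-1}(x,y)|\le c\,e^{-\omega'\gamma|x-y|}$ and on the distance $\ge 10^{-30}\ell_{+,\gamma}$ from $\Delta_1$ to the collar where $\xi_1^\ast-\xi_2^\ast$ is supported, so the final exponential smallness is the same. One thing you handle more carefully than the paper: the term $\ell_{-,\gamma}^d|\hat\rho_{1,\La}-\hat\rho_{2,\La}|$ in the collar. You correctly observe that this needs an extension of Lemma~\ref{lemma6.10} from $\Delta_2$ to $\bar\Delta_2\cap\La$ to invoke Theorem~\ref{thmee5.0}(i); your verification that $\hat A_{y'}\subset A_{x_0}$ for an appropriate $x_0\in\Delta_0$ is numerically consistent (since $10^{-30}\ell_{+,\gamma}+10^{-20}\ell_{+,\gamma}+\gamma^{-1}N\ll 10^{-10}\ell_{+,\gamma}$). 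The paper's own proof passes directly from $A_{\le,i}$ to the bound ``$\|B\|\,\ell^\delta$'' on $\ell_{-,\gamma}^{-d/2}|\bar n_1-\bar n_2|$ without explicitly addressing this term; your decomposition makes the needed step visible. Finally, minor points: when bounding the local density of the bad set you need the one-dimensional marginal \emph{variance} of $P_1$ to be bounded \emph{below} (not just above); this follows from $\mathrm{Var}\ge\|\beta B_{\Delta_2}\|^{-1}$ and $\|B_{\Delta_2}\|\le|B_{\Delta_2}|$, i.e. from \eqref{7e.0.0} and the uniform sup-norm bound on $B$, and should be stated as such. You should also note that the shift $|\Delta(x,s)|$ need only be small on $\Delta_1$ (near $\partial\Delta_2$ it can be of order $\ell_{-,\gamma}^{\delta}\gg\ell_{-,\gamma}^{-d/2}$, but those cells are not counted in $R_{\Delta_1}$).
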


\medskip

{\bf Proof.} We first observe that from the definition of the
Wasserstein distance
        \begin{equation}
        \label{6.60}
R_{\Delta_1}\Big(P_1(\cdot|\und b_1),P_2(\cdot| \und
b_2)\Big)=\inf_Q Q\big(r_{\Delta_1}\ne r'_{\Delta_1}\big)
        \end{equation}
where $r_{\Delta_1}$ is the restriction of $\und r$ to $\Delta_1$,
namely $r_{\Delta_1}\in \mathcal{Y}_{\Delta_1}:=\{r(x,s)\in
\mathbb{R}^d,x\in \Delta_1, s=1,\dots S\}$. Thus the inf on the
r.h.s. of \eqref{6.60} can be restricted to  all couplings of the
marginals $P_{i,\Delta_1}$ on the set $\mathcal{Y}_{\Delta_1}$ of
the probabilities $P_i$, $i=1,2$.

Recalling \eqref{a6.39} we define
    \begin{equation}
    \label{6.52bi}
 b_i= B_{\Delta_2}^{-1}\xi^*_i= \ell_-^{-d/2}B_{\Delta_2}^{-1}\big( B[
\bar n_{i,\La\setminus\Delta_2}-a^*]\big),\qquad i=1,2
    \end{equation}
We call $ b_{i,\Delta_1}$ the restriction of the vector $ b_i$ to the
set $\Delta_1$.

We next call $C$  the matrix with entries $C_{i,j}=(
B_{\Delta_2})^{-1} _{i,j}$, $i=(x,s)$, $j=(x',s')$,
$x,x'\in\Delta_2$, $s,s'\in\{1,\dots S\}\}$, $C_{\Delta_1}^{-1}$
denotes the
restriction to $\Delta_1$ of  $C^{-1}$. 

Then remark that marginals of Gaussian variables are Gaussian themselves, so we get:
    \begin{equation}
      \label{8.4}
 dP_{i,\Delta_1}(r_{\Delta_1})= \psi(r_{\Delta_1}-b_{i,\Delta_1})dr_{\Delta_1},
 \qquad  \psi(r_{\Delta_1}-b_{i,\Delta_1}) =Z_i^{-1} e^{-\frac{1}{2}(r_{\Delta_1}
 -b_{i,\Delta_1},
 C_{\Delta_1}^{-1}(r_{\Delta_1}-b_{i,\Delta_1}))}
     \end{equation}

We use that the Wasserstein distance is related to the
variational distance via the following relation
   \begin{equation}
      \label{8.7}
2R_{\Delta_1}\big(P_{1,\Delta_1},P_{2,\Delta_1}\big)
=\|P_{1,\Delta_1}-P_{2,\Delta_1}\|
     \end{equation}
where
   \begin{equation}
      \label{8.5}
\|P_{1,\Delta_1}-P_{2,\Delta_1}\|:= \int
|\psi(r_{\Delta_1}-b_{1,\Delta_1})-\psi(r_{\Delta_1}-b_{2,\Delta_1})|
dr_{\Delta_1}
     \end{equation}

\vskip.5cm

We now prove that

   \begin{equation}
      \label{8.8}
 \|P_{1,\Delta_1}-P_{2,\Delta_1}\|
\le 2\|C_{\Delta_1}^{-1}\| \|b_{1,\Delta_1}- b_{2,\Delta_1}\|_{L^2}
\Big(\sum_{i=(x,s), x\in \ell_-\mathbb{Z}^d\cap \Delta_1}
C_{ii}\Big)^{1/2}
     \end{equation}
To prove \eqref{8.8} we interpolate defining
$M(t)=tb_{1,\Delta_1}+(1-t)b_{2,\Delta_1}$, $t\in [0,1]$. Then,
shorthanding $M=M(t)$,
   \begin{equation}
      \label{8.9}
 {\rm l.h.s.\ of \eqref{8.8}} \le 2\int_0^1\int |\Big(b_{1,\Delta_1}-
b_{2,\Delta_1},
 C_{\Delta_1}^{-1}(r_{\Delta_1}-M)\Big)|\psi(r_{\Delta_1}-M)dr_{\Delta_1}\, dt
     \end{equation}
Using Cauchy-Schwartz the r.h.s.\ is bounded by
   \begin{equation}
      \label{8.10}
\le 2\|C_{\Delta_1}^{-1}\| \|b_{1,\Delta_1}-
b_{2,\Delta_1}\|_{L^2}\int_0^1\int \Big(\sum_{s,x\in \Delta_1}
 (r(x,s)-M(x,s))^2\Big)^{1/2} \psi(r_{\Delta_1}-M)dr_{\Delta_1}\, dt
     \end{equation}
hence \eqref{8.8}.

To estimate $\|b_{1,\Delta_1}- b_{2,\Delta_1}\|_{L^2}$, we apply
Theorem \ref{thmappB.2} with $C'=C''=I$, $I$ the identity
matrix, and with $A=B_{\Delta_2}$, observing  
that $B_{\Delta_2}(x,s,x's')=0$ whenever $|x-x'|> \ga^{-1}N$. Thus
from \eqref{appB.2} and \eqref{appB.6}, using that $\bar
\rho_{i,\La\setminus\Delta_2}\in A_{\le,i}$, $i=1,2$,  \eqref{a6.39}
and \eqref{6.37} we get that there are $c$ and $c'$, such that for all
$x\in\Delta_1$ and since dist$\dis{({\Delta_1,\Delta_2^c})>
10^{-30}\ell_+}$
    \begin{eqnarray*}
&& \hskip-.8cm
\big|b_{1,\Delta_1}(x,s)-b_{2,\Delta_1}(x,s)\big|=\big|
    \sum_{s',y\in\La\setminus\Delta_2}B_{\Delta_2}^{-1}(x,s,y,s')
B(\ell_-^{-d/2}\bar n_{1,\La\setminus\Delta_2}(y,s)-\bar
n_{2,\La\setminus\Delta_2}(y,s))\big|\\&& \hskip3.4cm\le \|B\|
\ell_-^{\delta}
 \sum_{s',y\in\La\setminus\Delta_2} e^{-c|x-y|\ga} \le
 c'\ell_-^{\delta} e^{-c\ga 10^{-30}\ell_+}
    \end{eqnarray*}
Thus this inequality together with \eqref{8.7} and \eqref{8.8}
implies \eqref{6.53}.
 \qed

 \vskip1cm

 {\bf Proof of Theorem \ref{thm:6.2}}.
Recalling the definition \eqref{6.28} of the probabilities $\mu_i$,
and the conditional probabilities defined in \eqref{6.36}, from
Propositions \ref{prop10}, \ref{prop11}, \ref{prop12} we get that
for all $\bar \rho_{i,\La\setminus\Delta_2}\in A_{\le,i}$, $i=1,2$,
        \begin{equation*}
R_{\Delta_1}\big(\mu_1(\cdot|\bar \rho_{1,\La\setminus\Delta_2}),
\mu_2(\cdot|\bar \rho_{2,\La\setminus\Delta_2})\big)\le 2c
\ga^{d/4} +2\ga^{\delta^*}+\eps_3=\eps_4
    \end{equation*}
Thus, there is a coupling \- $\hat
Q\big(n'_{\Delta_2},n''_{\Delta_2}|\bar
\rho_{1,\La\setminus\Delta_2},\bar
\rho_{2,\La\setminus\Delta_2}\big)$ of the conditional
probabilities $\mu_i(\cdot|\bar \rho_{i,\La\setminus\Delta_2})$,
$i=1,2$  such that
    \begin{equation}
    \label{a6.67}
\hat Q\big(n'_{\Delta_1}\ne
n''_{\Delta_1}|\bar\rho_{1,\La\setminus\Delta_2},\bar
\rho_{2,\La\setminus\Delta_2}\big)\le 2\eps_4
    \end{equation}
We define for all $\bar \rho_{i,\La\setminus\Delta_2}$
    \begin{equation*}
Q\big(n'_{\Delta_2},
n''_{\Delta_2}|\bar\rho_{1,\La\setminus\Delta_2},\bar\rho_{2,\La\setminus\Delta_2}\big) =
\begin{cases} \hat Q(n'_{\Delta_2}, n''_{\Delta_2}
|\bar\rho_{1,\La\setminus\Delta_2},\bar
\rho_{2,\La\setminus\Delta_2})&\hskip -2cm \text{if $\bar
\rho_{i,\La\setminus\Delta_2} \in A_{\le,i}, i=1,2 $}
\\dG^0_\La(n'_{\Delta_2}|\bar\rho_{1,\La\setminus\Delta_2},\bar
q_{1,\La^c}) dG^0_\La(n''_{\Delta_2}|\bar
\rho_{2,\La\setminus\Delta_2},\bar q_{2,\La^c})&\text{otherwise}
        \end{cases}
 \end{equation*}
 We then define a coupling $Q$ of the measures $\mu_i$ by
letting
    \begin{equation}
    \label{a6.68}
Q\big(n'_{\Delta_2}, n''_{\Delta_2})=Q\big(n'_{\Delta_2},
n''_{\Delta_2}|\bar\rho_{1,\La\setminus\Delta_2},\bar
\rho_{2,\La\setminus\Delta_2}\big)dG^0_\La\big(\bar\rho_{1,\La\setminus\Delta_2}|\bar
q_{1,\La^c}\big)dG^0_\La\big(\bar\rho_{2,\La\setminus\Delta_2}|\bar
q_{2,\La^c}\big)
    \end{equation}
From \eqref{a6.19}, \eqref{a6.67} and Theorem \ref{thmII.5.1} it
follows that
    \begin{equation}
    \label{a6.69}
Q\big(n'_{\Delta_1}\ne n''_{\Delta_1}\big)\le
2\eps_4+2c\ga^{\tau}+2e^{-c \ell_-^{2\delta}}=\eps_5
    \end{equation}
Observe that \eqref{a6.69} implies that
    \begin{equation}
    \label{a6.70}
R_{\Delta_1}(\mu_1,\mu_2)\le\eps_5
    \end{equation}
Then, \eqref{a6.70}, Propositions \ref{prop1}, \ref{prop2},
\ref{prop3} implies \eqref{a6.4}. \qed
 \vskip1cm

\subsection{Proof of Theorem \ref{thme3.7.1}}
        \label{sec:II.8}

\bigskip

We need to construct a coupling $Q_\La$ such that \eqref{aa6.1} holds.
%

Recall  $\mathring{\Delta}_{1}= \Delta_1\setminus\delta_{\rm
in}^{\ga^{-1}}[\Delta_1]$ and that for any two configurations $\bar
q_{i,\La\setminus\mathring{\Delta}_{1}}$, $i=1,2$ on $\mathcal
X^{(k)}_{\La\setminus\mathring{\Delta}_{1}}$ we denote by $\bar q_{i,\mathring{\Delta}_{1}^c}=\bar q_{i,\La\setminus\mathring{\Delta}_{1}}\cup \bar
q_{i,\La^c}$, $i=1,2$. From Theorem \ref{thm:6.1} we have that, for
any $ n_{\Delta_1}$, there is a coupling $Q_{\mathring{\Delta}_{1}}\big(q'_{\bar \Delta_1},q''_{\mathring{\Delta}_{1}}|\bar q_{1,\mathring{\Delta}_{1}^c},\bar q_{2,\bar \Delta_1^c}, n_{\Delta_1}\big)$ of the
two conditional Gibbs measures $dG^0_\La(q_{\mathring{\Delta}_{1}}|q_{i,\mathring{\Delta}_{1}^c}, n_{\Delta_1})$, $i=1,2$ such that
    \begin{equation}
    \label{6.64}
\sum_{x\in\ell_{-,\ga}\mathbb{Z}^d \cap \Delta_0} Q_{\mathring{\Delta}_{1}}\big(q'_\La\cap C^{(\ell_{-,\ga})}_x\ne
    q''_\La\cap C^{(\ell_{-,\ga})}_x|\bar q_{1,\mathring{\Delta}_{1}^c},\bar q_{2,\bar \Delta_1^c}, n_{\Delta_1}\big) \le
2\eps_0
    \end{equation}
Given $\und n'$ and $\und n''$, we define a coupling $ \hat
Q_{\mathring{\Delta}_{1}}\equiv \hat Q_{\mathring{\Delta}_{1}} \big(q'_{\mathring{\Delta}_{1}}
    q''_{\mathring{\Delta}_{1}}|\bar q_{1,\mathring{\Delta}_{1}^c},\bar q_{2,\mathring{\Delta}_{1}^c},
\und n',\und n''\big)$  of $dG^0_\La(\cdot|\bar q_{1,\mathring{\Delta}_{1}^c}, \und n')$, $dG^0_\La(\cdot|,\bar q_{2,\mathring{\Delta}_{1}^c},
\und n'')$, by setting
        \begin{equation*}
\hat Q_{\mathring{\Delta}_{1}}=
\begin{cases} Q_{\mathring{\Delta}_{1}} &\text{if
$n'_{\Delta_1}=n''_{\Delta_1} $}
\\dG^0_\La(\cdot|\bar q_{1,\mathring{\Delta}_{1}^c}, \und
n')dG^0_\La(\cdot|,\bar q_{2,\mathring{\Delta}_{1}^c}, \und
n'')&\text{otherwise}
        \end{cases}
        \end{equation*}
From Theorem \ref{thm:6.2} there is a coupling $Q^*$ of
$G^0_\La(n_{\La}|q_{i,\La^c})$, $i=1,2$ such that
    \begin{equation}
    \label{a6.78}
Q^*(n'_{\Delta_1}\ne n''_{\Delta_1})\le 2\eps_1
    \end{equation}
Then the final coupling $Q_\La$ is defined as follows:
    \begin{eqnarray}
    \nn
&& Q_\La(q'_{\La},q''_{\La})=  \hat Q_{\mathring{\Delta}_{1}} \big(q'_{\mathring{\Delta}_{1}}
    q''_{\mathring{\Delta}_{1}}|\bar q_{1,\mathring{\Delta}_{1}^c},\bar q_{2,\mathring{\Delta}_{1}^c},
\und n',\und n''\big)
  \\&&\hskip2cm dG^0_\La( q'_{\La\setminus\mathring{\Delta}_{1}}| \bar q_{1,\La^c},\und
 n') dG^0_\La( q''_{\La\setminus\mathring{\Delta}_{1}}| \bar q_{2,\La^c},\und
 n'')Q^*(n',n'')
 \label{6.69}
    \end{eqnarray}

Thus from \eqref{6.64} and \eqref{a6.78}  we get
    \begin{equation}
    \label{6.65}
\sum_{x\in\ell_{-,\ga}\mathbb{Z}^d \cap \Delta_0} Q_\La\big(q'_\La\cap
C^{(\ell_{-,\ga})}_x\ne
    q''_\La\cap C^{(\ell_{-,\ga})}_x\big)\le \eps_6
    \end{equation}
To complete the proof of  \eqref{aa6.1} we need to show that
    \begin{equation}
    \label{a6.66}
    \sum_{ s=1}^S \sum_{x\in\Delta_0}  Q\big(q'_\La\cap
C^{(\ell_{-,\ga})}_x=
    q''_\La\cap C^{(\ell_{-,\ga})}_x,|\rho^{(\ell_{-,\ga})}({
q}'_{\La};x,s)-\rho^{(k)}_s|
 > \zeta_{K(\cdot;x)-1}\big)\le \eps
     \end{equation}
Since in the set on the l.h.s. of \eqref{a6.66}, $q'_\La=q''_\La$,
by using \eqref{a6.19} we have
    \begin{align}
   \nn
 Q_\La\big(q'_\La\cap C^{(\ell_{-,\ga})}_x & =
    q''_\La\cap C^{(\ell_{-,\ga})}_x,\,|\rho^{(\ell_{-,\ga})}({
q}'_{\La};x,s)-\rho^{(k)}_s|
 > \zeta_{K(\cdot;x)-1}\big)
\\
\nn
& \le G_{\La}^0\big(|\rho^{(\ell_{-,\ga})}({
q}'_{\La};x,s)-\rho^{(k)}_s > \zeta_{K(\cdot;x)-1} ; \bar q'_{\La}\big)
\\
\nn
  & \quad +   G_{\La}^0\big(|\rho^{(\ell_{-,\ga})}({
q}''_{\La};x,s)-\rho^{(k)}_s| > \zeta_{K(\cdot;x)-1} ; \bar q''_{\La}\big)
\\
\nn
& \le G_\La^*(|\rho^{(\ell_{-,\ga})}({ q}'_{\La};x,s)-\rho^{(k)}_s|
 > \zeta_{K(\cdot;x)-1}|\bar q'_{\La^c})
\\
& \quad + G_\La^*(|\rho^{(\ell_{-,\ga})}({ q}''_{\La};x,s)-\rho^{(k)}_s|
 > \zeta_{K(\cdot;x)-1}|\bar q''_{\La^c})
+2c\ga^{\tau}
     \label{6.67}
     \end{align}
From Theorem \ref{thmII.5.1} and (ii) of Theorem \ref{thmee5.0} it
follows that for all $x\in\Delta_0$ and for $\bar q_{\La^c}=\bar q'_{\La^c}$ or $\bar q''_{\La^c}) $,
    \begin{equation}
    \label{6.68}
G_\La^*(|\rho^{(\ell_{-,\ga})}({ q}'_{\La};x,s)-\rho^{(k)}_s|
 > \zeta_{K(\cdot;x)-1}|\bar q_{\La^c})\le e^{-c \ell_-^{2\delta}}
     \end{equation}
which together with \eqref{6.67} proves Theorem \ref{thme3.7.1}.\qed

 \vskip1cm

\part{Disagreement percolation}

\vskip.5cm

\nopagebreak

In this part we fix $t\in[0,1]$, a bounded $\mathcal
D^{\ell_{+,\ga}}$-measurable region $\La$, $k\in
\{1,\dots,S+1\}$;  $\mu'$ and $\mu''$ stand for the
measures $dG_\La(q_\La, \und \Ga|{\bar
q'}_{\La^c},\bar{\und \Ga'}_{\La^c})$ and $dG_\La(q_\La,
\und \Ga|{\bar q''}_{\La^c},\bar{\und \Ga''}_{\La^c})$.
They are obtained by conditioning measures $\nu'$ and
$\nu''$ which could be either DLR measures or   Gibbs
measures $dG_{\La'}(q_{\La'}, \und \Ga|{\bar
q}_{(\La')^c})$ with $\La'\supseteq \La$.  We will first
construct a coupling of $\mu'$ and $\mu''$ and, with the
help of such a coupling, we will then define a coupling of
$\nu'$ and $\nu''$ proving that it  satisfies the
requirements of Theorem \ref{thme3.6.1}.  The notation
which are most used in this part are reported below.

\vskip.5cm

\centerline{{\it Main notation and definitions.}}
 \nopagebreak
We call
    \begin{equation}
\hskip-2cm\xi=(q,\und \Ga)\in \mathcal X^{(k)}_{\La}\times
\mathcal B_\La
    \label{7.1.1}
        \end{equation}
Given a $\mathcal D^{(\ell_{+,\ga})}$ measurable subset
$\Delta$ of $\La$ and $\xi=(q,\und\Ga)$, we call
$\xi_\Delta=(q_\Delta,\und\Ga_\Delta)$ its restriction to
$\Delta$. Namely if $\und\Ga=(\Ga(1),\dots\Ga(n))$, then
    \begin{equation}
\hskip-2cm \Ga_\Delta(i)=\big(\rm{sp}[\Ga(i)]\cap\Delta,
\eta_{\rm{sp}[\Ga(i)]\cap\Delta}\big),\qquad \und
\Ga_{\Delta}=( \Ga_\Delta(1),\dots \Ga_\Delta(n))
    \label{7.1.2}
        \end{equation}
We will say that we vary $\xi$ in $\Delta^c$ if we change
$\xi$ leaving $\xi_\Delta$ invariant.

We denote by $\Omega_\La$  the product space,
    \begin{equation}
\hskip-2cm \Omega_\La=(\mathcal X^{(k)}_{\La}\times
\mathcal B_\La)^2,  \hskip 2cm \omega=(\xi,\xi')\in
\Omega_\La
    \label{7.1.3}
        \end{equation}
Given a subset $\Delta\subset \La$ and
$\omega=(\xi,\xi')\in \Omega_\La$, we call
$\omega_\Delta=(\xi_\Delta, \xi'_\Delta)\in\Omega_{\Delta}$
its restriction to $\Delta$.

We call $\mathcal{F}_\La$ the $\si$-algebra of all Borel
sets  in $\Omega_\La$  and for any $\mathcal
D^{(\ell_{+,\ga})}$ measurable set $\Delta$ in $\La$ we
call $\mathcal F_\Delta$ the $\si$-algebra of all Borel
sets $A$ such that $\text{\bf 1}_{A}(\om)$ does not vary
when we change $\om$ in $\Delta^c$.

%
%

 \vskip1cm

\section{Construction of the coupling}
    \label{sec:7}
The target of this section is to construct a ``good''
coupling $Q$ of $\mu'$ and $\mu''$. The basic idea is to
implement the disagreement percolation technique used in
van der Berg and Maes, \cite{vm}, Butta et al., \cite{BMP},
Lebowitz et al,\cite{LMP}. The first step is to introduce a
sequence of random sets $\La_n$, which is done in the next
subsection. We will  then introduce the notion of
``stopping sets''  and ``strong Markov couplings'' showing
that the sets $\La_n$ are indeed stopping sets and, using
the strong Markov coupling property, we will finally get
the desired coupling of $\mu'$ and $\mu''$.

\vskip2cm

\subsection{The sequence $\La_n$}
\label{subsec:7.2}

We will define here for each $\omega=(\xi',\xi'')\in \Om_\La$ a
decreasing sequence of $\mathcal D^{(\ell_{+,\ga})}$-measurable sets
$\La_n$, which are therefore  set valued  random variables.  We set
$\La_0=\La$ and for $n\ge 0$, define
$\La_{n+1}=\La_n\setminus\Si_{n+1}$, thus the sequence is defined
once we specify the ``screening sets'' $\Si_n$. Screening sets are
defined iteratively with the help of the notion of ``good'' and
``bad cubes''.

After defining in an arbitrary fashion an order among the
$\mathcal D^{(\ell_{+,\ga})}$ cubes of $\delta_{\rm
out}^{\ell_{+,\ga}}[\Delta]$, for any $\mathcal
D^{(\ell_{+,\ga})}$-measurable set $\Delta \subset \La$, we
start the definition by calling bad all the cubes of
$\delta_{\rm out}^{\ell_{+,\ga}}[\La_0]$. We then select
among these the first one (according to the pre-definite
order) which intersects a polymer (i.e.\ either $\ssp(\und
\Ga')\cap C\ne\emptyset$, or $\ssp(\und \Ga'')\cap
C\ne\emptyset$), if there is no such cube  we  then take
the first cube in $\delta_{\rm out}^{\ell_{+,\ga}}[\La_0]$.
Call $C_1$ the cube selected with such a rule. We
then define $\Si_1=\delta_{\rm out}^{\ell_{+,\ga}}[C_1]\cap
\La_0$ and call bad all cubes of $\Si_1$ if $C_1$ intersects a polymer.
If not, we
say that a $\mathcal D^{(\ell_{+,\ga})}$ cube
$C\in \Si_1$ is good if $\ssp(\und \Ga')\cap C=\ssp(\und
\Ga'')\cap C=\emptyset$ and if
    \begin{equation}
    \label{7.2.1}
\om\in \bigcap_{x\in \ell_{-\ga}\mathbb Z^d\cap
C }
\Theta_{\La_0}(x), \quad \text{$\Theta_{\La_0}$ has been
defined in \eqref{e3.7.1.0},}
    \end{equation}
otherwise $C\in \Si_1$ is called bad.  In this way
each cube of $\Si_1$ is classified as good or bad and
therefore all cubes of $\delta_{\rm
out}^{\ell_{+,\ga}}[\La_1]$ are classified as good or bad.
We then select $C_2$ in $\delta_{\rm
out}^{\ell_{+,\ga}}[\La_1]$ in the same way we had selected
$C_1$ in $\delta_{\rm out}^{\ell_{+,\ga}}[\La_0]$,
$\Si_2=\delta_{\rm out}^{\ell_{+,\ga}}[C_2]\cap \La_1$  and
the cubes of $\Si_2$ are then  classified as good or bad by
the same rule used for those of $\Si_1$. By iteration we
then define a sequence which becomes eventually constant,
as it stops changing at $\La_n$ if $\delta_{\rm
out}^{\ell_{+,\ga}}[\La_n]$ has no bad cube or if $\La_n$
is empty. Since $\La$ has $N^*:=|\La|/\ell_{+,\ga}^d$
cubes, $\La_n$ is certainly constant after $N^*$, but maybe
even earlier. In Appendix \ref{appCC} we will prove:

 \vskip1cm

    \begin{thm}
   \label{thm7.2.1 }
If the sequence $\{\La_n\}$ stops at $n=N$ and $\La_N$ is
non empty, then
   \begin{equation}
    \label{7.2.2}
q'_\La\cap \delta_{\rm out}^{\ga^{-1}}[\La_N]=q''_\La\cap
\delta_{\rm out}^{\ga^{-1}}[\La_N]
    \end{equation}
    and
   \begin{equation}
    \label{7.2.3}
 \ssp(\und
\Ga')\cap \delta_{\rm out}^{\ga^{-1}}[\La_N]=\ssp(\und
\Ga'')\cap  \delta_{\rm out}^{\ga^{-1}}[\La_N]=\emptyset
    \end{equation}

    \end{thm}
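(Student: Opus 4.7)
The plan is to unpack the stopping rule and chase each removed cube back to the step at which it was classified as good, harvesting the two conclusions of the theorem from the two defining properties of ``good''. First I would observe that ``the sequence stops at $N$ with $\La_N\ne\emptyset$'' means, by definition, that every $\mathcal D^{(\ell_{+,\ga})}$-cube in $\delta_{\rm out}^{\ell_{+,\ga}}[\La_N]$ is good. By the construction in Subsection \ref{subsec:7.2}, each such cube $C$ was added to some screening set $\Si_{m+1}\subset\La_m$ (with $m<N$) and, because it was declared good there, satisfies both (a) $\ssp(\und\Ga')\cap C=\ssp(\und\Ga'')\cap C=\emptyset$, and (b) $\om\in\Theta_{\La_m}(x)$ for every $x\in \ell_{-,\ga}\mathbb Z^d\cap C$.

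Next I would derive \eqref{7.2.3} from (a). Since $\La_N$ is $\mathcal D^{(\ell_{+,\ga})}$-measurable and $\ell_{+,\ga}$ is an integer multiple of $\ga^{-1}$, any $\mathcal D^{(\ga^{-1})}$-cube lying in $\La_N^c$ and adjacent to $\La_N$ is contained in a $\mathcal D^{(\ell_{+,\ga})}$-cube of $\delta_{\rm out}^{\ell_{+,\ga}}[\La_N]$; hence $\delta_{\rm out}^{\ga^{-1}}[\La_N]\subset\delta_{\rm out}^{\ell_{+,\ga}}[\La_N]$, and property (a) applied to each enclosing cube yields \eqref{7.2.3} at once.

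I would then derive \eqref{7.2.2} from (b). For any $\mathcal D^{(\ga^{-1})}$-cube $D\subset\delta_{\rm out}^{\ga^{-1}}[\La_N]$, pick the $\mathcal D^{(\ell_{+,\ga})}$-cube $C\supset D$ provided above; since $\ga^{-1}$ is an integer multiple of $\ell_{-,\ga}$, $D$ is a union of $\mathcal D^{(\ell_{-,\ga})}$-cubes $C_x^{(\ell_{-,\ga})}$ with $x\in \ell_{-,\ga}\mathbb Z^d\cap C$. By construction $C\subset\La_m$ and so $C_x^{(\ell_{-,\ga})}\subset\La_m$. Property (b) combined with the definition \eqref{e3.7.1.0} of $\Theta_{\La_m}(x)$ gives $q'_{\La_m}\cap C_x^{(\ell_{-,\ga})}=q''_{\La_m}\cap C_x^{(\ell_{-,\ga})}$, which is the same as $q'_\La\cap C_x^{(\ell_{-,\ga})}=q''_\La\cap C_x^{(\ell_{-,\ga})}$ (the restriction of $q_\La$ to the subset $\La_m$ of $\La$ coincides with $q_\La$ on $C_x^{(\ell_{-,\ga})}\subset\La_m$). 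Taking the union over $x\in C$ and then over all such cubes $D$ produces \eqref{7.2.2}.

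The only step I expect to require genuine care is the last one, namely the bookkeeping linking $\Theta_{\La_m}$ (which is formulated for the reduced domain $\La_m$ and for boundary data on $\La_m^c$) with the original configurations $q'_\La,q''_\La$ on $\La$. One must check that the screening procedure never re-introduces conditioning that would make the $q'_{\La_m}$ appearing in $\Theta_{\La_m}$ differ from the restriction of $q'_\La$; this is essentially a consistency check guaranteed by the strong Markov / stopping-set structure to be established earlier in the section, but it is the only place where the iterative nature of the definition of $\{\La_n\}$ really enters.
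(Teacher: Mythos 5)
Your derivation of \eqref{7.2.3} is correct and, after the inclusion $\delta_{\rm out}^{\ga^{-1}}[\La_N]\subset\delta_{\rm out}^{\ell_{+,\ga}}[\La_N]$, it is exactly the paper's short argument. Your derivation of \eqref{7.2.2}, however, has a genuine gap, and it is not a bookkeeping issue that the stopping-set machinery fixes for free.

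The problem is in the step where you assert that ``Property (b) combined with the definition \eqref{e3.7.1.0} of $\Theta_{\La_m}(x)$ gives $q'_{\La_m}\cap C_x^{(\ell_{-,\ga})}=q''_{\La_m}\cap C_x^{(\ell_{-,\ga})}$.'' Look again at Definition \ref{kappa} and \eqref{e3.7.1.0}: when $K_{\La_m}(\cdot;x)=0$ the set $\Theta_{\La_m}(x)$ is \emph{defined to be the whole space}, so $\omega\in\Theta_{\La_m}(x)$ carries no information at all about $q'\cap C_x^{(\ell_{-,\ga})}$ versus $q''\cap C_x^{(\ell_{-,\ga})}$. And $K_{\La_m}(\cdot;x)=0$ is certainly possible for a good cube $C\in\Si_{m+1}$: it occurs whenever $A_x=B_x(10^{-10}\ell_{+,\ga})\cap\La_m^c\neq\emptyset$ and either the two conditioning configurations disagree on $A_x$, or the density deviation there exceeds $\zeta_2$. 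A cube is still ``good'' in that case, because the requirement $\omega\in\Theta_{\La_m}(x)$ is vacuous. So goodness alone does not yield agreement of $q'$ and $q''$ on the cells of $C$; you have to show that $K_{\La_m}(\cdot;x)>0$ at the relevant sites, and this is the substantive part of the proof.

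The paper closes this gap with an induction on the ``age'' of cubes (Appendix \ref{appCC}): one introduces a function $M(x)$ on good cubes, with $M=\infty$ on $\La^c$ and on bad cubes, and otherwise defined by taking $1+\max$ of $M$ over the older sites within $B_x(10^{-10}\ell_{+,\ga})$. Lemma~1 shows $M(x)$ depends only on the status (good/bad) of the finitely many older cubes that $B_x(2^d 10^{-10}\ell_{+,\ga})$ can meet, so that \eqref{CC.2} gives $M(x)<\bar m-2$ whenever $M(x)<\infty$. Lemma~2 then proves, by induction on age, that for a good cube with $\bar m-M(x)=h>0$ one has both $q'_\La\cap C_x^{(\ell_{-,\ga})}=q''_\La\cap C_x^{(\ell_{-,\ga})}$ and the density bound $\le\zeta_h$. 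The induction is essential precisely because the statement must propagate simultaneously the agreement of configurations and the quantitative bound on the empirical densities: the latter is what guarantees $K_{\La_m}(\cdot;x)>0$ at the next step, so that $\Theta_{\La_m}(x)$ is the nontrivial set. This double induction is the content of Property P, from which the theorem is deduced by observing that when the procedure stops at a non-empty $\La_N$, every cube of $\delta_{\rm out}^{\ell_{+,\ga}}[\La_N]$ is good, so for $x$ at distance $\le\ga^{-1}$ from $\La_N$ the ball $B_x(2^d 10^{-10}\ell_{+,\ga})\cap\La_N^c$ meets only good older cubes, hence Property~P applies.

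In short: your plan treats ``$C$ good'' as if it directly encodes agreement of configurations on the $\ell_{-,\ga}$-cells of $C$, but the definition only does so conditionally on $K>0$, and establishing $K>0$ requires precisely the inductive control of the density deviations supplied by Lemma~2 and the function $M$.
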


\vskip2cm

\subsection{Stopping sets}
        \label{subsec:7.3}

The random variables $\La_n$ are ``stopping sets'' and the
sequence $\La_n$ is decreasing, $\La_{n+1}\preccurlyeq
\La_n$, in the following sense.

\vskip.5cm

\begin{itemize}

\item
$\mathcal F_{\Delta^c}$, $\Delta$ a $\mathcal
D^{(\ell_{+,\ga})}$ measurable subset of $\La$, is the
$\si$ algebra of all Borel sets $A$ such that $\text{\bf
1}_{A}(\om)$ does not change if we vary $\om$ in $\Delta$.

\item
A  random variable $\mathcal{R}$ with values in the
$\mathcal D^{(\ell_{+,\ga})}$ measurable subsets of $\La$
is called a stopping  set if for all $\Delta$,
    \begin{equation}
    \label{7.3.1}
\{\omega\in\Omega:\mathcal{R}(\om)=\Delta\}\in \mathcal
F_{\Delta^c}
    \end{equation}

\item
Two stopping sets $\mathcal R'$ and $\mathcal R$
 are such that $\mathcal R'\preccurlyeq
\mathcal{R}$ if
    \begin{eqnarray*}
&&  \mathcal R'(\omega)\subset \mathcal R(\omega),
\;\;\text{ for all $\omega\in \Omega_\La$}
\\&&\{\omega: \mathcal R'(\omega)=\Delta'\}\cap \{\omega: \mathcal R(\omega)=\Delta\}\in
\mathcal F_{\Delta^c},\quad\text{for all $\Delta'\subset
\Delta$}
   \end{eqnarray*}

\end{itemize}

\vskip2cm

\subsection{Strong Markov couplings}
\label{subec:7.4}

A coupling $Q(d\omega)$ of $\mu'$ and $\mu''$ is called
strong Markov in  $\mathcal R$,  $\mathcal R$ a stopping
set, if the measure
    \begin{equation}
d\tilde Q(\omega):=\sum_{\Delta\subset\La} {\bf 1}_{\{
\mathcal R(\omega)=\Delta\}}d\pi_\Delta(\omega_\Delta|\bar
\omega_{\Delta^c})dQ({\bar\omega}_{\Delta^c})
     \label{7.4.1}
        \end{equation}
is also a coupling of $\mu'$ and $\mu''$ for all couplings
$d\pi_\Delta(\omega_\Delta|\bar \omega_{\Delta^c})$  of
$d\mu'(\xi_\Delta|\bar \xi_\Delta)$, and
$d\mu''(\xi'_\Delta|\bar \xi'_\Delta)$.

\vskip2cm
        \begin{thm}
        \label{thm:7.4.1}
Given any stopping set $\mathcal{R}$, let $Q$ be a coupling
of $\mu'$ and $\mu''$ which is strong Markov in
$\mathcal{R}$, Then any coupling $\tilde Q$ defined by
\eqref{7.4.1} is strong Markov in $\mathcal{R'}$ provided
the stopping set $\mathcal{R'}$ is such that $\mathcal
R'\preccurlyeq \mathcal{R}$,

    \end{thm}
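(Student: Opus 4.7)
The plan is to reduce the strong Markov property of $\tilde Q$ in $\mathcal{R}'$ to the assumed strong Markov property of $Q$ in $\mathcal{R}$ by a gluing argument. Fix arbitrary couplings $d\pi'_{\Delta'}(\omega_{\Delta'}|\bar\omega_{(\Delta')^c})$ of $d\mu'(\xi_{\Delta'}|\bar\xi_{(\Delta')^c})$ and $d\mu''(\xi'_{\Delta'}|\bar\xi'_{(\Delta')^c})$, and set
$$
d\hat Q(\omega) \;:=\; \sum_{\Delta'\subset \La}\, \mathbf{1}_{\{\mathcal{R}'(\omega)=\Delta'\}}\, d\pi'_{\Delta'}(\omega_{\Delta'}|\bar\omega_{(\Delta')^c})\, d\tilde Q(\bar\omega).
$$
It must be shown that $d\hat Q$ is a coupling of $\mu'$ and $\mu''$, which is exactly the strong Markov property of $\tilde Q$ in $\mathcal{R}'$.

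For each $\Delta\subset\La$ I construct a modified coupling $d\pi^\star_\Delta(\omega_\Delta|\bar\omega_{\Delta^c})$ of $d\mu'(\xi_\Delta|\bar\xi_{\Delta^c})$ and $d\mu''(\xi'_\Delta|\bar\xi'_{\Delta^c})$ as follows. Because $\mathcal{R}'\preccurlyeq\mathcal{R}$, on the event $\{\mathcal{R}=\Delta\}$ one has $\mathcal{R}'(\omega)=\Delta'$ for some $\Delta'\subset\Delta$, and moreover the event $\{\mathcal{R}'=\Delta',\mathcal{R}=\Delta\}$ lies in $\mathcal F_{\Delta^c}$, so its indicator is a function only of $\omega_{\Delta\setminus\Delta'}$ and $\bar\omega_{\Delta^c}$. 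I therefore first sample $\omega_{\Delta\setminus\Delta'}$ from the $(\Delta\setminus\Delta')$-marginal of $d\pi_\Delta(\cdot|\bar\omega_{\Delta^c})$, read off the (now determined) value $\Delta'$ of $\mathcal{R}'$, and then resample $\omega_{\Delta'}$ from $d\pi'_{\Delta'}(\cdot|\omega_{(\Delta')^c})$. The DLR self-consistency of the finite-volume Gibbs measures $\mu'$, $\mu''$ inside $\La$ yields
$$
d\mu'(\xi_{\Delta'}|\xi_{\Delta\setminus\Delta'},\bar\xi_{\Delta^c}) \;=\; d\mu'(\xi_{\Delta'}|\bar\xi_{(\Delta')^c}),
$$
and similarly for $\mu''$, which guarantees that the two pieces glue together into a bona fide coupling of the $\Delta$-conditionals of $\mu'$ and $\mu''$.

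Substituting the definition \eqref{7.4.1} of $\tilde Q$ into $d\hat Q$, interchanging the two sums, and collapsing the double indicator by $\mathcal{R}'\preccurlyeq\mathcal{R}$, one arrives at
$$
d\hat Q(\omega) \;=\; \sum_{\Delta\subset\La}\, \mathbf{1}_{\{\mathcal{R}(\omega)=\Delta\}}\, d\pi^\star_\Delta(\omega_\Delta|\bar\omega_{\Delta^c})\, dQ(\bar\omega_{\Delta^c}),
$$
which is precisely the formula \eqref{7.4.1} applied to $\mathcal{R}$ and to the couplings $\pi^\star_\Delta$. By the hypothesis that $Q$ is strong Markov in $\mathcal{R}$, the right-hand side is a coupling of $\mu'$ and $\mu''$, completing the proof.

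The only delicate point is measurability bookkeeping: one must know that $\{\mathcal{R}'=\Delta'\}\cap\{\mathcal{R}=\Delta\}\in\mathcal F_{\Delta^c}$ in order to legitimately treat $\mathbf{1}_{\{\mathcal{R}'=\Delta'\}}$ as a function of $(\omega_{\Delta\setminus\Delta'},\bar\omega_{\Delta^c})$ inside the construction of $\pi^\star_\Delta$; this is exactly what the definition of $\mathcal{R}'\preccurlyeq\mathcal{R}$ provides. Together with the DLR consistency on nested $\Delta'\subset\Delta$, the rest of the argument is a routine rearrangement of sums.
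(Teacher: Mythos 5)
Your proof is correct, and it is packaged differently from the paper's. The paper verifies directly that $\hat Q(f)=\mu'(f)$ for any $f$: it uses that $\mathcal R'$ is a stopping set to write $\hat Q(f)$ as an integral against $d\tilde Q$ of the conditional expectation $\mu'(f|\xi_{A^c})$, expands $d\tilde Q$ via \eqref{7.4.1}, uses $\mathcal R'\preccurlyeq\mathcal R$ to factor the joint indicator out of the inner $\pi_\Delta$-integral, and then collapses via the tower-property identity \eqref{7.4.5}. You instead construct a glued coupling $\pi^\star_\Delta$ (sample $\omega_{\Delta\setminus\Delta'}$ from the marginal of $\pi_\Delta$, then $\omega_{\Delta'}$ from $\pi'_{\Delta'}$), verify it is a coupling of the $\Delta$-conditionals of $\mu'$ and $\mu''$, show that $\hat Q$ coincides with the $\tilde Q$ built from $Q$, $\mathcal R$ and $\pi^\star_\Delta$, and then invoke the hypothesis as a black box. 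Both are the same computation, but your version applies the hypothesis modularly rather than re-deriving the marginal condition by hand; the price is the extra verification that $\pi^\star_\Delta$ is a valid coupling, which in the paper's version is absorbed into \eqref{7.4.5}.

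One point you should sharpen: you write that the indicator of $\{\mathcal R'=\Delta',\mathcal R=\Delta\}$ is ``a function only of $\omega_{\Delta\setminus\Delta'}$ and $\bar\omega_{\Delta^c}$.'' In fact the definition of $\mathcal R'\preccurlyeq\mathcal R$ gives that the event lies in $\mathcal F_{\Delta^c}$, hence its indicator is a function of $\bar\omega_{\Delta^c}$ \emph{alone}. This is not a cosmetic point: in your two-stage sampling you ``first sample $\omega_{\Delta\setminus\Delta'}$, then read off $\Delta'$,'' which would be circular if $\Delta'$ depended on $\omega_{\Delta\setminus\Delta'}$ --- you cannot know which coordinates to sample before you know $\Delta'$. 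The construction is rescued precisely because $\Delta'$ is already a deterministic function of $\bar\omega_{\Delta^c}$, so the two-stage procedure is well-defined with $\Delta'$ read off \emph{before} anything in $\Delta$ is sampled. Making this explicit is also what is needed to factor $\mathbf 1_{\{\mathcal R'=A,\mathcal R=\Delta\}}$ outside the $\pi_\Delta$-integral when you identify $\hat Q$ with $\tilde Q$ built from $\pi^\star_\Delta$.
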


    \bigskip

{\bf Proof.} We have to prove that for any family of
couplings $\{\hat \pi_\Delta(d\omega_\Delta|\bar
\omega_{\Delta^c}), \Delta\subset \La, \bar
\omega_{\Delta^c}\in \Omega_{\Delta^c} \}$,
 the probability  $\hat
Q(d\omega)$  defined as
    \begin{equation}
d\hat Q(\omega):=\sum_{A\subset\La} {\bf 1}_{\{ \mathcal
R'(\omega)=A\}}d\hat \pi_\Delta(\omega_A|\bar
\omega_{A^c})d\tilde Q(\omega_{A^c})
     \label{7.4.2}
        \end{equation}
is a coupling of $\mu'$ and $\mu''$. We thus take a
function $f(\xi)$ and we prove that $\hat Q(f)=\mu'(f)$,
where $\hat Q(f)$, $\mu'(f)$, is the expectation of $f$
under $Q$, respectively $\mu'$.

Using that $\mathcal{R'}$ is a stopping set we get
    \begin{eqnarray}
    \nn
\hat Q(f)&=&\sum_{A\subset\La}\int_{\Omega_{A^c}} {\bf
1}_{\{ \mathcal R'(\omega)=A\}}d\tilde
Q(\omega_{A^c})\int_{\Omega_A}f(\xi)d\hat
\pi_A(\omega_A|\bar \omega_{A^c})
\\&=&\nn\sum_{A\subset\La}\int_{\Omega_{A^c}} {\bf 1}_{\{
\mathcal R'(\omega)=A\}}d\tilde
Q(\omega_{A^c})\mu'(f|\xi_{A^c})
\\&=&\nn\sum_{A\subset\La}\int_{\Omega} {\bf
1}_{\{ \mathcal R'(\omega)=A\}}d\tilde Q(\omega)
\mu'(f|\xi_{A^c})
    \label{7.4.3}
        \end{eqnarray}
We now rewrite $d\tilde Q(\omega)$ by using its definition
\eqref{7.4.1} and since $\mathcal R'\preccurlyeq
\mathcal{R}$ we get
    \begin{equation}
\hat
Q(f)=\sum_{\Delta\subset\La}\sum_{A\subset\Delta}\int_{\Omega_{\Delta^c}}
{\bf 1}_{\{ \mathcal R(\omega)=\Delta\}}{\bf 1}_{\{
\mathcal R'(\omega)=A\}}
dQ(\bar\omega_{\Delta^c})\int_{\Omega_\Delta}d\pi_\Delta(\omega_\Delta|\bar
\omega_{\Delta^c})\mu'(f|\xi_{A^c})
    \label{7.4.4}
        \end{equation}
Observe that (recalling $A\subset \Delta$)
    \begin{equation}
\int_{\Omega_\Delta}d\pi_\Delta(\omega_\Delta|\bar
\omega_{\Delta^c})\mu'(f|\xi_{A^c})=\int
d\mu'(\xi_\Delta|\xi_{\Delta^c})\mu'(f|\xi_{\Delta^c},\xi_{\Delta\setminus
A}) =\mu'(f|\xi_{\Delta^c})
    \label{7.4.5}
        \end{equation}
We insert \eqref{7.4.5} in \eqref{7.4.4} and we get
     \begin{equation*}
\hat Q(f)=\sum_{\Delta\subset\La}\int_{\Omega_{\Delta^c}}
{\bf 1}_{\{ \mathcal R(\omega)=\Delta\}}
dQ(\omega_{\Delta^c})\mu'(f|\xi_{\Delta^c})=\mu'(f)
        \end{equation*}
The Theorem is proved.\qed

\vskip2cm

\subsection{Construction of couplings}
\label{subsec:7.5}

We use the sequence $\{\La_n\}$ of decreasing  stopping
sets (in the order $\preccurlyeq$) and Theorem
\ref{thm:7.4.1} to construct a sequence $\{Q^n\}$  of
couplings of $\mu'$ and $\mu''$, the desired coupling will
then be $Q^{N^*}$, where $N^*= |\La|/\ell_{+,\ga}^d$. The
sequence $\{Q^n\}$ is defined iteratively by setting
$Q^{0}$ equal to the product coupling: $Q^{0} = \mu'\times
\mu''$ which, as it can be easily checked, is strong Markov
in $\La_0$.  Then for any $n\ge 0$ we set
        \begin{equation}
        \label{7.17}
dQ^{n+1}( \om_\La ) = \sum_{\Delta\ne\emptyset} {\bf
1}_{\left\{\La_{n}(\om_\La) = \Delta \right\}}d
\pi_{\Delta}( \om_{\Delta} \vert \om_{\La\setminus \Delta},
\bar \om_{\La^c}  ) dQ^{n} (\om_{\La\setminus\Delta})+{\bf
1}_{\left\{\La_{n}(\om_\La) = \emptyset \right\}}dQ^{n}
(\om_{\La})
        \end{equation}
where $dQ^{n}(\om_{\Delta^{c}})$ is the marginal of $dQ^n$
over $\{\om_{\Delta^{c}}\}$ and
$\pi_{\Delta}$,   $\Delta\ne\emptyset$, is the coupling of
$d\mu'(\xi'_\Delta|  {\bar \xi}'_{\Delta^c})$, and
$d\mu''(\xi''_\Delta| {\bar \xi}''_{\Delta^c})$  defined
next.
 We distinguish three cases according to the values
of $\bar\om_{\Delta^{c}}=({\bar \xi}'_{\Delta^c},{\bar
\xi}''_{\Delta^c})$.

\vskip.5cm

\begin{itemize}

  \item
If  $\bar\om_{\Delta^{c}}$ is such that either $\ssp(\und\Ga')\cap
\delta_{\rm{out}}^{\ell_{+,\ga}}[\Delta]\ne\emptyset$, or
$\ssp(\und\Ga'')\cap
\delta_{\rm{out}}^{\ell_{+,\ga}}[\Delta]\ne\emptyset$, or both, then
$\pi_\Delta$ is the  product coupling: $d\pi_{\Delta}(
\xi'_{\Delta},\xi''_{\Delta} \vert \bar\om_{\Delta^{c}} )= d\mu'(
\xi'_{\Delta} \vert {\bar \xi}'_{\Delta^{c}} )d\mu'( \xi''_{\Delta}
\vert {\bar \xi}''_{\Delta^{c}} )$.


\vskip .5cm \noindent

  \item
If $\bar\om_{\Delta^{c}}$ is such that $\ssp(\und \Ga')\cap
\delta_{\rm{out}}^{\ell_{+,\ga}}[\Delta]= \ssp(\und
\Ga'')\cap
\delta_{\rm{out}}^{\ell_{+,\ga}}[\Delta]=\emptyset$ and
$q'\cap\delta_{\rm{out}}^{\ga^{-1}}[\Delta]=q''\cap\delta_{\rm{out}}^{\ga^{-1}}[\Delta]$
then $ d \pi_{\Delta}( \xi'_{\Delta},\xi''_{\Delta} \vert
\bar\om_{\Delta^{c}} )= d\mu'( \xi'_{\Delta} \vert {\bar
\xi}'_{\Delta^{c}} ) \delta(\xi'_\Delta-\xi''_\Delta)
d\xi''_\Delta$, namely $ d \pi_{\Delta}$ is the coupling
supported by the diagonal.
%
%
%
%
%

\vskip .5cm \noindent

  \item
Finally let $\bar\om_{\Delta^{c}}$ be such that $\ssp(\und\Ga')\cap
\delta_{\rm{out}}^{\ell_{+,\ga}}[\Delta]= \ssp(\und\Ga'')\cap
\delta_{\rm{out}}^{\ell_{+,\ga}}[\Delta]=\emptyset$ but
$q'\cap\delta_{\rm{out}}^{\ga^{-1}}[\Delta]\ne
q''\cap\delta_{\rm{out}}^{\ga^{-1}}[\Delta]$. Call $T =
\Sigma_{n+1}\cup \left( \delta^{\ell_{+,\ga}}_{\rm{out}}
[\Sigma_{n+1}] \cap \Delta\right)$, $U = \Delta \setminus T$. Let
$dP(q'_U,q''_U,\und \Ga',\und\Ga'')=d\mu'(q'_U, \und \Ga'\vert \bar
\xi'_{\Delta^c})d\mu''(q''_U,\und\Ga''\vert \bar \xi''_{\Delta^c})$
be the product of the marginal distributions of $d\mu'(\cdot\vert
\bar \xi'_{\Delta^c})$ and $d\mu''(\cdot\vert \bar
\xi''_{\Delta^c})$ over $\mathcal X^{(k)}_U\times\mathcal B_\Delta$.
Let $Q_T$ be the coupling defined in Theorem \ref{thme3.7.1} and
letting
  $\Xi=\{\om_{\Delta^{c}}: \und\Gamma' \cap (T\cup
\delta_{\rm{out}}^{\ell_{+,\ga}}[T])=\und\Gamma''  \cap (T\cup
\delta_{\rm{out}}^{\ell_{+,\ga}}[T])=\emptyset \}$, we denote by
${\bf 1}_{\Xi}$ the characteristic function of the set $\Xi$.

Then we define
       \begin{eqnarray*}
&& \hskip-1cm
d\pi_{\Delta}( \om _\Delta\, \vert \,\bar
\om_{\Delta^{c}})
 =  {\bf 1}_{\Xi}(\bar \om_{\Delta^{c}})\,
dQ_{T} \left( q'_T,q''_T  \vert q'_U, {\bar
    q}'_{\Delta^c},q''_U,
{\bar q}''_{\Delta^c} \right) dP(q'_U,q''_U,\und \Ga',\und\Ga'')
\\&& \hskip2cm+[1- {\bf 1}_{\Xi}(\bar \om_{\Delta^{c}})]
d\mu'(q'_\Delta, \und \Ga'\vert \bar
\xi'_{\Delta^c})d\mu''(q''_\Delta,\und\Ga''\vert \bar
\xi''_{\Delta^c})%
\end{eqnarray*}

\end{itemize}

\vskip1cm

By Theorem \ref{thm7.2.1 } the second case above occurs if
and only if all cubes of
$\delta_{\rm{out}}^{\ell_{+,\ga}}[\Delta]$ are good, while in the
third case there are bad cubes in $\delta_{\rm{out}}^{\ell_{+,\ga}}[\Delta]$
so that $\Si_{n+1}$ is non empty. The proof that cubes are good
with large probability will be based on Theorem
\ref{thme3.7.1} and the following lemma:

\vskip.5cm

  \begin{lemma}
Suppose $\La_n(\om)=\Delta$ and that the third case above
is verified, namely $\om_{\Delta^{c}}$ is such that
$\ssp(\und\Ga')\cap
\delta_{\rm{out}}^{\ell_{+,\ga}}[\Delta]=
\ssp(\und\Ga'')\cap
\delta_{\rm{out}}^{\ell_{+,\ga}}[\Delta]=\emptyset$ and
$q'\cap\delta_{\rm{out}}^{\ga^{-1}}[\Delta]\ne
q''\cap\delta_{\rm{out}}^{\ga^{-1}}[\Delta]$.  Suppose also
that $\und\Gamma'  \cap (T\cup
\delta_{\rm{out}}^{\ell_{+,\ga}}[T])=\und\Gamma''  \cap
(T\cup \delta_{\rm{out}}^{\ell_{+,\ga}}[T])=\emptyset $.
Let $C$ in $\Si_{n+1}$,  then $C$ is good if $\om_\Delta\in
\Theta_T(x)$ for all $x\in C$, $\Theta_T$ as in
\eqref{e3.7.1.0}.

  \end{lemma}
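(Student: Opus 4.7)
My plan is to reduce the goodness of $C\in\Si_{n+1}$ to the two defining conditions of Subsection \ref{subsec:7.2} and verify each one by exploiting the geometry of $T$ relative to $\Delta$ and the hypotheses on polymers.

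First, the polymer condition. Since $C\subset\Si_{n+1}\subset T$ and we are in the third case where the coupling $\pi_\Delta$ is supported on configurations satisfying $\und\Ga'\cap(T\cup\delta_{\rm out}^{\ell_{+,\ga}}[T])=\und\Ga''\cap(T\cup\delta_{\rm out}^{\ell_{+,\ga}}[T])=\emptyset$, it follows immediately that $\ssp(\und\Ga')\cap C=\ssp(\und\Ga'')\cap C=\emptyset$. Combined with the hypothesis on $\om_{\Delta^c}$ that $\ssp(\und\Ga')\cap\delta_{\rm out}^{\ell_{+,\ga}}[\Delta]=\ssp(\und\Ga'')\cap\delta_{\rm out}^{\ell_{+,\ga}}[\Delta]=\emptyset$, the cube $C_{n+1}$ does not intersect any polymer of $\om$, so the alternative that would mark all of $\Si_{n+1}$ as bad does not apply.

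Second, the $\Theta$-condition. Recall that $\Theta_{\La_n}(x)=\Theta_\Delta(x)$ is defined via $A_x^{\Delta}=B_x(10^{-10}\ell_{+,\ga})\cap\Delta^c$ and $K_\Delta(\bar q'_{\Delta^c},\bar q''_{\Delta^c};x)$, while $\Theta_T(x)$ is defined via $A_x^T=B_x(10^{-10}\ell_{+,\ga})\cap T^c$ and the boundary conditions read off $T^c=\Delta^c\cup U$. The key geometric step is the following: because $T=\Si_{n+1}\cup(\delta_{\rm out}^{\ell_{+,\ga}}[\Si_{n+1}]\cap\Delta)$ contains an entire $\ell_{+,\ga}$-layer around $\Si_{n+1}$ inside $\Delta$, any point $x\in\Si_{n+1}$ has distance at least $\ell_{+,\ga}$ from $U=\Delta\setminus T$. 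Since $10^{-10}\ell_{+,\ga}<\ell_{+,\ga}$, this gives $B_x(10^{-10}\ell_{+,\ga})\cap U=\emptyset$, hence $A_x^T=A_x^\Delta$. Moreover on the common set $A_x^\Delta$ the boundary conditions agree (both come from $\bar q'_{\Delta^c}$, $\bar q''_{\Delta^c}$), so $K_T(\cdot;x)=K_\Delta(\cdot;x)$.

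Third, one checks that the defining clauses of $\Theta_T(x)$ and $\Theta_\Delta(x)$ then coincide on $\om_\Delta$. The cube $C_x^{(\ell_{-,\ga})}$ containing $x$ lies inside $C\subset T\subset\Delta$, so $q'_\Delta\cap C_x^{(\ell_{-,\ga})}=q'_T\cap C_x^{(\ell_{-,\ga})}$ and likewise for $q''$, while $\rho^{(\ell_{-,\ga})}(q'_T;x,s)=\rho^{(\ell_{-,\ga})}(q'_\Delta;x,s)$ for the same reason. Since $K_T(\cdot;x)=K_\Delta(\cdot;x)$, the bound $\le\zeta_{K_T(\cdot;x)-1}$ is identical to $\le\zeta_{K_\Delta(\cdot;x)-1}$. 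Therefore $\om_\Delta\in\Theta_T(x)$ is equivalent to $\om\in\Theta_\Delta(x)$, proving the lemma. No deep estimate is required; the only potential pitfall is the geometric verification that the small ball $B_x(10^{-10}\ell_{+,\ga})$ cannot escape the thick layer $T$ into $U$, which is where the specific choices of the constants $10^{-10}$ versus the cube size $\ell_{+,\ga}$ in $\delta_{\rm out}^{\ell_{+,\ga}}$ play their role.
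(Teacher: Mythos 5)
Your proof is correct and follows the same approach as the paper's, which is the single-sentence assertion that $\Theta_T(x)=\Theta_\Delta(x)$ for all $x\in C$. What you add is the missing geometric justification: $T$ contains the full $\ell_{+,\ga}$-shell $\delta_{\rm out}^{\ell_{+,\ga}}[\Sigma_{n+1}]\cap\Delta$ around $\Sigma_{n+1}$, so any $x\in\Sigma_{n+1}$ is at distance at least $\ell_{+,\ga}$ from $U=\Delta\setminus T$, whence $B_x(10^{-10}\ell_{+,\ga})\cap U=\emptyset$, $A_x^T=A_x^\Delta$, and the $K$-values and the $\Theta$-conditions computed through $T$ and through $\Delta$ coincide (the conditions depend only on data in $A_x\subset\Delta^c$ and in $C_x^{(\ell_{-,\ga})}\subset T$, on which the boundary conditions and the configuration agree). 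Your handling of the polymer clause and of the ``$C_{n+1}$ intersects a polymer'' shortcut via the third-case hypothesis and the $\Xi$-condition is also the intended reading.
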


\vskip.5cm

{\bf Proof.} The proof follows from the definitions of good
cubes and $\Theta_T(x)$ because for all $x\in C$,
$\Theta_T(x)=\Theta_\Delta(x)$.  \qed


\vskip2cm

\section{Probability estimates.}
    \label{sec:8}
Recall from the beginning of Part III that $\mu'$ and
$\mu''$  are obtained by conditioning to the configurations
outside $\La$ the measures $\nu'$ and $\nu''$ which are
either DLR measures or Gibbs measures $dG_{\La'}(q_{\La'},
\und \Ga|{\bar q}_{(\La')^c})$ with $\La'\supseteq \La$.
Thus if $Q^{N^*}$ is the coupling of $\mu'$ and $\mu''$
defined in Subsection \ref{subsec:7.5}, we obtain a
coupling $P$ of $\nu',\nu''$ by writing
   \begin{equation}
      \label{8.1}
 d P(\om)= d\nu'(\bar\xi'_{\La^c})d\nu''(\bar\xi''_{\La^c})
 dQ^{N^*}(\om_\La|\bar \om_{\La^c}),\quad \om=(\om_\La,\bar \om_{\La^c}),\;
 \bar \om_{\La^c}=(\bar\xi'_{\La^c},\bar\xi''_{\La^c})
     \end{equation}
We will prove here that there is a constant $c$ such that for
all $\ga$ small enough, for any $\mathcal
D^{(\ell_{+,\ga})}$-measurable subset $\Delta$ of $\La$:
    \begin{equation}
      \label{8.2}
  P\Big(\{\omega:\La_{N^*}(\om) \supset \Delta\}\Big) \ge 1-
  c_1 e^{- c_2 \frac {{\rm
  dist}(\Delta,\La^c)}{\ell_{+,\ga}}}
     \end{equation}
This proves that $(q'_\La,\und \Ga')$ and $(q''_{\La'},\und
\Ga'')$  agree in $\Delta$, in the sense of \eqref{e3.6.0},
with probability $ \ge 1-   c_1 e^{- c_2 \frac {{\rm
  dist}(\Delta,\La^c)}{\ell_{+,\ga}}}$ from which Theorem
\ref{thme3.6.1} follows. Indeed if $\nu'$ and $\nu''$ are two DLR
measures, by the arbitrariness of $\Delta$ and $\La$,
\eqref{8.2} shows that  $\nu'=\nu''$, hence that there is a
unique DLR measure. If instead  $\nu'$ and $\nu''$ are two
Gibbs measures $dG_{\La'}(q_{\La'}, \und \Ga|{\bar
q}_{{\La'}^c})$ and $dG_{\La''}(q_{\La''}, \und \Ga|{\bar
q}_{{\La''}^c})$, $\La \subset \La'$, $\La\subset \La''$
then \eqref{8.2} yields \eqref{e3.6.1}.

\vskip2cm

\subsection{Reduction to a  percolation event}
\label{subsec:8.1}

Denote  by $\mathcal A=\mathcal A(\om)$ the  union of all bad cubes
contained in $\La$ and of the cubes in
$\delta_{\rm out}^{\ell_{+,\ga}}[\La]$ with a polymer, namely
those cubes $C$  such that  $C\subseteq {\rm sp}(\Ga)$,
$\Ga$ in $\und\Ga'\cup\und\Ga''$.
Since by its definition any screening set is
connected to a bad cube
and since any bad cube in $\La$ is necessarily
contained in a screening set, it follows
that if $\mathcal A\ne \emptyset$ then it is connected to $\La^c$.

Since the event in \eqref{8.2} is bounded by
    \begin{equation}
      \label{8.1.1}
 \{\omega:\La_{N^*}(\om) \supset \Delta\}^c
 \subset \{\mathcal A(\om)\cap \Delta \ne \emptyset\}
     \end{equation}
, it is therefore also bounded by the event that the bad cubes percolate from $\Delta$ to $\La^c$. Hence, denoting in the sequel by $A$ a connected, $\mathcal
D^{(\ell_{+,\ga})}$-measurable subset of $\La\cup \delta_{\rm
out}^{\ell_{+,\ga}}[\La]$,
    \begin{equation}
      \label{8.1.2}
  P\Big(\{\La_{N^*}\supset \Delta\}^c\Big) \le
  \sum_{x\in \ell_{+,\ga}\mathbb Z^d\cap \Delta}\sum_{A: A\ni x, A\cap  \delta_{\rm out}^{\ell_{+,\ga}}[\La]\ne \emptyset}
   P\big(\{\mathcal A=A\}\big)
     \end{equation}
We write $\mathcal A=\mathcal A_1\cup
\mathcal A_2\cup\mathcal A_3$, $\mathcal A_i$ the
union of cubes of ``type $i$''.  Cubes of type
1 are those with a polymer, namely $C$ is type 1
if there is $\Ga$ in $\und\Ga'\cup\und\Ga''$
such that $C\subseteq {\rm sp}(\Ga)$.  $C$ is
type 2 (also called unsuccessful) if $C$,
say in $\Si_{n+1}$, is
bad and all cubes of
$\delta_{\rm out}^{\ell_{+,\ga}}[\La_n]$ are
without polymers (in the above sense).
Cubes of type 3 are the remaining ones,
they are therefore in the union of all
$\Si_{n+1}$ with  $\Si_{n+1}$ connected
to a type 1 bad cube. Then calling $N_A= |A|/\ell_{+,\ga}^d$,
    \begin{equation}
      \label{8.1.3}
\text{l.h.s.\ of \eqref{8.1.2}}
 \le  \sum_{x\in \ell_{+,\ga}\mathbb Z^d\cap \Delta}\sum_{A: A\ni x, A\cap  \delta_{\rm out}^{\ell_{+,\ga}}[\La]\ne \emptyset} 3^{N_A} \max_{A_1\cup A_2\cup A_3=A}
  P\Big(\bigcap_{i=1}^3\{\mathcal A_i=A_i\}\Big)
     \end{equation}
Since $\dis{
\mathcal A_3 \subset
\bigcup_{C \in \mathcal A_1} \delta_{\rm out}^{\ell_{+,\ga}}[C]
}$,
     \begin{equation}
      \label{8.1.4}
N_{\mathcal  A_3} \le 3^{d} N_{\mathcal A_1}
     \end{equation}
Therefore  $N_{\mathcal A_1}+N_{\mathcal A_2}+3^{d}
N_{\mathcal A_1}\ge N_{\mathcal A}$ and
     \begin{equation}
      \label{8.1.5}
\bigcap_{i=1}^3\Big\{\mathcal A_i=A_i\Big\} \;\;\subset\;\; \Big\{
\mathcal A_2=A_2; N_{\mathcal A_2} \ge \frac{N_A}2\Big\} \,\cup \,
\Big\{\mathcal A_1=A_1; N_{\mathcal A_1} \ge \frac{N_A}{2(1+3^d)}\Big\}
     \end{equation}
We are thus reduced to estimate for any $(A_1, A_2, A_3)$,
     \begin{equation}
      \label{8.1.6}
P\big( \{
\mathcal A_2=A_2\}\big), \;\text{if $N_{A_2} \ge \frac{N_A}2$};\quad
P\big( \{
\mathcal A_1=A_1\}\big), \;\text{if $ N_{ A_1} \ge \frac{N_A}{2(1+3^d)}$}
     \end{equation}

\vskip2cm

\subsection{Peierls estimates}
\label{subsec:8.2}
We bound here $P\big( \{
\mathcal A_1=A_1\}\big)$ where $A_1$ is some given set in $\La\cup
\delta_{\rm out}^{\ell_{+,\ga}}[\La]$.  Thus each cube $C\subset A_1$
is either contained in sp$(\Ga)$, $\Ga\in \und \Ga'$ or
in sp$(\Ga)$, $\Ga\in \und \Ga''$ (or both).  Thus
     \begin{equation}
      \label{8.2.1}
P\big( \{
\mathcal A_1=A_1\}\big) \le 2^{N_{A_1}} \max_{B\subset A_1,
N_{B}\ge N_{A_1}/2}
\max\{ \nu'({\rm sp}(\und \Ga) \supset B);\nu''({\rm sp}(\und \Ga )\supset B)\}
     \end{equation}
where $\dis{{\rm sp}(\und \Ga )= \bigcup_{\Ga\in \und \Ga} {\rm sp}( \Ga )}$.
Let $B= C_1\cup\cdots \cup C_n$,
$C_i$ disjoint cubes of $\mathcal D^{(\ell_{+,\ga})}$,
then, since $\nu'$ and $\nu''$ satisfy the Peierls estimates,
     \begin{eqnarray}
      \label{8.2.2}
&& \hskip-1cm \nu'({\rm sp}(\und \Ga) \supset B) \le \sum_{\Ga_1,...,\Ga_n, {\rm sp}(\Ga_i)\supset
 C_i}  \nu'\big(\und \Ga\ni \Ga_1,...,\Ga_n\big) \le
 \sum_{\Ga_1,...,\Ga_n, {\rm sp}(\Ga_i)\supset
 C_i} e^{-c_{\rm pol}\zeta^2 \ell_{-,\ga}^d (N_{\Ga_1}+\cdots+N_{\Ga_n})} \nn\\&&
 \hskip1cm
 \le e^{-c_{\rm pol}\zeta^2 \ell_{-,\ga}^d N_{B}/2} \Big(\sum_{\Ga: {\rm sp}(\Ga)\ni C}
 e^{-c_{\rm pol}\zeta^2 \ell_{-,\ga}^d N_{\Ga}/2}\Big)^{N_B} \le 2^{N_B}
 e^{-c_{\rm pol}\zeta^2 \ell_{-,\ga}^d N_{B}/2}
     \end{eqnarray}
 for all $\ga$ small enough.  Thus
      \begin{equation}
      \label{8.2.3}
P\big( \{
\mathcal A_1=A_1\}\big) \le 2^{2N_{A_1}}
e^{-c_{\rm pol}\zeta^2 \ell_{-,\ga}^d N_{A_1}/4}
     \end{equation}

\vskip2cm

\subsection{Probability of unsuccessful cubes}
\label{subsec:8.3}
We will bound here $P\big( \{
\mathcal A_2=A_2\}\big)$.  Given any $n>0$ we define
      \begin{equation}
      \label{8.3.1}
\mathcal A_{2,n}(\om) = \mathcal A_2(\om) \cap\La_n(\om)^c, 
\quad O_n(\om)= N_{\La_n(\om)\cap A_2}
     \end{equation}
   \begin{equation}
   \label{8.3.2}
g_{n}(\om) = \chi_{n}(\om)\cdot \epsilon^{O_n(\om)},\quad
\chi_{n}(\om):= \text{\bf 1}_{\mathcal A_{2,n}(\om)=A_2
 \cap\La_n(\om)^c}
   \end{equation}
$\eps>0$ will be specified later.  We are going to prove that for all $n$,
       \begin{equation}
          \label{8.3.3}
\mathcal{E}(g_{n+1}) \le  \mathcal{E}(g_{n}) \le \ldots \le  \mathcal{E}(g_{0})
       \end{equation}
where $\mathcal{E}$ is the expectation with respect
to  $P$.  Since $A_2\subset \La=\La_0$ and
 $\mathcal A_{2,N^*}(\om) = \mathcal A_2(\om)$, we then get from \eqref{8.3.3},
       \begin{equation}
          \label{8.3.4}
P\big( \{
\mathcal A_2=A_2\}\big) \le \eps^{N_{A_2}}
       \end{equation}
Recalling \eqref{8.1}, we set $P^{N^*}=P$ and for $n< N^*$,
   \begin{equation}
      \label{8.3.5}
 d P^n(\om)= d\nu'(\bar\xi'_{\La^c})d\nu'(\bar\xi''_{\La^c})
 dQ^{n}(\om_\La|\bar \om_{\La^c}),\quad \om=(\om_\La,\bar \om_{\La^c}),\;
 \bar \om_{\La^c}=(\bar\xi'_{\La^c},\bar\xi''_{\La^c})
     \end{equation}
calling  $\mathcal{E}^{n}$ the expectation w.r.t.\ $P^n$.   We have
$ \mathcal{E}(g_{n+1})= \mathcal{E}^{n+1}(g_{n+1})$,
hence by \eqref{7.17},
      \begin{equation}
            \label{8.3.6}
      \begin{split}
\mathcal{E}(g_{n+1}) = \sum_{\Delta} \sum_{\Delta'\subset \Delta}
\epsilon^{N_{A_2\cap \Delta'}} & \int P^{n}(d\,\om_{\Delta^{c}}) \Big[
\text{\bf 1}_{\left\{\Lambda_{n}= \Delta,\Lambda_{n+1} =
\Delta'\right\}}
\chi_n(\om) \\
& \quad \times \int_{\mathcal A_2(\om_\Delta)\supset A_2\cap \Si_{n+1}}  \pi_{\Delta}(d\om_{\Delta}
\,\vert\, \xi_{\Delta^{c} })  \Big]
      \end{split}
     \end{equation}
where $\Si_{n+1}= \Delta\setminus \Delta'$.
The last integral is equal to 1 if $A_2\cap \{ \Delta\setminus \Delta'\}=\emptyset$,
 while, if this is not the case,  by \eqref{e3.7.1a}
           \begin{eqnarray}
      \label{8.3.7}
&&\hskip-1cm \int_{\mathcal A_2(\om_\Delta)\supset A_2\cap \{ \Delta\setminus \Delta'\}}  \pi_{\Delta}(d\om_{\Delta}
\,\vert\, \xi_{\Delta^{c} }) \le  c( \eps_g +
e^{-c_{\rm pol}\zeta^2 \ell_{-,\ga}^d /2}),\quad
A_2\cap \{ \Delta\setminus \Delta'\}\ne\emptyset
     \end{eqnarray}
We then get from \eqref{8.3.6},
      \begin{equation}
            \label{8.3.8}
\mathcal{E}(g_{n+1}) \le \mathcal{E}^n(g_{n})\max\{1, \frac{c(
\eps_g + e^{-c_{\rm pol}\zeta^2 \ell_{-,\ga}^d /2})}{\eps^{3^d}}\}
     \end{equation}
We choose
     \begin{equation}
            \label{8.3.9}
            \eps =\frac 12 \Big( c( \eps_g +
e^{-c_{\rm pol}\zeta^2 \ell_{-,\ga}^d /2}) \Big)^{3^{-d}}
     \end{equation}
so that the $\max$ on the r.h.s. of \eqref{8.3.8} is 1  which  thus
proves \eqref{8.3.3} and \eqref{8.3.4}.

\subsection{Proof of Theorem \ref{thme3.6.1}}
\label{subsec:8.4}

As we have shown at the beginning of this Section,  Theorem
\ref{thme3.6.1} follows from \eqref{8.2} that we prove here.

Given $\eps$ as in \eqref{8.3.9}, for $\ga$ small enough we bound
the r.h.s of \eqref{8.2.3} as
    \begin{equation}
      \label{8.4.1}
P\big( \{ \mathcal A_1=A_1\}\big) \le 2^{2N_{A_1}} e^{-c_{\rm
pol}\zeta^2 \ell_{-,\ga}^d N_{A_1}/4}\le \eps^{N_{A_1}}
     \end{equation}

From \eqref{8.1.2}, \eqref{8.1.3}, \eqref{8.1.6}, \eqref{8.4.1} and
\eqref{8.3.4} we then get
    \begin{eqnarray*}
 &&\hskip-1cm P\Big(\{\La_{N^*}\supset \Delta\}^c\Big) \le
  \sum_{x\in \ell_{+,\ga}\mathbb Z^d\cap \Delta}
  \sum_{A: A\ni x, A\cap  \delta_{\rm out}^{\ell_{+,\ga}}[\La]\ne \emptyset}
  3^{N_A}2\eps^{N_A}\\&&\hskip2cm\le 2|\Delta|\sum_{n\ge \frac {{\rm
  dist}(\Delta,\La^c)}{\ell_{+,\ga}}}(3\eps)^n
     \end{eqnarray*}
that implies \eqref{8.2}.\qed

\newpage

\vskip1cm

\part{Appendices}

\vskip.5cm

\appendix

\setcounter{equation}{0}

\section{ Operators on Euclidean spaces}
\label{sec:B}

For the sake of completeness we recall here some elementary
properties of operators on finite dimensional Hilbert
spaces used in the previous sections. We call $\mathcal H$
the real Hilbert space of vectors $u=\{u(i)\}$ with scalar
product
   \begin{equation}
      \label{7.0}
(u, v) = \sum_{i}u(i)v(i)
     \end{equation}
where $i$ above ranges in a finite index set on which a distance
$|i-j|$ is defined (in our applications $i$ stands for a pair
$(x,s)$, with $x\in \ell \mathbb Z^d\cap \La, s\in \{1,..,S\}$,
and either $\ell=\ell_{-,\ga}$ or $\ell=\ga^{-1/2}$, $\La$ being a
fixed $\mathcal D^{(\ell_{-,\ga})}$-measurable bounded subset of
$\mathbb R^d$. Operators on $\mathcal H$ are identified to
matrices $B=B(i,j)$ by setting $\dis{Bu(i)=\sum_jB(i,j)u(j)}$.  We
write $\dis{|u|_\infty=\max_i |u(i)|}$,
   \begin{equation}
      \label{7e.0}
\|B\|^2 = \sup_{u\ne 0} \frac{(Bu,Bu)}{(u,u)},\quad
 \|B\|_\infty=\sup_{u\ne 0}\frac{|Bu|_\infty}{|u|_\infty}
     \end{equation}
Recall that
   \begin{equation}
      \label{7e.0.0}
 \|B\|_\infty \le \max_i \sum_j |B(i,j)|,\quad \|B\|\le \max_i \{\sum_j
 |B(i,j)|,\sum_j
 |B(j,i)|\}=:|B|
     \end{equation}
The first inequality in \eqref{7e.0.0} is obvious. To prove
the second one  we write
   \begin{eqnarray*}
&&\sum_i \Big(\sum_j B(i,j) u(j)\Big)^2 \le
\sum_{i,j_1,j_2}| B(i,j_1)| |B(i,j_2)|\frac 12
\big(u(j_1)^2+u(j_2)^2\big)\\&& \hskip2cm \le
\sum_{i,j_1,j_2}|B(i,j_1)| |B(i,j_2)| u(j_1)^2 \le |B|^2
\sum_i u(i)^2
    \end{eqnarray*}

\vskip1cm

In Theorem \ref{thmappB.2} below  we consider matrices of
the form $B=C'A^{-1}C''$, thus including $(QAQ)^{-1}$
(after restricting to $Q\mathcal H$) and $PA(QAQ)^{-1}QA$,
the matrix considered in \eqref{Ie.3.2.32.1}.  With in mind
these two applications we will suppose the diagonal
elements of $A$ strictly positive and large.

\vskip1cm

\begin{thm}
 \label{thmappB.2}
Let $B=C'A^{-1}C''$ with  $A= D+R$,  $D$  a diagonal matrix, and
suppose there are $c>0$, $c'>0$ and $b>0$ such that the following
holds (recall the definition of the  norm $|C|$ given in
\eqref{7e.0.0}).
   \begin{equation}
      \label{appB.6.1}
|C'|+|C''|+|R| \le c
    \end{equation}
The diagonal elements $D(i,i)$ of $D$ are such that $D(i,i)\ge b$
for every $i$. Finally $C'(i,j)=C''(i,j)=R(i,j)=0$ whenever
$|i-j|\ge c'\ga^{-1}$. Then if $b$ is large enough,
   \begin{equation}
      \label{appB.6}
\|B\| \le \frac {2c^2}{b},\quad \|B\|_\infty \le \max_{i}
\sum_{j}|B(i,j)| e^{\ga |i-j|} \le \frac {2c^2 e^{2c'}}b
    \end{equation}

\end{thm}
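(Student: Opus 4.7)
The plan is to exploit the size of the diagonal $D$ relative to the off-diagonal part $R$ via a convergent Neumann series. Since $D(i,i)\ge b$ we have $\|D^{-1}\|\le 1/b$, and since $\|R\|\le |R|\le c$ (using \eqref{7e.0.0}) we get $\|RD^{-1}\|\le c/b<1$ as soon as $b$ is chosen large enough. Writing $A=(I+RD^{-1})D$ and expanding,
\begin{equation*}
A^{-1} \;=\; \sum_{n\ge 0} D^{-1}(-RD^{-1})^n,
\end{equation*}
with convergence in every reasonable norm. For the spectral bound in \eqref{appB.6} I would then simply chain operator norm estimates: $\|B\|\le \|C'\|\,\|A^{-1}\|\,\|C''\|\le c\cdot\dfrac{1/b}{1-c/b}\cdot c$, which is $\le 2c^2/b$ once $b\ge 2c$.

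For the weighted row-sum bound, I would introduce the auxiliary norm
\begin{equation*}
|M|_\ast \;:=\; \max_i\sum_j |M(i,j)|\,e^{\gamma|i-j|},
\end{equation*}
which dominates $\|M\|_\infty$ (take $\gamma=0$) and is submultiplicative by the triangle inequality: $\sum_j|MN(i,j)|e^{\gamma|i-j|}\le\sum_k|M(i,k)|e^{\gamma|i-k|}\sum_j|N(k,j)|e^{\gamma|k-j|}\le |M|_\ast|N|_\ast$. The key observation is that for any matrix $M$ with $M(i,j)=0$ whenever $|i-j|\ge c'\gamma^{-1}$, the weighting factor never exceeds $e^{c'}$, so $|M|_\ast \le e^{c'}|M|$. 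Applied to $C'$, $C''$, $R$, this yields $|C'|_\ast,|C''|_\ast,|R|_\ast\le e^{c'}c$, while $|D^{-1}|_\ast=\|D^{-1}\|_\infty\le 1/b$.

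Now I would plug these into the Neumann series: each term satisfies
\begin{equation*}
|D^{-1}(RD^{-1})^n|_\ast \;\le\; \frac{1}{b}\Bigl(\frac{e^{c'}c}{b}\Bigr)^n,
\end{equation*}
so for $b\ge 2e^{c'}c$ the geometric sum gives $|A^{-1}|_\ast\le 2/b$. Then by submultiplicativity
\begin{equation*}
|B|_\ast \;\le\; |C'|_\ast\,|A^{-1}|_\ast\,|C''|_\ast \;\le\; e^{c'}c\cdot\frac{2}{b}\cdot e^{c'}c \;=\; \frac{2c^2 e^{2c'}}{b},
\end{equation*}
and this is exactly the second inequality of \eqref{appB.6}, with the first inequality $\|B\|_\infty\le\max_i\sum_j|B(i,j)|e^{\gamma|i-j|}=|B|_\ast$ being immediate from $e^{\gamma|i-j|}\ge 1$.

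No serious obstacle is anticipated: the only thing to be careful about is that the finite-range property of the individual matrices $C',C'',R$ (but not of $A^{-1}$) together with the fact that every non-diagonal displacement in the series $D^{-1}(RD^{-1})^n$ is supplied by an $R$ factor is what makes the weighted norm $|\cdot|_\ast$ finite, and the fact that the weighted norm is dominated by $e^{c'}\cdot |\cdot|$ only uses the cutoff $c'\gamma^{-1}$ once per factor. The threshold for $b$ to be ``large enough'' is then the explicit choice $b\ge 2e^{c'}c$, which also ensures convergence of both series.
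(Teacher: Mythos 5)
Your proposal is correct and follows the same route as the paper: the Neumann expansion of $A^{-1}=\sum_{n\ge 0}D^{-1}(-RD^{-1})^n$, then chaining the unweighted norms for the first inequality of \eqref{appB.6} and the $e^{\gamma|i-j|}$-weighted row sums for the second, with the finite-range hypothesis giving one factor $e^{c'}$ per matrix. Packaging the weighted row-sum as a submultiplicative norm $|\cdot|_\ast$ is a clean formalization of the chain of three sums written out in the paper, but it is the same estimate, including the explicit threshold $b\ge 2e^{c'}c$.
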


\vskip.5cm

{\bf Proof.} By \eqref{7e.0.0},
{$\|R\|\le c$}.
On the other hand  $\|D\|^{-1}\le b^{-1}$
and for $b$ so large that $b^{-1}c<1$ the sum on the
r.h.s.\ of \eqref{appB.7.1} below converges and
    \begin{equation}
      \label{appB.7.1}
A^{-1} = D^{-1} - D^{-1} R D^{-1}+ D^{-1} R  D^{-1} R
D^{-1}-\cdots= \sum_{n=0}^\infty \Big(-D^{-1} R\Big)^n
D^{-1}
     \end{equation}
as seen by multiplying  the r.h.s.\ of \eqref{appB.7.1}
from the left by $A$: we then get $
AD^{-1}\big(1-RD^{-1}+\cdots\big)$ which is equal to 1
after writing $AD^{-1}= 1+RD^{-1}$ and after telescopic
cancellations. Thus \eqref{appB.7.1} holds and
    \begin{equation}
      \label{appB.8}
\|A^{-1} \|\le   \sum_{n=0}^\infty b^{-n-1}\|R\|^n \le
\frac 1{b(1-c/b)}
     \end{equation}
hence, {recalling   \eqref{7e.0.0},
we get }
 the first inequality in \eqref{appB.6}. We write
   \begin{eqnarray*}
&& \sum_{j}|B(i,j)| e^{\ga |i-j|} \le \sum_{i_1}|C'(i,i_1)| e^{\ga
|i-i_1|}\sum_{i_2}|A^{-1}(i_1,i_2)|e^{\ga |i_1-i_2|}\sum_j |
C''(i_2,j)|e^{\ga |i_2-j|}
\\&&\hskip2cm \le c^2 e^{2c'}\max_{i_1}
\sum_{i_2} |A^{-1}(i_1,i_2)|e^{\ga |i_1-i_2|}
    \end{eqnarray*}
Since $\dis{\sum_{i_2}|R(i_1,i_2)|e^{\ga |i_1-i_2|} \le
e^{c'}c}$, by \eqref{appB.7.1}
    \begin{equation*}
 \sum_{i_2}
|A^{-1}(i_1,i_2)|e^{\ga |i_1-i_2|} \le \sum_{n=0}^\infty
b^{-n-1}[e^{c'}c]^n
     \end{equation*}
hence the second inequality in \eqref{appB.6}.

 \qed

\vskip1cm

In the next two theorems we consider a matrix $R_1$ with
small norm, it represents in our applications the matrix
$PA(QAQ)^{-1}QA$ which by Theorem \ref{thmappB.2} has
indeed a small norm (if $b$ is large).

\vskip.5cm

\vskip1cm

\begin{thm}
 \label{thmappB.1}
Let  $B=A+R_1$; suppose  $A$  symmetric, $(u,Au)\ge \kappa
(u,u)$ for all $u$;  $\|R_1\|\le \eps$ and $\kappa>\eps>0$.
Then $B$ is invertible and
   \begin{equation}
      \label{appB.0}
\|B^{-1}\| \le \frac 1{\kappa'},\quad \kappa'=\kappa-\eps
     \end{equation}
Suppose further that
   \begin{equation}
      \label{appB.1}
\sup_{i} \sum_{j}|B(i,j)| e^{\ga |i-j|} \le a <\infty 
     \end{equation}
then
   \begin{equation}
      \label{appB.2}
|B^{-1}(i,j)| \le  (\frac 1a+\frac 1
{\kappa'})\exp\Big\{-\frac{\kappa' \ga|i-j|}
{a+\kappa'}\Big\}
     \end{equation}

\end{thm}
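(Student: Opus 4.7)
The plan divides into two parts, matching the two assertions.

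For the invertibility and operator norm bound, I would use the coercivity of $A$: for any $u \in \mathcal{H}$, symmetry of $A$ together with $\|R_1\|\le\eps$ gives
\[
(u,Bu) = (u,Au) + (u,R_1u) \ge \kappa\|u\|^2 - \eps\|u\|^2 = \kappa'\|u\|^2.
\]
Cauchy--Schwarz then yields $\|Bu\|\,\|u\| \ge (u,Bu) \ge \kappa'\|u\|^2$, hence $\|Bu\| \ge \kappa'\|u\|$. In finite dimension this forces $B$ to be injective, hence invertible; setting $u = B^{-1}v$ gives $\|B^{-1}\| \le 1/\kappa'$.

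For the exponential decay \eqref{appB.2}, I would use a Combes--Thomas-type conjugation. Fix arbitrary $i_0,j_0$ and let $\phi(k) := |x(k) - x(j_0)|$, which is $1$-Lipschitz with $\phi(i_0) - \phi(j_0) = |x(i_0)-x(j_0)|$. For $\mu \in (0,\gamma]$ define the diagonal matrix $D_\mu(k,k)=e^{\mu\phi(k)}$ and set $B_\mu = D_\mu B D_\mu^{-1}$, so that $B_\mu(k,l) = e^{\mu(\phi(k)-\phi(l))}B(k,l)$. The elementary bound $e^{\mu s}-1 \le (\mu/\gamma)(e^{\gamma s}-1)$ for $0\le\mu\le\gamma$, $s\ge 0$ (obtained from $e^{\mu s} = 1 + \mu\int_0^s e^{\mu t}\,dt$ and $\mu e^{\mu t} \le \mu e^{\gamma t}$), together with the Lipschitz property of $\phi$, yields the row-sum estimate
\[
\sum_l |(B_\mu - B)(k,l)| \le \frac{\mu}{\gamma}\sum_l |B(k,l)|\,e^{\gamma|k-l|} \le \frac{a\mu}{\gamma},
\]
and the same bound for column sums when $B$ is symmetric (as it is in all applications of the theorem in this paper; in general one invokes Schur's test $\|M\|^2 \le \|M\|_1\|M\|_\infty$ after a symmetric argument for columns). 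This gives $\|B_\mu - B\| \le a\mu/\gamma$ in operator norm.

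With this estimate in hand, the same coercivity argument as in Part~1 applied to $B_\mu$ shows that whenever $a\mu/\gamma < \kappa'$, the operator $B_\mu$ is invertible with $\|B_\mu^{-1}\| \le 1/(\kappa' - a\mu/\gamma)$. Since $D_\mu B D_\mu^{-1}$ inverts to $D_\mu B^{-1} D_\mu^{-1}$, we have
\[
|B^{-1}(i_0,j_0)| = e^{-\mu(\phi(i_0)-\phi(j_0))}\,|B_\mu^{-1}(i_0,j_0)| \le e^{-\mu|x(i_0)-x(j_0)|}\,\|B_\mu^{-1}\|.
\]
Choosing the concrete value $\mu = \kappa'\gamma/(a+\kappa')$ (which lies in the admissible range $\mu<\kappa'\gamma/a$ and balances the exponential rate against the prefactor) and simplifying using $a\mu/\gamma = a\kappa'/(a+\kappa')$ yields the claimed bound \eqref{appB.2}. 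The main obstacle is the passage from the row-sum estimate on $B_\mu - B$ to its $L^2$ operator norm, which is what enters the coercivity argument: this step exploits either symmetry of $B$ or a Schur-test bound on column sums, and obtaining the exact prefactor $(1/a + 1/\kappa')$ and rate $\kappa'\gamma/(a+\kappa')$ is a matter of careful bookkeeping in the optimization over $\mu$.
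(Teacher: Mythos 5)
Your argument for the first assertion is correct and in fact cleaner than the paper's: the paper derives $\|B^{-1}\|\le 1/\kappa'$ by representing $B^{-1}=\int_0^\infty e^{-Bt}\,dt$ and controlling the semigroup through the integration-by-parts identity $e^{-Bt}=e^{-At}-\int_0^t e^{-Bs}R_1 e^{-A(t-s)}\,ds$, whereas your direct coercivity-plus-Cauchy--Schwarz argument avoids the semigroup altogether.

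For the pointwise bound \eqref{appB.2} the two proofs are genuinely different. The paper splits $B^{-1}(i,j)=\int_0^\tau+\int_\tau^\infty (e_i,e^{-Bt}e_j)\,dt$, bounds the tail by $\|e^{-Bt}\|\le e^{-\kappa' t}$ and the head by Taylor-expanding $e^{-Bt}$ and feeding the weighted row-sum hypothesis \eqref{appB.1} into the chain estimate $\sum_{i_1,\dots,i_{n-1}}\prod_k|B(i_k,i_{k+1})|e^{\gamma|i_k-i_{k+1}|}\le a^n$, then optimizes over $\tau$. You instead conjugate by the diagonal weight $D_\mu$ (a Combes--Thomas argument) and re-run the coercivity estimate on $B_\mu=D_\mu B D_\mu^{-1}$.

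The gap is the one you already half-flagged: the conjugation route requires the $L^2$ operator norm $\|B_\mu-B\|$ to be small, which via the Schur test needs both row and column weighted sums of $B$ to be bounded, or else symmetry of $B$. But \eqref{appB.1} is purely a row-sum hypothesis, and nothing in the theorem statement gives you the column sums or symmetry. The paper's Taylor argument does not have this problem: writing $M(k,l)=|B(k,l)|e^{\gamma|k-l|}$, the chain sum above is $M^n(i,j)$, and $M^n(i,j)\le\sum_{j'}M^n(i,j')\le\|M\|_\infty^n\le a^n$ uses only row sums. In the paper's applications $B$ is symmetric, so your argument works there, but as written it proves a narrower statement than Theorem \ref{thmappB.1}, and the extra hypothesis should be stated rather than left parenthetical. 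A smaller, cosmetic point: with $\mu=\kappa'\gamma/(a+\kappa')$ your rate matches but the prefactor you obtain is $(a+\kappa')/(\kappa')^2$, not $1/a+1/\kappa'=(a+\kappa')/(a\kappa')$; these coincide only when $a=\kappa'$, so ``careful bookkeeping'' in $\mu$ alone will not recover the exact constant of \eqref{appB.2}, though any bound of that shape serves equally well downstream.
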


\vskip.5cm

{\bf Proof.}  By the integration by parts formula,
   \begin{equation}
      \label{appB.2.1}
e^{-B t}= e^{-At}-\int_0^t e^{-B s}R_1 e^{-A(t-s)}
     \end{equation}
Since $\|e^{-At}\|\le e^{-\kappa t}$,
   \begin{equation}
      \label{appB.2.2}
\|e^{-B t}\|\le e^{-\kappa t}+ e^{-\kappa
t}\sum_{n=1}^\infty \frac{(\eps t)^n}{n!} \le  e^{-(\kappa
-\eps) t}
     \end{equation}
Then $\dis{\int_0^\infty e^{-Bt}}$ is well defined and
equal to $B^{-1}$; \eqref{appB.0} also follows.

Calling $e_i$ the vector with components $e_i(j) =\text{\bf
1}_{i=j}$,
   \begin{equation}
      \label{appB.3}
B^{-1}(i,j)=
 \int_0^{\tau}  \big(e_i,e^{-Bt} e_j\big)+  \int_{\tau}^\infty  \big(e_i,e^{-Bt} e_j\big)
    \end{equation}
By \eqref{appB.2.2},
   \begin{equation}
      \label{appB.4}
|\int_{\tau}^\infty  \big(e_i,e^{-Bt} e_j\big)| \le
\frac{e^{-\kappa' \tau}}{\kappa'},   \quad
\kappa'=\kappa-\eps
    \end{equation}
By a Taylor expansion:
   \begin{equation}
      \label{appB.4.1}
| \big(e_i,e^{-Bt} e_j\big)| \le \sum_{n=0}^\infty\,
\frac{t^n}{n!} \,e^{-\ga|i-j|} \sum_{i_1,..,i_{n-1}}
|B(i,i_1)| e^{\ga|i-i_1|}\cdots  |B(i_{n-1},j)|
e^{\ga|j-i_{n-1}|}
    \end{equation}
hence using \eqref{appB.1},
   \begin{equation}
      \label{appB.5}
| \int_0^{\tau}  \big(e_i,e^{-Bt} e_j\big)| \le \frac
{e^{a\tau -\ga|i-j|}}{a}
    \end{equation}
By choosing $\dis{\tau= \frac{\ga|i-j|}{a+\kappa'}}$ we
then get \eqref{appB.2} from \eqref{appB.4} and
\eqref{appB.5}.


\qed

\vskip1cm

\begin{thm}
 \label{thmappB.1.1}

Let $B=A+R_1$ as in Theorem \ref{thmappB.1}; call $D$ the
diagonal part of $A$, $R_0:=A-D$, $R=R_0+R_1$  and suppose
that $\|R\|_\infty<\infty$. Then
   \begin{equation}
      \label{appB.4.2}
\|B^{-1}\|_\infty \le \frac 1 \kappa  + \frac
{\|R\|_\infty}{\kappa^{2}} \Big(1 +
\frac{\|R\|_\infty}{\kappa-\eps} \Big)
     \end{equation}

\end{thm}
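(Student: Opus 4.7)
The plan is to obtain a short expansion of $B^{-1}$ from the resolvent identity and estimate each term in $L^\infty$ operator norm, using the Euclidean bound $\|B^{-1}\|\le 1/(\kappa-\eps)$ from Theorem \ref{thmappB.1} to handle the last (and hardest) term.

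First I would establish the algebraic identity
\[
B^{-1}\;=\; D^{-1} \,-\, D^{-1} R D^{-1} \,+\, D^{-1} R\, B^{-1}\, R D^{-1}.
\]
This follows by combining the two one-sided resolvent formulas $B^{-1}=D^{-1}-D^{-1}RB^{-1}$ (from $BB^{-1}=I$ applied to $B=D+R$) and $B^{-1}=D^{-1}-B^{-1}RD^{-1}$ (from $B^{-1}B=I$), substituting the second into the $B^{-1}$ on the right-hand side of the first, so that the surviving error term is sandwiched between two copies of $R$.

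Taking $\|\cdot\|_\infty$ term by term, the first two contributions are straightforward. Since $(u,Au)\ge\kappa(u,u)$, evaluating at basis vectors $u=e_i$ gives $D(i,i)=(e_i,Ae_i)\ge\kappa$, so $\|D^{-1}\|_\infty\le 1/\kappa$. Submultiplicativity of the $L^\infty$ operator norm then yields $\|D^{-1}RD^{-1}\|_\infty\le\|R\|_\infty/\kappa^2$. Adding these two gives the first two summands in the claimed bound $\frac{1}{\kappa}+\frac{\|R\|_\infty}{\kappa^2}$.

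For the remainder $M:=D^{-1}RB^{-1}RD^{-1}$ I would write $(Mu)(i)=D(i,i)^{-1}\bigl(R(i,\cdot),\,B^{-1}(RD^{-1}u)\bigr)$ and apply Cauchy--Schwarz in the pairing:
\[
|(Mu)(i)|\;\le\;\frac{1}{\kappa}\,\|R(i,\cdot)\|_{2}\,\|B^{-1}\|\,\|RD^{-1}u\|_{2}.
\]
The row estimate $\|R(i,\cdot)\|_{2}^{2}\le(\max_{k}|R(i,k)|)(\sum_{k}|R(i,k)|)\le\|R\|_\infty^{2}$ gives $\|R(i,\cdot)\|_{2}\le\|R\|_\infty$; a dual Schur-type estimate (using that the columns of $R$ satisfy the same bound, in view of the symmetry of $R_0=A-D$ and of $R_1$ in the applications where Theorem \ref{thmappB.1.1} is invoked) gives $\|RD^{-1}u\|_{2}\le(\|R\|_\infty/\kappa)\|u\|_\infty$ uniformly in the dimension of $\mathcal H$; and from Theorem \ref{thmappB.1}, $\|B^{-1}\|\le 1/(\kappa-\eps)$. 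Multiplying these three factors yields $\|M\|_\infty\le\|R\|_\infty^{2}/[\kappa^{2}(\kappa-\eps)]$, which is the third summand in the statement.

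The main technical obstacle is precisely the dimension-free Schur bound on $\|RD^{-1}u\|_2$ in terms of $\|u\|_\infty$: a naive Cauchy--Schwarz loses a factor of $\sqrt{\dim\mathcal H}$. The point is that the sandwich structure of $M$ allows one to convert the Euclidean control $\|B^{-1}\|\le 1/(\kappa-\eps)$ into an $L^\infty$-$L^\infty$ control on $M$ without such a loss, provided both row and column $\ell^{1}$-sums of $R$ are controlled by $\|R\|_\infty$ --- an implicit but harmless hypothesis in the present setting. Once this is in hand, summing the three bounds proves the theorem.
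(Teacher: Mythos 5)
Your resolvent expansion $B^{-1}=D^{-1}-D^{-1}RD^{-1}+D^{-1}RB^{-1}RD^{-1}$ is the same as the paper's, and the first two summands are handled correctly: $D(i,i)=(e_i,Ae_i)\ge\kappa$ gives $\|D^{-1}\|_\infty\le 1/\kappa$, and then $\|D^{-1}RD^{-1}\|_\infty\le\|R\|_\infty/\kappa^2$.

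The gap is in the sandwich term $M=D^{-1}RB^{-1}RD^{-1}$, and it is genuine. The estimate $\|RD^{-1}u\|_2\le(\|R\|_\infty/\kappa)\|u\|_\infty$ is false: take $u=\mathbf 1$ and $R=I$ (so that every row and column $\ell^1$-sum equals $\|R\|_\infty=1$); then $\|RD^{-1}\mathbf 1\|_2\ge\kappa^{-1}\sqrt{\dim\mathcal H}$ while $\|\mathbf 1\|_\infty=1$. Controlling both rows and columns of $R$ by $\|R\|_\infty$ bounds $\|RD^{-1}u\|_\infty$ and also the Euclidean operator norm $\|R\|$, but converting an $L^\infty$ bound on a vector into an $L^2$ bound always costs a $\sqrt{\dim\mathcal H}$ factor, which is exactly what your Cauchy--Schwarz chain loses. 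The closing assertion that ``the sandwich structure of $M$ allows one to convert the Euclidean control without such a loss'' is precisely what needs a proof, and none is offered; the proposal is incomplete at this step.

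You have, nonetheless, identified the genuinely delicate point. The paper's own proof asserts $\sum_j|M(i,j)|\le\kappa^{-2}\|R\|_\infty^2\,\|B^{-1}\|$ by factoring the triple sum $\sum_j\sum_{k,h}|R(i,k)|\,|B^{-1}(k,h)|\,|R(h,j)|$, but carrying out the sums naturally yields $\kappa^{-2}\|R\|_\infty^2\,\|B^{-1}\|_\infty$ (the $L^\infty$ norm, not the Euclidean one), i.e.\ the self-consistent inequality $\|B^{-1}\|_\infty\le\kappa^{-1}+\kappa^{-2}\|R\|_\infty+\kappa^{-2}\|R\|_\infty^2\|B^{-1}\|_\infty$, which closes only under the extra hypothesis $\|R\|_\infty<\kappa$ and with different constants than \eqref{appB.4.2}. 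A complete argument therefore needs either that smallness condition or additional structural input (such as the exponential off-diagonal decay established separately in Theorem~\ref{thmappB.1}); generic norm manipulations involving only $\|B^{-1}\|$, $\|R\|_\infty$ and $\kappa$ cannot produce a dimension-free $L^\infty$ bound.
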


\vskip.5cm

{\bf Proof.} Recalling that $B=D+R$, we use the identity
   \begin{equation*}
 B^{-1} =  D^{-1} -  D^{-1} R D^{-1} + D^{-1} R  B^{-1} R D^{-1}
     \end{equation*}
Then
   \begin{equation*}
 B^{-1}(i,j)  =  (e_i,D^{-1}e_j) -  (D^{-1}e_i, R  D^{-1}e_j)
  +\sum_{k,h}(e_i, D^{-1} R e_k)(e_k,  B^{-1} e_h)(e_h, R D^{-1} e_j)
     \end{equation*}
so that
        \begin{eqnarray*}
&&\sum_j |B^{-1}(i,j)|\le  \kappa^{-1} + \kappa^{-2}
\|R\|_\infty + \|B^{-1}\|  \kappa^{-2} \|R\|_\infty^2
    \end{eqnarray*}
and \eqref{appB.4.2} follows using  \eqref{appB.0}.

\qed

\vskip2cm
\section{Proof of Theorem \ref{thm7.2.1 }}
        \label{appCC}
In the sequel cubes are always cubes in $\mathcal
D^{(\ell_{+,\ga})}$ and a cube $C$ is called ``older'' than $C'$ if
there is $n$ such that $C'\subset \La_n$ and $C\subset
\La_n^c$.
We will prove the theorem as a consequence of the following
property:

\vskip.5cm

{\bf Property P.}  {\sl Let $C$ be a good cube, $x\in
\ell_{-,\ga}\mathbb Z^d\cap C$, $\{C_i\}$ the cubes older
than $C$ which intersect $B_x(2^d10^{-10}\ell_{+,\ga})$. If
either $\{C_i\}$ is empty or if all $C_i$ are good, then
$q'_\La\cap C_x^{(\ell_{-,\ga})}=q''_\La\cap
C_x^{(\ell_{-,\ga})}$.}

\vskip.5cm

Before proving Property P, we will use it to prove Theorem
\ref{thm7.2.1 }.  Suppose that for some $N$, $\La_N$ is non
empty and that all cubes in $\delta_{\rm
out}^{\ell_{+,\ga}}[\La_N]$ are good (thus the sequence
$\La_n$ stops at $N$).  Let $C$ be a cube in $\delta_{\rm
out}^{\ell_{+,\ga}}[\La_N]$, $x\in \ell_{-,\ga}\mathbb
Z^d\cap C$ and at distance $\le \ga^{-1}$ from $\La_N$.
Then $B_x(2^d10^{-10}\ell_{+,\ga})\cap \La_N^c$ intersects
only cubes of $\delta_{\rm out}^{\ell_{+,\ga}}[\La_N]$,
which are by assumption good; then by Property P,
$q'_\La\cap C_x^{(\ell_{-,\ga})}=q''_\La\cap
C_x^{(\ell_{-,\ga})}$, hence \eqref{7.2.2}. \eqref{7.2.3}
holds because all cubes of $\delta_{\rm
out}^{\ell_{+,\ga}}[\La_N]$ are good.

\vskip1cm

We start the proof of Property P by introducing a new
function $M(x)$, $x\in \ell_{-,\ga}\mathbb Z^d$. We set
$M(x)=\infty$ outside $\La$ and at all $x$ which are in bad
cubes. The definition of $M(x)$ on the good cubes is given
iteratively in $\La_n^c$. We thus suppose to have already
defined $M(x)$ on all cubes of $\La_n^c$ and have to define
it on $\Si_{n+1}=\La_{n+1}^c\setminus \La_n^c$. Let thus
$C\subset \Si_{n+1}$ and $x\in C$. We set $M(x)=0$ if
$B_x(10^{-10}\ell_{+,\ga})\cap \La_n^c = \emptyset$,
otherwise
          \begin{equation}
     \label{CC.1}
M(x):= 1+\max\Big\{ M(y)\big| y\in \ell_{-,\ga}\mathbb Z^d\cap B_x(10^{-10}\ell_{+,\ga}),
\text{\;\;$y$ such that $C_y^{(\ell_{+,\ga})}\subset \La_n^c$}\Big\}
     \end{equation}
To compute the value of $M(x)$, $x\in C$, $C\subset \Si_{n+1}$, we need to look at all sequences $y_1,y_2,....$ such that: $|y_h-y_{h-1}|\le 10^{-10}\ell_{+,\ga}$, $C^{(\ell_{+,\ga})}_{y_h}$ is older than $C^{(\ell_{+,\ga})}_{y_{h-1}}$, $h_0=x$ and
to know whether   the
cubes $C^{(\ell_{+,\ga})}_{y_h}$ are good or bad.  In principle the sequence
may be arbitrarily long but in fact it is not:

\vskip1cm

{\bf Lemma 1.} \;{\sl  Let $C$ be a good cube, $x\in C$,
then the value of $M(x)$ depends only on
whether the cubes
$\{C_i\}$ are good or bad, where $\{C_i\}$ is the
collection of cubes older than $C$ which intersect
$B_x(2^d10^{-10}\ell_{+,\ga})$.}

\vskip.5cm

{\bf Proof.}  Since
any ball of radius $(2^{d}10^{-10}+1)\ell_{+,\ga}$ intersects at most
$2^d$ cubes of the partition  $\mathcal
D^{(\ell_{+,\ga})}$, then any sequence  $y_1,y_2,....$  as above consists at most of
$2^d$ elements.  \qed

\vskip1cm

Since $\bar m= 2^{d}+2$, then
          \begin{equation}
     \label{CC.2}
\text{either $M(x)<\bar m-2$ or $M(x)=+\infty$}
      \end{equation}
We will next prove:

\vskip.5cm

{\bf Lemma 2.} \;{\sl Let $C$ be a good cube, $x\in C$,
then, if $\bar m- M(x)=h>0$, }
          \begin{equation}
     \label{CC.3}
q'_\La\cap C_x^{(\ell_{-,\ga})}=q''_\La\cap
C_x^{(\ell_{-,\ga})},\quad
\max_{s\in\{1,..,S\}}|\rho^{(\ell_{-,\ga})}({
q}'_{\La};x,s)-\rho^{(k)}_s|
 \le \zeta_{h}
      \end{equation}

\vskip.5cm

{\bf Proof.} The proof is by induction on the ``age'' of
the cubes.  We thus suppose that  the above statements
holds for all cubes of $\La_n^c$.  Let $C$ be a good cube
in $\Si_{n+1}$, then the above properties hold by the
definition of the function $K$ and of good cubes.  \qed

\vskip.5cm

 Property P is then an immediate consequence of Lemma  2 and \eqref{CC.2}.
  \vskip1cm

\section{ Mean field}
\label{sec:C}

In this appendix  we prove  Theorems \ref{thme2.1} and \ref{thme2.2}. Our approach is based on the recent works \cite{GM,GMRZ}, having in mind that in \cite{GM} the total density was set to $1$, the temperature being the free parameter, while here we  fix the (inverse) temperature $\beta =1$, the total density $x$ being the free parameter. The two approaches are equivalent, see \eqref{e2.3}.

To achieve our goal, we will need Lemmas \ref{lem=zmonotone}, \ref{lem=slopejump}, \ref{lem=notation_agreement}, \ref{lem=pvariations} and  \ref{lem=common_tangent} below. The first lemma is an essential property relating the total density $x$ to the corresponding constrained minimizer in a one-to-one way. The second and third lemmas respectively deal with the first and second derivatives of the free energy. They show in particular that the sign of the second derivative depends on the roots of some peculiar second degree polynomial. The fourth lemma studies the locations of these roots, while the fifth and last lemma gives a general condition for a piecewise-convex function to have a common tangent at two different points.

The section is organized as follows. We first give some notations and reformulate known results, before stating our auxiliary lemmas. Then we prove Theorems \ref{thme2.1} and \ref{thme2.2}, while the proofs of lemmas are deferred to the end of the present section.

\vskip.5cm

{\bf Notations}

\vskip.5cm

 For any $x\in (0,+\infty), z\in [0,1]$, we will denote by $\rho^{(z,x)}$ the density vector $\rho$ defined as follows:
\begin{equation}
\label{rhozx}
  \rho^{(z,x)}_i =
  \left\{
    \begin{array}{crl}
      \frac{1+(S-1)z}{S}x & \mbox{for} & i = 1 \\
      \frac{1-z}{S}x & \mbox{for} & i=2,\ldots, S.
    \end{array}
  \right.
\end{equation}
Notice that $\sum \rho^{(z,x)}_i = x $ and rewrite \eqref{e2.4} as follows:
\begin{equation}
f^{\rm mf}(x) = \inf\big\{F^{\rm mf}(\rho^{(z,x)}); 0\leq z \leq 1 \big\}.
\end{equation}

Now, remarking that $zx = \rho^{(z,x)}_1- \rho^{(z,x)}_2$, we adapt a result from \cite{GMRZ,GM}. Namely, recalling Theorem A.1 in \cite{GM} or section 3 in \cite{GMRZ}, and comparing \eqref{treshold} and \eqref{jump} below with (A.10) and (A.22) in \cite{GM}, we know that for any $S>2$ there exists a threshold
\begin{equation}
\label{treshold}
x_{S}: = 2\frac{S-1}{S-2}\ln (S-1)
\end{equation}
such that
\begin{itemize}
\item for all $x<x_{S}$, the function $z \mapsto F^{\rm mf}(\rho^{(z,x)})$ reaches its minimum at $z=0$;
\item for all $x>x_{S}$, the function  $z \mapsto F^{\rm mf}(\rho^{(z,x)})$ reaches its minimum at $z=z(x)$, defined as the largest solution of the equation $R(z)=x$ where
\begin{equation}
\label{jump}
R(z):=\frac{1}{z} \ln \frac{1+(S-1)z}{1-z};
\end{equation}
\item at $x = x_{S}$, the function  $z \mapsto F^{\rm mf}(\rho^{(z,x)})$ reaches its minimum at $z=0$ \emph{and} at $z = z(x_{S}) = \frac{S-2}{S-1}$.
\end{itemize}
The statement above means that we have
\begin{equation}
\label{branches}
f^{\rm mf}(x) = \begin{cases}
f^{\rm dis}(x) := F^{\rm mf}(\rho^{(0,x)}) & \text{ if }  x \leq x_{S} \\
f^{\rm ord}(x) :=F^{\rm mf}(\rho^{(z(x),x)}) & \text{ if }  x \geq x_{S}.
\end{cases}
\end{equation}
First of all, we will see that
\begin{lemma}[Monotony of $R$ and $z$]
\label{lem=zmonotone}
The functions $R:z\to R(z)$ and $z:x\to z(x)$ are both increasing respectively on $[z_{S},1)$ and $[x_{S},+\infty)$, where $z_S=\frac{S-2}{S-1}$. They satisfy the relations $R\circ z = \textrm{Id}_{[x_{S},+\infty)}$ and $z\circ R = \textrm{Id}_{[z_{S},1)}$.
\end{lemma}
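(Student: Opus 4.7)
My plan is to reduce the entire lemma to a direct monotonicity analysis of $R$ on $[z_S,1)$. Once $R$ is shown to be continuous and strictly increasing on $[z_S,1)$, I will compute the boundary values $R(z_S)=x_S$ (a direct substitution using $1+(S-1)z_S=S-1$ and $1-z_S=1/(S-1)$) and $R(z)\to +\infty$ as $z\to 1^-$, so that $R$ restricts to a bijection $[z_S,1)\to [x_S,+\infty)$. The promised inverse $z(\cdot)$ is then just this restricted inverse, provided we verify that for $x\ge x_S$ the largest solution of $R(z)=x$ really sits in $[z_S,1)$.

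The analytic core is the derivative of $R$. Setting $h(z)=\ln\frac{1+(S-1)z}{1-z}$ I would compute $h'(z)=\frac{S}{P(z)}$ with $P(z)=(1+(S-1)z)(1-z)$, so that $R'(z)=g(z)/z^2$ where $g(z):=zh'(z)-h(z)$. Then I would differentiate once more: using $P(z)-zP'(z)=1+(S-1)z^2$, I get
\begin{equation*}
g'(z)=\frac{Sz\bigl[2(S-1)z-(S-2)\bigr]}{P(z)^2},
\end{equation*}
so $g'$ vanishes at $z^\ast=\frac{S-2}{2(S-1)}=z_S/2$, is negative on $(0,z^\ast)$ and positive on $(z^\ast,1)$. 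Since $g(0)=0$ and $g(z)\to +\infty$ as $z\to 1^-$, the function $g$ has exactly one zero $z^{\ast\ast}\in(z^\ast,1)$, is negative on $(0,z^{\ast\ast})$ and positive on $(z^{\ast\ast},1)$; equivalently, $R$ strictly decreases on $(0,z^{\ast\ast})$ and strictly increases on $(z^{\ast\ast},1)$.

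The crucial step — and the one I expect to be the main obstacle, since it pins down the position of the critical point of $R$ — is to show that $z^{\ast\ast}<z_S$, equivalently $g(z_S)>0$. A direct evaluation gives $h(z_S)=2\ln(S-1)$, $h'(z_S)=S$, hence $g(z_S)=\frac{S(S-2)}{S-1}-2\ln(S-1)$. Setting $u=S-1\ge 2$, positivity of $g(z_S)$ is equivalent to $\phi(u):=u-u^{-1}-2\ln u>0$, which I would prove from $\phi(1)=0$ and $\phi'(u)=(1-1/u)^2\ge 0$.

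Having established $z^{\ast\ast}<z_S$, it follows immediately that $R$ is strictly increasing on $[z_S,1)$, which yields existence of a continuous strictly increasing inverse $\tilde z:[x_S,+\infty)\to[z_S,1)$ with $R\circ\tilde z=\mathrm{Id}$ and $\tilde z\circ R=\mathrm{Id}$. Finally, to identify $\tilde z$ with $z$, I would argue that for any $x\ge x_S$ the equation $R(z)=x$ has at most one solution in $(0,z^{\ast\ast})$ and exactly one in $[z^{\ast\ast},1)$, by the decreasing/increasing decomposition above; since $z^{\ast\ast}<z_S\le \tilde z(x)$, the value $\tilde z(x)$ is the \emph{largest} solution, matching the definition of $z(x)$. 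Monotonicity of $z$ is then automatic as the inverse of the increasing function $R|_{[z_S,1)}$.
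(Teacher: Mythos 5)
Your proof is correct and follows the paper's approach in its essentials: both write $R'(z)=g(z)/z^2$ with the same $g$, differentiate to get $g'(z)=\dfrac{Sz\bigl[2(S-1)z-(S-2)\bigr]}{P(z)^2}$, and reduce everything to the positivity $g(z_S)>0$ (your reformulation $\phi(u)=u-u^{-1}-2\ln u>0$ for $u=S-1\ge 2$, with $\phi'(u)=(1-1/u)^2$ and $\phi(1)=0$, is the same one-line monotonicity check the paper performs). You supply some extra bookkeeping the paper leaves implicit — the full sign pattern of $g$ on $(0,1)$ via the unique zero $z^{\ast\ast}<z_S$, the verification $R(z_S)=x_S$, and the identification of the restricted inverse $\tilde z$ with the ``largest root'' definition of $z(x)$ — but this is additional detail on top of an identical core argument, not a different route.
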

Moreover
\begin{lemma}
\label{lem=slopejump}
\begin{equation}
\label{slopeext}
 \lim_{x\to 0 } (f^{\rm mf})'(x) = - \infty \quad \text{ and } \lim_{x \to +\infty } (f^{\rm mf})'(x)  = + \infty,
\end{equation}
\begin{equation}
\label{slopejump}
 \lim_{x\uparrow x_{S} } (f^{\rm mf})'(x) -  \lim_{x\downarrow x_{S} } (f^{\rm mf})'(x)  = \left(1-\frac{2}{S}\right)\ln(S-1).
\end{equation}
\end{lemma}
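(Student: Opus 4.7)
\textbf{Proof plan for Lemma \ref{lem=slopejump}.}

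The strategy is to compute $(f^{\rm dis})'$ and $(f^{\rm ord})'$ explicitly, using the closed form of $f^{\rm dis}$ on the one hand, and the envelope theorem on the other. First, substituting $z=0$ into \eqref{rhozx} gives $\rho^{(0,x)}_i = x/S$ for all $i$, so a direct computation from \eqref{e2.1} (with $\beta=1$) yields
\begin{equation*}
f^{\rm dis}(x) \;=\; \frac{S-1}{2S}\,x^{2} \,+\, x\,\log(x/S) \,-\, x, \qquad (f^{\rm dis})'(x) \;=\; \frac{S-1}{S}\,x + \log(x/S).
\end{equation*}
This already proves the first half of \eqref{slopeext}, since $(f^{\rm dis})'(x)\to-\infty$ as $x\downarrow 0$, and gives $(f^{\rm dis})'(x_S)$ in closed form.

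For $f^{\rm ord}$, the idea is to apply the envelope theorem: for $x>x_S$, Lemma~\ref{lem=zmonotone} and the analysis recalled from \cite{GM,GMRZ} tell us that $z(x)$ is an interior critical point of $z\mapsto F^{\rm mf}(\rho^{(z,x)})$, and it depends smoothly on $x$. Writing $\phi(z,x):=F^{\rm mf}(\rho^{(z,x)})$, this gives $(f^{\rm ord})'(x)=\partial_x\phi(z,x)\big|_{z=z(x)}$. Next I would compute $\partial_x\phi$ directly from the chain rule $\partial_x\phi=\sum_s(\partial_x\rho^{(z,x)}_s)(\sum_{s'\neq s}\rho^{(z,x)}_{s'}+\log\rho^{(z,x)}_s)$. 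A short algebraic simplification, using $\sum_s\rho^{(z,x)}_s=x$ and the identity $(1+(S-1)z)+(S-1)(1-z)=S$, should yield, with $a:=1+(S-1)z$ and $b:=1-z$,
\begin{equation*}
(f^{\rm ord})'(x) \;=\; \frac{(S-1)(1-z^{2})}{S}\,x \,+\, \frac{1}{S}\bigl[a\log a+(S-1)\,b\log b\bigr] \,+\, \log(x/S).
\end{equation*}

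To get \eqref{slopejump}, I then evaluate this formula at $x=x_S$, $z=z_S=(S-2)/(S-1)$, for which $a=S-1$, $b=1/(S-1)$ and $1-z_S^{2}=(2S-3)/(S-1)^{2}$. The bracketed logarithmic term reduces to $(S-2)\log(S-1)/S$, and subtracting $(f^{\rm ord})'(x_S)$ from $(f^{\rm dis})'(x_S)$ cancels the $\log(x_S/S)$ terms, leaving
\begin{equation*}
\frac{x_S}{S}\cdot\frac{(S-2)^{2}}{S-1} \,-\, \frac{S-2}{S}\log(S-1).
\end{equation*}
Plugging in $x_S=2\frac{S-1}{S-2}\log(S-1)$ from \eqref{treshold} collapses the first term to $\frac{2(S-2)}{S}\log(S-1)$, giving $(1-2/S)\log(S-1)$ as required.

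For the $x\to+\infty$ limit in \eqref{slopeext}, I would use Lemma~\ref{lem=zmonotone}: since $R(z)\to+\infty$ as $z\uparrow 1$, we have $z(x)\to 1$, and in fact $1-z(x)\sim S e^{-x}$ from the leading behaviour of $R$. Plugging this into the formula for $(f^{\rm ord})'$ shows that the first quadratic-looking term is of order $xe^{-x}\to 0$, the logarithmic bracket tends to $\log S$, and $\log(x/S)\to+\infty$ dominates; hence $(f^{\rm ord})'(x)\sim\log x\to+\infty$. The main obstacles are purely algebraic: (i) carrying out the chain-rule computation of $\partial_x\phi$ without errors and (ii) the delicate cancellation in the subtraction $(f^{\rm dis})'(x_S)-(f^{\rm ord})'(x_S)$, where the quadratic contributions conspire through the explicit value of $x_S$ to leave only the simple logarithmic residue.
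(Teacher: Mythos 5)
Your proposal is correct and follows the same basic strategy as the paper: both compute $(f^{\rm dis})'$ explicitly and obtain $(f^{\rm ord})'$ via the envelope theorem, then subtract at $x=x_S$. The cosmetic difference is that the paper further simplifies the envelope-theorem expression using the stationarity relation $R(z(x))=x$ (i.e.\ $xz=\ln\frac{1+(S-1)z}{1-z}$) to reach the compact form $(f^{\rm ord})'(x)=\frac{S-1}{S}x+\ln\frac{x}{S}+\ln(1-z)+\frac{xz}{S}$ of \eqref{fdord}, after which the $\frac{S-1}{S}x$ and $\ln\frac{x}{S}$ pieces cancel in the jump and only $-\ln(1-z_S)-\frac{x_S z_S}{S}=(1-\frac{2}{S})\ln(S-1)$ survives; you instead keep the raw expression $\frac{S-1}{S}x(1-z^2)+\frac{1}{S}[a\log a+(S-1)b\log b]+\log(x/S)$ and carry out the heavier cancellation at the end, arriving at the same residue. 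The more substantive difference is in the $x\to+\infty$ argument: you use the sharp asymptotics $1-z(x)\sim S e^{-x}$, which shows cleanly that the quadratic piece vanishes and $(f^{\rm ord})'(x)=\log x+o(1)$. The paper instead tries to lower-bound $(f^{\rm ord})'$ via the inequality $x\ge\frac1z\log\frac1{1-z}$, dropping the term $(\frac{S-1}{Sz}-1)\log\frac1{1-z}$ from \eqref{est}; but that term is negative once $z>(S-1)/S$, which does occur as $x\to\infty$, so that step does not hold as stated. Your asymptotic route is therefore somewhat more robust, though if you present it you should justify the equivalence $1-z(x)\sim S e^{-x}$ (it follows from $x=R(z)=\ln\frac{S}{1-z}+o(1)$ and the monotonicity in Lemma~\ref{lem=zmonotone}) rather than asserting it.
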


\begin{lemma}
\label{lem=notation_agreement}
\begin{align}
\label{second_derivative}
\forall x \leq x_{S}, \quad \frac{d^{2}f^{\rm dis}}{dx^{2}}(x) & =  \frac{S-1}{S} + \frac{1}{x}\\
\nn \\
\label{notation_agreement}
\forall x \geq x_{S}, \quad \frac{d^{2} f^{\rm ord}}{dx^{2}}(x) & =  \left(\frac{S-1}{S}\right)\frac{z'(x)}{xz(x)}\left[R^{+}_{z(x)}-x\right]\left[x-R^{-}_{z(x)}\right] ,
\end{align}
where $R^{\pm}_{z}$ denotes the roots of the second degree polynomial $P_{z}(X) := X^{2} - b_{z}X - c_{z}$  given by
\begin{equation*}
\left\{
\begin{array}{rcl}
b_{z} & := & \frac{S(S-2)}{(S-1)[1+(s-1)z]} \\
c_{z} & := & \frac{S^{2}}{(S-1)(1-z)[1+(s-1)z]}.
\end{array}
\right.
\end{equation*}

\end{lemma}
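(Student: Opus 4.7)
The disordered branch is immediate: substituting $\rho^{(0,x)}=(x/S,\ldots,x/S)$ into \eqref{e2.1} gives
\[
f^{\rm dis}(x)=\tfrac{S-1}{2S}x^{2}+x\log(x/S)-x,
\]
so differentiating twice yields $(f^{\rm dis})''(x)=\tfrac{S-1}{S}+\tfrac{1}{x}$.

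For the ordered branch, the strategy is to set $\Phi(x,z):=F^{\rm mf}(\rho^{(z,x)})$, use the envelope theorem to get $(f^{\rm ord})'(x)=\Phi_{x}(x,z(x))$, and then extract $(f^{\rm ord})''$ by the Schur-complement identity
\[
(f^{\rm ord})''(x) \;=\; \Phi_{xx}-\frac{\Phi_{xz}^{2}}{\Phi_{zz}}\Bigg|_{z=z(x)},
\]
which follows from differentiating $\Phi_{z}(x,z(x))=0$ to get $z'(x)=-\Phi_{xz}/\Phi_{zz}$. A direct computation gives $\Phi_{xx}=\tfrac{S-1}{S}(1-z^{2})+\tfrac{1}{x}$ (using $\partial_{x}\log\rho_{j}=1/x$), and
\[
\Phi_{z}=\frac{x(S-1)}{S}\big[\log(\rho_{1}/\rho_{2})-xz\big],\qquad
\Phi_{zz}=\frac{x(S-1)}{S}\Big[\frac{S}{(1-z)(1+(S-1)z)}-x\Big].
\]
At the critical point, the mean field equation reads $\log(\rho_{1}/\rho_{2})=xz$, i.e.\ $x=R(z)$, and substituting this into $\Phi_{xz}=-\tfrac{2x(S-1)z}{S}+\tfrac{S-1}{S}\log(\rho_{1}/\rho_{2})$ collapses it to $\Phi_{xz}=-\tfrac{(S-1)xz}{S}$.

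Once these three partial derivatives are in hand, the plan is to combine them over the common denominator $xS\bigl[S-x(1-z)(1+(S-1)z)\bigr]$ and check that the numerator equals
\[
S^{2}+S(S-2)x(1-z)-(S-1)x^{2}(1-z)(1+(S-1)z),
\]
which is exactly $(S-1)(1-z)(1+(S-1)z)\cdot(c_{z}+b_{z}x-x^{2})$ with $b_{z},c_{z}$ as in the statement. The factor $z'(x)/(xz)$ in the target formula is produced by the relation $z'(x)=z/[S/((1-z)(1+(S-1)z))-x]$, which follows from $R(z(x))=x$ and the identity $R(z)+zR'(z)=S/((1-z)(1+(S-1)z))$. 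Since $[R_{z}^{+}-x][x-R_{z}^{-}]=-(x^{2}-b_{z}x-c_{z})=c_{z}+b_{z}x-x^{2}$, identifying coefficients gives \eqref{notation_agreement}.

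The main technical obstacle is the final algebraic consolidation: one must carefully use $(1-z)(1+z)+z^{2}=1$ to cancel the $z^{2}$-terms arising from $(S-1)(1-z^{2})x(S-xA)$ and $-(S-1)z^{2}x^{2}A$ (where $A=(1-z)(1+(S-1)z)$), and then factor the remaining linear combination to expose $Sx(1-z)(S-2)$ as the coefficient of $x$. Once this identification is made, the lemma follows.
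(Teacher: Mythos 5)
Your proof is correct, and it organizes the calculation somewhat differently from the paper. The paper starts from the explicit closed form for $(f^{\rm mf})'(x)$ (the expression \eqref{fdordaux}), differentiates it once more in $x$ with $z=z(x)$ carried along, and then uses $z'R'=1$ and $zR'=-R+S/((1-z)(1+(S-1)z))$ to collapse the result into $-\tfrac{S-1}{S}\tfrac{z'}{xz}P_{z}(x)$; you instead apply the standard Schur-complement identity $(f^{\rm ord})''=\Phi_{xx}-\Phi_{xz}^{2}/\Phi_{zz}$ for a constrained minimum, compute the three second partials of $\Phi(x,z)=F^{\rm mf}(\rho^{(z,x)})$ individually, and use the stationarity relation $\log(\rho_1/\rho_2)=xz$ to simplify $\Phi_{xz}$ to $-(S-1)xz/S$ before reassembling. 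I checked your partials ($\Phi_{xx}=\tfrac{S-1}{S}(1-z^{2})+\tfrac1x$, $\Phi_{zz}=\tfrac{(S-1)x}{S}[\tfrac{S}{A}-x]$ with $A=(1-z)(1+(S-1)z)$, $\Phi_{xz}=-\tfrac{(S-1)xz}{S}$ at $z=z(x)$), the common-denominator numerator $S^{2}+S(S-2)x(1-z)-(S-1)x^{2}A=(S-1)A(c_{z}+b_{z}x-x^{2})$, and the identity $z'/z=A/(S-xA)$; all are correct, and they reproduce \eqref{notation_agreement} exactly. Your route makes the positivity structure (the Hessian of $\Phi$ restricted to the critical fiber) more transparent and is a bit more systematic; the paper's is more compact but hides where the $z'$-bookkeeping happens, and its final displayed line even drops the factor $(S-1)/S$ (which you recover correctly). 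Both are sound and lead to the stated formula.
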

According to Lemma \ref{lem=notation_agreement}, the convexity properties of $f^{\rm ord}$ will follow from the position of the roots of $P_{z(x)}$ with respect to $x$. We will actually prove the lemma below
\begin{lemma}[Roots of $P_{z}$]\mbox{}
\label{lem=pvariations}
The roots of the polynomial $P_{z}$ are such that $R^{-}_{z} < 0 < R^{+}_{z}$ and
\begin{itemize}
\item for any $S \geq 60$, and for all $z \in [z_{S},1)$, $R^{+}_{z} >  R(z)$;
\item for any $3 \leq S \leq 59 $, there exists a unique $z_S^{\star} \in (z_S,1)$ such that $R^{+}_{z_S^{\star}} = R(z_S^{\star})$. Moreover, $R^{+}_{z} < R(z)$ on $[z_S,z_S^{\star})$ and $R^{+}_{z} > R(z)$ on $(z_S^{\star},1)$.
\end{itemize}
\end{lemma}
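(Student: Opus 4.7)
\textbf{Proof plan for Lemma \ref{lem=pvariations}.}

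The first claim $R^-_z<0<R^+_z$ is immediate, since $P_z$ is monic and $P_z(0)=-c_z<0$ (the denominator $(S-1)(1-z)[1+(S-1)z]$ is strictly positive on $[z_S,1)$). To compare $R^+_z$ with $R(z)$, observe that $R(z)>0$ on $[z_S,1)$ and that a monic quadratic with one negative and one positive root is negative strictly between its roots; therefore, for any $X>0$,
\[
X < R^+_z \iff P_z(X)<0, \qquad X>R^+_z \iff P_z(X)>0.
\]
Thus the whole lemma reduces to analyzing the sign of the single real-valued function
\[
\Phi(z):=P_z(R(z)) \;=\; R(z)^2 - b_z\,R(z) - c_z, \qquad z\in[z_S,1).
\]

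The strategy is to combine a boundary analysis of $\Phi$ with a ``at-most-one-sign-change'' result. First, as $z\uparrow 1$, one has the asymptotics $R(z)\sim -\ln(1-z)$ (logarithmic blow-up), while $c_z\sim \tfrac{S}{(S-1)(1-z)}$ (polynomial blow-up) and $b_z$ remains bounded. Hence $c_z$ dominates $R(z)^2$, so $\Phi(z)\to-\infty$, and $R^+_z>R(z)$ for $z$ sufficiently close to $1$. Next, at $z=z_S=(S-2)/(S-1)$ we use $1+(S-1)z_S=S-1$, $1-z_S=1/(S-1)$ and $R(z_S)=x_S=\tfrac{2(S-1)}{S-2}\ln(S-1)$ to compute
\[
b_{z_S}=\frac{S(S-2)}{(S-1)^2}, \qquad c_{z_S}=\frac{S^2}{S-1},
\]
and therefore
\[
\Phi(z_S)\;=\;\frac{4(S-1)^2\ln^2(S-1)}{(S-2)^2}\;-\;\frac{2S\ln(S-1)}{S-1}\;-\;\frac{S^2}{S-1}.
\]
Viewing the right-hand side as a function $\psi(S)$ of a real variable $S\ge 3$, a direct monotonicity analysis (the first term behaves like $4\ln^2 S$ while the subtracted terms grow like $S$) shows that $\psi$ is strictly decreasing on an initial interval and has a unique zero. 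Inspecting integer values locates this zero between $S=59$ and $S=60$, giving $\Phi(z_S)>0$ for $3\le S\le 59$ and $\Phi(z_S)\le 0$ for $S\ge 60$.

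The main obstacle is the third ingredient: showing that $\Phi$ changes sign \emph{at most once} on $[z_S,1)$. The plan is to study $\Phi'(z)$ explicitly using
\[
R'(z)=\frac{S}{z(1-z)[1+(S-1)z]}-\frac{R(z)}{z},
\]
and the elementary rational expressions for $b_z'$ and $c_z'$, and to show that $\Phi'$ can vanish at most once on $[z_S,1)$. A convenient reformulation is to multiply through by $z^2(1-z)^2[1+(S-1)z]^2$, turning $\Phi'(z)=0$ into a polynomial equation in $z$ and $R(z)$; since $R(z)$ is itself strictly monotone on $[z_S,1)$ by Lemma \ref{lem=zmonotone}, one reduces the problem to counting roots of a single-variable function. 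The claim then follows from a Rolle-type / sign-variation argument on this reduced function.

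With these three ingredients in hand the conclusion is immediate. For $S\ge 60$: $\Phi(z_S)\le 0$ and $\Phi(z)\to-\infty$ as $z\uparrow 1$; the at-most-one-sign-change property forces $\Phi<0$ on all of $[z_S,1)$, hence $R^+_z>R(z)$. For $3\le S\le 59$: $\Phi(z_S)>0$ and $\Phi$ is eventually negative, so by continuity $\Phi$ has at least one zero in $(z_S,1)$, and uniqueness from the sign-change count gives the desired $z_S^\star$ with $R^+_z<R(z)$ on $[z_S,z_S^\star)$ and $R^+_z>R(z)$ on $(z_S^\star,1)$. The boundary computation at $z=z_S$ and the asymptotics at $z\uparrow 1$ are essentially calculus; the delicate point, as noted, is the uniqueness of the critical point of $\Phi$, which is where I would expect to spend most of the technical work.
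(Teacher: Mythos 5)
Your reduction of the lemma to the sign of $\Phi(z):=P_z(R(z))$ is clean and the boundary analysis ($\Phi(z_S)$ and $\Phi\to-\infty$ as $z\uparrow 1$) is correct in substance. But the step you yourself flag as ``where I would expect to spend most of the technical work'' — that $\Phi$ changes sign at most once on $[z_S,1)$ — is the entire content of the lemma, and your sketch does not carry it. The plan to show ``$\Phi'$ vanishes at most once'' and then invoke a Rolle-type argument runs into the problem that $\Phi' = 2RR' - b_z'R - b_zR' - c_z'$ involves both $R(z)$ (a transcendental function) and $R'(z)$, so after clearing denominators you still have a function mixing polynomials in $z$ with $\ln\frac{1+(S-1)z}{1-z}$, and there is no obvious sign-variation count. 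The paper sidesteps this by studying a different auxiliary function, $H_S(z):=z[R^+_z-R(z)]$, and proving it is \emph{strictly increasing} on $[z_S,1)$ (not merely unimodal). The factor of $z$ is essential: $zR(z)=\ln\frac{1+(S-1)z}{1-z}$ is a pure logarithm with no $1/z$ prefactor, and $zR^+_z$ is a ratio of algebraic expressions; so $H_S(z)$ is ``algebraic $+$ single log,'' and its derivative is of the form $\bigl[A(z)+B(z)\sqrt{\Delta(P_z)}\bigr]$ over an explicitly positive denominator, with $A$ and $B$ polynomials whose positivity on $[z_S,1)$ can be checked directly. That monotonicity immediately gives ``vanishes at most once,'' whereas your $\Phi$ carries a quadratic in $R(z)$ which does not collapse in the same way.

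There is also a small error in your boundary discussion: you assert that $\psi(S):=\Phi(z_S)$ is ``strictly decreasing on an initial interval.'' It is not — e.g.\ $\psi(3)\approx 1.1$ and $\psi(4)\approx 2.6$, so $\psi$ first increases before its eventual decay to $-\infty$. The paper's analysis of the corresponding $G(S)=H_S(z_S)$ (decreasing on $[3,S^\star]$ with $S^\star\approx 16.2$, then increasing, $G(3)<0$, $G\to+\infty$) is what actually establishes the unique crossing near $S\approx 59$. Your heuristic ($4\ln^2 S$ versus linear growth) only shows $\psi\to -\infty$ and hence existence of a zero, not uniqueness; uniqueness there also requires a derivative computation. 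So the proposal leaves both the main uniqueness-in-$z$ step and the uniqueness-in-$S$ step unproved.
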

Eventually, the following fact will be helpful to analyze the convex envelope of $f^{\rm mf}$:
\begin{lemma}
\label{lem=common_tangent}
Let $f:(a,b] \to \mathbb{R}$ and $g:[b,c) \to \mathbb{R}$ be convex functions with continuous second derivatives. If $f(b)=g(b)$ and if $\dis{ \inf_{x<b} f'(x) <g'(b) < f'(b) < \sup_{x>b} g'(x)} $, then there exists a common tangent to their respective graphs $\Gamma_f,\Gamma_g$.
\end{lemma}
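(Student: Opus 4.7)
The plan is to parameterize candidate common tangents by their slope $s$, and to reduce the existence question to a sign change of a continuous function on the interval $[g'(b),f'(b)]$, to which the intermediate value theorem applies. For each such $s$, the $C^2$ convexity of $f$ on $(a,b]$ together with the hypothesis $\inf_{x<b}f'(x)<g'(b)\le s$ yields a point $x_1(s)\le b$ with $f'(x_1(s))=s$; symmetrically $s\le f'(b)<\sup_{x>b}g'(x)$ yields $x_2(s)\ge b$ with $g'(x_2(s))=s$. I then consider
\[
\phi(s)\;=\;\bigl[f(x_1(s))-s\,x_1(s)\bigr]\;-\;\bigl[g(x_2(s))-s\,x_2(s)\bigr],
\]
which is the difference of the $y$-intercepts of the two tangent lines of slope $s$; a common tangent exists exactly when $\phi(s)=0$. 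To sidestep any well-definedness issue when $f'$ or $g'$ are not strictly monotone, I will equivalently write $\phi(s)=g^{\ast}(s)-f^{\ast}(s)$ using the restricted Legendre transforms $f^{\ast}(s)=\sup_{x\in(a,b]}(sx-f(x))$ and $g^{\ast}(s)=\sup_{x\in[b,c)}(sx-g(x))$, which are automatically continuous in $s$.

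Next I compute the sign of $\phi$ at the two endpoints, using only convexity and the identification $f(b)=g(b)$. At $s=g'(b)$, the tangent to $\Gamma_{g}$ of slope $s$ passes through $(b,g(b))$, while the tangent to $\Gamma_{f}$ of slope $s=g'(b)$ touches $f$ at some $x_1<b$, strictly, because $f'(b)>g'(b)$ and $f'$ is continuous on $(a,b]$; convexity of $f$ then places this $f$-tangent strictly below the point $(b,f(b))=(b,g(b))$, so its $y$-intercept is strictly smaller than that of the $g$-tangent, giving $\phi(g'(b))>0$. A symmetric argument at $s=f'(b)$, where the $f$-tangent now passes through $(b,f(b))$ and the $g$-tangent touches at some $x_2>b$, yields $\phi(f'(b))<0$. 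The intermediate value theorem then produces $s^{\star}\in(g'(b),f'(b))$ with $\phi(s^{\star})=0$, which gives the desired common tangent, touching $\Gamma_f$ at $x_1(s^{\star})<b$ and $\Gamma_g$ at $x_2(s^{\star})>b$.

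The proof is essentially bookkeeping once this parameterization is set up; the only genuine point of care is the continuity of $\phi$ in $s$ when $f',g'$ are not strictly increasing, and the Legendre-transform reformulation handles this cleanly. All other ingredients, namely strict positivity/negativity of $\phi$ at the two endpoints, follow by a one-line convexity argument together with $f(b)=g(b)$.
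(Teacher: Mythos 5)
Your proof is correct in substance but takes a genuinely different route from the paper's. The paper argues by a \emph{supremum} construction: it considers the set $K$ of points $\alpha\in[b,c)$ whose tangent line to $\Gamma_g$ intersects $\Gamma_f$, shows $K$ is non-empty and bounded above, takes $\alpha^\star=\sup K$, and then closes the argument by the implicit function theorem --- if the tangent $T_g(\alpha^\star)$ crossed $\Gamma_f$ transversally rather than tangentially, $K$ would contain a neighborhood of $\alpha^\star$, contradicting maximality. You instead parameterize candidate tangents by \emph{slope} $s\in[g'(b),f'(b)]$, compare the $y$-intercepts of the two supporting lines of slope $s$ via a function $\phi(s)=g^\ast(s)-f^\ast(s)$, and invoke the intermediate value theorem. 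Your use of the restricted Legendre transforms to sidestep the well-definedness of $x_1(s),x_2(s)$ (when $f',g'$ are not strictly increasing) is the right fix, and the argument that $\phi$ is continuous is sound since both transforms are finite and convex on a neighborhood of $[g'(b),f'(b)]$. Your route is arguably cleaner: it avoids the implicit function theorem entirely, replacing it with a sign change of a single real function.

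One small bookkeeping error: you have the signs of $\phi$ at the endpoints backwards. With your definition $\phi(s)=[f(x_1)-sx_1]-[g(x_2)-sx_2]$, you correctly argue that at $s=g'(b)$ the $f$-tangent has strictly \emph{smaller} $y$-intercept than the $g$-tangent, but that gives $\phi(g'(b))<0$, not $>0$; symmetrically $\phi(f'(b))>0$. This does not affect the conclusion --- the intermediate value theorem only requires a sign change --- but it should be corrected. Also worth noting: the strictness of the inequality at $s=g'(b)$ needs $f$ to be non-affine on $[x_1,b]$; this follows because $f'(x_1)=g'(b)<f'(b)$, so $f'$ is not constant there, which you should say explicitly since $f$ is only assumed convex, not strictly convex.
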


\vskip.5cm

We are now ready to prove our theorems.

\vskip.5cm

{\bf Proof of  Theorem \ref{thme2.1}.}
By \eqref{second_derivative}, $f^{\rm dis}$ is strictly convex. Let us now study the convexity of $f^{\rm ord}$. Fixing $x \geq x_{S}$ we remark that Lemma \ref{lem=zmonotone} implies $z(x) \geq z_S$ and $R(z(x)) = x$ so that Lemma \ref{lem=pvariations} gives:
\begin{itemize}
\item for all $S$,  $x > 0 > R^{-}_{z(x)}$;
\item if $S\ge 60$ then $R^{+}_{z(x)} >  x $;
\item if $3 \le S \le 59$,  $R^{+}_{z(x)} < x $ if $ x < x_{S}^{\star} $ and $R^{+}_{z(x)} > x $ if $ x > x_{S}^{\star} $,  where $x_{S}^{\star}:= R(z_S^{\star})$.
\end{itemize}
Therefore, \eqref{notation_agreement} shows that  if $S \geq 60$ then $f^{\rm ord}$ is strictly convex  on $[x_{S}, \infty)$, while if $S \leq 59$ then $f^{\rm ord}$ is strictly concave on $[x_{S},x_{S}^{\star}]$ and strictly convex on $[x_{S}^{\star},+\infty)$.

\vskip.5cm

Let us analyze the convex envelope of $f^{\rm mf}$.
\begin{itemize}
\item If $S \ge 60$, \eqref{branches} and Lemma \ref{lem=slopejump} show that Lemma \ref{lem=common_tangent} applies to $f = f^{\rm dis}$, $g=f^{\rm ord}$, $a =0$, $b = x_{S}$ and $c=+\infty$.
\item If $S \le 59$ we first have to deal with the concave part of $f^{\rm ord}$. We introduce the function $g$ defined by
\begin{equation*}
g(x) =
\begin{cases}
f^{\rm ord}(x_{S}^{\star}) + (f^{\rm ord})'(x_{S}^{\star})\cdot( x -x_{S}^{\star} ) & \text{ if } x \le x_{S}^{\star} \\
f^{\rm ord}(x) & \text{ if } x \ge x_{S}^{\star}.
\end{cases}
\end{equation*}
 Since $(f^{\rm ord})''(x_{S}^{\star})= 0$, $g$ is convex and has continuous second derivatives. Moreover, on $[x_{S},x_{S}^{\star}]$, the graph of $g$ is a line located above the graph of $f^{\rm ord}$ (concavity of $f^{\rm ord}$); since the latter intersects the (convex) graph of $f^{\rm dis}$, the graph of $g$ and the graph  of $f^{\rm dis}$ intersect at some point with abscisse  $b \in (x_{S},x_{S}^{\star})$. Besides, the concavity of  $f^{\rm ord}$ implies $g'(b) = (f^{\rm ord})'(x_{S}^{\star}) <  (f^{\rm ord})'(x_{S})$, while the convexity of $f^{\rm dis}$ implies $(f^{\rm dis})'(b) >(f^{\rm dis})'(x_{S}) $. Thus Lemma \ref{lem=slopejump} shows that Lemma \ref{lem=common_tangent} applies to $f = f^{\rm dis}$ and $g$ defined above.
\end{itemize}
In any case, Lemma \ref{lem=common_tangent} implies that there exists a line $T_{1}$ which is simultaneously tangent to  the disordered branch of $f^{\rm mf}$ (at some point $x_{-}< x_{S}$) and to the ordered branch of $f^{\rm mf}$ (at some other point $x_{+} > x_{S}$). The function $f^{\rm mf}(x)$ being strictly convex  outside $[x_{-},x_{+}]$, the graph of its convex envelope necessarily coincides with $T_{1}$ (resp. with the graph of $f^{\rm mf}$) inside (resp. outside) $[x_{-},x_{+}]$. Denoting by $\lambda_{1}$ the slope of $T_{1}$, the convex envelope of $f_{\lambda_{1}}^{\rm mf}(x) = f^{\rm mf}(x)-\lambda_{1}x$ is horizontal on $[x_{-},x_{+}]$ and strictly convex outside this segment of minimizers. \qed
\vskip1cm

{\bf Proof of Theorem \ref{thme2.2}.}
If $\rho$ is a minimizer of $F=F_{1,\lambda_{1}}^{\rm mf}$, then $x = \sum_{s}\rho_{s}$ is a minimizer of $f_{1,\lambda_{1}}^{\rm mf}$ so that $x \in \left\{ x_{-},x_{+} \right\}$. If  $x = x_{-}<x_{S}$, then $\rho = \rho^{(S+1)}$; if $x = x_{+}>x_{S}$, then there exists $k\in\left\{1,\ldots,S \right\}$ such that $\rho = \rho^{(k)} := \tau^{1,k} \cdot \rho^{z(x),x} $, where $\tau_{1,k}$ exchanges the first and the $k^{\rm th}$ coordinates. Reciprocally, the above $S+1$ vectors $\rho^{(k)}$ are all minimizers of $F$. Moreover,
\begin{align*}
\sum_{s} \rho^{(1)}_s = x^{+} > x^{-} = \sum_{s} \rho^{(S+1)}_s,
\end{align*}
thus proving \eqref{e2.5}.

We now show the second part of  Theorem \ref{thme2.2} dealing with the Hessian of $F$. Straightforward computations show:
\begin{align*}
 L^{(k)}(s,s') =\frac
 {\partial^2 F}{\partial\rho_s
 \partial\rho_{s'}}\Big|_{\rho=\rho^{(k)}}= \frac{1}{\rho^{(k)}_s}
 \text{\bf 1}_{s=s'} + \text{\bf 1}_{s\ne s'}
\end{align*}

Since $\rho^{(k)}$ is a minimizer, $L^{k}:= D^{2}F(\rho^{(k)})$ is semi-definite positive. Actually, $L^{(k)}$ is definite positive, or else the third order corrections in the Taylor--Lagrange formula would contradict the extremality of $\rho^{k}$:
\begin{align*}
\forall s,t,u \quad \frac{\partial^{3} F}{\partial \rho_{s} \partial \rho_{t} \partial \rho_{u}} &  = - \frac{1}{\rho_{s}^{2}}\text{\bf 1}_{s=t=u}.
\end{align*}
Taking an orthonormal basis of eigenvectors, the estimate \eqref{2.4} holds with $\kappa^{\ast}> 0 $ the smallest eigenvalue of $L^{(1)},L^{(S+1)}$.\qed

\vskip.5cm

This section ends with the proofs of Lemma \ref{lem=zmonotone}, Lemma  \ref{lem=slopejump}, Lemma \ref{lem=notation_agreement}, Lemma  \ref{lem=pvariations} and  Lemma \ref{lem=common_tangent}  which are stated at the beginning of the section and used in the proofs above.

\vskip.5cm

{\bf Proof of Lemma \ref{lem=zmonotone}.}
We express $R'(z) = \frac{g(z)}{z^{2}}$ and show that $g$ is always positive. Recalling \eqref{jump} we have:
\begin{align*}
R'(z) & =  \frac{1}{z}\cdot \left[ \frac{ S -1 }{ 1 + (S-1)z }
+ \frac{1}{1-z}  \right]  -\frac{1}{z^{2}} \cdot \ln \frac{ 1 + (S-1)z }{ 1 -z }, \\
 & = \frac{1}{z^{2}} \left[  \frac{1}{ 1 -z }   -  \frac{1}{1 + (S-1)z } - \ln  \frac{ 1 + (S-1)z }{ 1 -z }\right], \\
 & = \frac{1}{z^{2}} g(z).
\end{align*}

We now show that $g$ is always positive:
\begin{align*}
  g(z) & = \frac{1}{ 1 -z }  - \frac{1}{1 + (S-1)z }  - \ln  \frac{ 1 + (S-1)z }{ 1 -z }, \\
  g'(z) & = \frac{1}{(1-z)^{2}}  + \frac{S-1}{[1+(S-1)z]^{2}} - \frac{S-1}{1+(S-1)z} + \frac{1}{1-z}\\
  & = \frac{Sz\left[2(S-1)z - (S - 2)\right]}{(1-z)^{2}[1+(S-1)z]^{2}}.
\end{align*}
We see immediately that $g'>0$ for all $z > \frac{S-2}{2(S-1)}$, so that $g$ increases on  $[\frac{S-2}{S-1},1)$. On this subinterval, $g$ is thus minimal at $\left(\frac{S-2}{S-1}\right)$ where it takes the value
\begin{equation*}
  g\left(\frac{S-2}{S-1}\right)  = - 2 \ln (S-1) - \frac{1}{S-1} + (S-1),
\end{equation*}
which increases with $S$, vanishes at $S=2$, and is strictly positive for all $S \geq 3$. From this it follows that $g$ is strictly positive  on $[\frac{S-2}{S-1},1)$, which implies that $R$ is strictly increasing with $z$. Since $R$ goes to $+\infty$ when $z\to 1$, Lemma \ref{lem=zmonotone} is proved.\qed

\vskip1cm

{\bf Proof of Lemma \ref{lem=slopejump}.}

Since $\rho^{(0,x)}$ is the vector $(\frac{x}{S},\ldots,\frac{x}{S})$,  equations  \eqref{e2.1}, \eqref{branches}, \eqref{rhozx} give for all $x < x_{S}$:
\begin{align}
\nn f^{\rm mf}(x) & = \frac{S(S-1)}{2}\left(\frac{x}{S}\right)^{2} + S \frac{x}{S}\left( \ln \frac{x}{S} - 1 \right)\\
\label{fddis}\left(f^{\rm mf}\right)'(x) &= \frac{S-1}{S}x +  \ln \frac{x}{S} .
\end{align}

Recalling \eqref{e2.1}, \eqref{branches} and \eqref{rhozx}, $f^{\rm mf}(x) = F \left( x,z(x) \right)$ holds  for all $x > x_{S}$, where
\begin{align}
F(x,z) & = \frac{1}{2} \frac{S-1}{S} x^{2}(1-z^2)
+(S-1)\frac{x(1-z)}{S} \ln\frac{x(1-z)}{S}
\nonumber
\\
&\quad +\frac{x(1+(S-1)z)}{S}\ln \frac{x(1+(S-1)z)}{S} - x.\label{FRz}
\end{align}

Using \eqref{FRz} and recalling that $\left(\frac{\partial F}{\partial z}\right)_{| z(x)}=0$, we have  for all $x > x_{S}$:
\begin{align}
\left(f^{\rm mf}\right)'(x) &= \left(\frac{\partial F}{\partial x}\right)_{\big\vert x,z(x)} + z'(x)\left(\frac{\partial F}{\partial z}\right)_{\big \vert z(x)} \nonumber \\
                  & = \frac{S-1}{S} x(1-z^2) +(S-1)\frac{1-z}{S} \left[ \ln\frac{x(1-z)}{S}+1 \right] \nonumber \\
\label{fdordaux}               & \quad + \frac{1+(S-1)z}{S}\left[\ln \frac{x(1+(S-1)z)}{S}+1\right] -1  \\
\label{fdord}
 & = \frac{S-1}{S}x + \ln \frac{x}{S} + \ln (1-z) + \frac{xz}{S}.
\end{align}

From \eqref{jump}, we know that $x \geq \frac{1}{z}\log \frac{1}{1-z}$, thus
\begin{align}
\nn \left(f^{\rm mf}\right)'(x) & \ge \left(\frac{S-1}{Sz} - 1 \right)\log  \frac{1}{1-z} + \ln \frac{x}{S} + \frac{xz}{S}, \\
\label{est}                           & \geq  \ln \frac{x}{S} + \frac{xz}{S}.
\end{align}
From Lemma \ref{lem=zmonotone}, $z(x) \to z_{S}$ as $x \to x_{S}$ thus \eqref{slopejump} follows from \eqref{fddis} - \eqref{fdord}. Similarly, $z(x) \to 1$ as $x \to \infty$, thus \eqref{slopeext} follows by taking limits in \eqref{fddis} and \eqref{est}.
\qed

\vskip1cm

{\bf Proof of Lemma \ref{lem=notation_agreement}.}

First notice that \eqref{second_derivative} follows from \eqref{fddis}. Using \eqref{fdordaux} we get:
\begin{align*}
\left(f^{\rm mf}\right)''(x) &  = \frac{1}{x} + \frac{S-1}{S}(1-z^2 - 2xz'z) +  \frac{S-1}{S}z'\ln \frac{1+(S-1)z}{1-z} \\
& =  \frac{1}{x} + \frac{S-1}{S}\left(1-z^2 - 2xz'z\right) + \frac{S-1}{S}z'zx\\
& = \frac{z'}{x}\left(  R' + \frac{S-1}{S}\left[(1-z^2)xR' - zx^{2}\right]  \right)
\\
& = -\frac{1}{x}\frac{z'}{z}P_{z(x)}(x)
\end{align*}
where we used $\frac{1}{z'(x)} = R'(z(x))$ ($ = R'$ by abusing notations) and $zR' = -R(z) + \frac{S}{(1-z)\left[ 1+ (S-1)z\right]}$ (from \eqref{jump}). This achieves the proof of \eqref{notation_agreement}.\qed

\vskip1cm

{\bf Proof of Lemma \ref{lem=pvariations}.}

\begin{itemize}
\item roots of $P_{z}$

We notice that the discriminant of $P_{z}$
\begin{equation*}
 \Delta(P_{z}) = \frac{S[S+(3S-4)z]}{(1-z)}
\end{equation*}
is always positive, so that the two distinct roots of $P_{z}$ are given by
\begin{equation}\label{tworoots}
R^{\pm}_{z} =  \frac{S}{2(S-1)}\left[  \frac{(S-2) \pm \sqrt{\frac{S[S+(3S-4)z]}{1-z}}}{1+(S-1)z} \right].
\end{equation}
For all positive $z$ we have $ \frac{S+(3S-4)z}{1-z} \geq S$, thus $R^{-}_{z}$ is negative while $R^{+}_{z}$ is positive.

\item sign of $R^{+}_{z}-R(z)$

We will actually analyze the sign of $ H_{S}(z) := z [R^{+}_{z} - R(z)]$, showing  it is strictly monotone and thus  vanishes at most once. Using  \eqref{jump} and \eqref{tworoots} we get
\begin{align}
H_{S}(z) & =  \frac{Sz}{2(S-1)(1+(S-1)z)}\left[S-2 + \sqrt{\Delta(P_{z})}\right] - \log  \frac{1+(S-1)z}{1-z} \label{HS}\\
H_{S}'(z) &  = \frac{S^{2}\left[ S + 2(2S-3)z + (S-2)(2S-3)z^{2} + \left(-1+2(S+2)z + (2S-3)z^{2}\right) \sqrt{\Delta(P_{z})}\right]}{2(S-1)(1-z)^{2}(1+(S-1)z)^{2} \sqrt{\frac{S(S+(3S-4)z)}{1-z}}}\nonumber\\
 &  = \frac{S^{2}\left[A(z) + B(z) \sqrt{\Delta(P_{z})}\right]}{2(S-1)(1-z)^{2}(1+(S-1)z)^{2} \sqrt{\Delta(P_{z})}}\label{Hpz}
\end{align}
In the formula \eqref{Hpz} above, the denominator as well as the polynomial $A(z)$ in the numerator are clearly positive for all $z>0$. Since the polynomial $B(z)$ is increasing for $z>0$ and since $B(z_{S}) \geq  -1 + 2(S-1) z_{S} =  2S-5 > 0$, we deduce that  $H_{S}'(z)$ is always positive for $z\in[z_{S},1)$.

\item $H_{S}$ vanishes exactly once $\iff S \leq 59$

We now check for which values of $S$ the function $H_{S}$ actually vanishes somewhere on $[z_{S},1)$. As $z\to 1$, the leading term in $H_{S}$ diverges like $(1-z)^{-1/2}$, so that $H_{S}(z) \to + \infty$. Thus $H_{S}$ will vanish exactly once if and only if $H_{S}(z_{S}) \le 0$.

\begin{align*}
G(S) & = H_{S}(z_{S})  = \frac{S(S-2)(S-2+\sqrt{S(8-11S+4S^{2})})}{2(S-1)^{3}} - 2\log (S-1)\\
G'(S)& = \frac{S\left[2S^{4}-10S^{3}+27S^{2}-40S+24-(4S^{2}-13S+12)\sqrt{S(8-11S+4S^{2})} \right]}{2(S-1)^{4}\sqrt{S(8-11S+4S^{2})}}
\end{align*}
and
\begin{align*}
G'(S)=0 & \iff 2S^{4}-10S^{3}+27S^{2}-40S+24 = (4S^{2}-13S+12)\sqrt{S(8-11S+4S^{2})} \\
& \iff  (2S^{4}-10S^{3}+27S^{2}-40S+24)^{2} = S(8-11S+4S^{2})(4S^{2}-13S+12)^{2}\\
& \iff 4(S-2)^{2}(S-1)^3(S^3-19S^{2}+48S-36) = 0.
\end{align*}
The last bracket reaches a local (negative) maximum at $S = \frac{19-\sqrt{217}}{3} \approx 1.4$ and a local (negative) minimum at $S = \frac{19-\sqrt{217}}{3} \approx 11.2$. Therefore it has exactly one root $S^{\star}$, is negative before this root and positive after it. Numerical computations give $S^{\star} \approx 16.2$.

From this, we know that $G$ is decreasing on $[3,S^{\star}]$ and increasing on $[S^{\star},\infty)$. Since $G(3) < 0 $ and since $G(S)$ diverges like $+\sqrt{S}$ as $S\to \infty$, we get that $G$ has exactly one root $\bar{S} > S^{\star}$, is negative before it and positive after it. Numerical computations show $\bar{S} \approx 59.1$. \qed
\end{itemize}

\vskip1cm

{\bf Proof of Lemma \ref{lem=common_tangent}.}
We will use the notation
\begin{equation*}
K := \left\{ \alpha \in [b,c); \alpha \geq b \text{ and } T_{g}(\alpha)\cap \Gamma_{f} \neq \emptyset \right\},
\end{equation*}
where $T_{g}(\alpha)$ denotes the tangent to  $\Gamma_g$ at $\alpha$.

Since $f(b)=g(b)$, we have $b \in K$, and $K$ is non-empty. Besides, by continuity of $g'$, there exists $b_0 \in (b,c)$ such that $g'(b_0) = f'(b)$; since $f,g$ are strictly convex, elements of $K$ are bounded from above by $b_0$ and  $\alpha^{\star}:= \sup K \le b_{0}$ is well defined.

Now, let $ \alpha_{n}$ an increasing sequence converging to $\alpha^{\star}$. By definition, $T_{g}(\alpha_{n})$ intersects $\Gamma_f$, and we denote by $x_{n}$ the abscisse of the intersection point which is the closest to $b$, so that $f'(x_{n}) \ge g'(\alpha_{n}) \ge g'(b)$. We now show that $x_n$ is a bounded decreasing  sequence:
\begin{itemize}

\item On $\left\{ x \geq x_{n}  \right\}$, $\Gamma_{f}$ is above $T_{f}(x_n)$ (convexity of $f$), which in turn is above $T_{g}(\alpha_n)$ (definition of $x_n$), and therefore above $T_{g}(\alpha_{n+1})$ (convexity of $g$). Thus $\Gamma_{f}$ may not intersect $T_{g}(\alpha_{n+1})$ after abscisse $x_n$, and $x_{n+1} \leq x_{n}$.

\item By continuity of $f'$, there exists $b_{1} \in (a,b)$ such that $f'(b_{1}) = g'(b)$, thus $f'(x_{n}) \ge g'(b)$ implies $x_{n} \ge b_{1}$ (convexity of $f$).

\end{itemize}

Thus $x_{n} \to x^{\star} \in [b_{1},b] \subset (a,b]$, and by continuity of $f,g,g'$, $T_{g}(\alpha^{\star})$ intersects $\Gamma_{f}$ at $(x^{\star},f(x^{\star})) $. In particular, $\alpha^{\star} \in K$ and $f'(x^{\star}) \geq g'(\alpha^{\star})$.

If we had $f'(x^{\star}) > g'(\alpha^{\star})$ we could apply the implicit function function theorem to $\Psi(\alpha,x) = g(\alpha) + g'(\alpha)(x-\alpha) - f(x)$ to deduce that $K$ contains a neighborhood of $\alpha^{\star}$, thus contradicting the maximality of $\alpha^{\star}$. Therefore  $f'(x^{\star}) = g'(\alpha^{\star})$ and $T_{g}(\alpha^{\star}) = T_{f}(x^{\star})$ is actually tangent to $\Gamma_{f}$.
\qed

\bibliographystyle{amsalpha}

\begin{thebibliography}{99}
\label{t5}

\bibitem{BKMP} P. Baffioni, T.Kuna,  I Merola, E. Presutti:
{A liquid vapor phase transition in quantum statistical mechanics. Submmitted to {\em Memoirs AMS }
(2004).}



\bibitem{vb} J. van der Berg: A uniqueness condition
for Gibbs measures with application to the two dimensional
antiferromagnet {\em Commun. Math. Phys. \bf 152}(1993), 161--166.



\bibitem{vm} J. van der Berg, C. Maes: Disagreement percolation in
the study of Markov fields {\em Ann. Prob.\bf 22}(1994), 749--763.


\bibitem{vs} J. van der Berg, J.E. Steif:  Percolation and
the hard core lattice model  {\em Stochastic Processes and Appl.
\bf 49}(1994), 179--197.

 \bibitem{BZ} A. Bovier, M. Zahradnik: {The low temperature phase of Kac-Ising models
{\em J.Stat. Phys. \bf 87}  (1997),
311-332.}

 \bibitem{BMP} P. Butt\`{a}, I. Merola, E. Presutti: On the validity
of the van der Waals theory in Ising systems with long range
interactions {\em Markov Provesses and Related Fields \bf 3} (1977)
63--88



\bibitem{CP} M. Cassandro, E. Presutti:
{Phase transitions in Ising systems with long but finite range interactions
{\em Markov Processes and Related Fields \bf 2}(1996) 241--262.}


\bibitem{DMPV2} A. De Masi, I. Merola, E. Presutti,
Y. Vignaud: Coexistence of ordered and disordered phases in Potts
models in the continuum {\em in preparation}


\bibitem{DS}{R.L. Dobrushin, S.B. Shlosman:}
{Completely analytical interactions: constructive description
{\em J.Stat. Phys. \bf 46(5--6)}(1987) 983--1014.}


\bibitem{GMRZ} {Hans-Otto Georgii, Salvador Miracle-Sole, Jean Ruiz, Valentin Zagrebnov: Mean field theory of the Potts Gas {\em J. Phys. A} {\bf 39}  (2006) 9045--9053.}


\bibitem{GH} {Hans-Otto Georgii, O. H\"{a}ggstr\"{o}m: Phase transition in continuum Potts models. {\em  Comm. Math. Phys.} {\bf  181} (1996)  507--528.}


\bibitem{GM} T. Gobron, I. Merola: First order
phase transitions in Potts models with finite range interactions
{\em J. Stat. Phys., \bf 126} (2006).


\bibitem{LMP} J.L. Lebowitz, Mazel, E. Presutti: Liquid vapour
phase transitions for systems with finite range interactions {\em
J. Stat. Phys} (1999).



\bibitem{leipzig} E. Presutti:
From Statistical Mechanics towards Continuum Mechanics. {\em course
given at Max-Plank Institute, Leipzig } (1999).

\label{t6}

\bibitem{ruelle} D. Ruelle: Widom-Rowlinson:
{Existence of a phase transition in a continuous classical system. {\em Phys. Rev. Lett. \bf 27} (1971) 1040--1041.}


\bibitem{cluster-expansion} {M. Zahradn\`\i k: A short course on the Pirogov-Sinai theory.  {\em Rend. Mat. Appl.} {\bf 18}, 411--486 (1998).}
\label{y6}




\end{thebibliography}

\end{document}